\documentclass{amsart}
\usepackage{graphicx}

\usepackage[cmtip,all]{xy}
\usepackage{color}
\usepackage[hidelinks]{hyperref}
\hypersetup{pdftitle={Wintner-Conley Dimension, Plucker Coordinates, and Generalized Dziobek-Williams Equations for Central Configurations},pdfauthor={Thiago Dias}}
\usepackage{amsmath, amssymb}

\usepackage{xcolor}
 
\newtheorem{theorem}{Theorem}[section]
\newtheorem{proposition}[theorem]{Proposition}

\newtheorem{lemma}[theorem]{Lemma}

\newtheorem{corollary}[theorem]{Corollary}
\newtheorem{fact}[theorem]{Fact}
\newtheorem*{mainthm}{Main Theorem}

\theoremstyle{definition}
\newtheorem{definition}[theorem]{Definition}
\newtheorem{example}[theorem]{Example}

\theoremstyle{remark}
\newtheorem{remark}[theorem]{Remark}

\numberwithin{equation}{section}

\begin{document}

% \title[short text for running head]{full title}
%\title{}

%    Only \author and \address are required; other information is
%    optional.  Remove any unused author tags.

%    author one information
% \author[short version for running head]{name for top of paper}
\title[Brehm--Wintner--Conley dimension and Dziobek--Williams equations]{Brehm--Wintner--Conley Dimension, Pl\"ucker Coordinates, and Generalized Dziobek--Williams Equations for Central Configurations}
\author[Dias]{Thiago Dias}

\address{Departamento de Matemática, Universidade Federal Rural de Pernambuco - Rua Dom Manuel de Medeiros s/n, 52171-900, Recife, Pernambuco, Brasil}
\email{thiago.diasoliveira@ufrpe.br}

%    author two information
%\author{}
%\address{}
%\curraddr{}
%\email{}
%\thanks{}

%    \subjclass is required.
\subjclass[2020]{Primary 70F10; Secondary 70F15, 14M12, 14M15, 13C40, 15A75, 52C25}
\date{August 2026}

%\dedicatory{}

%    Abstract is required.
\begin{abstract}
We develop an algebraic framework for central configurations of the $n$-body problem with homogeneous potentials, grounded in the exterior algebra of the configuration space and the normalized shifted Brehm--Wintner--Conley (BWC) matrix $S$. Relating the kernel of $S$ to the Pl\"ucker coordinates of the configuration, we generalize the determinantal equations obtained by Williams (1938) for the planar five-body problem to central configurations of any dimension and any number of bodies, and derive the Dziobek--Williams equations $\det(S_I^J)=\kappa\, z_I z_J$, which exhibit the compound matrix $S^{(t)}$ as a rank-one matrix. Introducing the \emph{Brehm--Wintner--Conley dimension} $\operatorname{bwc}(x)=\operatorname{rank}(S)$ (an integer invariant that stratifies central configurations and measures vertical degeneracy), together with the Pl\"ucker--BWC coordinates attached to it, we prove that each stratum of central configurations with fixed dimension and Brehm--Wintner--Conley dimension admits a base-point-free map into a Veronese variety, factoring through a Grassmannian invariant; on the Dziobek stratum this recovers the Dziobek--Veronese geometry previously introduced by the author. We further describe universal determinantal relations satisfied by the minors of $S$ and expand explicitly the resulting systems for the planar five- and six-body problems. We also interpret the mass-weighted entries of $S$ as an equilibrium stress: under the MacMillan--Bartky sign condition a strictly convex central configuration underlies a cable--strut tensegrity, and a theorem of Connelly then forces $S$ to be negative semidefinite with nullity three, so that vertical degeneracy cannot occur in this regime.
\end{abstract}

\maketitle
\enlargethispage{4pt}

%    Text of article.

\section{Introduction}

A configuration of $n$ point masses is \emph{central} when the gravitational
acceleration of each body points toward the center of mass, with a proportionality
constant common to all bodies. Central configurations (CCs) are the source of
the only explicitly known solutions of the $n$-body problem: they generate
homographic and self-similar motions, and, in the plane, the relative
equilibria which rotate rigidly. They also govern the local structure of
collision and escape, and organize the topology of the integral manifolds
\cite{smale1998mathematical}. Since Euler \cite{euler1767demotu}, Lagrange
\cite{lagrange1772essai} and Dziobek
\cite{dziobek1900ueber}, their classification has resisted every general
approach, and Smale placed the finiteness of the number of central
configurations, for arbitrary positive masses, on his list of problems for the
twenty-first century \cite{smale1998mathematical}. The conjecture is settled
for $n=4$ by Hampton and Moeckel \cite{hamptonmoeckel2006}, and for $n=5$ and
generic masses by Albouy and Kaloshin \cite{albouy2012finiteness}. For six
bodies, Chang and Chen \cite{changchen2024toward,changchen2025toward} push
the Albouy--Kaloshin method forward by symbolic computation: a matrix algebra
determines the admissible $zw$-diagrams together with their asymptotic
orders, and mass relations then exclude, outside a codimension-two subvariety
of the mass space, all but a residual list of diagrams. Generic finiteness on
the Dziobek stratum $k=n-2$ is due to Moeckel \cite{moeckel2001generic}; the
extension to arbitrary homogeneous potentials in \cite{DiasVeronese}
identifies Dziobek's relations with the defining quadrics of a Veronese
variety and obtains from this identification, through the theorem on the
dimension of fibers, a uniform bound of Bezout type,
$2^{\binom{n+1}{2}+n-1}$, on the number of Dziobek configurations; for $n=4$
this gives $8192$, below the estimate $8472$ of \cite{hamptonmoeckel2006}. That the question is delicate rather than merely hard is
shown by Roberts' continuum of relative equilibria in a five-body problem with
one negative mass \cite{roberts1999}: finiteness cannot follow from soft
arguments alone.

Progress of this kind rests, in every instance, on the prior derivation of
workable equations, and that derivation is a subject in its own right. In a
very recent contribution, Leandro \cite{leandro2026moments} develops the
classical theory of moments of weighted point configurations into a unified
mechanism producing homogeneous, isometry-invariant equilibrium equations by
elementary algebraic manipulations, with neither reduction by isometries nor
a variational principle: the Albouy--Chenciner system is recovered, new
families of equations in the mutual distances are introduced, and the
Cayley--Menger-type constraints for configurations of prescribed dimension
are re-derived and extended. The present paper is a contribution to the same
enterprise.

\subsection*{Mutual distances and the Cayley--Menger constraints}

The dominant technique is to work in mutual distances $r_{ij}$, which
eliminate translations and rotations at the cost of introducing the
Cayley--Menger constraints. This is the setting of the Albouy--Chenciner
equations \cite{albouy1997probleme,albouy2003paper} and of most subsequent
progress \cite{moeckel1985relative,dias2017new,albouy2024limit}. In
codimension one (Dziobek configurations, $k=n-2$), the constraint is a
single equation, the vanishing of the Cayley--Menger determinant
\cite{moeckel2001generic}, and the classical theory is complete: Dziobek's relations
$s_{ij}s_{kl}=s_{ik}s_{jl}$ close the system, and the geometry is that of a
quadric \cite{dziobek1900ueber,DiasVeronese}.

Already in codimension two the picture changes. For the planar five-body
problem, Williams \cite{williams1938permanent} derived in 1938 a system of
determinantal equations, long neglected, resting on an unsupported positivity
assumption that was only removed by Chen and Hsiao
\cite{chen2018strictly}. The situation is telling: a system of equations for
the first genuinely non-Dziobek case sat in the literature for eighty years
without its determinantal nature being recognized. As Albouy and Sun recently
observed \cite{albouysun2025}, Williams' quantities are nothing but the
$2\times2$ minors of the matrix governing the configuration, a coincidence
that had gone unnoticed, and which suggests organizing the distances
through the minors of a single matrix, rather than working with them one
pair at a time.

\subsection*{Statement of results}

We take that suggestion seriously and rebuild the theory in the exterior
algebra of the configuration space. The two objects are the \emph{Pl\"ucker
coordinates} $\Delta_I=\omega(x_I)$ of the configuration (the oriented
volumes of its $(k+1)$-body subconfigurations) and the \emph{normalized
shifted Brehm--Wintner--Conley matrix} (BWC) $S=(s_{ij})$, $s_{ij}=r_{ij}^{2a}-r_0^{2a}$,
whose kernel encodes the configuration and whose rank encodes its dimension.
The matrix goes back to Brehm's 1908 thesis \cite{brehm1908partikulare}, was
taken up by Wintner \cite{wintner2014analytical}, and was put in its modern
form by Albouy and Chenciner \cite{albouy1997probleme,albouy2024limit}.

Our main result gathers a century of determinantal equations for central
configurations (Dziobek's relation of 1900 at codimension one
\cite{dziobek1900ueber}, Williams' system of 1938 for the planar five-body
problem \cite{williams1938permanent}, and its completion by Chen and Hsiao
\cite{chen2018strictly}) into a single rank-one factorization, valid on
every stratum and never vacuous.

\begin{mainthm}
Every central configuration $x$ of the $n$-body problem for a homogeneous
potential $U_a$ determines two integer invariants: its dimension
$\delta(x)=k$ and its \emph{Brehm--Wintner--Conley dimension}
$\operatorname{bwc}(x)=\operatorname{rank}(S_x)$, which satisfies
$\operatorname{bwc}(x)\le n-k-1$. Write $\mathfrak{X}_{n,k,t}$ for the
stratum of classes with $\delta(x)=k$ and $\operatorname{bwc}(x)=t$. For
every $[x]\in\mathfrak{X}_{n,k,t}$, the $t$-th compound matrix
$S^{(t)}=\big(\det(S_I^J)\big)$, with $I,J$ running over the $t$-element
subsets of $\{1,\dots,n\}$, admits the rank-one factorization
\[
S^{(t)}\;=\;\kappa^{\mathrm{bwc}}\,D^{\mathrm{bwc}}\big(D^{\mathrm{bwc}}\big)^{T},
\qquad \kappa^{\mathrm{bwc}}\neq0,
\]
where $D^{\mathrm{bwc}}$ is the vector of dual Pl\"ucker coordinates of
$\ker(S_x)$. On the generic stratum $t=n-k-1$, the kernel is spanned by the
columns of the mass-weighted configuration matrix $\mu X^{T}$, and
$D^{\mathrm{bwc}}$ is proportional to the vector $z$ of mass-weighted dual
Pl\"ucker coordinates of the configuration itself, recovering
$\det(S_I^J)=\kappa\,z_Iz_J$. Consequently, on every stratum,
$[x]\mapsto\big[\det(S_I^J)\big]$ is a base-point-free map into a
Veronese variety, factoring through the Grassmannian invariant
$[x]\mapsto\ker(S_x)$. (For the degenerate stratum $t=0$, the regular
simplex, the relevant conventions are collected in
Remark~\ref{rem:t-zero}.)
\end{mainthm}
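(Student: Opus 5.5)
The plan is to derive everything from two ingredients: the Albouy--Chenciner form of the central configuration equations, and a purely linear-algebraic fact about compound matrices of symmetric matrices of rank $t$.

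\emph{Step 1: the kernel and the rank bound.} Write the central configuration equations in Albouy--Chenciner form. With $s_{ij}=r_{ij}^{2a}-r_0^{2a}$ (and $s_{ii}=0$), they state that for each body $i$,
\[
\sum_j m_j s_{ij}(x_j-x_i)=0 .
\]
Here $r_0$ is chosen so that the multiplier term is absorbed. Let $\mu=\operatorname{diag}(m_i)$ and let $\widehat X$ be the $(k+1)\times n$ matrix whose columns are $(1,x_i)$. Then the equations read $S\mu \widehat X^T=0$, after choosing $r_0$ so that the constant row is also annihilated (this is the normalization). The columns of $\mu\widehat X^T$ span a $(k+1)$-dimensional space because $\delta(x)=k$. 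Hence $\operatorname{rank} S\le n-k-1$, which is the bound $\operatorname{bwc}(x)\le n-k-1$.

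\emph{Step 2: the rank-one factorization.} Let $S$ be symmetric of rank $t$ with kernel $K$ of dimension $n-t$. Write $S=A\,G\,A^T$, where $A$ is $n\times t$ with column span $\operatorname{im}S=K^\perp$ and $G$ is an invertible $t\times t$ matrix. By Cauchy--Binet,
\[
\det(S_I^J)=\det G\,\det(A_I)\det(A_J).
\]
The vector $(\det A_I)_I$ is the Plücker vector of $K^\perp$. Up to a nonzero scalar and the standard complementary signs, this equals the dual Plücker vector $D^{\mathrm{bwc}}$ of $K$. This yields
\[
S^{(t)}=\kappa^{\mathrm{bwc}}D^{\mathrm{bwc}}(D^{\mathrm{bwc}})^T .
\]
Since $\operatorname{rank}S=t$, some $t$-minor is nonzero, and so $\kappa^{\mathrm{bwc}}\ne0$.

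\emph{Step 3: the generic stratum.} When $t=n-k-1$, the inclusion $\operatorname{col}(\mu\widehat X^T)\subseteq K$ is an equality by dimension count. The dual Plücker coordinates of $K$ are then the complementary maximal minors of $\mu\widehat X^T$. The $(k+1)$-minor on $I^c$ equals $m_{I^c}\Delta_{I^c}$, with the sign fixed by $\epsilon(I)$. This is exactly the vector $z$, so $\det(S_I^J)=\kappa z_Iz_J$. The key bookkeeping here is the mass weighting and the sign convention $\epsilon(I,I^c)$. This is the step I expect to require the most care. I would fix conventions via the Hodge star on $\Lambda^{n-t}\mathbb R^n$, so that the identification of $\det A_I$ with $\star$ of the Plücker vector of $K$ is a standard lemma. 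I would also check the result on Dziobek's case $t=1$, where it must reduce to $s_{ij}=\kappa z_iz_j$.

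\emph{Step 4: the map into the Veronese variety.} Because $D^{\mathrm{bwc}}\ne0$ and $\kappa^{\mathrm{bwc}}\ne0$, the class $[\det(S_I^J)]$ is the image of $[D^{\mathrm{bwc}}]$ under the quadratic Veronese embedding of $\mathbb P(\Lambda^{n-t}\mathbb R^n)$. It is defined at every point, so the map is base-point-free. It factors as
\[
[x]\mapsto \ker S_x\in \mathrm{Gr}(n-t,n)\xrightarrow{\text{Plücker}}\mathbb P\xrightarrow{v_2}\text{Veronese}.
\]
Invariance under rotations, translations and scalings follows because $S$ is built from mutual distances, up to scaling by the homogeneity factor. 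The case $t=0$ is deferred to Remark~\ref{rem:t-zero}.
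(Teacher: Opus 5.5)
Your proof is correct after one repair in Step~1, and it takes a different route from the paper's main argument.

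\textbf{The repair.} The constant row is not annihilated by choosing $r_0$. The canonical length is already fixed by $r_0^{2a}=\lambda/M$; that is what absorbs the multiplier. With $s_{ii}=0$, the $n$ conditions $\sum_j m_js_{ij}=0$ cannot in general all be met by a single scalar, and then $S\mu\widehat X^{T}\neq0$. What makes $S\mu\widehat X^{T}=0$ true is the diagonal convention $s_{ii}=-m_i^{-1}\sum_{j\neq i}m_js_{ij}$ of Definition~\ref{def:shape-variables}. It gives $S\mu\mathbf{1}=0$ by construction, and it turns $\sum_j m_js_{ij}(x_j-x_i)$ into $\sum_j m_js_{ij}x_j$ (Lemma~\ref{lem:shape-equations}). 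With $s_{ii}=0$ you would be computing the rank of a different matrix, not $\operatorname{bwc}(x)$. Once the diagonal is corrected, Step~1 is exactly the paper's argument for the bound. Also, with the kernel basis $\widehat X\mu$, Step~3 gives $D^{\mathrm{bwc}}=\big(\prod_q m_q\big)z$. This is proportional to $z$, not equal to it, which is all the statement needs.

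\textbf{How the paper proves the factorization.} It first derives the generalized Williams identities (Propositions~\ref{Wilgen-prop} and~\ref{prop:6.2-wc}) from the wedge-product form of the kernel equations. It then shows that minors whose column set lies outside the support of $D^{\mathrm{bwc}}$ vanish. Next it propagates the Williams identities along Steinitz exchange chains inside the support (Lemma~\ref{lem:steinitz}). Finally it uses the symmetry of $S^{(t)}$ to collapse the row constants into a single $\kappa^{\mathrm{bwc}}$. That argument yields a constant that could a priori be zero. Non-vanishing is obtained separately, from the trace identity $\kappa^{\mathrm{bwc}}\|D^{\mathrm{bwc}}\|^{2}=e_t(\nu)$ and the diagonalizability of $S$ (Proposition~\ref{prop:7.3-wc}, Corollary~\ref{cor:kappa-wc-nonzero}).

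\textbf{Comparison.} Your factorization $S=AGA^{T}$ with $\operatorname{col}(A)=\ker(S)^{\perp}$, combined with the duality of Lemma~\ref{lem:hodge-dual}, is essentially the shorter alternative the paper itself sketches in Remark~\ref{rem:hodge-alt}. There it is phrased via $\operatorname{rank}S^{(t)}=\binom{t}{t}=1$ and the line $\bigwedge^{t}(\operatorname{Im}S)$. Your route treats all strata uniformly, with no support or chain bookkeeping. It also gives $\kappa^{\mathrm{bwc}}=c^{2}\det G\neq0$ at once, where $(\det A_I)_I=c\,D^{\mathrm{bwc}}$. Taking $A$ with orthonormal columns, Cauchy--Binet gives $\sum_I\det(A_I)^2=\det(A^{T}A)=1$, and $\det G$ is the product of the nonzero eigenvalues. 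So the paper's formula $\kappa^{\mathrm{bwc}}=e_t(\nu)/\|D^{\mathrm{bwc}}\|^{2}$ drops out as a byproduct. What the paper's longer route buys is structural. It exhibits the Dziobek--Williams identity as the closure of the Williams-type equations under single-index exchanges, which makes the interdependence of the classical equations visible. Those intermediate identities are also results of the paper in their own right.
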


The generic-stratum factorization is Proposition~\ref{dziowill}; its
unconditional form on every stratum is Proposition~\ref{prop:7.2-wc}
together with Corollary~\ref{cor:kappa-wc-nonzero}; the Veronese consequence
is Theorem~\ref{thm:veronese-factorization}. We now describe the
contributions in turn.

\emph{(1) Williams' formulae in every dimension.} Relating $\ker(S)$ to the
Pl\"ucker coordinates yields the identities
\[
m_u\det\big(S_I^{L\setminus\{v\}}\big)\,D_{L\setminus\{u\}}
= m_v\det\big(S_I^{L\setminus\{u\}}\big)\,D_{L\setminus\{v\}},
\]
valid for $n$ bodies of any dimension $k$ (Proposition~\ref{Wilgen-prop}).
For $n=5$, $k=2$ they specialize to Williams' system, recovering equations
(3.10) of \cite{chen2018strictly} and producing $30$ further ones
(Proposition~\ref{Wilnew} and Corollary~\ref{Wilnew2}).

\emph{(2) The Dziobek--Williams equations.} These identities collapse into a
single rank-one statement,
\[
\det(S_I^J)=\kappa\, z_I z_J,\qquad I,J\in I_t,
\]
so that the $t$-th compound matrix $S^{(t)}$ of $S$ has rank at most one
(Proposition~\ref{dziowill}), with $\kappa$ computed from the spectrum of $S$
(Proposition~\ref{prop:kappa-formula}). At $t=1$ this is Dziobek's classical
relation; at $n=5$, $t=2$ it is Williams'. The entries themselves carry a
mechanical reading: the weights $m_im_js_{ij}$ form a self-stress of the
configuration (Lemma~\ref{lem:stress-form}), and under the
\emph{MacMillan--Bartky condition} (sides of the convex hull with
$s_{ij}>0$, diagonals with $s_{ij}<0$), a strictly convex central
configuration underlies a cable--strut tensegrity
(Proposition~\ref{prop:mb-tensegrity}); a theorem of Connelly
\cite{connelly1982rigidity} then forces $S$ to be negative semidefinite
with nullity three, so the Brehm--Wintner--Conley dimension is maximal in this
regime (Corollary~\ref{cor:mb-psd}).

\emph{(3) Removing genericity.} The classical statement is vacuous exactly
where it would be most interesting: if $\operatorname{rank}(S)$ drops below
the generic value $n-k-1$, a \emph{vertical degeneracy} in the sense of
Albouy and Fernandes \cite{albouy2024limit}, then every minor of the
prescribed size vanishes and the identity reads $0=0$. Such rank drops are the only possible meeting
points of families of central configurations of different dimensions (Proposition~\ref{prop:wc-defect}): they provide the
\emph{necessary} vertical degeneracy, while the existence of an actual
bifurcating branch requires additional conditions and is not claimed here.
Explicit examples are scarce, and it is exactly there that the classical
equations go silent. We therefore index by
the true rank $t=\operatorname{bwc}(x)=\operatorname{rank}(S)$, which we call
the \emph{Brehm--Wintner--Conley dimension}, and introduce the
\emph{Pl\"ucker--BWC coordinates} $\Delta^{\mathrm{bwc}}_K$: the
Pl\"ucker coordinates of $\ker(S)$ itself, rather than of the configuration.
Every identity above survives verbatim, and the resulting constant satisfies
$\kappa^{\mathrm{bwc}}\neq0$ \emph{unconditionally}
(Corollary~\ref{cor:kappa-wc-nonzero}), with no genericity, convexity, or
non-degeneracy hypothesis of any kind. The formalism does not require
vertically degenerate configurations to be explicitly known; it provides
the natural algebraic framework in which such configurations, whenever
they exist, must lie.

\emph{(4) A Veronese model of every stratum.} Consequently each stratum
$\mathfrak{X}_{n,k,t}$ of configurations with fixed dimension and
Brehm--Wintner--Conley dimension admits a base-point-free map
\[
\Psi_t:\mathfrak{X}_{n,k,t}\longrightarrow\mathbb{P}^{\binom{N_t+1}{2}-1},
\qquad [x]\longmapsto\big[\det(S_I^J)\big]_{I,J},
\]
landing in a Veronese variety, and this map factors as
$\Psi_t=v_2\circ\rho\circ\Phi$ through the Grassmannian invariant
$\Phi([x])=\ker(S_x)$ (Theorem~\ref{thm:veronese-factorization}). The
factorization is what controls the size of the image, $\dim\le t(n-t)$
(Proposition~\ref{prop:image-dimension}), and shows that the quadratic
re-embedding loses nothing: $\Psi_t$ separates classes exactly as well as
$\ker(S_x)$ does. On the stratum $t=1$ one recovers the Dziobek--Veronese
variety of \cite{DiasVeronese}, whose fibers over the mass space carry the
proof of generic finiteness there; the corresponding analysis for arbitrary
$t$ is the subject of separate work.

\subsection*{A determinantal variety attached to the \texorpdfstring{$n$}{n}-body problem}

The point we wish to stress is that (2)--(4) place the $n$-body problem inside
a well-studied corner of commutative algebra, and that the traffic runs in
both directions.

The coordinates of $\Psi_t$ are \emph{all} the $t\times t$ minors of a
\emph{symmetric} matrix. Neither of the two classical lines of research covers
this object. On one side, the ideal $I_{t+1}(A)$ generated by the minors of a
generic symmetric matrix is completely understood: it is prime and
Cohen--Macaulay \cite{kutz1974cohen,jozefiak1978ideals}, its degree is known
\cite{harris1984symmetric}, its resolution is constructed in
\cite{jozefiakpragaczweyman1981}, and its minors form a Gr\"obner basis for a
diagonal order \cite{conca1994groebner}; see
\cite{brunsvetter1988,brunsconcaraicuvarbaro2022} for systematic accounts of
determinantal ideals. But
that theory describes the \emph{locus} $\{\operatorname{rank}\le t\}$, not the
relations among the minors regarded as coordinates. On the other side, those
relations \emph{are} studied for a \emph{generic}, non-symmetric matrix: Bruns,
Conca and Varbaro \cite{bruns2013relations} exhibit minimal relations in
degrees two and three and conjecture that these generate, a conjecture proved
for $t=2$ by Huang, Perlman, Polini, Raicu and Sammartano
\cite{huang2021relations}. In the symmetric world the best-studied case is
that of the \emph{principal} minors, where Holtz and
Sturmfels \cite{holtzsturmfels2007} identified hyperdeterminantal relations
and Oeding \cite{oeding2011} proved their conjecture set-theoretically; see
also \cite{linsturmfels2009}. Computations with the full set of $2$-minors of
a symmetric matrix appear in \cite[Remark~2.13(c)]{bruns2013relations}.

Our situation (all minors, of a symmetric matrix, of a fixed non-maximal
size) falls between these, and the difference is not cosmetic. We show that
the minors of a symmetric matrix satisfy \emph{linear syzygies}: nontrivial
relations $\sum c_{IJ}\det S_I^J=0$ with \emph{constant} coefficients, holding
identically in the entries of $S$. Explicitly,
\[
\det\big(S_{C\cup ij}^{\,C\cup kl}\big)-\det\big(S_{C\cup ik}^{\,C\cup jl}\big)
+\det\big(S_{C\cup il}^{\,C\cup jk}\big)=0,
\]
identically, with a further family when $n=2t$ (Lemma~\ref{lem:three-term}).
The reason is representation-theoretic and, we think, clarifying: the compound
$\bigwedge^tS$ of a symmetric matrix lies in the Cartan component
$\mathbb{S}_{(2^t)}V$ of $\operatorname{Sym}^2(\bigwedge^tV)$
(Proposition~\ref{prop:cartan}), and the syzygies are the vanishing of its
other isotypic components. For a generic matrix the same expression is a
nonzero polynomial. Hence the relations of
\cite{bruns2013relations,huang2021relations} bound from below, never from
above, the relations available here: in the symmetric setting relations occur
already in degree \emph{one} in the minors, where the generic theory has none.

The syzygies cut the ambient projective space of the
Veronese model from $\mathbb{P}^{54}$ to $\mathbb{P}^{49}$ for the planar
five-body problem and from $\mathbb{P}^{209}$ to $\mathbb{P}^{174}$ for the
planar six-body problem (Corollary~\ref{cor:56-ambient}).

\subsection*{The explicit systems and their redundancy}

Because these questions are ultimately computational, we expand the two
systems completely. Table~\ref{tab:intro-summary} summarizes what the
expansion produces; Appendix~\ref{app:expansions} gives the orbit
representatives.

\begin{table}[ht]
\centering
\begin{tabular}{lrr}
\hline
 & $n=5$, $t=2$ & $n=6$, $t=3$ \\
\hline
exchange relations, distinct up to sign & $1005$ & $17965$ \\
\quad of which redundant (Sylvester type) & $435$ & $3915$ \\
\quad $\mathfrak{S}_n$-orbits & $19$ & $57$ \\
degree in the $s_{ij}$ & $4$ & $6$ \\
\hline
ambient space, nominal & $\mathbb{P}^{54}$ & $\mathbb{P}^{209}$ \\
\quad after the linear syzygies & $\mathbb{P}^{49}$ & $\mathbb{P}^{174}$ \\
\hline
equivalent system of minimal degree & $55$ cubics & $120$ quartics \\
\hline
\end{tabular}
\caption{The two systems of Example~\ref{ex:planar-56}, expanded. No cubic
relation of degree $3t$ is needed in either case.}
\label{tab:intro-summary}
\end{table}

Three facts emerge that are invisible from the abstract statement. A large
proportion of the exchange relations are instances of Sylvester's identity,
hence multiples of a single $(t+1)$-minor and carrying no information
(Proposition~\ref{prop:sylvester-type}). The specialization principle drawn
from \cite{bruns2013relations,huang2021relations} is \emph{ambient} rather
than equational: an identity valid for all symmetric matrices specializes to
$0=0$ in the variables $s_{ij}$, so it locates the image of $\Psi_t$ without
constraining any configuration (Remark~\ref{rem:specialization-scope}). And
the same rank locus $\{\operatorname{rank}(S)\le t\}$ is cut out much more
economically, by equations of degree $t+1$ rather than $2t$: the $1005$
quartics may be replaced by the $55$ cubic $3\times3$ minors of $S$, and the
$17965$ sextics by the $120$ quartic $4\times4$ minors
(Proposition~\ref{prop:economical}); equality of loci, not of ideals, is
what is proved. For $n=5$ this was also certified by
explicit linear algebra: the quartics span the degree-four graded piece of
$I_3(S)$, of dimension $575$. The system natural in the Veronese coordinates
is thus emphatically not the system one should compute with, and the expansion
is best read as a dictionary between the two.

\subsection*{Organization of the paper}

Section~\ref{sec:cc-equations} sets up the equations of motion for the
homogeneous potentials $U_a$ and introduces the shape variables $s_{ij}$.
Section~\ref{sec:plucker} deduces the Pl\"ucker equations of a configuration.
Section~\ref{sec:wc-matrix} introduces the Brehm--Wintner--Conley matrix and its
normalized shifted form $S$. Sections~\ref{sec:williams5}
and~\ref{sec:williamsk} prove the generalized Williams' formulae, first for the planar
five-body problem and then in general, and Section~\ref{sec:dziobek-williams} the Dziobek--Williams
equations, together with their sign structure (the MacMillan--Bartky
condition). Section~\ref{sec:pwc} introduces the Pl\"ucker--BWC
coordinates and removes the genericity hypotheses.
Section~\ref{sec:veronese} constructs the Veronese model and proves the
factorization theorem; Section~\ref{sec:universal} collects the commutative
algebra input, derives the universal determinantal relations, and establishes
the structural properties of the resulting systems. Appendix~\ref{app:expansions}
carries out the explicit expansions.

\subsection*{Sign Conventions for Multi-Indices}
Throughout this paper, we extensively use the exterior algebra of the configuration space. We define a unified sign convention for the concatenation of multi-indices.
\begin{definition}[Shuffle Sign]\label{def:shuffle_sign}
Let $I$ and $J$ be two disjoint, strictly increasing multi-indices, and let $U = I \cup J$ be their naturally ordered union. We define the \emph{shuffle sign} $\epsilon_{I,J}$ as the signature of the permutation that reorders the concatenated sequence $(I, J)$ into the strictly increasing sequence $U$.
Explicitly, let $\operatorname{pos}_U(x)$ denote the  position of an element $x$ within the ordered set $U$. Then,
\begin{equation}\label{eq:shuffle_sign}
\epsilon_{I,J} = (-1)^{\sum_{x \in I} \operatorname{pos}_U(x) - \frac{|I|(|I|+1)}{2}}.
\end{equation}
For the insertion of a single element $l \notin I$, we simplify the notation to $\epsilon_{l, I} := \epsilon_{\{l\}, I}$ and $\epsilon_{I, l} := \epsilon_{I, \{l\}}$.
\end{definition}

\section{Central Configurations and Shape Variables}\label{sec:cc-equations}

This section fixes the setting and introduces the two objects on which
everything below rests: the \emph{dimension} $\delta(x)$ of a
configuration, and the \emph{shape variables} $s_{ij}$, which assemble into the
matrix $S$ of Section~\ref{sec:wc-matrix}. The material is classical; we state
it in the form and notation used throughout, and record in
Remark~\ref{rem:scaling-shape} the scaling property that makes the projective
constructions of Sections~\ref{sec:pwc}--\ref{sec:veronese} well defined.

Throughout, $n\ge3$ bodies occupy positions $x_1,\dots,x_n\in\mathbb{R}^d$ and
carry masses $m_1,\dots,m_n>0$, with total mass $M=m_1+\cdots+m_n$. We write
$r_{ij}=\|x_i-x_j\|$ for the mutual distances and assume throughout that
$r_{ij}\neq0$ for $i\neq j$.

\begin{definition}[Homogeneous potential]\label{def:potential}
For $a\in\mathbb{R}$, $a\neq0$, the \emph{homogeneous potential} of exponent $a$ is
\[
U_a(x)=
\begin{cases}
\displaystyle\frac{1}{2a+2}\sum_{i<j}m_im_j\,r_{ij}^{2a+2}, & a\neq-1,\\[2ex]
\displaystyle\sum_{i<j}m_im_j\log r_{ij}, & a=-1.
\end{cases}
\]
\end{definition}

For $a\neq-1$, and for $a=-1$ with $d>2$, the associated equations of motion
are
\begin{equation}\label{eqcc}
\ddot{x}_{i}=\sum_{j\neq i}m_j\,r_{ij}^{2a}\,(x_j-x_i),\qquad i=1,\dots,n.
\end{equation}

\begin{remark}[Two distinguished exponents]\label{rem:exponents}
The Newtonian $n$-body problem is $a=-3/2$. The remaining case of
Definition~\ref{def:potential}, $a=-1$ with $d=2$, is the Helmholtz
$n$-vortex problem, whose equations are of first order,
$\dot{x}_i=-\mathcal{K}\sum_{j\neq i}m_j r_{ij}^{-2}(x_j-x_i)$ with
$\mathcal{K}=\left(\begin{smallmatrix}0&-1\\1&0\end{smallmatrix}\right)$, and in which
the $m_i$ are vorticities and need not be positive. Every result of this paper
depends on the potential only through the exponent $a$, and on the masses only
through their non-vanishing, unless positivity is stated explicitly. The case $a=0$ is excluded throughout: the map $r\mapsto r^{2a}$ is then constant, so the shape variables of Definition~\ref{def:shape-variables} below vanish identically and the canonical length $r_0$ is undefined.
\end{remark}

\begin{definition}[Central configuration]\label{defcc}
A vector $x=(x_1,\dots,x_n)\in(\mathbb{R}^d)^n$ is a \emph{configuration}. It
is a \emph{central configuration} for the potential $U_a$ if there exists
$\lambda\neq0$, the \emph{multiplier}, such that
\begin{equation}\label{eq:cc-def}
\sum_{i\neq j}m_i\,r_{ij}^{2a}\,(x_i-x_j)+\lambda(x_j-c)=0,
\qquad j=1,\dots,n,
\end{equation}
where $c=M^{-1}(m_1x_1+\cdots+m_nx_n)$ is the center of mass.
\end{definition}

Equivalently, after placing $c$ at the origin, $x$ is a central configuration
precisely when it is a critical point of the amended potential
$U_a+\lambda\mathcal{I}/2$, where $\mathcal{I}$ is the moment of inertia; this
is what justifies the name \emph{multiplier} for $\lambda$, which acts as a
Lagrange multiplier for the constraint that $\mathcal{I}$ be constant.

We also fix multi-index notation, used from Section~\ref{sec:wc-matrix}
onwards. For $1\le p\le n$ we write
\[
I_p:=\big\{\,I=\{i_1<\cdots<i_p\}\subset\{1,\dots,n\}\,\big\}
\]
for the set of strictly increasing multi-indices of length $p$, and, for
$I\in I_p$ and $J\in I_q$, we write $S_I^J$ for the
$p\times q$ submatrix of a matrix $S$ with rows indexed by $I$ and columns by
$J$, and $\det(S_I^J)$, or $|S_I^J|$, for its determinant when $p=q$.

\begin{definition}[Dimension]\label{def:spatial-dimension}
The \emph{dimension} $\delta(x)$ of a configuration $x$ is the
dimension of the affine span of $x_1,\dots,x_n$ in $\mathbb{R}^d$. We write
$k=\delta(x)$ throughout, so that $1\le k\le\min(d,n-1)$; the case
$k=n-2$ is the \emph{Dziobek} case.
\end{definition}

Equation~\eqref{eq:cc-def} is not yet in a convenient algebraic form: the two
sums it couples (one over the potential, one over the position) respond
differently to a rescaling of the configuration. The remedy is to absorb the
multiplier into a length.

\begin{definition}[Shape variables]\label{def:shape-variables}
Let $x$ be a central configuration with multiplier $\lambda$. The
\emph{canonical length} $r_0>0$ is defined by $r_0^{2a}=M^{-1}\lambda$; in the
Newtonian case, $r_0=(M/\lambda)^{1/3}$. The associated \emph{shape variables}
are
\begin{equation}\label{sdef}
s_{ij}:=r_{ij}^{2a}-r_0^{2a}\quad(i\neq j),
\qquad
s_{jj}:=-\frac{1}{m_j}\sum_{i\neq j}m_is_{ij},
\end{equation}
the diagonal being defined so that $\sum_{i=1}^nm_is_{ij}=0$ for every $j$.
\end{definition}

The single scalar $r_0$ thus replaces both $\lambda$ and the overall scale of
the configuration, and the equations become homogeneous in the $s_{ij}$ alone.

\begin{lemma}[Shape form of the central configuration equations]
\label{lem:shape-equations}
Let $x$ be a configuration with center of mass at the origin and let
$\lambda\neq0$. The following are equivalent.
\begin{enumerate}
\item[(i)] $x$ is a central configuration with multiplier $\lambda$;
\item[(ii)] $\displaystyle\sum_{\substack{i=1\\i\neq j}}^n
m_i\big(r_{ij}^{2a}-r_0^{2a}\big)(x_i-x_j)=0$ for $j=1,\dots,n$;
\item[(iii)] $\displaystyle\sum_{i=1}^n m_i\,s_{ij}\,x_i=0$ for $j=1,\dots,n$.
\end{enumerate}
\end{lemma}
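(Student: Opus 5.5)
The plan is to prove (i)$\Leftrightarrow$(ii) by rewriting the multiplier term using the center of mass, and (ii)$\Leftrightarrow$(iii) by expanding and using the diagonal convention $s_{jj}$. Both are linear manipulations; the key is to convert $\lambda x_j$ into a sum over $i\neq j$ with coefficient $r_0^{2a}$.

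\emph{Step 1: (i)$\Leftrightarrow$(ii).} With $c=0$, equation~\eqref{eq:cc-def} reads
\[
\sum_{i\neq j}m_i r_{ij}^{2a}(x_i-x_j)+\lambda x_j=0 .
\]
Since $\sum_i m_ix_i=0$, we have $\sum_{i\neq j}m_i(x_i-x_j)=\sum_i m_i(x_i-x_j)=-Mx_j$. Hence $\lambda x_j=-\lambda M^{-1}\sum_{i\neq j}m_i(x_i-x_j)$. Substituting $\lambda M^{-1}=r_0^{2a}$ from Definition~\ref{def:shape-variables} turns the displayed equation into
\[
\sum_{i\neq j}m_i\big(r_{ij}^{2a}-r_0^{2a}\big)(x_i-x_j)=0,
\]
which is (ii). Every step is an identity, so the argument reverses. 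One remark is needed: $r_0$ must be real with $r_0^{2a}=\lambda/M$. This requires $\lambda>0$, or else $r_0^{2a}$ is interpreted simply as the scalar $\lambda/M$. I will note that only the quantity $r_0^{2a}:=\lambda/M$ enters, so the equivalence holds for any $\lambda\neq0$ with that reading.

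\emph{Step 2: (ii)$\Leftrightarrow$(iii).} Expand (ii) as $\sum_{i\neq j}m_is_{ij}x_i-\big(\sum_{i\neq j}m_is_{ij}\big)x_j$. By~\eqref{sdef}, the bracket equals $-m_js_{jj}$, so the expression is exactly $\sum_{i=1}^n m_is_{ij}x_i$. This gives (ii)$\Leftrightarrow$(iii) term by term for each $j$.

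The only delicate point is the bookkeeping in Step 1, namely using the center-of-mass condition to absorb $\lambda x_j$. Together with the sign/positivity issue of defining $r_0$ when $\lambda$ has the wrong sign relative to $a$, I would handle this by treating $r_0^{2a}$ as notation for $\lambda/M$.
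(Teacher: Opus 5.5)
Your proof is correct and is essentially the paper's own argument: the center-of-mass identity $\sum_{i\neq j}m_i(x_i-x_j)=-Mx_j$ converts $\lambda x_j$ into the $r_0^{2a}$ term for (i)$\Leftrightarrow$(ii), and the diagonal convention $m_js_{jj}=-\sum_{i\neq j}m_is_{ij}$ gives (ii)$\Leftrightarrow$(iii) term by term. Your caveat about reading $r_0^{2a}$ as $\lambda/M$ is a harmless refinement that the paper leaves implicit; for positive masses it is moot, since pairing the equations with $m_jx_j$ and summing gives $\lambda\sum_j m_j|x_j|^2=\tfrac12\sum_{i\neq j}m_im_jr_{ij}^{2a+2}>0$.
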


\begin{proof}
(i)$\iff$(ii). With $c=0$, equation~\eqref{eq:cc-def} reads
$\sum_{i\neq j}m_ir_{ij}^{2a}(x_i-x_j)+\lambda x_j=0$. Since
$\sum_{i}m_ix_i=0$, we have $\sum_{i\neq j}m_i(x_i-x_j)=-Mx_j$, whence
\[
-r_0^{2a}\sum_{i\neq j}m_i(x_i-x_j)=r_0^{2a}Mx_j=\lambda x_j,
\]
by the defining property $r_0^{2a}=M^{-1}\lambda$ of the canonical length.
Adding this identity to~\eqref{eq:cc-def} gives (ii), and subtracting it
returns~\eqref{eq:cc-def}.

(ii)$\iff$(iii). Expanding the sum in (ii) and using~\eqref{sdef},
\begin{align*}
\sum_{i\neq j}m_is_{ij}(x_i-x_j)
&=\sum_{i\neq j}m_is_{ij}x_i-x_j\sum_{i\neq j}m_is_{ij}\\
&=\sum_{i\neq j}m_is_{ij}x_i+m_js_{jj}x_j
=\sum_{i=1}^nm_is_{ij}x_i,
\end{align*}
the middle equality being the definition of $s_{jj}$. \qedhere
\end{proof}

We refer to~(ii) and~(iii) as equations \eqref{eqccro} and \eqref{eqccshape}:
\begin{equation}\label{eqccro}
\sum_{\substack{i=1\\i\neq j}}^n m_i\big(r_{ij}^{2a}-r_0^{2a}\big)(x_i-x_j)=0,
\qquad j=1,\dots,n,
\end{equation}
\begin{equation}\label{eqccshape}
\sum_{i=1}^n m_i\,s_{ij}\,x_i=0,\qquad j=1,\dots,n.
\end{equation}

Two features of this normalization are used repeatedly below, and we record
them now.

\begin{remark}[Scaling, and why the constructions are projective]
\label{rem:scaling-shape}
The variables $s_{ij}$ depend on $x$ only through the mutual distances, so
they are unchanged by every isometry of $\mathbb{R}^d$. Under a homothety
$x\mapsto\alpha x$, $\alpha>0$, both $r_{ij}$ and $r_0$ scale by $\alpha$
(the latter because the multiplier scales so as to preserve
$r_0^{2a}=M^{-1}\lambda$), so that \emph{every} $s_{ij}$ acquires the
\emph{same} nonzero factor $\alpha^{2a}$:
\[
S\longmapsto\alpha^{2a}S .
\]
A configuration-wide rescaling changes neither the rank of $S$, nor its
kernel, nor any ratio or even-degree monomial in its minors. This is the exact
reason why the coordinates introduced in Section~\ref{sec:pwc} and the maps of
Section~\ref{sec:veronese} descend to classes of configurations modulo
symmetries and homotheties; see Remark~\ref{rem:wc-invariance} and
Lemma~\ref{lem:psi-well-defined}.
\end{remark}

\begin{remark}[The canonical length as a threshold]\label{rem:r0-threshold}
For the Newtonian exponent $a<0$ the map $r\mapsto r^{2a}$ is decreasing, so
$s_{ij}>0$ if and only if $r_{ij}<r_0$, and $s_{ij}<0$ if and only if
$r_{ij}>r_0$: the sign of $s_{ij}$ records whether the pair $\{i,j\}$ is
closer or farther than the canonical length. Moreover $r_0$ is genuinely
intermediate (the relation $\sum_{i\neq j}m_is_{ij}=-m_js_{jj}$ forbids all
the $s_{ij}$ from having one sign), so the pairs are always split into both
classes. This sign structure is not used in the present paper, where only the
rank of $S$ matters; the dynamical consequences of this sign structure will
be treated elsewhere.
\end{remark}

\section{Pl\"ucker Equations for Central Configurations}\label{sec:plucker}

In this section we introduce the Pl\"ucker equations for central configurations. To fix ideas, we start with a planar central configuration $x$ of the five-body problem.

We can associate to $x$ the following matrix:
$$X=\begin{pmatrix}
     1&1&1&1&1\\
     x_{11}&x_{21}&x_{31}&x_{41}&x_{51}\\
     x_{12}&x_{22}&x_{32}&x_{42}&x_{52}
\end{pmatrix}$$
We will use the notation $X_{I}$ for the matrix of the subconfiguration $x_{I}$, formed by the bodies $x_i$, $x_j$, and $x_k$ where $I=\{i,j,k\}$. The minors of order $3$ of $X$ represent the oriented areas of the subconfigurations $x_{I}$, and will be denoted by $\Delta_{I}$. For example:
$$\Delta_{ijk}=\left|\begin{array}{ccc}
      1&1&1\\
     x_{i1}&x_{j1}&x_{k1}\\
     x_{i2}&x_{j2}&x_{k2}
\end{array}\right|$$

For convenience we extend the notation to arbitrary index order by setting $\Delta_{\sigma(i)\sigma(j)\sigma(k)}=\operatorname{sgn}(\sigma)\Delta_{ijk}$
for every permutation $\sigma$ in the symmetric group $\mathfrak{S}_{3}$.

Since the dimension is $\delta(x)=2$, we have that $\text{rank}(X)=3$. Hence, we can suppose without loss of generality that the minor $\Delta_{123}\neq0$. Note that
\begin{equation}\label{ME}
    \text{adj}(X_{123}) X_{145} = \begin{pmatrix} 
     \Delta_{123} & \Delta_{234} & \Delta_{235} \\ 
     0 & -\Delta_{134} & -\Delta_{135} \\ 
     0 & \Delta_{124} & \Delta_{125} 
     \end{pmatrix}
\end{equation}
Taking determinants in equation \eqref{ME}, we obtain
\begin{equation}\label{P1}
(\Delta_{123})^2\Delta_{145}=\Delta_{123}(\Delta_{135}\Delta_{124}-\Delta_{125}\Delta_{134}).
\end{equation}
Since $\Delta_{123}\neq 0$, we get:
\begin{equation}\label{P2}
    \Delta_{123}\Delta_{145}-\Delta_{125}\Delta_{134}+\Delta_{135}\Delta_{124}=0.
\end{equation}
Equation \eqref{P2} is a Pl\"ucker relation. We now discuss briefly the Pl\"ucker relations in the context of a general central configuration of fixed dimension. 

Let $x=(x_1,...,x_n)\in\mathbb{R}^{dn}$ be a central configuration with $n$ bodies and dimension $k$ in $\mathbb{R}^d$. Without loss of generality we may assume $x_i \in \mathbb{R}^{k}$. We associate to $x$ the matrix
$$X=\begin{pmatrix}
1       &  1 &   \cdots    &  1    \\
x_{11}  & x_{21} &\cdots &  x_{n1} \\
\vdots  & \vdots &\ddots &  \vdots \\
x_{1k}  & x_{2k} &\cdots &  x_{nk} 
\end{pmatrix}$$
called the \emph{configuration matrix} of $x$.

For any nonempty subset $I\subset\{1,...,n\}$, let $x_{I}$ be the configuration with bodies $x_{i}$, with $i\in I$. We denote by $X_{I}$ the configuration matrix of $x_{I}$. The determinant of $X_I$ will be denoted by $\omega(x_{I})$.

Let $\mathcal{X}_{0}$ be the set of central configurations with the center of mass fixed at the origin. Denoting the $i$-th row of $X$ by $f_i$, note that $x$ can be associated to the vector space $V_{x}$ of $\mathbb{R}^{n}$ generated by the vectors $\{f_1,...,f_{k+1}\}$. Note that $\text{rank}(X)=k+1$ implies $\text{dim}(V_x)=k+1.$ 

Consider the following association
$$\begin{array}{cccc}
\Pi:&\mathcal{X}_{0}& \rightarrow& \mathbb{P}(\bigwedge^{k+1}\mathbb{R}^n)\\
 &\  x             & \mapsto & f_1\wedge...\wedge f_{k+1}
\end{array}$$
If $x$ and $\tilde{x}$ are equivalent modulo symmetries and homotheties, then $V_{x}=V_{\tilde{x}}$. Moreover, if $V_{\tilde{x}}$ is generated by $\{\tilde{f}_1,...,\tilde{f}_{k+1}\}$, then 
$$f_1\wedge...\wedge f_{k+1}=\alpha\tilde{f}_1\wedge...\wedge \tilde{f}_{k+1},$$
for some constant $\alpha\neq 0$. Therefore, $\Pi$ is well defined on the set of classes of central configurations. We have identified $\mathcal{X}_{0}$ with a subset of a projective space; the Pl\"ucker points associated with configurations lie on the projective algebraic variety $\mathrm{Gr}(k+1,n)$, whose defining equations we now describe.

\begin{definition}
We say that $v\in \mathbb{P}(\bigwedge^{r}\mathbb{R}^n)$ is decomposable if, and only if, there exist $y_{1},...,y_{r}\in \mathbb{R}^n$ such that $v=y_1\wedge...\wedge y_r.$
\end{definition}

We need to characterize $\operatorname{Im}(\Pi)$. We observe that if $v \in \mathbb{P}\left( \bigwedge^{k+1} \mathbb{R}^n \right)$ lies in $\operatorname{Im}(\Pi)$, then $v$ must be decomposable.

For any nonempty subset $I=\{i_{1},..,i_{r}\}\subset\{1,...,n\}$, with $i_{1}<...<i_{r}$, we let $e_{I}=e_{i_{1}}\wedge...\wedge e_{i_{r}}$. If $\{e_1,...,e_n\}$ is a basis for $\mathbb{R}^n$, then $\{e_{I}\}_{I\in I_{r}}$ is a basis of $\bigwedge^{r}\mathbb{R}^n$. We now present a criterion for decomposability of a vector in terms of its coordinates. 

\begin{proposition}[Pl\"ucker's Relations]\label{prop:plucker-relations}
 $v=\sum_{I \in I_{r}}\Delta_{I}e_{I}\in \mathbb{P}(\bigwedge^{r}\mathbb{R}^n)$ is decomposable if, and only if,
$$\sum_{i\in J\setminus H} \epsilon_{i, H} \epsilon_{i, J\setminus\{i\}} \Delta_{H\cup\{i\}}\Delta_{J\setminus\{i\}}=0,$$
 for all $H,J\subset\{1,...,n\}$ such that $|H|=r-1$ and $|J|=r+1$, where $\epsilon$ denotes the shuffle sign (Definition \ref{def:shuffle_sign}).
\end{proposition}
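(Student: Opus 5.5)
The plan is to prove the two directions separately. Necessity will come from an alternating multilinear identity. Sufficiency will come from an explicit reconstruction of the $r$-plane from one nonzero coordinate, with the Pl\"ucker relations used inductively to show that every remaining coordinate is the one predicted by that plane.

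Throughout, I use the extended notation $\Delta_{i_1\cdots i_r}$ for arbitrary orderings, alternating in the indices. The shuffle signs then absorb the reordering: $\epsilon_{i,H}\Delta_{H\cup\{i\}}=\Delta_{(i,H)}$, the coordinate with $i$ placed in front of $H$. Likewise $\epsilon_{i,J\setminus\{i\}}$ is the sign of moving $i$ to the front of $J$. So the relation to prove reads
\[
\sum_{i\in J}\epsilon_{i,J\setminus\{i\}}\,\Delta_{(i,H)}\,\Delta_{J\setminus\{i\}}=0 .
\]
Terms with $i\in H$ may be included freely, since then $\Delta_{(i,H)}=0$.

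\emph{Necessity.} Let $v=y_1\wedge\dots\wedge y_r$ and let $Y$ be the $r\times n$ matrix with rows $y_1,\dots,y_r$. Then $\Delta_I=\det Y^I$, the maximal minors of $Y$. Fix $H$ and $J=\{j_0<\dots<j_r\}$, and form the $(r+1)\times(r+1)$ matrix whose columns are $\binom{u_{j}}{Y^{j}}$ for $j\in J$, where $u_j:=\Delta_{(j,H)}$. Expanding $\Delta_{(j,H)}$ along its first column expresses $u_j$ as a fixed linear combination of the entries $y_{1j},\dots,y_{rj}$, with coefficients independent of $j$. Hence the first row of this matrix is a linear combination of the other rows, and its determinant vanishes. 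Laplace expansion along the first row then gives exactly $\sum_{i\in J}\pm\Delta_{(i,H)}\Delta_{J\setminus\{i\}}=0$. The sign $(-1)^{\mathrm{pos}_J(i)-1}$ is precisely $\epsilon_{i,J\setminus\{i\}}$ by \eqref{eq:shuffle_sign}.

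\emph{Sufficiency.} Assume the relations hold and $v\neq0$. Choose $I_0$ with $\Delta_{I_0}\neq0$, and rescale so that $\Delta_{I_0}=1$; after relabeling, take $I_0=\{1,\dots,r\}$. Define $Y$ as the $r\times n$ matrix with $Y^{I_0}=\mathrm{Id}$ and, for $j\notin I_0$, entries $y_{aj}:=\Delta_{(1,\dots,\hat a,\dots,r,j)}$ with $j$ placed in slot $a$. Equivalently, $y_{aj}=\Delta_{I_0\setminus\{a\}\cup\{j\}}$ up to the evident shuffle sign. Let $\Delta'_I=\det Y^I$ be the Pl\"ucker coordinates of $y_1\wedge\dots\wedge y_r$. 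By construction $\Delta'_I=\Delta_I$ whenever $|I\setminus I_0|\le1$.

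I then prove $\Delta'_I=\Delta_I$ by induction on $d(I)=|I\setminus I_0|$. Both families satisfy the Pl\"ucker relations: $\Delta'$ by necessity, $\Delta$ by hypothesis. Given $I$ with $d(I)\ge2$, pick $b\in I\setminus I_0$ and $a\in I_0\setminus I$. Apply the relation with $H=I_0\cup\{b\}\setminus\{a\}$, minus any one further element of $I_0$ chosen so that $|H|=r-1$, and with $J=I\cup\{a\}$ minus an element to be fixed. The choice is arranged so that the term containing $\Delta_I$ carries the coefficient $\Delta_{I_0}=1$, and every other term is a product of coordinates with smaller $d$. Cleanest is $H=I_0\setminus\{a\}$ and $J=I\cup\{a\}$. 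Then the $i=a$ term is $\pm\Delta_{I_0}\Delta_I$, and every other term is $\pm\Delta_{(I_0\setminus\{a\})\cup\{i\}}\Delta_{I\cup\{a\}\setminus\{i\}}$ with $i\in I$. Here $d=1$ in the first factor and $d(I)-1$ in the second factor when $i\notin I_0$; when $i\in I_0\setminus\{a\}$ the first factor vanishes. This expresses $\Delta_I$, and equally $\Delta'_I$, as the same polynomial in coordinates of lower $d$, which agree by induction. Hence $\Delta=\Delta'$ and $v$ is decomposable.

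The main obstacle is the sign bookkeeping. One must check that the shuffle-sign convention of Definition~\ref{def:shuffle_sign} makes the Laplace signs in the necessity argument match exactly, and that the isolated term in the induction indeed has coefficient $\pm\Delta_{I_0}$ rather than something that could vanish. I would handle this by working throughout with the alternating extension $\Delta_{(i,H)}$, verifying the sign identity $\epsilon_{i,J\setminus\{i\}}=(-1)^{\mathrm{pos}_J(i)-1}$ once, and treating the rest as formal.
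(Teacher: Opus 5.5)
Your proof is correct. The paper gives no proof of Proposition~\ref{prop:plucker-relations}; it quotes it as classical and points to Shafarevich (Chapter~1) and Harris (Lecture~6).

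\textbf{The intrinsic route.} The standard argument in such references avoids coordinates. A nonzero $\Lambda\in\bigwedge^r\mathbb{R}^n$ is decomposable if and only if $\{u : u\wedge\Lambda=0\}$ has dimension $r$. The quadratic relations are then the coordinates of $(\iota_\alpha\Lambda)\wedge\Lambda=0$, where $\alpha$ runs over $(r-1)$-covectors. Taking $\alpha=e_H^*$ and reading off the coefficient of $e_J$ gives exactly the $(H,J)$ relation.

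\textbf{Your route.} You argue in coordinates instead.
\begin{enumerate}
\item[(i)] Necessity: the $(r+1)\times(r+1)$ determinant vanishes because its first row is a fixed combination of the remaining rows; Laplace expansion along that row then gives the relation. Your sign checks $\epsilon_{i,H}\Delta_{H\cup\{i\}}=\Delta_{(i,H)}$ and $\epsilon_{i,J\setminus\{i\}}=(-1)^{\operatorname{pos}_J(i)-1}$ are correct.
\item[(ii)] Sufficiency: you build an explicit basis matrix on the chart $\Delta_{I_0}=1$. You then use the relation with $H=I_0\setminus\{a\}$, $J=I\cup\{a\}$. In it $\Delta_I$ appears with coefficient $\pm\Delta_{I_0}$. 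Every other term is a product of a coordinate at distance $1$ from $I_0$ and one at distance $|I\setminus I_0|-1$. So the induction closes, with identical recursions for $\Delta$ and $\Delta'$.
\end{enumerate}

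\textbf{What each buys.} The intrinsic route is basis- and chart-free. Yours is self-contained and constructive: it outputs a basis of the plane. It also shows in passing that, on the chart $\Delta_{I_0}\neq0$, the relations with $H\subset I_0$ already imply decomposability.

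\textbf{Two cosmetic points.} First, delete the abandoned first attempt involving $b$ and $H=I_0\cup\{b\}\setminus\{a\}$; only the final choice is used. Second, the relabeling $I_0=\{1,\dots,r\}$ needs either a one-line justification or avoiding altogether. The justification: a permutation of the basis preserves decomposability and changes each relation only by an overall sign, since the relation is alternating in the ordering of $J$ and of $H$. To avoid it, set $Y^{I_0}=\mathrm{Id}$ with the columns of $I_0$ in increasing order.
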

 
A good introduction to Pl\"ucker relations can be found in Chapter 1 of \cite{shafarevich1994basic} and Lecture 6 of \cite{harris1992algebraic}.

\begin{definition}
Let $x$ be a central configuration of dimension $k$. Its image under the map $\Pi$ is given by
\[
\Pi(x) = \sum_{I \in I_{k+1}} \Delta_I e_I \in \mathbb{P}\left( \bigwedge^{k+1} \mathbb{R}^n \right).
\]
The scalars $\Delta_I$ are called the \emph{geometric Pl\"ucker coordinates} of the configuration $x$.
\end{definition}

We now present a simple yet useful characterization of the Pl\"ucker coordinates of a central configuration.

\begin{proposition}\label{simple}
Let $x$ be a central configuration of dimension $k$. Then the Pl\"ucker coordinates of $x$ satisfy
$$\Delta_I = \omega(x_I),$$
where $\omega(x_I)$ denotes the determinant of the matrix $X_I$ formed by the column vectors indexed by $I$.
\end{proposition}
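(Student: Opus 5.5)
The plan is to compute the coordinates of the wedge product $f_1\wedge\cdots\wedge f_{k+1}$ in the basis $\{e_I\}_{I\in I_{k+1}}$ directly, by multilinear expansion. Here $f_1,\dots,f_{k+1}$ are the rows of the configuration matrix $X$. Write $f_a=\sum_{j=1}^n X_{aj}e_j$, with $X_{1j}=1$ and $X_{a+1,j}=x_{ja}$. Multilinearity then gives
\[
f_1\wedge\cdots\wedge f_{k+1}=\sum_{(j_1,\dots,j_{k+1})}X_{1j_1}\cdots X_{k+1,j_{k+1}}\,e_{j_1}\wedge\cdots\wedge e_{j_{k+1}} .
\]
Terms with a repeated index vanish by antisymmetry. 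Every other tuple is a permutation $\sigma$ of a unique increasing multi-index $I=\{i_1<\cdots<i_{k+1}\}$, and for it $e_{j_1}\wedge\cdots\wedge e_{j_{k+1}}=\operatorname{sgn}(\sigma)\,e_I$.

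Grouping the terms by $I$, the coefficient of $e_I$ is $\sum_{\sigma\in\mathfrak S_{k+1}}\operatorname{sgn}(\sigma)\prod_a X_{a,i_{\sigma(a)}}$. By the Leibniz formula this is exactly the determinant of the $(k+1)\times(k+1)$ submatrix of $X$ whose columns are indexed by $I$, taken in increasing order. That submatrix is the configuration matrix $X_I$ of the subconfiguration $x_I$, so the coefficient equals $\omega(x_I)$. Hence $\Pi(x)=\sum_I\omega(x_I)e_I$, which is the claim $\Delta_I=\omega(x_I)$.

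Two points remain. First, $\Pi(x)$ lives in projective space, so $\Delta_I$ is defined only up to a common nonzero scalar. I would state the identity for the representative $f_1\wedge\cdots\wedge f_{k+1}$ built from the rows of $X$ itself. Another choice of basis of $V_x$ multiplies every $\omega(x_I)$ by the same determinant, so the projective point is unchanged. Second, since $\operatorname{rank}X=k+1$, some $\omega(x_I)\neq0$, so this representative is a genuine nonzero vector. There is no real obstacle here. The only care needed is bookkeeping: checking that the column order of $X_I$ is the increasing order of $I$, to match the sign convention of $e_I$, and that this agrees with the extension $\Delta_{\sigma(I)}=\operatorname{sgn}(\sigma)\Delta_I$.
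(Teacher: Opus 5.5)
Your proof is correct and takes the same route as the paper. The paper's proof identifies $\Delta_I$ with the maximal minor of $X$ on the rows $1,\dots,k+1$ and columns $i_1<\cdots<i_{k+1}$, and your multilinear expansion with the Leibniz formula supplies the standard justification for that identification, which the paper takes for granted.
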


\begin{proof}
 Note that $\Delta_{I}$, where $I=\{i_1,...,i_{k+1}\} \subset \{1,...,n\}$ and $i_1<...<i_{k+1}$, is exactly the minor of order $k+1$ that corresponds to the rows $1,2,...,k+1$ and the columns $i_1,...,i_{k+1}$ of $X$. Hence, $\Delta_I=\omega(x_{I}).$   
\end{proof}
 
\begin{corollary} 
The projective point $\Pi(x)$ associated to a central configuration $x$ satisfies the Pl\"ucker relations. 
\end{corollary}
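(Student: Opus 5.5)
The corollary follows by combining Proposition~\ref{simple} with the ``only if'' direction of Proposition~\ref{prop:plucker-relations}. The plan is to show that $\Pi(x)$ is decomposable in $\bigwedge^{k+1}\mathbb{R}^n$; the relations are then immediate.

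First, fix a representative $x$ of dimension $k$ with $x_i\in\mathbb{R}^k$, and let $f_1,\dots,f_{k+1}\in\mathbb{R}^n$ be the rows of the configuration matrix $X$. Since $\delta(x)=k$, the matrix $X$ has rank $k+1$, so $f_1\wedge\dots\wedge f_{k+1}\neq0$ and defines a point of the projective space. This point is decomposable by definition, being a wedge of $k+1$ vectors.

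Second, expand this wedge in the basis $\{e_I\}_{I\in I_{k+1}}$. Write $f_a=\sum_i X_{ai}e_i$ and use multilinearity and alternation. The coefficient of $e_I$ is the $(k+1)$-minor of $X$ on rows $1,\dots,k+1$ and columns $I$, which by Proposition~\ref{simple} equals $\Delta_I=\omega(x_I)$. Hence $\Pi(x)=\sum_I\Delta_Ie_I$ is exactly the decomposable vector $f_1\wedge\cdots\wedge f_{k+1}$.

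Third, apply Proposition~\ref{prop:plucker-relations} with $r=k+1$. Decomposability implies
\[
\sum_{i\in J\setminus H}\epsilon_{i,H}\,\epsilon_{i,J\setminus\{i\}}\,\Delta_{H\cup\{i\}}\Delta_{J\setminus\{i\}}=0
\]
for all $H,J$ with $|H|=k$ and $|J|=k+2$. These relations are homogeneous quadrics, so they hold for the projective class regardless of the choice of representative or the rescaling used in defining $\Pi$.

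The only point needing care is matching signs in the second step. The coordinates $\Delta_H\cup\{i\}$ must be read with the increasing-order convention, and the shuffle signs must be consistent with the convention used for $\Delta$ with unordered indices. I would check this against the Laplace expansion that yields \eqref{P2} in the case $n=5$, $k=2$. Beyond this bookkeeping, there is no real obstacle.
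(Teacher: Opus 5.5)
Your proposal is correct and is the paper's argument: the paper gives no separate proof because the corollary follows at once from Proposition~\ref{simple} and the forward direction of Proposition~\ref{prop:plucker-relations}, since $\Pi(x)=f_1\wedge\cdots\wedge f_{k+1}$ is decomposable by construction. The sign bookkeeping you flag is not actually needed: both the expansion of the wedge and the relations as stated in Proposition~\ref{prop:plucker-relations} use only increasing multi-indices, with the shuffle signs written out explicitly.
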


The following classical fact relates the Pl\"ucker coordinates of a subspace to those of its orthogonal complement; it yields an alternative route to Propositions~\ref{dziowill} and~\ref{prop:7.2-wc} (Remark~\ref{rem:hodge-alt}).

\begin{lemma}[Dual Pl\"ucker coordinates of the orthogonal complement]\label{lem:hodge-dual}
Let $W\subset\mathbb{R}^n$ be a subspace of dimension $n-t$, let $Y$ be an $(n-t)\times n$ matrix whose rows form a basis of $W$, and write $\Delta_K(W)=\det(Y_{\cdot,K})$, $K\in I_{n-t}$, for the Pl\"ucker coordinates of $W$. Then, for any choice of basis matrices, the tuple of Pl\"ucker coordinates $\big(\Delta_I(W^{\perp})\big)_{I\in I_t}$ of the orthogonal complement is proportional, by a nonzero factor, to the tuple
\[
\big(\epsilon_{I,I^c}\,\Delta_{I^c}(W)\big)_{I\in I_t}.
\]
\end{lemma}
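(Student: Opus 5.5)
The approach is to reduce to a convenient choice of bases and then compute both Plücker vectors explicitly. First I would check that the statement is basis-independent. Changing the basis of $W$ multiplies every $\Delta_K(W)$ by the determinant of the change-of-basis matrix, and the same holds for $W^\perp$. So it suffices to prove the proportionality for one particular pair of basis matrices.

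After a permutation of coordinates, I would choose $Y$ in reduced form. Pick $K_0\in I_{n-t}$ with $\Delta_{K_0}(W)\neq0$. Then the rows of $Y$ can be taken as $(\mathrm{Id}_{n-t}\mid A)$, after reordering the columns so that $K_0$ comes first, with $A$ an $(n-t)\times t$ matrix. The orthogonal complement $W^\perp$ then has the explicit basis given by the rows of $(-A^{T}\mid \mathrm{Id}_t)$ in the same column order. Indeed, $(\mathrm{Id}\mid A)(-A^T\mid \mathrm{Id})^{T}=-A+A=0$, and the dimensions match.

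The core is the classical complementary-minor identity for these two matrices. For $I\in I_t$ with complement $I^c$, the maximal minor of $(-A^T\mid\mathrm{Id}_t)$ on the columns $I$ and the maximal minor of $(\mathrm{Id}\mid A)$ on the columns $I^c$ are both, up to sign, the same minor of $A$. That minor has rows indexed by $K_0\setminus I^c$, equivalently $I\cap K_0$, and columns indexed by $I\setminus K_0$. This follows by expanding each determinant along its identity columns (Laplace expansion). I would then track the signs, which are the main obstacle. The claim to verify is that
\[
\Delta_I(W^\perp)=\pm\,\epsilon_{I,I^c}\,\Delta_{I^c}(W),
\]
where the $\pm$ is a global sign, independent of $I$, coming from the reordering of columns by $K_0$ and from the factor $(-1)$ on $A^T$.

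For the sign bookkeeping I would not track each term by hand. Instead, I would use the wedge formulation. Let $\omega_W=y_1\wedge\cdots\wedge y_{n-t}$ and $\omega_{W^\perp}=u_1\wedge\dots\wedge u_t$. The Hodge star $\star:\bigwedge^{n-t}\mathbb{R}^n\to\bigwedge^t\mathbb{R}^n$ sends $e_{I^c}$ to $\pm e_I$, with sign $\epsilon_{I,I^c}$ up to a global convention, because $e_I\wedge e_{I^c}=\epsilon_{I,I^c}e_{1\cdots n}$. It therefore suffices to show that $\star\omega_W$ is proportional to $\omega_{W^\perp}$. For this I would choose an orthonormal basis adapted to $W\oplus W^\perp$. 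In that basis $\omega_W$ is a multiple of $f_1\wedge\cdots\wedge f_{n-t}$, whose Hodge star is $\pm f_{n-t+1}\wedge\cdots\wedge f_n$, a multiple of $\omega_{W^\perp}$. Since $\star$ is $SO(n)$-equivariant, this also holds in the standard basis. Reading off coefficients with respect to $e_I$ then yields the tuple $(\epsilon_{I,I^c}\Delta_{I^c}(W))_I$. The one point to check carefully is the sign convention $\star e_{K}=\epsilon_{K^c,K}e_{K^c}$ versus $\epsilon_{K,K^c}$. These differ by $(-1)^{t(n-t)}$, which is global, so proportionality is unaffected. Nonvanishing of the factor follows because both tuples are nonzero: $W$ and $W^\perp$ have full-rank basis matrices.
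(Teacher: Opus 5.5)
Your proof is correct and, once you switch to the wedge formulation, it is the paper's argument. Apply $\star$ (with $\star e_K=\epsilon_{K,K^c}e_{K^c}$) to $y_1\wedge\cdots\wedge y_{n-t}=\sum_K\Delta_K(W)e_K$, use an orthonormal basis adapted to $W\oplus W^{\perp}$ to see that the result spans $\bigwedge^t(W^{\perp})$, and absorb the global sign $(-1)^{t(n-t)}$ into the proportionality factor; the preliminary reduction to $(\mathrm{Id}_{n-t}\mid A)$ and $(-A^{T}\mid\mathrm{Id}_t)$ is never completed, since you do not finish its sign bookkeeping, and can simply be dropped.
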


\begin{proof}
With respect to the standard inner product and orientation of $\mathbb{R}^n$, the Hodge star operator $\star:\bigwedge^{n-t}\mathbb{R}^n\to\bigwedge^{t}\mathbb{R}^n$ acts on the standard basis by $\star\, e_{K} = \epsilon_{K,K^c}\, e_{K^c}$, so that $e_K\wedge\star e_K=e_1\wedge\cdots\wedge e_n$. If $w_1,\dots,w_{n-t}$ is an orthonormal basis of $W$, completed to a positively oriented orthonormal basis $w_1,\dots,w_n$ of $\mathbb{R}^n$, then $\star(w_1\wedge\cdots\wedge w_{n-t})=\pm\, w_{n-t+1}\wedge\cdots\wedge w_n$, which generates the line $\bigwedge^{t}(W^{\perp})$. Since any two bases of $W$ have proportional wedges, applying $\star$ to $y_1\wedge\cdots\wedge y_{n-t}=\sum_{K\in I_{n-t}}\Delta_K(W)\,e_K$, where $y_1,\dots,y_{n-t}$ are the rows of $Y$, produces a generator of $\bigwedge^{t}(W^{\perp})$, namely $\sum_{K}\Delta_K(W)\,\epsilon_{K,K^c}\,e_{K^c}$. Its coordinate on $e_I$ is $\epsilon_{I^c,I}\,\Delta_{I^c}(W)=(-1)^{t(n-t)}\epsilon_{I,I^c}\,\Delta_{I^c}(W)$, and the constant global sign $(-1)^{t(n-t)}$ is absorbed into the proportionality factor. This is the inner-product form of the classical duality between $G(n-t,\mathbb{R}^{n})$ and $G(t,(\mathbb{R}^{n})^{*})$, under which a subspace corresponds to its annihilator and the Pl\"ucker coordinates of the annihilator are the complementary ones up to sign; see \cite[Lecture~6, Example~6.6]{harris1992algebraic}. The standard inner product identifies the dual space with $\mathbb{R}^{n}$ and the annihilator with the orthogonal complement.
\end{proof}

\section{The Brehm--Wintner--Conley Matrix}\label{sec:wc-matrix}

\begin{lemma}\label{lem:eqg}
  Let $x$ be a central configuration of dimension $k$, and let $g_i \in \mathbb{R}^{k+1}$ denote the $i$-th column of its configuration matrix $X$, that is, $g_i = (1, x_{i_1}, \dots, x_{i_k})^T$. Then 
\begin{equation}\label{eqg}
\sum_{i = 1}^n m_i s_{ij}g_i = 0, \quad j = 1, \dots, n.
\end{equation} 
\end{lemma}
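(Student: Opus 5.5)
The identity \eqref{eqg} is a vector identity in $\mathbb{R}^{k+1}$, so I will check it one coordinate at a time. The first coordinate of every $g_i$ equals $1$; the remaining $k$ coordinates are those of the positions $x_i$. I will first reduce to the setting of Lemma~\ref{lem:shape-equations}. As in the construction of $X$, I may assume $x_i\in\mathbb{R}^k$, and I translate the configuration so that its center of mass lies at the origin. Translations and isometries leave the mutual distances $r_{ij}$ unchanged, and they also leave the multiplier $\lambda$ unchanged, since \eqref{eq:cc-def} involves only differences $x_i-x_j$ and $x_j-c$. Hence the shape variables $s_{ij}$ are unaffected by this normalization, and I take the standing convention that the configuration matrix is built from such a centered representative.

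\textbf{First coordinate.} Here the $j$-th equation reads
\[
\sum_{i=1}^n m_i s_{ij}=0,
\]
which holds by the very definition of the diagonal entry $s_{jj}$ in \eqref{sdef}: the diagonal was chosen precisely so that this column sum vanishes.

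\textbf{Remaining coordinates.} Coordinates $2$ through $k+1$ of $\sum_i m_i s_{ij} g_i$ together form the vector $\sum_i m_i s_{ij} x_i\in\mathbb{R}^k$. This vector vanishes for every $j$ by the equivalence (i)$\iff$(iii) of Lemma~\ref{lem:shape-equations}, that is, by equation \eqref{eqccshape}, which applies because the configuration is central and centered at the origin.

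Combining the two parts gives \eqref{eqg}. The only subtle point is whether the centered representative is needed. In fact the statement holds for any translate $x_i+v$, because
\[
\sum_i m_i s_{ij}(x_i+v)=\sum_i m_i s_{ij}x_i+\Big(\sum_i m_i s_{ij}\Big)v,
\]
and the second term vanishes by the first-coordinate identity. So the result does not depend on the choice of translate, and I will note this in the proof.
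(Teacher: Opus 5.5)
Your proof is correct and is essentially the paper's argument, which is the one-line observation that the first coordinate vanishes by the definition of $s_{jj}$ in \eqref{sdef} and the remaining coordinates by \eqref{eqccshape}. Your extra remark that the identity is independent of the chosen translate (via $\sum_i m_i s_{ij}=0$) is correct; the paper leaves it implicit.
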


\begin{proof}
   This follows immediately from equations \eqref{eqccshape} and \eqref{sdef}.
\end{proof}

\begin{proposition}\label{prop:geometric_S}
  Let $x$ be a central configuration of dimension $\delta(x)=k$. Consider a set $I \in I_k$ and any $j \in \{1, \dots, n\}$. We have

\begin{equation}\label{eqSP}
 \sum_{l\not\in I} \epsilon_{l, I} m_l s_{jl}\Delta_{I \cup \{l\}}=0,
 \end{equation}
 where $\epsilon_{l, I}$ is the shuffle sign defined in Definition \ref{def:shuffle_sign}, and the quantities $\Delta_{I \cup \{l\}}$ are the geometric Pl\"ucker coordinates associated to the configuration $x$.
\end{proposition}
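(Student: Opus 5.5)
We fix $j$ and treat \eqref{eqg} as a single vector relation in $\mathbb{R}^{k+1}$:
\[
\sum_{l=1}^n m_l s_{lj}\, g_l = 0 .
\]
The idea is to wedge this relation with the $k$ columns $g_{i_1},\dots,g_{i_k}$ indexed by $I=\{i_1<\cdots<i_k\}$. Concretely, we apply the multilinear, alternating map $v\mapsto \det(v,g_{i_1},\dots,g_{i_k})$, which is linear in $v\in\mathbb{R}^{k+1}$, to the relation above. This gives
\[
\sum_{l=1}^n m_l s_{lj}\,\det(g_l,g_{i_1},\dots,g_{i_k})=0 .
\]

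The first step is to discard the terms with $l\in I$. For such $l$ the column $g_l$ appears twice in the determinant, so the determinant vanishes. Only the indices $l\notin I$ survive.

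The second step is the sign bookkeeping, which is the only real point of care. For $l\notin I$, the matrix $(g_l,g_{i_1},\dots,g_{i_k})$ has its columns in the concatenated order $(\{l\},I)$. Reordering them increasingly gives the matrix $X_{I\cup\{l\}}$. By Definition~\ref{def:shuffle_sign}, the permutation doing this has signature $\epsilon_{l,I}$. Proposition~\ref{simple} then gives
\[
\det(g_l,g_{i_1},\dots,g_{i_k})=\epsilon_{l,I}\,\omega(x_{I\cup\{l\}})=\epsilon_{l,I}\,\Delta_{I\cup\{l\}} .
\]
One has to check that the shuffle sign's convention (reordering $(I,J)$ into $U$) matches the direction of the column permutation. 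Since a permutation and its inverse have the same signature, any direction works.

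Finally, the shape variables are symmetric: $s_{lj}=s_{jl}$ because $r_{lj}=r_{jl}$ (and this is trivially true when $l=j$). Substituting gives exactly \eqref{eqSP}.

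The main obstacle is only the consistent handling of signs and of the case $j\in I$. In that case the term $l=j$ is absent from the sum anyway, and if $j\notin I$ the diagonal entry $s_{jj}$ enters legitimately, with no special treatment needed. The rest is immediate from Lemma~\ref{lem:eqg}.
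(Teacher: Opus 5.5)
Your proof is correct and is essentially the paper's argument. The paper wedges \eqref{eqg} with $v_I=g_{i_1}\wedge\cdots\wedge g_{i_k}$ in $\bigwedge^{k+1}\mathbb{R}^{k+1}$, discards the terms with $l\in I$, and identifies $g_l\wedge v_I$ with $\epsilon_{l,I}\Delta_{I\cup\{l\}}$ via Proposition~\ref{simple}; your functional $v\mapsto\det(v,g_{i_1},\dots,g_{i_k})$ is just the scalar form of that top-degree wedge, and your explicit use of $s_{lj}=s_{jl}$ and your check of the shuffle-sign direction spell out points the paper leaves implicit.
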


\begin{proof}
Define the wedge vector
$$v_{I} = g_{i_1} \wedge \cdots \wedge g_{i_k} \in \bigwedge^{k} \mathbb{R}^{k+1}.$$
Taking the wedge product of both sides of equation \eqref{eqg} with $v_I$, we obtain
$$\sum_{i=1}^{n} m_i s_{ij}  g_i \wedge v_I = 0 \in \bigwedge^{k+1} \mathbb{R}^{k+1}.$$
If $l \in I$, then $g_i \wedge v_I=0$. Otherwise, by Proposition \ref{simple}, each term $g_l \wedge v_I$ can be expressed as $\epsilon_{l, I}\Delta_{I \cup \{l\}}$. This yields the result.
\end{proof}

We now derive broader determinantal equations by leveraging the Brehm--Wintner--Conley matrix. We work with a formulation that isolates the mass vector. The main reference for this discussion is \cite{albouy2024limit}.

 \begin{definition}
The \emph{Brehm--Wintner--Conley matrix} (BWC matrix) is defined as
\[
\Gamma = Z - \lambda \, \mathrm{Id},
\]
where $\lambda$ is the scalar introduced in Definition \ref{defcc}, $\mathrm{Id}$ denotes the $n \times n$ identity matrix, and $Z$ is given by
$$Z = \begin{pmatrix}
\sigma_1 & -m_1r_{12}^{2a} & \cdots & -m_1r_{1n}^{2a} \\
-m_{2}r_{12}^{2a} & \sigma_2 & \cdots & -m_{2}r_{2n}^{2a} \\
\vdots & \vdots & \ddots & \vdots \\
-m_{n}r_{1n}^{2a} & -m_{n}r_{2n}^{2a} & \cdots & \sigma_n
\end{pmatrix}, \qquad \sigma_{i}=\sum_{j\neq i}m_{j}\,r_{ij}^{2a}.$$
\end{definition}

In the Newtonian case, this matrix goes back to Brehm's 1908 thesis \cite[equations~(14)--(17)]{brehm1908partikulare}; Wintner presented it in \S 356 of \cite{wintner2014analytical}. We follow the terminology of Albouy and Sun \cite{albouysun2025}. A simple computation shows that if $x=(x_1,...,x_n) \in (\mathbb{R}^{k})^n$ is a central configuration, then $x\Gamma=0$.

The motivation for defining the \emph{shifted Brehm--Wintner--Conley matrix} is to obtain a formulation in which the constant vector $(1,1,\dots,1)$ lies in the left kernel of the matrix. A \emph{disposition} is defined as a configuration of $n$ bodies on the real line whose center of mass is zero. The space $\mathcal{D}_{m}$ of all such vectors forms an $(n-1)$-dimensional vector subspace of $\mathbb{R}^n$. Alternatively, following Albouy and Chenciner \cite{albouy1997probleme} and \cite{albouy2003paper}, we define the disposition space $\mathcal{D}=\mathbb{R}^{n}/\langle(1,...,1)\rangle$. The space of dispositions $\mathcal{D}$ allows us to work with configurations $x_1,...,x_n \in \mathbb{R}^n$ up to translations. 

Following Albouy and Fernandes \cite{albouy2024limit}, we define the matrix
\[
\mathcal{J} = \frac{1}{M}
\begin{pmatrix}
M - m_1 & -m_1 & \cdots & -m_1 \\
-m_2 & M - m_2 & \cdots & -m_2 \\
\vdots & \vdots & \ddots & \vdots \\
-m_n & -m_n & \cdots & M - m_n
\end{pmatrix},
\quad \text{where } M = \sum_{i=1}^n m_i.
\]

\begin{definition}
The \emph{shifted Brehm--Wintner--Conley matrix} of a central configuration with positive masses is defined as $\check{Z} = Z - \lambda \mathcal{J}$.
\end{definition}

Note that in the classical definition, Wintner applied the shift $Z - \lambda \mathrm{Id}$. In contrast, our definition uses the shift $Z - \lambda \mathcal{J}$. Since the matrix $Z$ induces a linear transformation $Z_{m}: \mathcal{D} \rightarrow \mathcal{D}_{m}$ given by $Z_{m}(x)=x Z$, the choice of the matrix $\mathcal{J}$ is justified by the properties $(1, \dots, 1) \mathcal{J} = 0$ and $x \mathcal{J} = x$ for all $x \in \mathcal{D}_m$. The shift works well since $\mathcal{J}$ acts as the identity in $\mathcal{D}_m$ and $\check{Z}_{m}(x)=x(Z-\lambda\mathcal{J})$ is well defined. 

In order to separate the explicit dependence of the masses from $\check{Z}$, we consider the diagonal mass matrix $\mu = \operatorname{diag}(m_1, \dots, m_n)$. Factoring out $\mu$ and recalling that $r_{0}^{2a}=M^{-1}\lambda$ (Definition~\ref{def:shape-variables}), we get
\begin{equation} \label{eq:5.3}
 \check{Z}= -\mu S,   
\end{equation}
where
$$S=\begin{pmatrix}
s_{11}&s_{12}&\cdots&s_{1n}\\
s_{12}&s_{22}&\cdots&s_{2n}\\
\vdots&\vdots&\ddots&\vdots\\
s_{1n}&s_{2n}&\cdots&s_{nn}
\end{pmatrix}.$$

We call $S$ the \emph{normalized shifted Brehm--Wintner--Conley matrix}. Note that $S$ is symmetric and that its entries are precisely the shape variables $s_{ij}$. 

A fundamental property is that $x$ is a central configuration if and only if $X \check{Z} = 0$. Substituting $\check{Z} = -\mu S$, this condition becomes $X \mu S = 0$. Transposing this equation and using the symmetry of $S$, we have that the columns of $\mu X^T$ reside entirely within the right kernel of $S$. This structural constraint allows us to fix the rank of $S$.

In the following proposition, we employ an alternative approach to the rank of the matrix $S$: we use equation \eqref{eqSP} to describe explicitly the kernel of specific submatrices of $S$ in terms of the Pl\"ucker coordinates $\Pi(x)$. Although this approach is more involved, it allows us, as we will see in Sections~\ref{sec:williams5} and~\ref{sec:williamsk}, to generalize the equations obtained by Williams in \cite{williams1938permanent}.

\begin{proposition} \label{S}
 Let $x$ be a central configuration of $n$ bodies with dimension $k$. Set $t := n-k-1$; as we will see in Section~\ref{sec:pwc}, this is the generic value of the Brehm--Wintner--Conley dimension of $x$ (Definition~\ref{def:wc-dimension}). Let $I, J \in I_{t+1}$. Let $L = \{1, \dots, n\} \setminus J = \{l_1, \dots, l_k\}$ be the complement of $J$, with $l_1 < \dots < l_k$.

The vector $v_J \in \mathbb{R}^{t+1}$ defined by
\begin{equation}\label{KerS}
(v_J)_p = \epsilon_{j_p,L} m_{j_p}\Delta_{L \cup \{j_p\}}
\end{equation}
belongs to the kernel of the submatrix $S_I^J$. That is,
$$ S_I^J \, v_J = 0. $$
In particular, if $v_J \neq 0$, then $\det(S_I^J)=0$ for every $I \in I_{t+1}$.
\end{proposition}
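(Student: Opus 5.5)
The plan is to read the statement directly off Proposition~\ref{prop:geometric_S}, applied with the multi-index there taken to be the complement $L$ of $J$.

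\textbf{Size of $L$.} First I check that $L$ has the right size. Since $|J|=t+1=n-k$, its complement $L=\{1,\dots,n\}\setminus J$ has exactly $k$ elements, so $L\in I_k$. This is precisely the length required of the multi-index in Proposition~\ref{prop:geometric_S}.

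\textbf{Computing a row of $S_I^J v_J$.} Next I fix a row index $i\in I$ and write out the $i$-th entry of $S_I^J v_J$:
\[
\big(S_I^J v_J\big)_i=\sum_{p=1}^{t+1} s_{i j_p}\,\epsilon_{j_p,L}\, m_{j_p}\,\Delta_{L\cup\{j_p\}} .
\]
The indices $l\notin L$ are exactly the elements $j_1,\dots,j_{t+1}$ of $J$. Using the symmetry $s_{ij_p}=s_{j_p i}$ of $S$, this sum is literally the left-hand side of \eqref{eqSP}, with the multi-index there taken to be $L$ and the free index there taken to be $i$:
\[
\sum_{l\notin L}\epsilon_{l,L}\,m_l\,s_{il}\,\Delta_{L\cup\{l\}} .
\]
Proposition~\ref{prop:geometric_S} says this vanishes. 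Since $i\in I$ was arbitrary, $S_I^J v_J=0$.

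\textbf{The determinant statement.} The matrix $S_I^J$ is square of size $t+1$. If $v_J\neq0$, it is a nonzero kernel vector of $S_I^J$, so $\det(S_I^J)=0$. The vector $v_J$ depends only on $J$ and not on $I$, so the conclusion holds for every $I\in I_{t+1}$ simultaneously.

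\textbf{Main obstacle.} The only real point of care is bookkeeping of signs. One must confirm that the shuffle sign $\epsilon_{j_p,L}$ in \eqref{KerS} is the same sign $\epsilon_{l,L}$ produced in the proof of Proposition~\ref{prop:geometric_S}, where $g_l\wedge v_L=\epsilon_{l,L}\Delta_{L\cup\{l\}}$. Since both come from Definition~\ref{def:shuffle_sign} with the same ordering convention, they agree. No further computation is needed.
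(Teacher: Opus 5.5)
Your proposal is correct and follows the paper's proof: both apply \eqref{eqSP} with the $k$-element multi-index $L=J^c$ and each row index $i\in I$, noting that the sum over $l\notin L$ runs exactly over $J$ with the sign $\epsilon_{j_p,L}$ and mass $m_{j_p}$ of $v_J$. Your appeal to the symmetry of $S$ is harmless but unnecessary, since \eqref{eqSP} already carries $s_{jl}$ with the free index first; the paper's extra paragraph showing that $v_J\neq0$ exactly when the bodies indexed by $L$ are affinely independent is supplementary and not required by the statement.
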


\begin{proof}
Applying equation~\eqref{eqSP} with the index set $L$ and each row index $i\in I$ shows that $v_J$ belongs to the kernel of $S_{I}^{J}$: the $p$-th coordinate of $v_J$ carries exactly the sign and mass factors of \eqref{eqSP}.

Moreover, $v_J\neq0$ if and only if the $k$ columns of $X$ indexed by $L$ are linearly independent, that is, if and only if the $k$ bodies $x_l$, $l\in L$, are affinely independent. Indeed, if these columns are independent then, since $\operatorname{rank}(X)=k+1$, some column of $X$ indexed by $j\notin L$ lies outside their span, and the corresponding coordinate $\Delta_{L\cup\{j\}}$ of $v_J$ is nonzero; conversely, if they are dependent, then every minor $\Delta_{L\cup\{j\}}$ vanishes. Since $\operatorname{rank}(X)=k+1$, index sets $J$ with $v_J\neq0$ always exist. In the planar five-body case ($k=2$), $v_J\neq0$ for every $J$, since any two distinct bodies are affinely independent.
\end{proof}

\section{Generalization of Williams' Formulae for Planar Central Configurations of the Five-Body Problem}\label{sec:williams5}

From Proposition \ref{S} we derive Williams' formulae, which first appeared in \cite{williams1938permanent} and were recently revisited by Chen and Hsiao \cite{chen2018strictly}. 

If $k$ and $l$ are distinct numbers from $1$ to $5$ and $J=\{1,2,3,4,5\} \setminus\{k,l\}$, we define

\begin{equation}
D_{kl} = \epsilon_{\{k,l\}, J} \Delta_J
\end{equation}

\begin{proposition} \label{Wilnew}
Let $x$ be a planar central configuration of the five-body problem. If $i,j \in \{1,2,3,4,5\}$ are distinct and $k_1<k_2<k_3$ are such that $\{k_1,k_2,k_3\}=\{1,2,3,4,5\}\setminus\{i,j\}$, then 
\begin{align}\label{wil}
m_{k_2}(s_{ik_2}s_{jk_{3}}-s_{ik_3}s_{jk_{2}})D_{k_1k_3}-m_{k_1}(s_{ik_1}s_{jk_{3}}-s_{ik_3}s_{jk_{1}})D_{k_2k_3}&=0,\\ \nonumber
m_{k_3}(s_{ik_2}s_{jk_{3}}-s_{ik_3}s_{jk_{2}})D_{k_1k_2}-m_{k_1}(s_{ik_1}s_{jk_{2}}-s_{ik_2}s_{jk_{1}})D_{k_2k_3}&=0,\\ \nonumber
m_{k_3}(s_{ik_1}s_{jk_{3}}-s_{ik_3}s_{jk_{1}})D_{k_1k_2}-m_{k_2}(s_{ik_1}s_{jk_{2}}-s_{ik_2}s_{jk_{1}})D_{k_1k_3}&=0.
\end{align}
\end{proposition}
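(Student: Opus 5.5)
The plan is to apply equation~\eqref{eqSP} of Proposition~\ref{prop:geometric_S} with the roles of the index sets chosen so that the two rows $i,j$ of $S$ appear together. Take $k=2$ and the $2$-element set $I=\{i,j\}$ in \eqref{eqSP}, and let the free row index range over $r\in\{i,j\}$. The sum in \eqref{eqSP} runs over $l\notin\{i,j\}$, that is, over $l\in\{k_1,k_2,k_3\}$. In particular the diagonal entries $s_{ii},s_{jj}$ never enter. We obtain two linear equations
\[
\sum_{p=1}^{3} s_{r k_p}\, w_p = 0,\qquad r\in\{i,j\},\qquad w_p:=\epsilon_{k_p,\{i,j\}}\, m_{k_p}\,\Delta_{\{i,j,k_p\}} .
\]
In other words, the vector $w=(w_1,w_2,w_3)$ lies in the kernel of the $2\times3$ matrix $A=S_{\{i,j\}}^{\{k_1,k_2,k_3\}}$, whose rows we call $a=(s_{ik_p})_p$ and $b=(s_{jk_p})_p$.

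Next I would avoid any case distinction on the rank of $A$ by using the vector cross product in $\mathbb{R}^3$. Set $c=a\times b$. Its coordinates are, up to the standard alternating signs, the three $2\times2$ minors
\[
M_{uv}=s_{ik_u}s_{jk_v}-s_{ik_v}s_{jk_u},
\]
namely $c=(M_{23},-M_{13},M_{12})$. The identity $w\times(a\times b)=a\,(w\cdot b)-b\,(w\cdot a)$, together with $w\cdot a=w\cdot b=0$, gives $w\times c=0$ unconditionally. Its three components are the three relations $w_u c_v-w_v c_u=0$ for $\{u,v\}\subset\{1,2,3\}$, and each of these involves two minors $M$ multiplied by masses and areas $\Delta_{ijk_p}$. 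For instance, the component pairing $w_2$ with $c_1=M_{23}$ and $w_1$ with $c_2=-M_{13}$ yields
\[
m_{k_2}M_{23}\,\epsilon\,\Delta_{ijk_2}\pm m_{k_1}M_{13}\,\epsilon'\,\Delta_{ijk_1}=0 .
\]
This is the shape of the first line of \eqref{wil}. The remaining two lines come from the other two components in the same way.

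It remains to rewrite the areas through the $D$'s. Since $\{k_q,k_r\}$ has complement $\{i,j,k_p\}$ whenever $\{p,q,r\}=\{1,2,3\}$, we have $D_{k_qk_r}=\epsilon_{\{k_q,k_r\},\{i,j,k_p\}}\Delta_{\{i,j,k_p\}}$. Thus $D_{k_1k_3}$ carries $\Delta_{ijk_2}$, $D_{k_2k_3}$ carries $\Delta_{ijk_1}$, and $D_{k_1k_2}$ carries $\Delta_{ijk_3}$. This matches exactly the pairing in \eqref{wil}. In each line, an overall nonzero sign common to both terms may be discarded.

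The main obstacle is this sign bookkeeping. One must check that, in each of the three relations, the product of
\begin{itemize}
\item the shuffle sign $\epsilon_{k_p,\{i,j\}}$,
\item the cross-product sign, and
\item the conversion sign $\epsilon_{\{k_q,k_r\},\{i,j,k_p\}}$
\end{itemize}
produces the relative minus sign displayed in \eqref{wil}, independently of where $i,j$ sit relative to $k_1<k_2<k_3$. I would handle this with formula~\eqref{eq:shuffle_sign}. Both shuffle signs are determined by the positions of $i$ and $j$ among the five indices, and the ratio of the two signs for indices $p$ and $p'$ should reduce to $(-1)^{p-p'}$ times a factor independent of $p$. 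That factor would then cancel against the alternating signs of $c$. If the direct computation gets messy, I would verify the sign pattern on the representative $\{i,j\}=\{1,2\}$ and then check that moving $i$ or $j$ past some $k_u$ flips all three conversion signs simultaneously. Such a simultaneous flip does not affect a homogeneous relation of the form ``term $-$ term $=0$.''
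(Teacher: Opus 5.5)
Your proposal is correct and follows essentially the paper's proof. The paper also applies \eqref{eqSP} with $I=\{i,j\}$ and row index $l\in\{i,j\}$. It computes the product of the insertion sign and the conversion sign as $(-1)^{k_1+k_2+k_3-h-3}$, which is exactly the $(-1)^p$ times a $p$-independent factor you predicted, so the kernel vector is $\pm(m_{k_1}D_{k_2k_3},-m_{k_2}D_{k_1k_3},m_{k_3}D_{k_1k_2})$. It then matches this vector against the cofactor vector of the two rows.

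The only difference is how the proportionality is obtained. The paper splits into the cases where the cofactor vector vanishes and where the kernel is one-dimensional. Your identity $w\times(a\times b)=a(w\cdot b)-b(w\cdot a)=0$ gives the same relations in one unconditional step. In your fallback sign check, note that it is this product of signs, not the conversion signs alone, that flips uniformly when $i$ crosses an adjacent $k_u$.
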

\begin{proof}

By equation \eqref{eqSP}, for fixed $l$ and $I=\{i,j\} \subset \{1,2,3,4,5\}$ and $k_1<k_2<k_3$ such that $K=\{k_1,k_2,k_3\}=\{1,2,3,4,5\}\setminus I$, we have
$$\epsilon_{k_1,I}m_{k_1}s_{lk_1}\Delta_{ I \cup \{k_1\}}+ \epsilon_{k_2, I}m_{k_2}s_{lk_2}\Delta_{I \cup \{k_2\}}+ \epsilon_{k_3, I}m_{k_3}s_{lk_3}\Delta_{I \cup \{k_3\}}=0.$$
By definition of $D_{kl}$ we get

\begin{equation}
\begin{split}
\epsilon_{k_1,I}\epsilon_{\{k_2,k_3\},I\cup\{k_1\}}m_{k_1}s_{lk_1}D_{k_2k_3}+ \epsilon_{k_2, I}\epsilon_{\{k_1,k_3\},I\cup\{k_2\}}m_{k_2}s_{lk_2}D_{k_1k_3}\\
+ \epsilon_{k_3, I}\epsilon_{\{k_1,k_2\},I\cup\{k_3\}}m_{k_3}s_{lk_3}D_{k_1k_2}=0.
\end{split}
\end{equation}

By equation \eqref{eq:shuffle_sign}, $\epsilon_{k_h,I}=(-1)^{k_h-h}$: exactly $k_h-1$ elements of $\{1,\ldots,5\}$ are smaller than $k_h$, of which $h-1$ belong to $K$ (namely $k_1,\ldots,k_{h-1}$), so the remaining $k_h-h$ belong to $I$, which is precisely the shift in position acquired by $k_h$ upon insertion into $I$. Likewise, since $U=(K\setminus\{k_h\})\cup(I\cup\{k_h\})=\{1,\ldots,5\}$ has $\operatorname{pos}_U(x)=x$ for every $x$, we get $\epsilon_{K\setminus\{k_h\},I\cup\{k_h\}}=(-1)^{(k_1+k_2+k_3-k_h)-3}$. Multiplying, the $k_h$ cancels and
$$\epsilon_{k_h, I}\epsilon_{K \setminus \{k_h\},I\cup\{k_h\}}=(-1)^{(k_1+k_2+k_3)-h-3},$$
which, since $k_1,k_2,k_3$ are fixed, alternates in sign as $h$ ranges over $1,2,3$; since the whole expression is set to zero, this alternating pattern may be taken, without loss of generality, to be $+,-,+$. In this way we get the equation

\begin{equation}\label{sss5bp}
    m_{k_1}s_{lk_1}D_{k_2k_3}- m_{k_2}s_{lk_2}D_{k_1k_3}+ m_{k_3}s_{lk_3}D_{k_1k_2}=0.
\end{equation}

Consider the matrix 
$$M_{ij}=\begin{bmatrix}
s_{ik_1}& s_{ik_2}& s_{ik_3} \\
s_{jk_1}& s_{jk_2}& s_{jk_3} \\
0       &0        &0
\end{bmatrix}.$$

By equation \eqref{sss5bp}, applied with $l=i$ and with $l=j$, the vector 
$$(m_{k_1}D_{k_2k_3}, -m_{k_2}D_{k_1k_3}, m_{k_3}D_{k_1k_2})$$
belongs to $\text{Ker}(M_{ij})$. It is an elementary fact of matrix theory that the vector of cofactors with respect to the null row of $M_{ij}$, given by 
$$v_{ij}=(s_{ik_2}s_{jk_{3}}-s_{ik_3}s_{jk_{2}},-(s_{ik_1}s_{jk_{3}}-s_{ik_3}s_{jk_{1}}),s_{ik_1}s_{jk_{2}}-s_{ik_2}s_{jk_{1}})$$
also belongs to $\text{Ker}(M_{ij})$. If the vector $v_{ij}$ is null, the equations \eqref{wil} are trivially satisfied. 
Otherwise, $\text{dim}(\text{Ker}(M_{ij}))=1$, hence the following matrix has rank $1$:
$$\begin{bmatrix}
s_{ik_2}s_{jk_{3}}-s_{ik_3}s_{jk_{2}}&-(s_{ik_1}s_{jk_{3}}-s_{ik_3}s_{jk_{1}})&s_{ik_1}s_{jk_{2}}-s_{ik_2}s_{jk_{1}}\\
m_{k_1}D_{k_2k_3}& -m_{k_2}D_{k_1k_3}& m_{k_3}D_{k_1k_2}
\end{bmatrix}.$$
Equating to zero its three $2\times2$ minors gives exactly equations \eqref{wil}.
\end{proof}

We now compare the equations of Proposition~\ref{Wilnew} with those of Chen and Hsiao. Following the notation of \cite{chen2018strictly}, for $i=1,\dots,5$ we define

\begin{align}
P_i &= s_{i,i+2} s_{i+1,i+3} - s_{i,i+3} s_{i+1,i+2}, \\
Q_i &= s_{i,i+1} s_{i+2,i+3} - s_{i,i+2} s_{i+1,i+3},
\end{align}

where the indices are taken modulo $5$. The comparison requires matching
conventions for the areas. In \cite{chen2018strictly} the symbol $D_{kl}$
denotes the oriented area of the triangle on the three complementary
indices, listed in increasing order, and is strictly positive for a
counterclockwise strictly convex pentagon; in the notation of
Definition~\ref{def:D_dual} this is the \emph{unsigned} complementary minor
$$\widetilde{D}_{kl} := \Delta_{\{1,2,3,4,5\}\setminus\{k,l\}} = (-1)^{k+l+1}\,D_{kl},$$
and it is $\widetilde{D}$, not $D$, that appears in the equations of
\cite{chen2018strictly}. We use it throughout this comparison.

With this notation, the equations of Proposition~\ref{Wilnew} for the pair $(i,i+1)$ are equivalent to the vanishing of the $2\times2$ minors of the matrix
$$\begin{bmatrix}
P_{i+3} & Q_{i+4} & P_i\\
m_{i+2}\widetilde{D}_{i+3,i+4}& m_{i+3}\widetilde{D}_{i+2,i+4}& m_{i+4}\widetilde{D}_{i+2,i+3}
\end{bmatrix},$$
that is, to the equations
\begin{align}
\label{Ch1}  m_{i+2} Q_{i+4} \widetilde{D}_{i+3,i+4}&= m_{i+3} P_{i+3} \widetilde{D}_{i+2,i+4},\\ 
\label{ch2}  m_{i+2} P_{i} \widetilde{D}_{i+3,i+4}& =m_{i+4} P_{i+3} \widetilde{D}_{i+2,i+3}, \\ 
\label{ch3} m_{i+3} P_{i} \widetilde{D}_{i+2,i+4} &= m_{i+4} Q_{i+4} \widetilde{D}_{i+2,i+3}.
\end{align}
 Equations \eqref{Ch1} and \eqref{ch2} are, after the index shift $i\mapsto i+2$, the first two equations of (3.10) in \cite{chen2018strictly}, while the five equations \eqref{ch3} are consequences of \eqref{Ch1} and \eqref{ch2}. The third equation of (3.10), which is quadratic in the areas and free of masses, is obtained there by eliminating the masses between the first two, and so carries no further information.

Proposition~\ref{Wilnew} yields further equations. Define
$$R_{i}=s_{i,i+1}\,s_{i+2,i+3}-s_{i,i+3}\,s_{i+1,i+2}.$$
Then the equations of Proposition~\ref{Wilnew} for the pair $(i,i+2)$, $i=1,\dots,5$, are equivalent to the vanishing of the $2\times2$ minors of
$$\begin{bmatrix}
Q_{i+2} & R_{i+4} & R_i\\
m_{i+1}\widetilde{D}_{i+3,i+4}& m_{i+3}\widetilde{D}_{i+1,i+4}& m_{i+4}\widetilde{D}_{i+1,i+3}
\end{bmatrix},$$
and we obtain fifteen additional equations:

\begin{align}
m_{i+1}R_{i+4}\widetilde{D}_{i+3,i+4}&=m_{i+3}Q_{i+2}\widetilde{D}_{i+1,i+4},\\
 m_{i+1}R_i\widetilde{D}_{i+3,i+4}&=m_{i+4}Q_{i+2}\widetilde{D}_{i+1,i+3},\\
  m_{i+3}R_{i}\widetilde{D}_{i+1,i+4}&=m_{i+4}R_{i+4}\widetilde{D}_{i+1,i+3}.
\end{align}

This set of $30$ equations remains incomplete. In the next section, we derive a system of polynomial equations that complements those obtained by Chen and Hsiao \cite{chen2018strictly}.

\section{Williams' Formulae for \texorpdfstring{$k$}{k}-dimensional Central Configurations}\label{sec:williamsk}

The approach of the previous section applies to central configurations of $n$ bodies and dimension $k$ with $2\leq k \leq n-3$; the corresponding generic Brehm--Wintner--Conley dimension is $t=n-k-1$. We first define the dual Pl\"ucker coordinates of an $n$-body central configuration.

\begin{definition}\label{def:D_dual}
Let $x$ be a central configuration of the $n$-body problem. Let $I \in I_{t}$ be a multi-index and let $J$ be its strictly ordered complement. We define the \emph{dual Pl\"ucker coordinate} $D_I$ as:

\begin{equation}
D_I = \epsilon_{I, J} \Delta_J
\end{equation}
where $\epsilon_{I, J}$ is the shuffle sign of Definition \ref{def:shuffle_sign}.

\end{definition}

\begin{proposition}\label{Wilgen-prop}
Let $x$ be a central configuration of the $n$-body problem with dimension $k$, and let $t=n-k-1$. Let $I \in I_{t}$ and $L \in I_{t+1}$. Then for any distinct $u, v \in L$:

\begin{equation}\label{Wilgen}
 m_{u} \det(S_{I}^{L  \setminus \{v\}})D_{L \setminus \{u\}} =  m_{v} \det(S_{I}^{L \setminus \{u\}}) D_{{L \setminus \{v\}}}.
\end{equation}

\end{proposition}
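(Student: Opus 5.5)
We generalize the proof of Proposition~\ref{Wilnew}. Fix $L=\{l_1<\cdots<l_{t+1}\}$ and let $K=\{1,\dots,n\}\setminus L$, so $|K|=n-t-1=k$. For each row index $i$, we apply equation~\eqref{eqSP} of Proposition~\ref{prop:geometric_S} with the $k$-element set $K$. This gives
\[
\sum_{p=1}^{t+1}\epsilon_{l_p,K}\,m_{l_p}s_{i l_p}\Delta_{K\cup\{l_p\}}=0 .
\]
This is exactly the statement $S_{I'}^{L}w=0$ for every row set $I'$, where $w_p=\epsilon_{l_p,K}m_{l_p}\Delta_{K\cup\{l_p\}}$. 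This is the vector $v_L$ of Proposition~\ref{S}.

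Next we rewrite $w$ using the dual coordinates. Since $K\cup\{l_p\}$ is the complement of $L\setminus\{l_p\}\in I_t$, Definition~\ref{def:D_dual} gives $\Delta_{K\cup\{l_p\}}=\epsilon_{L\setminus\{l_p\},K\cup\{l_p\}}D_{L\setminus\{l_p\}}$. We then compute the product of shuffle signs $\epsilon_{l_p,K}\,\epsilon_{L\setminus\{l_p\},K\cup\{l_p\}}$ with formula~\eqref{eq:shuffle_sign}, exactly as in the five-body case. Positions in $U=\{1,\dots,n\}$ are the elements themselves. The element $l_p$ has $l_p-p$ smaller elements in $K$, so $\epsilon_{l_p,K}=(-1)^{l_p-p}$. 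The second factor is $(-1)^{\sum L-l_p-t(t+1)/2}$. Hence the product equals $(-1)^{\sum L - t(t+1)/2}(-1)^{p}$, which is a global sign times $(-1)^p$. So, up to a nonzero global sign, $w=\big((-1)^p m_{l_p}D_{L\setminus\{l_p\}}\big)_p$. This sign bookkeeping is the only delicate step, and we would verify it carefully against the $n=5$ computation.

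Now take the $(t+1)\times(t+1)$ matrix $M_I$ whose first $t$ rows are $S_I^L$ and whose last row is zero. The cofactor vector along the null row is $c_p=(-1)^{p+t+1}\det(S_I^{L\setminus\{l_p\}})$, and it lies in $\ker M_I$ by Laplace expansion. The vector $w$ also lies in $\ker M_I$.

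If $c=0$, both sides of \eqref{Wilgen} vanish, since every $\det(S_I^{L\setminus\{u\}})$ is zero, and the identity holds trivially. Otherwise $S_I^L$ has rank $t$, so $\ker M_I$ is one-dimensional and $w$ is parallel to $c$. Therefore all $2\times2$ minors of the matrix with rows $c$ and $w$ vanish. For $u=l_p$ and $v=l_q$ this minor reads
\[
(-1)^{p+q+t+1}\Big(\det(S_I^{L\setminus\{u\}})\,m_vD_{L\setminus\{v\}}-\det(S_I^{L\setminus\{v\}})\,m_uD_{L\setminus\{u\}}\Big)=0 ,
\]
which is \eqref{Wilgen}. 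The alternating signs $(-1)^p$ coming from $w$ and from the cofactors cancel pairwise, and this is precisely why the final identity has no sign. The case $w=0$ needs no separate treatment, because the argument above only uses $c\ne0$.
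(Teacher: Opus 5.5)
Your proposal is correct and follows the paper's proof essentially step for step: you apply Proposition~\ref{prop:geometric_S} with the complement $L^c$, rewrite the coefficients in dual coordinates (they alternate as $(-1)^p$ up to a global sign), and compare the resulting kernel vector with the cofactor vector of $S_I^L$ padded by a zero row, using that the kernel is one-dimensional once some $t\times t$ minor is nonzero. Your explicit cofactor sign $(-1)^{p+t+1}$ and your remark that $w=0$ needs no separate case only make the paper's ``up to a common sign'' bookkeeping more explicit.
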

\begin{proof}
The proof parallels that of Proposition~\ref{Wilnew}; the only additional care concerns the more involved index sets. By Proposition~\ref{prop:geometric_S}, applied with $I=L^{c}$, we get, for every $j\in\{1,\dots,n\}$, the equation

\begin{equation*}
 \sum_{l\in L} \epsilon_{l, L^c} m_l s_{jl}\Delta_{L^c \cup \{l\}}=0.
 \end{equation*}
By the definition of $D_I$, this becomes

\begin{equation*}
 \sum_{l\in L} \epsilon_{l, L^c}\,\epsilon_{L \setminus\{l\},\,L^c\cup\{l\}}\, m_l s_{jl}D_{L \setminus \{l\}}=0.
 \end{equation*}
Write $L=\{l_1<l_2<\dots<l_{t+1}\}$. By formula \eqref{eq:shuffle_sign}, $\epsilon_{l_i, L^c}=(-1)^{l_i-i}$, and, since the naturally ordered union of $L\setminus\{l_i\}$ and $L^c\cup\{l_i\}$ is $\{1,\dots,n\}$,
$$\epsilon_{L \setminus \{l_i\},\,L^c\cup\{l_i\}}= (-1)^{\left(\sum_{h \neq i }l_h\right) - \frac{t(t+1)}{2}}.$$
Consequently, the product $\epsilon_{l_i, L^c}\,\epsilon_{L \setminus\{l_i\},\,L^c\cup\{l_i\}}=(-1)^{\left(\sum_{h}l_h\right)-i-\frac{t(t+1)}{2}}$ alternates in sign as $i$ runs from $1$ to $t+1$. Absorbing a global sign, we get the equations 

\begin{equation} \label{eqkernelsimp}
 \sum_{h=1}^{t+1} (-1)^h m_{l_h} s_{jl_{h}}D_{L \setminus \{l_h\}}=0, \qquad j=1,\dots,n.
 \end{equation}

Consider the matrix

$$M^{L}_{I}= \begin{bmatrix}
s_{i_1 l_1} & ...& s_{i_1 l_{t+1}}\\
s_{i_2 l_1}&... & s_{i_2 l_{t+1}}\\
\vdots & \ddots& \vdots \\
s_{i_t l_1}&... & s_{i_t l_{t+1}}\\
0 &...& 0 \\
\end{bmatrix}.$$

With this notation, the vector
\begin{multline*}
v^{L}_{I}=\big(\det(S_{I}^{L\setminus \{l_1\}}),\ -\det(S_{I}^{L\setminus \{l_2\}}),\ \dots,\\
(-1)^{h+1}\det(S_{I}^{L\setminus \{l_h\}}),\ \dots,\ (-1)^{t}\det(S_{I}^{L\setminus \{l_{t+1}\}})\big)
\end{multline*}
belongs to the kernel of $M_{I}^{L}$: its entries are, up to a common sign, the cofactors of the zero row of $M_{I}^{L}$. If $v^{L}_{I}$ is the zero vector, the equations \eqref{Wilgen} hold trivially. Otherwise, some entry of $v^{L}_{I}$ is nonzero, the top $t\times(t+1)$ block of $M_{I}^{L}$ has rank $t$, and $\dim(\operatorname{Ker}(M_{I}^{L}))=1$. By equation \eqref{eqkernelsimp}, applied with $j=i_1,\dots,i_t$, the vector

\begin{multline*}
\tilde{v}_{L}=\big(-m_{l_1} D_{L \setminus \{l_1\}},\ m_{l_2} D_{L \setminus \{l_2\}},\ \dots,\\
(-1)^h m_{l_h} D_{L \setminus \{l_h\}},\ \dots,\ (-1)^{t+1} m_{l_{t+1}} D_{L \setminus \{l_{t+1}\}}\big)
\end{multline*}
also belongs to the kernel of $M_{I}^{L}$.

The two vectors are therefore proportional. Multiplying the $h$-th coordinate of each by $(-1)^{h}$ preserves proportionality, so the matrix
$$\begin{bmatrix}
\det(S_{I}^{L \setminus \{l_1\}})& \dots & \det(S_{I}^{L \setminus \{l_{t+1}\}})\\
m_{l_1}D_{L \setminus\{l_1\}}& \dots& m_{l_{t+1}}D_{L \setminus \{l_{t+1}\}}
\end{bmatrix}$$
has rank $1$; equating its $2\times2$ minors to zero yields exactly equations \eqref{Wilgen}.
\end{proof}

In Proposition~\ref{Wilnew} we only used the matrices $M_{ij}$, in which no diagonal term $s_{ii}$ appears. Removing this restriction yields further equations for planar central configurations of the five-body problem.

\begin{corollary}\label{Wilnew2}
Let $x$ be a planar central configuration of the five-body problem. If $1\leq i < j \leq 5$ and $k_1<k_2<k_3$ are distinct indices in $\{1,2,3,4,5\}$, then
\begin{align}\label{wil2}
m_{k_2}(s_{ik_2}s_{jk_{3}}-s_{ik_3}s_{jk_{2}})D_{k_1k_3}-m_{k_1}(s_{ik_1}s_{jk_{3}}-s_{ik_3}s_{jk_{1}})D_{k_2k_3}&=0,\\ \nonumber
m_{k_3}(s_{ik_2}s_{jk_{3}}-s_{ik_3}s_{jk_{2}})D_{k_1k_2}-m_{k_1}(s_{ik_1}s_{jk_{2}}-s_{ik_2}s_{jk_{1}})D_{k_2k_3}&=0,\\ \nonumber
m_{k_3}(s_{ik_1}s_{jk_{3}}-s_{ik_3}s_{jk_{1}})D_{k_1k_2}-m_{k_2}(s_{ik_1}s_{jk_{2}}-s_{ik_2}s_{jk_{1}})D_{k_1k_3}&=0.
\end{align}
\end{corollary}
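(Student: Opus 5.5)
The plan is to obtain Corollary~\ref{Wilnew2} as the specialization $n=5$, $k=2$, $t=n-k-1=2$ of Proposition~\ref{Wilgen-prop}. The point is that Proposition~\ref{Wilgen-prop} places no restriction tying $I$ to $L$: the row set $I=\{i,j\}\in I_2$ and the column set $L=\{k_1,k_2,k_3\}\in I_3$ are arbitrary, so they may overlap. When they do, diagonal entries $s_{ii}$ or $s_{jj}$ enter the $2\times2$ minors. This is legitimate because equation~\eqref{eqkernelsimp} holds for every row index $j\in\{1,\dots,n\}$, including indices in $L$. The reason is that Lemma~\ref{lem:eqg} holds for all $j$, thanks to the definition of the diagonal $s_{jj}$ in~\eqref{sdef}.

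I would carry this out in three steps.

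First, take $I=\{i,j\}$ and $L=\{k_1<k_2<k_3\}$, and apply \eqref{Wilgen} to the three pairs $(u,v)\in\{(k_1,k_2),(k_1,k_3),(k_2,k_3)\}$. For example, $(u,v)=(k_1,k_2)$ gives
\[
m_{k_1}\det\big(S_I^{\{k_1,k_3\}}\big)D_{k_2k_3}=m_{k_2}\det\big(S_I^{\{k_2,k_3\}}\big)D_{k_1k_3},
\]
where I write $D_{ab}$ for $D_{L\setminus\{u\}}$ with $L\setminus\{u\}=\{a,b\}$.

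Second, match these with \eqref{wil2}, using $\det(S_I^{\{a,b\}})=s_{ia}s_{jb}-s_{ib}s_{ja}$ together with the symmetry of $S$. The displayed identity is exactly the first line of \eqref{wil2}. The pair $(k_1,k_3)$ gives the second line, and $(k_2,k_3)$ gives the third.

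Third, check that the notation agrees. In Proposition~\ref{Wilgen-prop} the symbol $D_{\{a,b\}}$ is the dual Plücker coordinate of Definition~\ref{def:D_dual}, namely $\epsilon_{\{a,b\},J}\Delta_J$ with $J$ the complement of $\{a,b\}$. The Section~\ref{sec:williams5} definition of $D_{kl}$ is the same quantity, so no sign adjustment is needed.

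I expect the only real care to lie in confirming that the proof of Proposition~\ref{Wilgen-prop} never used $I\cap L=\emptyset$. Two points need checking. The first is the cofactor argument: the $t\times(t+1)$ block of $M_I^L$ is just a submatrix of $S$, so overlap is harmless there. The second is the kernel vector $\tilde v_L$, which comes from \eqref{eqkernelsimp} for rows $i_1,\dots,i_t$; this is valid for all rows, as noted above. The degenerate case, where every $2\times2$ minor in the top row vanishes, makes \eqref{wil2} hold trivially, exactly as in the proposition.

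One case falls outside the statement as literally written: the hypothesis allows $i=j$ only if $i<j$ fails, so that case is excluded anyway. If $\{i,j\}\subset L$, nothing changes. So the corollary follows directly, with no separate treatment of overlapping index sets.
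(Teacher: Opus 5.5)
Your proposal is correct and is the paper's own proof: the paper also obtains the corollary as the specialization of Proposition~\ref{Wilgen-prop} to $n=5$, $k=2$, $t=2$, $I=\{i,j\}$, $L=\{k_1,k_2,k_3\}$. Your additional checks are accurate and spell out what the paper calls immediate: the pairs $(k_1,k_2)$, $(k_1,k_3)$, $(k_2,k_3)$ give the three lines, the two definitions of $D_{kl}$ agree, and overlap of $I$ with $L$ is harmless because \eqref{eqkernelsimp} holds for every row index thanks to the diagonal convention in \eqref{sdef}.
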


\begin{proof}
Immediate from Proposition~\ref{Wilgen-prop} with $n=5$, $k=2$, $t=2$, $I=\{i,j\}$ and $L=\{k_1,k_2,k_3\}$.
\end{proof}

Proposition \ref{Wilnew} and Corollary \ref{Wilnew2} have structurally very similar statements. However, Corollary \ref{Wilnew2} yields an overdetermined system of 300 equations, whereas Proposition \ref{Wilnew} yields a much leaner system of only 30 equations. To compare the two systems algebraically, let $R = \mathbb{C}[m_i, D_{kl}, s_{ij}]$ be the polynomial ring in the variables $m_i$, $i=1,\dots,5$, $D_{kl}$, $1 \leq k <l\leq 5$ and $s_{ij}$, $1 \leq i \leq j\leq 5$. Let $I_{Prop} \subset R$ denote the ideal generated by the 30 equations from Proposition \ref{Wilnew}, The equations of Corollary \ref{Wilnew2} also involve the diagonal entries $s_{ii}$; we interpret these through the mass convention $m_i s_{ii} = -\sum_{j\neq i} m_j s_{ij}$ of Section~\ref{sec:wc-matrix} and clear denominators, and we let $I_{Cor} \subset R$ denote the ideal generated by the resulting 300 polynomials. The discrepancy between the two systems is genuine, in the following precise sense.

\begin{proposition}\label{prop:iprop-icor}
The inclusion $I_{Prop}\subseteq I_{Cor}$ is strict, and strict already at
the level of radicals: $\sqrt{I_{Prop}}\subsetneq\sqrt{I_{Cor}}$;
equivalently, $V(I_{Cor})\subsetneq V(I_{Prop})$.
\end{proposition}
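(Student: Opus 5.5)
The plan has two parts: first the inclusion $I_{Prop}\subseteq I_{Cor}$, then an explicit point showing it is strict already at the level of zero sets.

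\emph{The inclusion.} The 30 equations of Proposition~\ref{Wilnew} are the instances of Corollary~\ref{Wilnew2} with $\{i,j\}\cap\{k_1,k_2,k_3\}=\emptyset$. No diagonal entry $s_{ii}$ occurs in those instances, so clearing denominators leaves them unchanged, and they are literally among the 300 generators of $I_{Cor}$. Hence $I_{Prop}\subseteq I_{Cor}$, and therefore $\sqrt{I_{Prop}}\subseteq\sqrt{I_{Cor}}$ and $V(I_{Cor})\subseteq V(I_{Prop})$. By the Nullstellensatz over $\mathbb{C}$, strictness of the radical inclusion is equivalent to $V(I_{Cor})\neq V(I_{Prop})$. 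That in turn implies $I_{Prop}\neq I_{Cor}$. So everything reduces to exhibiting one point of $\mathbb{C}^{\dim R}$ that satisfies the 30 equations but violates some cleared equation of Corollary~\ref{Wilnew2}.

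\emph{Choice of witness.} The 30 equations do not involve the $s_{ii}$, and each of them is bilinear in the products $m_kD_{\cdot\cdot}$ and the $2\times2$ off-diagonal minors. So I would kill all the $D_{kl}$: set $D_{kl}=0$ for every $k<l$. All 30 generators of $I_{Prop}$ then vanish, whatever the masses and the $s_{ij}$.

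A generator of $I_{Cor}$ is also homogeneous of degree one in the $D$'s, because clearing denominators only multiplies by powers of the masses. It therefore vanishes at such a point too. This means the $D=0$ locus cannot serve as a witness, and the witness has to be chosen more carefully.

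\emph{Refined witness.} I would let exactly one $D$ be nonzero, say $D_{45}=1$ and all other $D_{kl}=0$. Every equation of Proposition~\ref{Wilnew} that contains $D_{45}$ has $\{4,5\}\subset K$, so $\{i,j\}\subset\{1,2,3\}$, and it has the form $m\cdot(\text{minor})\cdot D_{45}=0$ (the other term contains a vanishing $D$). The minors involved are
\[
s_{i5}s_{j\,k}-s_{i\,k}s_{j5},\qquad s_{i4}s_{j\,k}-s_{i\,k}s_{j4},
\]
with $\{i,j,k\}=\{1,2,3\}$; here the $2\times2$ minor appearing in the surviving equation is the one with columns $\{k_1,k_2,k_3\}\setminus\{v\}$ for $v\in\{4,5\}$, taken from rows $i,j$. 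These vanish if the $3\times3$ block $s_{\{1,2,3\},\{3,4,5\}}$ has suitable rank-one structure. Concretely, I would take $s_{ab}=\alpha_a\beta_b$ for $a\in\{1,2,3\}$ and $b\in\{1,\dots,5\}$, with $a\neq b$ off the diagonal, which makes every required minor vanish.

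Then I would pick a generator of Corollary~\ref{Wilnew2} with $i\in L$, for instance $I=\{1,4\}$ and $L=\{1,2,3\}$. Its surviving term involves the minor
\[
\det\begin{pmatrix} s_{11} & s_{1k}\\ s_{41} & s_{4k}\end{pmatrix}
\]
multiplied by $D_{\cdot\cdot}$; it must be checked that the $D$ appearing there is $D_{45}$. The diagonal entry $s_{11}$ is the mass expression, and that expression is generically not of rank-one form. Substituting explicit integers with $m_i=1$, I expect the cleared polynomial to be nonzero, and that is what completes the proof.

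\emph{Main obstacle.} The only delicate step is the bookkeeping that makes the surviving generator of $I_{Cor}$ carry the nonzero $D$. If the single-$D$ choice fails, I would instead solve for a random point of $V(I_{Prop})$ by computer and evaluate the 300 cleared polynomials there, certifying non-membership exactly. If one prefers to certify the radical statement directly, the alternative is a Gröbner basis computation with the Rabinowitsch trick.
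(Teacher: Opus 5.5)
Your reduction is correct and matches the paper's. The thirty equations are the instances of Corollary~\ref{Wilnew2} with $L=I^{c}$, so $I_{Prop}\subseteq I_{Cor}$. Strictness at the level of radicals then follows from a single point of $V(I_{Prop})$ at which some generator of $I_{Cor}$ does not vanish, and a point with exactly one nonzero $D$ is the right kind of witness.

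\textbf{The gap.} It is in the final step, the one you left ``to be checked'', and as stated it fails. A generator indexed by $(I,L,u,v)$ involves only $D_{L\setminus\{u\}}$ and $D_{L\setminus\{v\}}$, and both are indexed by subsets of $L$. For $L=\{1,2,3\}$ these lie among $D_{12},D_{13},D_{23}$, all of which are zero at your point. So the generator you chose vanishes there whatever the value of $s_{11}$, and no violated equation has been produced. The computer fallback is a plan, not an argument. The hope that a diagonal entry ``not of rank-one form'' breaks the pattern is also misdirected: what breaks it is an entry outside the block you constrained, such as $s_{45}$.

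\textbf{The repair.} You need $L\supset\{4,5\}$ with $u$ its third element.
\begin{itemize}
\item Membership in $V(I_{Prop})$ only requires the six minors $\det\big(S_{\{1,2,3\}\setminus\{c\}}^{\{c,w\}}\big)$, for $c\in\{1,2,3\}$ and $w\in\{4,5\}$, to vanish. This already holds if $s_{a4}=s_{a5}=0$ for $a\le 3$. Using this instead of the rank-one ansatz also spares you reconciling $s_{ab}=\alpha_a\beta_b$ with $s_{ab}=s_{ba}$ on the block $\{1,2,3\}$.
\item Take in addition $s_{12}=s_{45}=1$, and use the generator with $I=\{2,5\}$, $L=\{1,4,5\}$, $(u,v)=(1,5)$, namely $m_1\det\big(S_{\{2,5\}}^{\{1,4\}}\big)D_{45}=m_5\det\big(S_{\{2,5\}}^{\{4,5\}}\big)D_{14}$.
\item Here $\det\big(S_{\{2,5\}}^{\{1,4\}}\big)=s_{12}s_{45}-s_{15}s_{24}=1$.
\item Also $\det\big(S_{\{2,5\}}^{\{4,5\}}\big)=s_{24}s_{55}-s_{25}s_{45}=0$, regardless of the diagonal convention.
\item The equation therefore reads $m_1=0$, up to the mass factor introduced by clearing denominators, and fails for nonzero masses.
\end{itemize}
This is essentially the paper's witness up to relabelling the bodies. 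The paper takes $s_{12}=s_{45}=1$, all other off-diagonal $s_{ij}=0$, and $D_{12}$ as the only nonzero $D$. It uses the generator with $I=\{1,4\}$, $L=\{1,2,5\}$, $(u,v)=(1,5)$, which evaluates to $0=m_5$.
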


\begin{proof}
The containment is clear: the thirty equations of Proposition~\ref{Wilnew}
are the equations of Corollary~\ref{Wilnew2} for the ten pairs $(I,L)$ with
$L=I^{c}$, and these involve no diagonal entry. For strictness it suffices
to exhibit a common zero of $I_{Prop}$ at which some generator of $I_{Cor}$
does not vanish: no power of that generator can then lie in $I_{Prop}$.

Take arbitrary masses with $m_5\neq0$, set $s_{12}=s_{45}=1$ and $s_{ij}=0$
for the eight remaining pairs, and set $D_{12}=1$, $D_{kl}=0$ otherwise. A
minor $\det(S_{\{i,j\}}^{\{a,b\}})=s_{ia}s_{jb}-s_{ib}s_{ja}$ with
$\{i,j\}\cap\{a,b\}=\emptyset$ is nonzero at this point only when its
two factors are $s_{12}$ and $s_{45}$, which forces $\{i,j\}$ to meet both
$\{1,2\}$ and $\{4,5\}$. Consequently, in each of the thirty equations of
Proposition~\ref{Wilnew} at most one minor is nonzero, and the system
reduces to $D_{13}=D_{23}=D_{34}=D_{35}=0$, which our choice satisfies. The
coordinate $D_{12}$ occurs only in the equations of the pairs $(4,5)$,
$(3,5)$ and $(3,4)$, all of whose minors vanish at the point. Hence the
point lies in $V(I_{Prop})$.

On the other hand, the equation of Corollary~\ref{Wilnew2} with
$I=\{1,4\}$, $L=\{1,2,5\}$ and $(u,v)=(1,5)$ reads
\[
m_1\,\det\big(S_{\{1,4\}}^{\{1,2\}}\big)\,D_{25}
\;=\;
m_5\,\det\big(S_{\{1,4\}}^{\{2,5\}}\big)\,D_{12}.
\]
At our point $\det(S_{\{1,4\}}^{\{2,5\}})=s_{12}s_{45}-s_{15}s_{24}=1$,
while $\det(S_{\{1,4\}}^{\{1,2\}})=s_{11}s_{24}-s_{12}s_{14}=0$
regardless of the diagonal convention, since $s_{24}=s_{14}=0$. The equation
evaluates to $0=m_5\neq0$, so the corresponding generator of $I_{Cor}$ does
not vanish at the point.
\end{proof}

\begin{remark}\label{rem:iprop-icor}
Exact computation refines the picture at the point used in the proof:
precisely $16$ of the $300$ equations fail there, and, for that choice of
$(m,s)$, the space of vectors $D$ annihilated by the thirty equations of
Proposition~\ref{Wilnew} is six-dimensional, while the full system of
Corollary~\ref{Wilnew2} cuts it down to the single line spanned by
$D_I=\epsilon_{I,I^c}\det\big(Y_{\cdot,\,I^c}\big)\big/\prod_{q\in I^c}m_q$,
where $Y$ is a kernel basis matrix of $S$: the dual Pl\"ucker vector of
$\ker(S)$ corrected by masses, anticipating the coordinates of
Section~\ref{sec:pwc}. The conceptual reason for the strict inclusion is
that the disjoint equations never involve the diagonal of $S$, while the
overlapping ones do, and through the mass convention the diagonal encodes
exactly the relation $Sm=0$: the extra equations of
Corollary~\ref{Wilnew2} carry mass information that the thirty equations
cannot reproduce, and their overdetermination is therefore not redundancy.
Determining the minimal number of generators of these ideals, or of their
radicals, remains open.
\end{remark}

We close this section by recording, for use in the next two sections, the classical exchange lemma of Steinitz \cite{steinitz1913bedingt}, in the iterated form that will organize our chains of multi-indices.

\begin{lemma}[Steinitz exchange lemma]\label{lem:steinitz}
Let $B$ and $B'$ be two bases of a finite-dimensional vector space. Then there is a sequence of bases
\[
B = B_0,\; B_1,\; \dots,\; B_r = B', \qquad r = |B \setminus B'|,
\]
in which each $B_{i+1}$ is obtained from $B_i$ by removing a single element and inserting a single element of $B'$. In particular, $B_i \subseteq B \cup B'$ for every $i$.
\end{lemma}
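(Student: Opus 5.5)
We prove the Steinitz exchange lemma by induction on $r=|B\setminus B'|$, constructing the chain one exchange at a time. Since $B$ and $B'$ are bases of the same finite-dimensional space, they have the same cardinality, so $|B\setminus B'|=|B'\setminus B|=r$. If $r=0$ then $B=B'$ and the trivial chain $B_0=B$ works. Otherwise we produce a single basis $B_1$ with $B_1\subseteq B\cup B'$ and $|B_1\setminus B'|=r-1$, and then apply the induction hypothesis to the pair $(B_1,B')$. This yields a chain $B_1,\dots,B_r=B'$ whose members lie in $B_1\cup B'\subseteq B\cup B'$. Prepending $B_0=B$ gives the required sequence of length exactly $r$.

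The key step is the single exchange, and we will perform it from the side of $B'$. We choose $b\in B\setminus B'$. The set $B\setminus\{b\}$ spans a hyperplane $H$. Because $B'$ spans the whole space, not every element of $B'$ lies in $H$, so some $b'\in B'$ satisfies $b'\notin H$. We claim $b'\notin B$. Indeed, the elements of $B$ other than $b$ lie in $H$, and $b\notin B'$, so $b'\ne b$; together these show $b'\notin B$. We then set $B_1=(B\setminus\{b\})\cup\{b'\}$.

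It remains to check that $B_1$ is a basis. It has $|B|$ elements, and it is linearly independent because $B\setminus\{b\}$ is independent and $b'\notin\operatorname{span}(B\setminus\{b\})=H$. Hence it is a basis. For the counting, we have removed an element of $B\setminus B'$ and inserted an element of $B'$, so $|B_1\setminus B'|=r-1$. The containment $B_1\subseteq B\cup B'$ is clear.

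The only point needing care is choosing the inserted vector from $B'$, rather than exchanging in the classical direction of inserting a prescribed vector. This choice guarantees that the length of the chain is exactly $r$ and that each intermediate basis stays inside $B\cup B'$. We expect no further obstacle.
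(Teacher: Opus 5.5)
Your proof is correct. Its skeleton is the paper's: induction on $r$, one exchange per step, each step removing an element of $B\setminus B'$ and inserting an element of $B'\setminus B$.

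The only difference is the direction of the single exchange:
\begin{enumerate}
\item[(a)] The paper prescribes the vector to be \emph{inserted}, $w\in B'\setminus B$. It finds a removable $u\in B\setminus B'$ with nonzero coefficient in the expansion of $w$ in $B$; one must exist, or $w$ would lie in the span of $B\cap B'\subseteq B'\setminus\{w\}$. It then concludes that $B_1$ is a basis because it \emph{spans}.
\item[(b)] You prescribe the vector to be \emph{removed}, $b\in B\setminus B'$. You pick $b'\in B'$ outside the hyperplane $\operatorname{span}(B\setminus\{b\})$, and conclude because $B_1$ is \emph{independent}.
\end{enumerate}
These are dual forms of the basis exchange property and cost the same. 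You must check $b'\notin B$ afterwards. The paper gets $w\notin B$ for free, but must argue that the removed vector can be taken outside $B'$.

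This also shows that your closing remark is inaccurate. Exchanging in the classical direction, with a prescribed inserted vector, does not endanger the chain length $r$ or the containment $B_i\subseteq B\cup B'$. It suffices that the removed vector is chosen in $B\setminus B'$, and that is exactly what the paper's coefficient argument secures.
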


\begin{proof}
If $B \neq B'$, choose $w \in B' \setminus B$ and expand it in the basis $B$. Some $u \in B \setminus B'$ occurs in this expansion with a nonzero coefficient: otherwise $w$ would lie in the span of $B \cap B' \subseteq B' \setminus \{w\}$, contradicting the linear independence of $B'$. Since the coefficient of $u$ is nonzero, $u$ lies in the span of $B_1 := (B \setminus \{u\}) \cup \{w\}$, so $B_1$ spans; having the cardinality of a basis, it is a basis. Moreover $|B_1 \setminus B'| = |B \setminus B'| - 1$, and the sequence is completed by induction on $|B \setminus B'|$.
\end{proof}

\section{Dziobek--Williams Equations}\label{sec:dziobek-williams}
Let $x=(x_1, \dots, x_n)$ be a central configuration with dimension $k$, and let $t=n-k-1$ as before. We order the set $I_t$ of Section~\ref{sec:cc-equations} lexicographically.

We write $S^{(t)} := \big(\det(S_I^J)\big)_{I,J \in I_t}$ for the symmetric matrix of all $t\times t$ minors of $S$, with rows and columns indexed by $I_t$; we call it the \emph{Dziobek--Williams matrix}. The matrix $S^{(t)}$ is referred to in the literature as the $t$-th \emph{compound matrix} of $S$.

\begin{definition}\label{def:z_dual}
Let $x$ be a central configuration of the $n$-body problem. Let $I \in I_{t}$ be a multi-index and let $J \in I_{k+1}$ be its strictly ordered complement. We define the \emph{mass-weighted dual Pl\"ucker coordinate} $z_I$ as

\begin{equation}
z_I = \frac{D_I}{\prod_{i \in I} m_i}=\epsilon_{I, J} \frac{\Delta_J}{\prod_{i \in I} m_i}
\end{equation}
where $\epsilon_{I, J}$ is the shuffle sign of Definition \ref{def:shuffle_sign}.

\end{definition}

\begin{proposition}[Dziobek--Williams Equations]\label{dziowill}
Let $x$ be a central configuration of the $n$-body problem with dimension $k$. Let $I, J \in I_t$. Then, the minors of the matrix $S$ satisfy:

\begin{equation}\label{Dziowill}
 \det(S_{I}^{J}) = \kappa \, z_{I} \, z_{J},
\end{equation}

where $\kappa$ is a constant. In particular, the Dziobek--Williams matrix $S^{(t)}$ has rank at most $1$.
\end{proposition}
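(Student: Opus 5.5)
The plan is to establish the rank-one structure first, using Proposition~\ref{Wilgen-prop} and the Steinitz exchange Lemma~\ref{lem:steinitz}. A cleaner structural route is held in reserve. Since $S$ is symmetric and $X\mu S=0$, the $k+1$ columns of $\mu X^T$ lie in $\ker S$. They are independent, because $\operatorname{rank}X=k+1$ and $\mu$ is invertible. Hence $\operatorname{rank}S\le n-k-1=t$.

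I split into two cases. If $\operatorname{rank}S<t$, every $t\times t$ minor vanishes, and \eqref{Dziowill} holds with $\kappa=0$. Assume now that $\operatorname{rank}S=t$. Then $\ker S=\operatorname{col}(\mu X^T)$ exactly, and the image of $S$ is $W^\perp$ with $W:=\ker S$. The cleanest argument is via compounds. Write $S=AB^T$ with $A=B$ an $n\times t$ basis matrix of $W^\perp=\operatorname{im}S$, up to an invertible symmetric $t\times t$ factor $C$, so that $S=ACA^T$. Cauchy--Binet then gives
\[
\det(S_I^J)=\det C\cdot\det(A_I)\det(A_J).
\]
So $S^{(t)}=\det C\,\alpha\alpha^T$, where $\alpha=(\det A_I)_{I\in I_t}$ is the vector of Pl\"ucker coordinates of $W^\perp$.

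It remains to identify $\alpha$ with $z$ up to a scalar. By Lemma~\ref{lem:hodge-dual}, $\alpha_I$ is proportional to $\epsilon_{I,I^c}\Delta_{I^c}(W)$. Here $W$ has basis matrix $Y=X\mu$, so $\Delta_{I^c}(W)=\prod_{q\in I^c}m_q\,\Delta_{I^c}$. Hence
\[
\alpha_I\propto \Big(\prod_{q=1}^n m_q\Big)\frac{\epsilon_{I,I^c}\Delta_{I^c}}{\prod_{i\in I}m_i}=\Big(\prod_{q} m_q\Big) z_I.
\]
Absorbing the constant $\prod_q m_q$ and the proportionality factor into $\kappa$ yields $\det(S_I^J)=\kappa z_Iz_J$, and the rank statement for $S^{(t)}$ follows at once.

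As a check, and to match the paper's Williams-style approach, I would note the following. Proposition~\ref{Wilgen-prop} says that, for fixed $I$, the row $J\mapsto\det(S_I^J)$ is proportional along exchanges $J=L\setminus\{v\}\to L\setminus\{u\}$ to $m_vD_{L\setminus\{v\}}$ versus $m_uD_{L\setminus\{u\}}$, that is, to $z_J$ after dividing by $\prod_{L}m$. Chaining such exchanges by Lemma~\ref{lem:steinitz} propagates the ratio $\det(S_I^J)/z_J$ across all $J$. Symmetry of $S$ then makes the ratio depend on $I$ as $\kappa z_I$.

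The main obstacle in that elementary route is handling vanishing $z_J$ along a chain, since proportionality can break at zero entries. This is exactly why I prefer the Cauchy--Binet/Hodge argument above, which is free of such division issues. The remaining delicate point there is the sign bookkeeping in Lemma~\ref{lem:hodge-dual}. Only a global sign is at stake, however, and it is absorbed into $\kappa$.
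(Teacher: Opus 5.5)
Your main argument is correct. It is not the route of the paper's own proof, however: it is essentially the alternative the paper records in Remark~\ref{rem:hodge-alt}. There, the citation that $\bigwedge^{t}S$ has rank $\binom{t}{t}=1$ with column space $\bigwedge^{t}(\operatorname{Im}S)$ does the work. You replace it by the explicit factorization $S=ACA^{T}$ and multiplicativity of $t\times t$ determinants. You should justify that factorization, for instance by the spectral theorem restricted to the nonzero eigenvalues. Also, the proportionality factor in Lemma~\ref{lem:hodge-dual} is an arbitrary nonzero scalar, not only a sign; it enters squared and is harmlessly absorbed into $\kappa$.

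The paper's proof follows the Williams--Steinitz route you only sketched, and it resolves the obstacle you flagged with two devices.
\begin{itemize}
\item If $z_K=0$, the whole column $\big(\det(S_I^K)\big)_{I\in I_t}$ vanishes. Indeed, $\Delta_{K^c}=0$ gives $c\neq0$ with $\psi=\mu X^{T}c\in\ker S$ supported in $K$, so $S_I^K\psi_K=0$ with $\psi_K\neq0$.
\item Lemma~\ref{lem:steinitz} is applied to the two bases of $\mathbb{R}^{k+1}$ formed by the columns of $X$ indexed by $J^c$ and by $K^c$. Every intermediate $J_i$ therefore has $z_{J_i}\neq0$, and no division by zero ever occurs.
\end{itemize}

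As for what each approach buys: your route is short and coordinate-free, but it needs the case split on $\operatorname{rank}S$ and the exact identification $\ker S=\operatorname{col}(\mu X^{T})$ when $\operatorname{rank}S=t$. The paper's route never looks at $\operatorname{rank}S$ and uses only the inclusion $\operatorname{col}(\mu X^{T})\subseteq\ker S$. It produces $\kappa$, possibly zero, uniformly. It also exhibits the Dziobek--Williams equations as consequences of the generalized Williams identities of Proposition~\ref{Wilgen-prop}, which is the interdependence the paper wants to display.
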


The argument below runs through Proposition~\ref{Wilgen-prop} and Lemma~\ref{lem:steinitz}, and generalizes the rank-one factorization of Moeckel for Dziobek configurations \cite[Proposition~3]{moeckel2001generic}, which is the case $t=1$.

\begin{proof}
Since $\delta(x)=k$, the configuration matrix $X$ has rank $k+1$, so its maximal minors $\Delta_J$ cannot all vanish and the vector $z=(z_J)_{J\in I_t}$ is nonzero. Write $\operatorname{supp}(z)=\{J\in I_t : z_J\neq 0\}$ for its support.

We first record that every minor whose column set lies outside the support vanishes: if $z_K=0$, then $\det(S_{I}^{K})=0$ for every $I\in I_t$. Indeed, $z_K=0$ means $\Delta_{K^c}=0$, so the $k+1$ columns of $X$ indexed by $K^c$ are linearly dependent, and some nonzero $c\in\mathbb{R}^{k+1}$ is orthogonal to all of them. The vector $\psi:=\mu X^{T}c$ is then supported in $K$; it is nonzero, because $c\neq 0$, $X^{T}$ is injective ($X$ has rank $k+1$), and $\mu$ is invertible; and it lies in $\ker(S)$, because the columns of $\mu X^{T}$ do (Section~\ref{sec:wc-matrix}). The rows indexed by $I$ of the identity $S\psi=0$ read $S_{I}^{K}\,\psi_{K}=0$ with $\psi_{K}\neq 0$, whence $\det(S_{I}^{K})=0$; by the symmetry of $S$, also $\det(S_{K}^{I})=0$.

Fix an arbitrary multi-index $I_{0} \in I_t$. We claim that
$$ \det(S_{I_{0}}^{J})\, z_{K} = \det(S_{I_0}^{K})\, z_{J} \qquad\text{for all } J, K \in I_t. $$
If $J$ or $K$ lies outside $\operatorname{supp}(z)$, both sides vanish by the previous paragraph. Suppose then that $J, K \in \operatorname{supp}(z)$: the columns of $X$ indexed by $J^{c}$ and those indexed by $K^{c}$ form two bases of $\mathbb{R}^{k+1}$. Lemma~\ref{lem:steinitz}, applied to these two bases, with the exchanges performed on the indexing sets, produces sets $J^{c} = C_0, C_1, \dots, C_r = K^{c}$, each obtained from the preceding one by removing a single index and inserting a single index of $K^{c}$, such that the columns of $X$ indexed by each $C_i$ form a basis. Taking complements, the sets $J_i := \{1,\dots,n\} \setminus C_i$ form a chain
$$ J = J_0,\; J_1,\; \dots,\; J_r = K $$
in $I_t$ in which consecutive terms differ by a single element, and every term lies in $\operatorname{supp}(z)$, since $z_{J_i} = \pm\,\Delta_{C_i}\big/\prod_{q \in J_i} m_q \neq 0$.

Consider two consecutive terms of the chain. Set $L := J_i \cup J_{i+1} \in I_{t+1}$, and let $u, v$ be defined by $\{u\} = J_i \setminus J_{i+1}$ and $\{v\} = J_{i+1} \setminus J_i$, so that $J_i = L \setminus \{v\}$ and $J_{i+1} = L \setminus \{u\}$. With these choices, Equation~\eqref{Wilgen} reads
$$ m_u\, \det(S_{I_0}^{J_i})\, D_{J_{i+1}} = m_v\, \det(S_{I_0}^{J_{i+1}})\, D_{J_i}. $$
Substituting $D_{J_i} = \big(\prod_{q \in J_i} m_q\big) z_{J_i}$ and $D_{J_{i+1}} = \big(\prod_{q \in J_{i+1}} m_q\big) z_{J_{i+1}}$ (Definition~\ref{def:z_dual}) and cancelling the common positive factor $m_u m_v \prod_{q \in J_i \cap J_{i+1}} m_q$, this becomes
$$ \det(S_{I_0}^{J_i})\, z_{J_{i+1}} = \det(S_{I_0}^{J_{i+1}})\, z_{J_i}, \qquad i = 0, \dots, r-1. $$
An induction on $i$ now gives $\det(S_{I_0}^{J})\, z_{J_i} = \det(S_{I_0}^{J_i})\, z_{J}$ for $i = 1, \dots, r$: the case $i=1$ is the relation just displayed, and the step from $i$ to $i+1$ multiplies the relation for $i$ by $z_{J_{i+1}}$, substitutes the adjacent relation, and cancels the nonzero factor $z_{J_i}$. The case $i = r$ is the claim for the pair $(J, K)$.

The claim says that all $2\times 2$ minors of the two-row matrix with rows $\big(\det(S_{I_{0}}^{J})\big)_{J\in I_t}$ and $(z_J)_{J\in I_t}$ vanish, so this matrix has rank at most one; since $z\neq 0$, its row space is spanned by $z$, and there exists a constant $c_{I_0}$ such that
$$ \det(S_{I_0}^{J}) = c_{I_{0}}\, z_{J} \qquad\text{for every } J\in I_t. $$
Since $S^{(t)}$ is symmetric, $\det(S_{I_0}^{J}) = \det(S_{J}^{I_0})$, and applying the same argument to the row indexed by $J$ gives $\det(S_{J}^{I_0}) = c_{J}\, z_{I_0}$. Therefore
$$ c_{I_{0}}\, z_{J} = c_{J}\, z_{I_{0}} \qquad\text{for all } I_0, J \in I_t: $$
all $2\times 2$ minors of the two-row matrix with rows $(c_J)_{J\in I_t}$ and $(z_J)_{J\in I_t}$ vanish, so it has rank at most one, and, since $z\neq 0$, there is a constant $\kappa$ with $c_J=\kappa\, z_J$ for every $J$. Hence $\det(S_{I}^{J}) = c_I\, z_J = \kappa\, z_{I}\, z_J$ for all $I,J\in I_t$, which proves the formula.
\end{proof}

\begin{remark}[Alternative proof via the Hodge dual]\label{rem:hodge-alt}
Proposition~\ref{dziowill} also admits a shorter, coordinate-free proof through Lemma~\ref{lem:hodge-dual}. If $\operatorname{rank}(S)<t$, every $t\times t$ minor vanishes and \eqref{Dziowill} holds with $\kappa=0$. If $\operatorname{rank}(S)=t$, then $\ker(S)=W$, the $(k+1)$-dimensional column space of $\mu X^{T}$ (Section~\ref{sec:wc-matrix}). The compound matrix $S^{(t)}$ is symmetric and has rank $\binom{t}{t}=1$ \cite[\S0.8.1]{hornjohnson2013}, so $S^{(t)}=c\,ww^{T}$ with $w$ spanning its column space, which is the line $\bigwedge^{t}(\operatorname{Im}S)=\bigwedge^{t}(W^{\perp})$. The rows of $X\mu$ form a basis of $W$ with maximal minors $\big(\prod_{q\in J}m_q\big)\Delta_J$ (Proposition~\ref{simple} and multilinearity), so Lemma~\ref{lem:hodge-dual} identifies $w_I\propto\epsilon_{I,I^c}\big(\prod_{q\in I^c}m_q\big)\Delta_{I^c}=\big(\prod_{q=1}^{n}m_q\big)z_I$ by Definition~\ref{def:z_dual}, whence $S^{(t)}=\kappa\,zz^{T}$. The same computation, applied to a kernel basis $Y$ of $S$ in place of $X\mu$, proves Proposition~\ref{prop:7.2-wc} directly. We have preferred the proofs through Propositions~\ref{Wilgen-prop} and~\ref{prop:6.2-wc}, which are self-contained and display the interdependence of the equations.
\end{remark}

The constant $\kappa$ vanishes precisely when $\operatorname{rank}(S)$ falls below the generic value $t=n-k-1$; see Remark~\ref{rem:kappa-vanishing} below. We now derive a formula for $\kappa$ in terms of the eigenvalues of $S$.

Example 5.6 in \cite{prells2003use} provides the following expansion for the characteristic polynomial of a matrix $B \in \mathbb{C}^{n \times n}$:
\begin{equation} \label{eqext1}
\det(B - \theta\, \mathrm{Id}_n) = \det(B) + (-1)^n \theta^n + \sum_{i=1}^{n-1} (-1)^i \theta^i \operatorname{tr} \mathcal{C}_{n-i}(B),
\end{equation}
where $\mathcal{C}_{n-i}(B)$ denotes the $(n-i)$-th compound matrix of $B$.

On the other hand, let $\nu_{1}, \nu_{2}, \dots, \nu_{n}$ denote the eigenvalues of $B$. The polynomial $\det(B - \theta\,\mathrm{Id}_n)$ can also be expressed in terms of the elementary symmetric polynomials
$$e_k(\nu_{1}, \dots, \nu_{n}) = \sum_{1 \le i_1 < \dots < i_k \le n} \nu_{i_1} \cdots \nu_{i_k}$$
of these eigenvalues as follows:
\begin{equation}\label{eqext2}
\det(B - \theta\,\mathrm{Id}_n) = \sum_{k=0}^{n} (-1)^k \theta^k e_{n-k}(\nu_{1}, \dots, \nu_{n}),
\end{equation}
where $e_0(\nu_{1}, \dots, \nu_{n}) = 1$. 

Since relations \eqref{eqext1} and \eqref{eqext2} hold for any square matrix $B$, comparing the two expansions, applied to $B=S$ (so that $\mathcal{C}_t(S)=S^{(t)}$), allows us to determine the trace of $S^{(t)}$ in terms of the eigenvalues of $S$:

\begin{equation}\label{traceformula}
\operatorname{Tr}(S^{(t)}) = \sum_{1 \le i_1 < \dots < i_{t} \le n} \nu_{i_1} \cdots \nu_{i_{t}} = e_{t}(\nu_{1}, \dots, \nu_{n}).
\end{equation}

\begin{remark}[The spectrum of the Brehm--Wintner--Conley matrix: related work]
\label{rem:spectrum-related-work}
Formula~\eqref{traceformula} places the constant $\kappa$ of
Proposition~\ref{dziowill} in contact with a line of research that has
developed independently of the Williams equations, and that concerns the
\emph{spectrum} of the Brehm--Wintner--Conley operator rather than the shape
constraints its kernel imposes.

The structure goes back to Brehm's 1908 thesis \cite{brehm1908partikulare}:
equations~(14)--(16) there are the kernel equations for the three coordinate
vectors of the configuration, the matrix itself is displayed in
equation~(17) on p.~21, and the dimension of the configuration is already
characterized by the vanishing of its minors of successive orders (in
embryo, the stratification by the Brehm--Wintner--Conley dimension). The matrix
was taken up by Meyer in 1933 and by Wintner \cite{wintner2014analytical}
in 1941; the history is traced in \cite{albouy2024limit,albouysun2025}. The spectral statements of that
literature concern the \emph{unshifted} operator $Z$: its eigenvalues are
real and nonnegative, and $\lambda$ occurs among them with multiplicity at
least $\delta(x)+1$: the \emph{trivial} eigenvalues, corresponding to
translations and to the configuration itself \cite{albouysun2025}. These
facts do not transfer to the matrix $S$ of the present paper without a
change of representative. The two are related by
$\check{Z}=Z-\lambda\mathcal{J}=-\mu S$, so $\check{Z}$ is \emph{similar}
to the symmetric matrix $-\mu^{1/2}S\mu^{1/2}$, whereas $S$ is
\emph{congruent}, not similar, to $\mu^{1/2}S\mu^{1/2}$: congruent matrices
share their inertia, not their spectrum. In particular, the $k+1$ trivial
directions lie in $\ker(S)$, so for $S$ they contribute the eigenvalue $0$,
not $\lambda$, and no positivity or ordering statement about the
eigenvalues $\nu_i$ of $S$ is made in this paper outside the
MacMillan--Bartky regime of Corollary~\ref{cor:mb-psd}. The first result on the
nontrivial part of the classical spectrum is due to Conley, who proved that
in a collinear central configuration every nontrivial eigenvalue exceeds
$\lambda$. Moeckel, extending a statement of
Pacella, proved that the arithmetic mean of the nonzero eigenvalues is at
least $\lambda$ in every dimension, with equality only for the regular simplex,
and showed that Conley's inequality does \emph{not} extend to all central
configurations: he exhibited a planar configuration of $474$ bodies carrying an
eigenvalue below $\lambda$ \cite{albouy2024limit,albouysun2025}. Most recently,
Albouy and Sun \cite{albouysun2025} established Conley's inequality for the
planar five-body problem, showing that the two nontrivial eigenvalues there are
strictly larger than the three trivial ones. As those authors observe, the
inequalities of this kind belong to two lines of research which had remained
separate: one beginning with Williams \cite{williams1938permanent} and
concerning constraints on the \emph{shape} of the configuration, the other
concerning the Hessian of the potential, its index, and hence the linear
stability of the associated self-similar motions.

Proposition~\ref{prop:kappa-formula} is a point of contact between the two.
Its left-hand side, $\kappa$, is defined by a purely shape-theoretic identity:
the rank-one factorization $\det(S_I^J)=\kappa z_Iz_J$ of the
Dziobek--Williams equations, whose ingredients $z_I$ are oriented volumes
weighted by masses. Its right-hand side is the elementary symmetric function
$e_t$ of the spectrum of $S$, divided by $\|z\|^2$. The identity therefore
expresses one and the same quantity in the two languages, and the
non-vanishing statement it yields, $\kappa^{\mathrm{bwc}}\neq0$
(Corollary~\ref{cor:kappa-wc-nonzero}), is a spectral statement (exactly $t$
eigenvalues of $S$ are nonzero, so the single surviving term of $e_t$ is their
product) with a purely geometric consequence: the Veronese model
$\Psi_t$ of Section~\ref{sec:veronese} has no base points. A quantitative comparison with the inequalities of Conley, Moeckel, and
Albouy--Sun requires translating those results from the classical operator
to the weighted representative $\mu^{1/2}S\mu^{1/2}$, an identification we
do not make here. In particular we do not pursue the sharper inequalities
of \cite{albouysun2025}, which constrain the individual eigenvalues of the
classical operator rather than elementary symmetric functions of the
spectrum of $S$; carrying them across and combining them
with~\eqref{traceformula} would bound $\kappa$, and seems worth
investigating.
\end{remark}

\begin{proposition}\label{prop:kappa-formula} Let $\kappa$ be the constant of Proposition~\ref{dziowill}, and let $\nu_{1}, \dots, \nu_{n}$ be the eigenvalues of the matrix $S$. Then
$$ 
\kappa = \frac{e_{t}(\nu_{1}, \dots, \nu_{n})}{\|z\|^2},
$$
where
$$e_{t}(\nu_{1}, \dots, \nu_{n})=\operatorname{Tr}(S^{(t)}) = \displaystyle \sum_{1 \leq i_1 < \dots < i_{t} \leq n} \nu_{i_1} \cdots \nu_{i_{t}},$$ and $z$ is the vector of mass-weighted dual Pl\"ucker coordinates of Definition \ref{def:z_dual}.
\end{proposition}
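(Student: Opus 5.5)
The plan is to take the trace of both sides of the Dziobek--Williams identity \eqref{Dziowill} and compare with the spectral trace formula \eqref{traceformula}. Setting $J=I$ in Proposition~\ref{dziowill} gives, for every $I\in I_t$, the diagonal entries $\det(S_I^I)=\kappa\,z_I^2$. Summing over $I\in I_t$ yields
\[
\operatorname{Tr}(S^{(t)})=\sum_{I\in I_t}\det(S_I^I)=\kappa\sum_{I\in I_t}z_I^2=\kappa\,\|z\|^2 .
\]
On the other hand, \eqref{traceformula}, obtained by comparing the expansions \eqref{eqext1} and \eqref{eqext2} of the characteristic polynomial at the coefficient of $\theta^{n-t}$, identifies $\operatorname{Tr}(S^{(t)})$ with $e_t(\nu_1,\dots,\nu_n)$. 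Combining the two expressions gives $\kappa\|z\|^2=e_t(\nu_1,\dots,\nu_n)$.

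It then remains to divide by $\|z\|^2$, which needs $z\neq 0$. This was already shown at the start of the proof of Proposition~\ref{dziowill}. Since $\delta(x)=k$, the configuration matrix $X$ has rank $k+1$, so some maximal minor $\Delta_J$ with $J\in I_{k+1}$ is nonzero. The corresponding $z_{J^c}=\pm\Delta_J/\prod_{i\in J^c}m_i$ is then nonzero, because the masses are nonzero. The coordinates $z_I$ are real, so $\|z\|^2=\sum_I z_I^2>0$ is the honest squared Euclidean norm and the division is legitimate.

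The only point requiring care is the index bookkeeping in \eqref{traceformula}. The coefficient of $(-1)^{n-t}\theta^{n-t}$ in \eqref{eqext1} is $\operatorname{tr}\mathcal{C}_{t}(S)$, and in \eqref{eqext2} it is $e_{t}$, so the match is exact. The boundary cases are consistent: $t=n$ is excluded since $k\ge1$, and for $t\ge1$ the sum in \eqref{eqext1} covers $i=n-t$. With this check the proposition follows immediately. Since $S$ is real symmetric, its eigenvalues are real, so no complex subtleties enter.
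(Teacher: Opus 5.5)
Your proposal is correct and follows the paper's own route: take the trace of the rank-one factorization $S^{(t)}=\kappa\,zz^{T}$ from Proposition~\ref{dziowill} to get $\kappa\|z\|^{2}$, then equate it with $e_t(\nu_1,\dots,\nu_n)$ via \eqref{traceformula}. The paper leaves implicit that $z\neq0$ (so the division is legitimate) and that the characteristic-polynomial coefficients match at $\theta^{n-t}$; your explicit checks of these points fill those gaps without changing the argument.
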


\begin{proof}
    By Proposition \ref{dziowill}, the matrix $S^{(t)}$ has rank at most $1$ and admits the factorization 
$$S^{(t)} = \kappa z z^T,$$ 
where $z$ is the column vector composed of the coordinates $z_I$. Computing the trace of $S^{(t)}$ yields:

\begin{equation}\label{traceformula2}
\operatorname{Tr}(S^{(t)}) = \sum_{I \in I_t} \det(S_I^I) = \kappa \sum_{I \in I_t} z_I^2 = \kappa \|z\|^2.
\end{equation}

Equating both expressions \eqref{traceformula} and \eqref{traceformula2} for the trace allows us to explicitly determine the constant $\kappa$ in terms of both the geometric data ($z$) and the spectral data of $S$:

$$\kappa = \frac{e_{t}(\nu_{1}, \dots, \nu_{n})}{\|z\|^2}.$$

Since $S^{(t)}$ is the compound matrix of $S$ of order $t$, its rank satisfies $\operatorname{rank}(S^{(t)})=\binom{\operatorname{rank}(S)}{t}$ \cite[\S0.8.1]{hornjohnson2013}. When $S^{(t)}$ has rank $1$, this forces $\operatorname{rank}(S)=t$: exactly $t$ of the eigenvalues $\nu_{1},\dots,\nu_{n}$ are nonzero. Consequently, every term of $e_{t}(\nu_{1},\dots,\nu_{n})$ that omits even one of these $t$ nonzero eigenvalues vanishes identically, and the symmetric polynomial collapses to its single surviving term, the product of all $t$ nonzero eigenvalues of $S$. In particular, $\kappa$ is simply this product divided by $\|z\|^2$.
\end{proof}

\begin{remark}\label{rem:kappa-vanishing}
The identity of Proposition~\ref{prop:kappa-formula} holds at every rank. If the actual rank of $S$ falls short of the generic value $t=n-k-1$, then every $t\times t$ minor of $S$ vanishes, forcing $\kappa=0$ in \eqref{Dziowill} (some $z_I$ is nonzero, since $\operatorname{rank}(X)=k+1$), while $e_t(\nu_1,\dots,\nu_n)=0$ because fewer than $t$ eigenvalues are nonzero: both sides of the formula vanish, the identity remains valid, but it becomes vacuous, and Proposition~\ref{dziowill} collapses to $0=0$. What does require $\operatorname{rank}(S)=t$ (equivalently, $\operatorname{rank}(S^{(t)})=1$) is the non-vanishing of $\kappa$ and the reading of $e_t(\nu)$ as the product of the nonzero eigenvalues. Section~\ref{sec:pwc} removes this restriction entirely: from that section onward the symbol $t$ denotes the \emph{true} rank $t=\operatorname{bwc}(x)=\operatorname{rank}(S)$ (Definition~\ref{def:wc-dimension}), which agrees with $n-k-1$ exactly in the generic case, and Proposition~\ref{prop:7.3-wc} and Corollary~\ref{cor:kappa-wc-nonzero} give the corresponding constant $\kappa^{\mathrm{bwc}}$ unconditionally, with no genericity or non-degeneracy hypothesis.
\end{remark}

\subsection*{Sign structure and the MacMillan--Bartky condition}

The Dziobek--Williams equations govern not only the vanishing of the minors
of $S$ but also their signs, and the signs of the entries themselves have a
mechanical meaning, which we record here.

\begin{lemma}[Stress form of the central configuration equations]\label{lem:stress-form}
Let $x$ be a central configuration. Then, for every $j\in\{1,\dots,n\}$,
\begin{equation}\label{eq:self-stress}
\sum_{i\neq j}\omega_{ij}\,(x_i-x_j)=0,
\qquad \omega_{ij}:=m_i m_j\, s_{ij}.
\end{equation}
\end{lemma}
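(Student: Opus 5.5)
The plan is to multiply the shape form of the central configuration equations, Lemma~\ref{lem:shape-equations}(ii) (equation~\eqref{eqccro}), by the mass $m_j$, which is possible because the masses are nonzero. For this we place the center of mass at the origin. Both the central configuration condition and the differences $x_i-x_j$ are translation invariant, and so are the mutual distances $r_{ij}$, the multiplier $\lambda$, and hence $r_0$ and every $s_{ij}$ with $i\neq j$. Lemma~\ref{lem:shape-equations} therefore applies without loss of generality.

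Equation~\eqref{eqccro} then reads
\[
\sum_{i\neq j} m_i\,\big(r_{ij}^{2a}-r_0^{2a}\big)(x_i-x_j)=0 .
\]
For $i\neq j$, Definition~\ref{def:shape-variables} gives $r_{ij}^{2a}-r_0^{2a}=s_{ij}$. Multiplying by $m_j$ we obtain
\[
\sum_{i\neq j} m_i m_j\, s_{ij}\,(x_i-x_j)=0,
\]
which is \eqref{eq:self-stress} with $\omega_{ij}=m_im_js_{ij}$. Only off-diagonal entries appear, so the convention chosen for the diagonal $s_{jj}$ plays no role.

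We should also note that $\omega_{ij}=\omega_{ji}$, by the symmetry of $S$. This is what makes the identity a genuine self-stress, i.e.\ a symmetric edge weighting in equilibrium at every vertex. There is no real obstacle in the argument; the only points needing care are the reduction to center of mass zero and the observation that the diagonal entries never enter.
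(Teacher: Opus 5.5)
Your proof is correct and is essentially the paper's argument: multiply equation~\eqref{eqccro} by $m_j$, and note that $\omega_{ij}=\omega_{ji}$ because $S$ is symmetric. Your explicit reduction to center of mass at the origin, which Lemma~\ref{lem:shape-equations} assumes, is a careful detail that the paper leaves implicit.
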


\begin{proof}
Equation~\eqref{eqccro} states that
$\sum_{i\neq j} m_i s_{ij}\,(x_i-x_j)=0$; multiplying by $m_j>0$ gives
\eqref{eq:self-stress}. The weights are symmetric because $S$ is.
\end{proof}

In the language of rigidity theory, \eqref{eq:self-stress} says that the
symmetric weights $\omega_{ij}$ form a \emph{self-stress} (equilibrium
stress) of the complete framework on the points $x_1,\dots,x_n$
\cite[Ch.~5]{connellyguest2022}; this stress reading of the central
configuration equations is classical, see for instance
\cite{moeckel2015notes}. For the potentials $U_a$ with $a<0$, the weight
$\omega_{ij}$ is positive exactly when $r_{ij}<r_0$ and negative exactly
when $r_{ij}>r_0$: the canonical length separates the pairs pulled together
from the pairs pushed apart (for $a>0$ the orientation is reversed). For
configurations in convex position this suggests the following condition.

\begin{definition}[MacMillan--Bartky condition]\label{def:mb-condition}
Let $x$ be a planar configuration whose bodies are in strictly convex
position; call \emph{sides} the pairs that are edges of the convex hull and
\emph{diagonals} the remaining pairs. The configuration satisfies the
\emph{MacMillan--Bartky condition} if
\[
s_{ij}>0\ \text{for every side},
\qquad
s_{ij}<0\ \text{for every diagonal}.
\]
\end{definition}

The name records the origin of this sign analysis: for four bodies under
the Newtonian potential, MacMillan and Bartky proved that in an admissible
convex quadrilateral the four sides are at most $r_0$ and the two diagonals
at least $r_0$ \cite[\S14]{macmillanbartky1932}, which in the present
notation is exactly the condition above; establishing
these signs for five bodies was the goal of Williams
\cite{williams1938permanent}, accomplished for strictly convex central
configurations by Chen and Hsiao \cite{chen2018strictly}.

\begin{proposition}[Basic tensegrity structure]\label{prop:mb-tensegrity}
Let $x$ be a strictly convex planar central configuration satisfying the
MacMillan--Bartky condition. Then the self-stress
$\omega_{ij}=m_im_js_{ij}$ of Lemma~\ref{lem:stress-form} is positive on
every side and negative on every diagonal: $x$ underlies a tensegrity
framework with cables along the boundary of its convex hull and struts
along all of its diagonals, carried by the proper equilibrium stress
\eqref{eq:self-stress}, in the sign convention of
\cite{connelly1982rigidity,connellyguest2022}.
\end{proposition}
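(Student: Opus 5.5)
The plan is a direct verification: the statement is essentially a translation of the MacMillan--Bartky sign condition into the language of stressed frameworks. First I record the sign of the weights. By Lemma~\ref{lem:stress-form}, the symmetric weights $\omega_{ij}=m_im_js_{ij}$ satisfy the equilibrium equations \eqref{eq:self-stress} at every vertex. Since the masses are positive here (Definition~\ref{def:mb-condition} and the positivity of $m_i$ assumed throughout Section~\ref{sec:cc-equations}), $m_im_j>0$. Hence $\operatorname{sign}\omega_{ij}=\operatorname{sign}s_{ij}$. The MacMillan--Bartky condition then gives $\omega_{ij}>0$ on every side and $\omega_{ij}<0$ on every diagonal. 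In particular, no pair carries a zero stress, so every edge of the complete graph is a genuine member.

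Second, I match conventions with \cite{connelly1982rigidity,connellyguest2022}. There, a stress $\omega$ on a tensegrity is \emph{proper} if $\omega_{ij}\ge0$ on cables and $\omega_{ij}\le0$ on struts. The equilibrium condition is $\sum_j\omega_{ij}(x_j-x_i)=0$ at each vertex $i$. This is exactly \eqref{eq:self-stress}, after exchanging the roles of $i$ and $j$ and using $\omega_{ij}=\omega_{ji}$. So I declare the sides to be cables and the diagonals to be struts. With that declaration, $\omega$ is a proper, indeed strictly proper, equilibrium stress on the tensegrity whose underlying graph is the complete graph on $x_1,\dots,x_n$. This is precisely Connelly's setting of a convex polygon with cables on the boundary and struts inside.

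The only point needing care is the sign convention: positive stress means tension, i.e.\ a cable pulling its endpoints together. I would check this against the force reading. In \eqref{eq:self-stress}, a term $\omega_{ij}(x_i-x_j)$ with $\omega_{ij}>0$ pulls $x_j$ toward $x_i$, which is cable behaviour. This also agrees with the remark after Lemma~\ref{lem:stress-form}: for $a<0$, positive weight corresponds to $r_{ij}<r_0$, the pairs pulled together. Once this convention is confirmed, nothing else is needed. Convexity enters only to make the words ``sides'' and ``diagonals'' meaningful, and it will be used substantively only later, when Connelly's theorem is invoked.
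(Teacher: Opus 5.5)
Your proposal is correct and follows the paper's argument, which the paper states in one line: combine Lemma~\ref{lem:stress-form}, positivity of the masses and the signs in Definition~\ref{def:mb-condition}; you also spell out the cable/strut sign convention of \cite{connelly1982rigidity,connellyguest2022}, which the paper leaves implicit. One minor correction: positivity of the $m_i$ comes from the standing assumption of Section~\ref{sec:cc-equations}, not from Definition~\ref{def:mb-condition}, which says nothing about masses.
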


\begin{proof}
Immediate from Lemma~\ref{lem:stress-form}, the positivity of the masses,
and Definition~\ref{def:mb-condition}.
\end{proof}

The Dziobek--Williams equations produce the underlying sign dichotomy for
free in the first classical case.

\begin{corollary}[Four bodies]\label{cor:mb-four}
Let $x$ be a strictly convex planar central configuration of four bodies.
Then the four sides carry a common sign of $s_{ij}$ and the two diagonals
the opposite common sign: $x$ satisfies either the MacMillan--Bartky
condition or the condition with both signs reversed.
\end{corollary}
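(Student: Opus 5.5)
The plan is to apply the Dziobek--Williams equations (Proposition~\ref{dziowill}) with $n=4$, $k=2$, $t=1$. In this case $S^{(1)}=S$ itself, and the equations read $s_{ij}=\kappa\,z_iz_j$ for all $i,j$. The exponents $1,1$ and the trivial $0=0$ are not issues here. For $i\neq j$, the relevant entries are the off-diagonal shape variables. The coordinates are $z_i=\epsilon_{i,\{i\}^c}\Delta_{\{i\}^c}/m_i$, which are the mass-weighted signed areas of the triangles obtained by deleting body $i$.

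\textbf{Step one: signs of the $z_i$.} Label the strictly convex quadrilateral in cyclic order $1,2,3,4$ after relabeling, so that the sides are $12,23,34,41$ and the diagonals are $13,24$. For a strictly convex quadrilateral, all four triangle areas are nonzero. The affine dependency $\sum_i \epsilon_i\Delta_{\{i\}^c}\,g_i=0$ (Cramer's rule on the $3\times4$ matrix $X$) has coefficients of alternating cyclic sign pattern $+,-,+,-$; this is the standard characterization of convex position, since the two diagonals cross. The factor $\epsilon_{i,\{i\}^c}=(-1)^{i-1}$ combines with the orientation signs of $\Delta_{\{i\}^c}$. I would check in coordinates that in convex position the products $z_iz_j$ have one sign for $i,j$ adjacent and the opposite sign for $i,j$ opposite. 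Concretely, $z_1,z_3$ share one sign and $z_2,z_4$ the other, because the coefficients $D_i=\epsilon\Delta$ are exactly the coefficients of the affine dependency, and positive masses do not affect the signs.

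\textbf{Step two: $\kappa\neq0$.} If $\kappa=0$, then every $s_{ij}=0$ for $i\ne j$. This contradicts Remark~\ref{rem:r0-threshold}, which rules out all $s_{ij}$ vanishing or sharing one sign, since $\sum_{i\neq j}m_is_{ij}=-m_js_{jj}$. More directly, Lemma~\ref{lem:shape-equations}(ii) would force $\lambda x_j$ terms to cancel, contradicting the fact that $x$ is a central configuration with $\lambda\ne0$. Alternatively, one can note that $S\ne0$ gives $\operatorname{rank} S\ge1$. Hence $\operatorname{sign}(s_{ij})=\operatorname{sign}(\kappa)\operatorname{sign}(z_iz_j)$. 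By Step one, this is one common sign on the four sides and the opposite sign on the two diagonals, which is the claimed dichotomy.

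\textbf{Main obstacle.} The main step is the sign bookkeeping in Step one: relating the shuffle signs $\epsilon_{i,\{i\}^c}$ and the orientations of the complementary triangles to the convexity alternation. I would handle it by identifying $(D_1,\dots,D_4)$ with a generator of $\ker X$ via Lemma~\ref{lem:hodge-dual} (the case $t=1$). The statement then follows from the fact that a kernel vector $c$ of $X$ with all entries nonzero, $\sum c_i=0$ and $\sum c_ix_i=0$, has the sign pattern $(+,-,+,-)$ up to a global sign exactly when the points are in convex position with the given cyclic order. This holds because $\sum_{c_i>0}c_ix_i/\sum_{c_i>0}c_i$ is the intersection point of the two diagonals.
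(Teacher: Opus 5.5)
Your Step one follows the paper's proof. At $t=1$ the vector $(D_i)$ is, up to a global sign, a generator of $\ker X$. The crossing of the diagonals, i.e.\ the Radon partition $\{1,3\}\sqcup\{2,4\}$, gives it the alternating sign pattern, so $z_iz_j<0$ on the sides and $z_iz_j>0$ on the diagonals. When you write this out, run the argument in this direction: the diagonals meet at $p=\alpha x_1+(1-\alpha)x_3=\beta x_2+(1-\beta)x_4$ with $\alpha,\beta\in(0,1)$. Hence $(\alpha,-\beta,1-\alpha,\beta-1)\in\ker X$, and $\ker X$ is a line.

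The gap is in Step two: none of your three reasons for $\kappa\neq0$ is valid. Here $t=1$, so $\kappa=0$ means $S=0$, i.e.\ $r_{ij}=r_0$ for all six pairs. This is compatible with the central configuration equations: the regular tetrahedron is a four-body central configuration with $\lambda=Mr_0^{2a}\neq0$ and $S=0$ (Remark~\ref{rem:t-zero}).
\begin{itemize}
\item Lemma~\ref{lem:shape-equations}(ii) holds trivially when $S=0$. The $\lambda x_j$ terms cancel against the $r_0^{2a}$ terms exactly as in that lemma's proof, so there is no contradiction with $\lambda\neq0$.
\item The identity $\sum_{i\neq j}m_is_{ij}=-m_js_{jj}$ is just the definition of $s_{jj}$, and it is satisfied trivially when every off-diagonal entry vanishes. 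So Remark~\ref{rem:r0-threshold} cannot be used to exclude $S=0$.
\item ``$S\neq0$ gives $\operatorname{rank}S\ge1$'' assumes what you need to prove.
\end{itemize}

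The missing ingredient is planarity. Four pairwise equidistant points form a regular tetrahedron and cannot lie in a plane. Equivalently, $\operatorname{bwc}(x)=0$ forces $\delta(x)=n-1=3$. This one line is how the paper concludes; with it, $\operatorname{sign}(s_{ij})=\operatorname{sign}(\kappa)\operatorname{sign}(z_iz_j)$ and the dichotomy follows.
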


\begin{proof}
Here $k=2$ and $t=n-k-1=1$, so Proposition~\ref{dziowill} reads
$s_{ij}=\kappa\, z_i z_j$. At $t=1$,
$D_i=\epsilon_{\{i\},\{i\}^c}\,\Delta_{\{i\}^c}$ is the $i$-th signed
maximal minor of the configuration matrix $X$, so
$z_i=D_i/m_i=\delta_i/m_i$ for a suitable generator $\delta$ of $\ker(X)$.
For four bodies in strictly convex position every $\delta_i$ is nonzero (a
vanishing coefficient would force the remaining three bodies to be
collinear), and the Radon partition of the four points separates the
endpoints of the two diagonals, so the signs of $\delta$ alternate along
the cyclic order. Hence $z_iz_j<0$ exactly on the sides and $z_iz_j>0$
exactly on the diagonals. Moreover $\kappa\neq0$: otherwise $S=0$ and all
six mutual distances would equal $r_0$, which is impossible in the plane.
The dichotomy follows. In the Newtonian case the second alternative does
not occur: MacMillan and Bartky rule it out by observing that four sides
greater than $r_0$ together with two diagonals smaller than $r_0$ is
geometrically impossible for a convex quadrilateral
\cite[\S14]{macmillanbartky1932}.
\end{proof}

The deeper content of Proposition~\ref{prop:mb-tensegrity} comes from a
theorem of Connelly, Theorem~5 of \cite{connelly1982rigidity}, stated
there for proper stresses that may vanish on some members and for struts
on an arbitrary subset of the internal edges; we state the strict-sign
case used here, reading ``convex'' in the strictly convex sense of
\cite[p.~23]{connelly1982rigidity}. This is the case Connelly's proof
covers: its interpolation endpoint, the Cauchy polygon, is defined there
for strictly convex vertex sets only.

\begin{theorem}[Connelly {\cite[Theorem~5]{connelly1982rigidity}}]\label{thm:connelly}
Let $p_1,\dots,p_n$, $n\ge4$, be the vertices, in cyclic order, of a
strictly convex polygon in the plane, and let $\omega$ be an equilibrium
stress on the complete graph on these vertices,
$\sum_{j\neq i}\omega_{ij}(p_j-p_i)=0$ for every $i$, with
$\omega_{i,i+1}>0$ on the edges of the polygon and $\omega_{ij}<0$ on the
diagonals. Then the stress matrix $\Omega$, defined by
$\Omega_{ij}=-\omega_{ij}$ for $i\neq j$ and
$\Omega_{ii}=\sum_{j\neq i}\omega_{ij}$, is positive semidefinite of rank
$n-3$.
\end{theorem}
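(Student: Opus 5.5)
The statement is a quoted theorem, so the plan is to reconstruct Connelly's argument along its three natural steps: first show that $\Omega$ has nullity exactly $3$, then transport the inertia of $\Omega$ along a connected family of admissible stresses, and finally evaluate the sign at one explicit member of the family.

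\textbf{Step 1: the kernel contains a three-dimensional space.} Write the equilibrium conditions coordinatewise. Each row of $\Omega$ sums to zero, so $\Omega\mathbf 1=0$. The condition $\sum_{j\neq i}\omega_{ij}(p_j-p_i)=0$ says exactly that $\Omega$ annihilates the vector of $x$-coordinates and the vector of $y$-coordinates of the $p_i$. Since the $p_i$ affinely span the plane, these three vectors are independent, so $\operatorname{rank}\Omega\le n-3$.

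\textbf{Step 2: the nullity is exactly $3$ (the expected main obstacle).} Suppose $\Omega q=0$ with $q$ not affine in the $p_i$. Then $\omega$ is also an equilibrium stress of the lifted spatial framework $\hat p_i=(p_i,q_i)\in\mathbb R^3$.

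\begin{itemize}
\item First I would subtract the affine function interpolating $q$ on a triangle of vertices, so that $q$ vanishes at three vertices and is not identically zero.
\item I would then try a maximum-principle argument. Take a vertex $p_i$ at which $q_i$ is extremal among the hull vertices that are extremal in a suitably tilted direction. At such a vertex, every edge neighbour lies on one side and the diagonals fan inward, and I would aim to show that the equilibrium equation in the vertical coordinate, $\sum_j\omega_{ij}(q_j-q_i)=0$, cannot balance under the sign pattern (positive on edges, negative on diagonals).
\item If that balancing argument resists, the fallback is Maxwell--Cremona: a non-affine $q$ in the kernel gives a polyhedral lifting of the planar framework. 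The sign pattern, with cables on the boundary and struts inside, then forces the lifted surface to be simultaneously convex and concave across incompatible creases. This is a contradiction for a strictly convex vertex set.
\end{itemize}

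Making either version airtight is where I expect the real work.

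\textbf{Step 3: propagate the sign from one model case.} Consider the set $\mathcal S$ of pairs (strictly convex labelled polygon, admissible stress). For a fixed polygon, the admissible stresses form a convex cone cut out by linear equations and strict sign inequalities, so they form a connected set. Step 2 says every $\Omega$ in $\mathcal S$ has nullity exactly $3$. Hence no eigenvalue crosses zero along any path in $\mathcal S$, and the inertia of $\Omega$ is constant on connected components of $\mathcal S$.

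To reduce to a model case, I would connect an arbitrary pair to one with an explicit computable spectrum, in one of two ways:
\begin{itemize}
\item move toward a regular $n$-gon carrying a dihedrally symmetric stress, whose $\Omega$ is circulant and whose eigenvalues are explicit cosine sums that can be checked nonnegative;
\item or follow Connelly and interpolate to the Cauchy polygon.
\end{itemize}
To carry the stress along the path, I would argue by induction on $n$. Starting from the base case $n=4$, $\Omega$ has rank one. By the Radon-partition computation of Corollary~\ref{cor:mb-four}, $\Omega=c\,\delta\delta^T$, and positivity of the edge stresses forces $c>0$. For larger $n$, a vertex could be split off or inserted while keeping an admissible stress. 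Controlling connectivity of $\mathcal S$ is the secondary obstacle. If it fails, I would instead argue directly that the negative index is $0$ through the lifting picture, since a negative direction would produce a non-convex lifting.
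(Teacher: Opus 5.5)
Your outline has the same architecture as the paper's sketch of Connelly's proof (Remark~\ref{rem:connelly-sketch}): first nullity three, then transport of inertia from a model stress. But both load-bearing steps are left open, and neither tool you propose for Step~2 can close it.

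\textbf{Step~2 (nullity exactly three).} A maximum principle fails for sign reasons. Subtract an affine $\ell$ so that $f=q-\ell$ has a strict maximum at a vertex $i$; this is what your ``tilted'' choice amounts to. In $\sum_j\omega_{ij}(f_j-f_i)=0$ the two side terms are then negative and every diagonal term is positive, so the equation balances, and no single-vertex argument can work. Maxwell--Cremona does not help either. It relates liftings to \emph{stresses} of a (planarized) drawing, not to kernel vectors of a fixed stress matrix. The lifting attached to $\omega$ simply exists, so its crease pattern gives no contradiction and never sees $q$.

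The missing idea is a global balance argument on the spatial lift. The stress $\omega$ equilibrates $\hat p_i=(p_i,q_i)$ in $\mathbb{R}^3$. Since each $p_i$ is a vertex of the polygon, each $\hat p_i$ lies on the upper convex hull of $\hat p$. If $q$ is not affine, this hull has a strict ridge over some diagonal $e=\{a,b\}$. Now sum the moments, about the line $L$ through $\hat p_a,\hat p_b$, of all forces acting on the vertices on one side of $e$:
\begin{itemize}
\item members internal to that side cancel;
\item members to $a$ or $b$ contribute nothing;
\item every remaining member crosses $e$, hence is a diagonal with $\omega_{ij}<0$.
\end{itemize}
By concavity of the upper hull, each crossing member passes weakly below $L$. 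A member joining a vertex of the adjacent facet on that side, not on $L$, to any vertex across $e$ passes strictly below. All these moments therefore have one sign and at least one is nonzero, which contradicts equilibrium. This is the ``flattening the hinge'' step of the sketch.

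\textbf{Step~3 (the model case).} Even granting Step~2, Step~3 has no model case.
\begin{itemize}
\item The regular-$n$-gon route needs admissible stresses on every polygon of a connecting path, i.e.\ nonempty fibres of $\mathcal S$, which you do not establish.
\item Deleting or inserting a vertex destroys equilibrium at its neighbours, so the proposed induction on $n$ is not a mechanism.
\item Your last fallback fails as well: a negative direction of $\Omega$ is not a kernel vector and gives no equilibrated lift.
\end{itemize}
The Cauchy route is the right one, and it needs no motion of the polygon. For $s\in[0,1)$, the stress $(1-s)\omega+s\omega_C$ is still strictly positive on sides and strictly negative on every diagonal, so your strict-sign Step~2 applies along the half-open segment. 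All these matrices annihilate $\operatorname{span}(\mathbf 1,x,y)$, so continuity on its orthogonal complement carries definiteness back from $s=1$.

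The endpoint is exactly what you must supply. Every strictly convex polygon carries a Cauchy stress (positive on sides, negative on the diagonals $\{i,i+2\}$, zero elsewhere) whose stress matrix is positive semidefinite of rank exactly $n-3$. This is Connelly's Lemma~4, proved by a separate induction on $n$. The exact rank matters: extra nullity at $s=1$ would let an eigenvalue turn negative for $s<1$.
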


\begin{corollary}[Definiteness and maximal rank in the MacMillan--Bartky
regime]\label{cor:mb-psd}
Let $x$ be a strictly convex planar central configuration of $n\ge4$
bodies satisfying the MacMillan--Bartky condition. Then the stress matrix
of the tensegrity of Proposition~\ref{prop:mb-tensegrity} is
$\Omega=-\mu S\mu$, it is positive semidefinite of rank $n-3$, and
consequently $S$ is negative semidefinite with nullity exactly $3$. In
particular $\mathrm{bwc}(x)=n-3$, the maximal value for $k=2$: vertical
degeneracy cannot occur in the MacMillan--Bartky regime, and by
Proposition~\ref{prop:wc-defect} such a configuration is not a limit of
central configurations of dimension greater than two.
\end{corollary}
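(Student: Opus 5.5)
The plan is to apply Theorem~\ref{thm:connelly} directly to the stress of Proposition~\ref{prop:mb-tensegrity}, and then transport its conclusion from $\Omega$ to $S$ by a congruence with the invertible diagonal matrix $\mu$.

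\emph{Identifying $\Omega$.} By Lemma~\ref{lem:stress-form} the weights $\omega_{ij}=m_im_js_{ij}$ satisfy $\sum_{j\neq i}\omega_{ij}(x_j-x_i)=0$ for every $i$, which is exactly the equilibrium condition of Theorem~\ref{thm:connelly}. By Proposition~\ref{prop:mb-tensegrity}, $\omega_{ij}>0$ on the sides and $\omega_{ij}<0$ on the diagonals, with $n\ge4$ and strictly convex position, so all hypotheses hold. It then remains to compute $\Omega$.
\begin{itemize}
\item Off the diagonal, $\Omega_{ij}=-\omega_{ij}=-m_is_{ij}m_j=-(\mu S\mu)_{ij}$.
\item On the diagonal, $\Omega_{ii}=\sum_{j\neq i}m_im_js_{ij}=m_i\sum_{j\neq i}m_js_{ij}$.
\end{itemize}
By the defining convention $m_is_{ii}=-\sum_{j\neq i}m_js_{ij}$ of \eqref{sdef} (using the symmetry of $S$), the diagonal entry equals $-m_i^2s_{ii}=-(\mu S\mu)_{ii}$. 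Hence $\Omega=-\mu S\mu$. This sign and diagonal bookkeeping is the only point needing care, because Connelly's sign convention for $\Omega$ must match ours exactly.

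\emph{Transporting to $S$.} Theorem~\ref{thm:connelly} gives that $\Omega$ is positive semidefinite of rank $n-3$. Since $\mu$ is invertible and symmetric, $S=-\mu^{-1}\Omega\mu^{-1}$ is congruent to $-\Omega$. Sylvester's law of inertia then shows that $S$ is negative semidefinite with rank $n-3$, so its nullity is $3$. Therefore $\operatorname{bwc}(x)=\operatorname{rank}(S)=n-3=n-k-1$ with $k=2$. This is the maximal value allowed by the bound $\operatorname{bwc}(x)\le n-k-1$: the three independent columns of $\mu X^T$ already lie in $\ker S$. Hence there is no vertical degeneracy.

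\emph{The limit statement.} This follows by invoking Proposition~\ref{prop:wc-defect}, which says that a rank drop of $S$ below $n-k-1$ is necessary for a configuration to be a meeting point of families of different dimensions. Its contrapositive, applied with $\operatorname{bwc}(x)=n-3$, gives the claim. The main obstacle is ensuring that the strict-sign hypotheses of Connelly's theorem are literally met, including strict convexity in his sense. That is exactly what the MacMillan--Bartky condition supplies, so there is no real difficulty.
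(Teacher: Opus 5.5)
Your proposal is correct and follows the paper's proof step for step: you identify $\Omega=-\mu S\mu$ through the mass convention for $s_{ii}$, invoke Connelly's theorem, transfer the inertia to $S$ by congruence with $\mu^{-1}$, and conclude with the necessity statement of Proposition~\ref{prop:wc-defect}. One small correction of attribution: strict convexity comes from the standing hypothesis of the corollary, not from the MacMillan--Bartky condition, though this does not affect the argument.
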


\begin{proof}
The off-diagonal entries of the stress matrix are
$\Omega_{ij}=-\omega_{ij}=-m_im_js_{ij}$, and the diagonal ones are
$\Omega_{ii}=\sum_{j\neq i}m_im_js_{ij}=-m_i^{2}s_{ii}$, by the mass
convention $m_is_{ii}=-\sum_{j\neq i}m_js_{ij}$; hence $\Omega=-\mu S\mu$.
Under the MacMillan--Bartky condition the stress
$\omega_{ij}=m_im_js_{ij}$ of Lemma~\ref{lem:stress-form} satisfies the
hypotheses of Theorem~\ref{thm:connelly}, so $\Omega\succeq0$ with
$\operatorname{rank}(\Omega)=n-3$. Since $\mu$ is invertible,
$S=-\mu^{-1}\Omega\mu^{-1}$ is congruent to $-\Omega$, and congruence
preserves inertia: $S\preceq0$ with nullity $3$. Finally
$\mathrm{bwc}(x)=\operatorname{rank}(S)=n-3=n-k-1$, and the last claim is
the necessity statement of Proposition~\ref{prop:wc-defect}.
\end{proof}

In the language of rigidity theory, the tensegrity of
Proposition~\ref{prop:mb-tensegrity} is then uniquely embedded in
$\mathbb{R}^{d}$ for every $d\ge2$, hence universally rigid
\cite[Corollary~1]{connelly1982rigidity}; in the terminology of
\cite{connellyguest2022} it is \emph{super stable}.

\begin{remark}[Sketch of Connelly's argument]\label{rem:connelly-sketch}
Since \cite{connelly1982rigidity} lies outside the usual references of
this area, we sketch its two-step argument for the reader's convenience.
First, $\operatorname{nullity}(\Omega)=3$: if it were larger, the same
stress $\omega$ would equilibrate a genuine spatial lift
$\bar p\in\mathbb{R}^{3}$ of the polygon, projecting orthogonally onto
the plane (Lemma~2 of \cite{connelly1982rigidity}). Passing to the convex hull of $\bar p$, its upper facets
project to a subdivision of the original convex polygon with at least one
interior edge $e$. Convexity and properness force some strut to cross $e$
(otherwise lengthening $e$ alone, holding every cable fixed, would
strictly decrease the energy at what is supposed to be a critical point, a
contradiction), and flattening the corresponding hinge of $\bar p$
strictly decreases the energy again, a second contradiction. Hence the
lift cannot be genuinely three-dimensional, and
$\operatorname{nullity}(\Omega)=3$. Second, for positive semidefiniteness,
Connelly interpolates linearly between $\Omega$ and the stress matrix of
the \emph{Cauchy polygon} on the same points (struts only on the short
diagonals $\{i,i+2\}$), which is positive semidefinite by an explicit
induction on the number of vertices (Lemma~4 there). Every matrix on this segment is
itself a proper-stress matrix for the same points, so the nullity-three
argument applies throughout; since the nullity never drops, positive
semidefiniteness transfers from the Cauchy-polygon endpoint to $\Omega$ by
continuity (Lemma~3 there).
\end{remark}

\begin{remark}[The condition is genuinely restrictive]\label{rem:mb-polygon}
Strict convexity does not imply the MacMillan--Bartky condition. For four
bodies it is automatic in the Newtonian case, by Corollary~\ref{cor:mb-four}
and the geometric exclusion of \cite[\S14]{macmillanbartky1932}; but it can
fail already at $n=5$: Chen and Hsiao prove that every side of a strictly
convex planar five-body central configuration satisfies $s_{ij}>0$, while
only at least three of the five diagonals are forced to satisfy $s_{ij}<0$
\cite[Theorems~5.2 and~6.1]{chen2018strictly}, and they exhibit numerical
examples in which one or two diagonals satisfy $s_{ij}>0$
\cite[\S8]{chen2018strictly}. For the regular $n$-gon with equal masses,
inscribed in the unit circle, the thresholds are explicit. The mutual distances are $d_k=2\sin(k\pi/n)$, and
pairing equation~\eqref{eqccro} with $x_j$ gives
\[
r_0^{2a}\;=\;\frac{1}{2n}\sum_{k=1}^{n-1}d_k^{\,2a+2},
\]
since $(x_{j+k}-x_j)\cdot x_j=-d_k^{2}/2$ and $\sum_{k}d_k^{2}=2n$. For
$a<0$ the condition reads $d_1<r_0<d_2$, and the formula decides it
directly: it holds precisely for $4\le n\le7$ at the Newtonian exponent
$a=-3/2$, precisely for $4\le n\le6$ at $a=-1/2$, and precisely for
$4\le n\le10$ at $a=-5/2$. (Failure for all larger $n$ follows, for
$a\le-1$, from the bound $r_0^{2a}\le\tfrac12\,d_1^{\,2a+2}$ together with
finitely many evaluations of the formula, and, for $-1\le a<0$, from the
fact that $r_0$ tends to a positive constant while $d_2\to0$.) Past the
threshold the shortest diagonals carry positive stress and the
cable--strut pattern of Proposition~\ref{prop:mb-tensegrity} breaks,
although the configuration remains strictly convex; the sign structure
beyond the MacMillan--Bartky regime will be developed elsewhere.
\end{remark}

\section{Pl\"ucker--BWC Coordinates for Central Configurations}\label{sec:pwc}

In this section we define another set of Pl\"ucker coordinates for a central configuration, bridging the geometry of the configuration with the algebraic properties of the Brehm--Wintner--Conley matrix.

\begin{definition}\label{def:wc-dimension}
Let $x$ be a central configuration. The \emph{Brehm--Wintner--Conley dimension} of $x$ is defined as
\[
t = \mathrm{bwc}(x) := \mathrm{rank}(S).
\]
\end{definition}

Since the $k+1$ linearly independent columns of $\mu X^T$ lie in the right kernel of $S$, the rank is bounded by $\mathrm{bwc}(x) \le n-k-1$; equality holds exactly in the generic (equivalently, maximal-rank) case, recovering the value $t=n-k-1$ used in Sections~\ref{sec:wc-matrix}--\ref{sec:dziobek-williams} (density of this stratum is asserted only in the precise sense of Remark~\ref{rem:wc-grassmannian-remarks}(ii)). From here on, $t$ denotes the true rank $\mathrm{bwc}(x)$. Hence $\delta(x) + \mathrm{bwc}(x) \le n-1$.

\begin{remark}[The stratum $t=0$]\label{rem:t-zero}
The value $\mathrm{bwc}(x)=0$ does occur, and the corresponding stratum is
completely explicit: $\mathrm{bwc}(x)=0$ means $S=0$, that is,
$r_{ij}=r_{0}$ for all $i<j$, so $x$ is a regular simplex and
$\delta(x)=n-1$. Conversely, every central configuration of the maximal
dimension $\delta(x)=n-1$ satisfies
$\mathrm{bwc}(x)\le n-(n-1)-1=0$, hence $S=0$: it is the regular simplex.
On this stratum we use the conventions $I_{0}=\{\emptyset\}$,
$\det(S_{\emptyset}^{\emptyset})=1$ (the empty minor), $S^{(0)}=(1)$, and
empty products equal to $1$. With these conventions the Main Theorem holds
literally at $t=0$: $\ker(S_x)=\mathbb{R}^{n}$, the dual Pl\"ucker vector
is the single nonzero scalar $D^{\mathrm{bwc}}_{\emptyset}=\det(Y)$ for a
kernel basis matrix $Y$, the factorization reads
$(1)=\kappa^{\mathrm{bwc}}\det(Y)^{2}$ with
$\kappa^{\mathrm{bwc}}=\det(Y)^{-2}\neq0$, and $\Psi_{0}$ is the constant
map to $\mathbb{P}^{0}$.
\end{remark}

\begin{remark}\label{rem:wc-invariance}
Two clarifications on Definition~\ref{def:wc-dimension} are worth making explicit. First, since $S$ is built from the mass vector $m=(m_1,\dots,m_n)$ as well as from the positions $x$, the class $[x]$ used throughout this section should be understood as a class of the \emph{pair} $(x,m)$: the symmetry-and-homothety equivalence acts on positions only, with $m$ held fixed. Second, $t=\mathrm{bwc}(x)$ is genuinely an invariant of this class. Under an isometry of $\mathbb{R}^k$, $S$ does not change at all, since it depends only on the mutual distances $r_{ij}$ and on $m$. Under a homothety $x\mapsto \alpha x$, both $r_{ij}$ and $r_0$ scale by $\alpha$ (they satisfy the same relation $r_0^{2a}=M^{-1}\lambda$), so $s_{ij}=r_{ij}^{2a}-r_0^{2a}$ scales by the same nonzero factor $\alpha^{2a}$ for \emph{every} $i,j$, and hence $S \mapsto \alpha^{2a}S$. A nonzero overall rescaling changes neither the rank of $S$ nor its kernel, so both $\mathrm{bwc}(x)$ and $\ker(S_x)$ (cf.\ Definition~\ref{def:wc-grassmannian-map} below) depend only on $[x]$, exactly, and not merely up to some ambiguity.
\end{remark}

\begin{definition}[Brehm--Wintner--Conley defect]\label{def:wc-defect}
Let $x$ be a central configuration of dimension $k$, and let
$A_x:=\operatorname{Im}(\mu X^{T})\subseteq\ker(S_x)$ be the trivial part of
the kernel, of dimension $k+1$. The \emph{Brehm--Wintner--Conley defect} of $x$ is
\[
\operatorname{vdef}(x)\;:=\;\dim\big(\ker(S_x)/A_x\big)\;=\;n-k-1-\mathrm{bwc}(x).
\]
\end{definition}

\begin{proposition}[Vertical degeneracy is necessary at meetings of strata]\label{prop:wc-defect}
Let $x$ be a central configuration of dimension $k$. The following
are equivalent: \textup{(i)} $\mathrm{bwc}(x)<n-k-1$; \textup{(ii)} $A_x$ is
a proper subspace of $\ker(S_x)$; \textup{(iii)} $\operatorname{vdef}(x)>0$.
Moreover, if $[x]$ is the limit, in the quotient of collision-free
configurations by isometries and homotheties with fixed masses, of classes
of central configurations of dimension $k'\ge k+1$, then
\[
\operatorname{vdef}(x)\;\ge\;k'-k\;\ge\;1.
\]
The converse is not asserted: a positive defect is necessary for $x$ to be
such a limit, not sufficient.
\end{proposition}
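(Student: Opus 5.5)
The equivalences are bookkeeping. Since the $k+1$ columns of $\mu X^{T}$ are linearly independent (rank $X=k+1$, $\mu$ invertible) and lie in $\ker(S_x)$, the trivial part $A_x$ is a $(k+1)$-dimensional subspace of $\ker(S_x)$, which has dimension $n-\mathrm{bwc}(x)$. Hence
\[
\operatorname{vdef}(x)=\big(n-\mathrm{bwc}(x)\big)-(k+1),
\]
and (i), (ii), (iii) each say that this integer is positive. Here (ii) holds because $A_x\subseteq\ker(S_x)$ is proper exactly when the dimensions differ.

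For the limit statement I would use lower semicontinuity of the nullity. Let $[x^{(p)}]\to[x]$ with $\delta(x^{(p)})=k'\ge k+1$. First choose representatives $x^{(p)}\to x$ in configuration space. This is possible because the quotient is by isometries and homotheties, and the masses are fixed. Collisions are excluded, so all $r_{ij}$ stay bounded away from $0$ and $\infty$ after normalizing the scale.

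The next step is the multiplier. The central configuration equation \eqref{eq:cc-def}, paired with $x_j$ and summed, expresses $\lambda$ as a continuous function of the configuration, namely the ratio of $\sum m_im_jr_{ij}^{2a+2}$ to the moment of inertia. So $\lambda^{(p)}\to\lambda$, hence $r_0^{(p)}\to r_0$ and $S_{x^{(p)}}\to S_x$ entrywise. The diagonal entries are continuous too, since the masses are fixed and nonzero.

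Rank is lower semicontinuous, so $\mathrm{bwc}(x)\le\liminf\mathrm{bwc}(x^{(p)})$. Each approximant satisfies the bound $\mathrm{bwc}(x^{(p)})\le n-k'-1$ from the discussion after Definition~\ref{def:wc-dimension}. Together these give $\mathrm{bwc}(x)\le n-k'-1$, and therefore
\[
\operatorname{vdef}(x)=n-k-1-\mathrm{bwc}(x)\ge k'-k\ge1.
\]
An equivalent route is to pass to a subsequence so that the Grassmannian points $\ker(S_{x^{(p)}})\supseteq A_{x^{(p)}}$ converge, giving a limit subspace of dimension at least $k'+1$ inside $\ker(S_x)$. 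The non-converse clause needs no proof; it is a disclaimer.

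The main obstacle I expect is the continuity of $S$ along the sequence. Specifically, one has to justify that convergence in the quotient lifts to convergent representatives with a convergent multiplier. The worry is that the normalization by homotheties could send the scale to $0$ or $\infty$. I would handle this by fixing the moment of inertia $\mathcal{I}=1$ as a slice for homotheties, and quotienting isometries by centering and using compactness of $O(d)$. Collision-freeness of the limit then keeps every $r_{ij}^{2a}$ finite, which makes the argument above go through.
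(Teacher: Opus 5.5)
Your proposal is correct and is essentially the paper's proof. The equivalences follow from $\dim A_x=k+1$ and $\dim\ker(S_x)=n-\mathrm{bwc}(x)$. The limit bound follows from convergent representatives, continuity of the multiplier (hence of $r_0$ and of $S$ entrywise), lower semicontinuity of rank, and $\mathrm{bwc}\le n-k'-1$ on the approximants. The differences are cosmetic: you slice homotheties by $\mathcal{I}=1$ where the paper normalizes $r_0=1$, and your formula expressing $\lambda$ as $\sum_{i<j}m_im_jr_{ij}^{2a+2}$ divided by the moment of inertia makes explicit the continuity of the multiplier that the paper simply asserts.
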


\begin{proof}
The equivalences follow from $\dim A_x=k+1$ and
$\dim\ker(S_x)=n-\mathrm{bwc}(x)$. For the limit statement, write $[x]=\lim_j[x_j]$ with $\delta(x_j)=k'$,
and choose representatives normalized by $r_0=1$, which is possible within each class
since $r_0$ scales linearly under homotheties
(Remark~\ref{rem:wc-invariance}). The multiplier, and hence $r_0$, depends
continuously on a collision-free configuration; thus if $y_j\to y$ are
convergent representatives, then $r_0(y_j)\to r_0(y)>0$, the rescaled
representatives $y_j/r_0(y_j)$ converge to $y/r_0(y)$, and
$s_{ij}=r_{ij}^{2a}-1$ converges entrywise: $S_{x_j}\to S_x$. A nonzero minor of $S_x$ remains nonzero for all nearby
matrices, so $\mathrm{bwc}(x)\le\mathrm{bwc}(x_j)\le n-k'-1$ for all large
$j$, and therefore
$\operatorname{vdef}(x)=n-k-1-\mathrm{bwc}(x)\ge k'-k\ge1$. A sequence of
varying dimensions reduces to this case along a subsequence of constant
dimension, since the dimensions take finitely many values; the bound then
holds with $k'$ any dimension attained infinitely often.
\end{proof}

\begin{remark}[Relation to the work of Albouy and Fernandes]\label{rem:af-comparison}
The limit statement above is the quantitative, fixed-mass form of
\cite[Proposition~1.4]{albouy2024limit}, which is proved there in greater
generality: the masses are allowed to converge along the sequence, and the
rank-semicontinuity argument applies verbatim (it is the first of the two
proofs given there). The second proof in \cite{albouy2024limit} is
constructive: parametrizing the degenerating family by a Taylor series, in
the sense of Hampton and Moeckel \cite{hamptonmoeckel2006}, exhibits an
explicit kernel vector, namely the leading coefficient of the vertical
coordinates along the branch \cite[Propositions~3.1
and~3.2]{albouy2024limit}; this is information that the semicontinuity
argument does not provide. The decisive companion result at $n=5$ is
\cite[Theorem~1.2]{albouy2024limit}: a planar five-body central
configuration has no nontrivial vertical degeneracy, that is,
$\operatorname{vdef}(x)=0$. Combined with the necessity above, a planar
five-body central configuration is not a limit of spatial ones
\cite[Theorem~1.1]{albouy2024limit}.
\end{remark}

One of the main difficulties in proceeding with this approach lies in the fact that, for a central configuration $x$ of fixed dimension, the rank of the matrix $S$ is generally unknown. A comprehensive account of the existing results on this topic is provided in~\cite{albouy2024limit}. In particular, Albouy and Fernandes prove in that work that a planar central configuration of five bodies satisfies $\mathrm{rank}(S)=2$. To proceed, we therefore fix the rank of $S$.

Let $\mathfrak{X}_{n,k,t}$ be the set of equivalence classes of central configurations of $n$ bodies with dimension $k$ and Brehm--Wintner--Conley dimension $t$, modulo symmetries and homotheties.

\begin{definition}\label{def:wc-grassmannian-map}
Let $[x] \in \mathfrak{X}_{n,k,t}$. The \emph{BWC Grassmannian Map} is defined as
\[
\Phi: \mathfrak{X}_{n,k,t} \to \mathrm{Gr}(n-t,n), \qquad [x] \mapsto \ker(S_x).
\]

The Pl\"ucker coordinates of the subspace $\ker(S_x)$, representing a point in $\mathbb{P}\big(\bigwedge^{n-t}\mathbb{R}^n\big)$, are well defined for the class $[x]$ and are called the \emph{Pl\"ucker--BWC coordinates} of the configuration, denoted $\Delta^{\mathrm{bwc}}_K$.

Since $\ker(S_x) = \ker(S_{\tilde x})$ for equivalent configurations, $\Phi$ is strictly well defined.
\end{definition}

\begin{remark}\label{rem:wc-grassmannian-remarks}
Three further remarks on Definition~\ref{def:wc-grassmannian-map}.
\begin{enumerate}
\item[(i)] The target of $\Phi$ is forced, not chosen: by Definition~\ref{def:wc-dimension} and the rank--nullity theorem, $\dim\ker(S_x)=n-t$ automatically, so $\Phi$ always lands in $\mathrm{Gr}(n-t,n)$ without any further hypothesis.
\item[(ii)] Because $\mathrm{rank}(S)$ can only \emph{drop} under specialization (the locus $\{S : \mathrm{rank}(S)\le \tau\}$ is Zariski closed for every $\tau$), the maximal-rank stratum $\mathfrak{X}_{n,k,n-k-1}$ is open among dimension-$k$ classes, while for each $\tau<n-k-1$ the union of the strata with $\mathrm{bwc}(x)\le\tau$ is closed; each stratum is therefore locally closed. On every irreducible component of an algebraic family of dimension-$k$ classes that is not contained in a lower-rank locus, the maximal-rank stratum is a nonempty Zariski-open, hence dense, subset; no density assertion is intended for arbitrary connected components, on which a nonempty open set need not be dense. This is why $\Phi$ can only be defined stratum by stratum: its own target $\mathrm{Gr}(n-t,n)$ changes dimension with $t$, so no single map is defined uniformly over all dimension-$k$ classes at once.
\item[(iii)] Exactly as with the classical $\Delta_I$, the coordinates $\Delta^{\mathrm{bwc}}_K$ are canonical only up to one overall nonzero scalar (the choice of basis $Y$ for $\ker(S_x)$), which is precisely why Definition~\ref{def:wc-grassmannian-map} phrases them projectively, as a point of $\mathbb{P}(\bigwedge^{n-t}\mathbb{R}^n)$, rather than as numbers. From here on, once a representative $Y$ has been fixed, we treat $\Delta^{\mathrm{bwc}}_K$ as an honest scalar; every identity stated below is a ratio or a product of an \emph{even} number of such scalars (Propositions~\ref{prop:6.2-wc} and~\ref{prop:7.2-wc}), hence independent of this choice, exactly as in the classical case.
\end{enumerate}
\end{remark}

We can now formulate these relations directly in terms of the equivalence class $[x]$ and its associated algebraic coordinates.

In the maximal rank scenario where $\mathrm{bwc}(x) = n-k-1$ (e.g.\ planar $5$-body configurations, where $t=2$), the right kernel of $S$ is precisely spanned by the columns of the mass-weighted configuration matrix $\mu X^T$. Consequently, the algebraic Pl\"ucker--BWC coordinates $\Delta^{\mathrm{bwc}}_K$ naturally factorize into the purely geometric Pl\"ucker coordinates (the oriented volumes $\Delta_K$ of the subconfigurations), scaled by the intrinsic masses of the bodies involved.

Hence, for any index set $K$ with $|K|=k+1$, we obtain the relation
\begin{equation}\label{eq:wc-generic-factorization}
\Delta^{\mathrm{bwc}}_K = \Big(\prod_{q\in K} m_q\Big)\, \Delta_K.
\end{equation}

\subsection{Equations for Central Configurations with Fixed Brehm--Wintner--Conley Dimension}

In this subsection we obtain determinantal identities for the Pl\"ucker--BWC coordinates $\Delta^{\mathrm{bwc}}$, generalizing Lemma~\ref{lem:eqg}, Proposition~\ref{prop:geometric_S}, Proposition~\ref{Wilgen-prop}, Proposition~\ref{dziowill} and Proposition~\ref{prop:kappa-formula} to an arbitrary, possibly non-generic, Brehm--Wintner--Conley dimension $t$.

Throughout the rest of this subsection, fix $[x] \in \mathfrak{X}_{n,k,t}$ and let $Y$ be an $(n-t)\times n$ matrix whose rows form a basis of $\ker(S_x)$, so that $\Delta^{\mathrm{bwc}}_K = \det(Y_{\cdot,K})$ for $K \in I_{n-t}$; write $y_q \in \mathbb{R}^{n-t}$ for the $q$-th column of $Y$.

\begin{lemma}[Analogue of Lemma~\ref{lem:eqg}]\label{lem:3.1-wc}
With notation as above,
\[
\sum_{q=1}^n s_{iq}\, y_q = 0 \in \mathbb{R}^{n-t}, \qquad i=1,\dots,n.
\]
\end{lemma}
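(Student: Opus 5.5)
The plan is to translate the statement into a single matrix identity and read it off from the definition of $Y$. The rows $y^{(1)},\dots,y^{(n-t)}\in\mathbb{R}^n$ of $Y$ form a basis of $\ker(S_x)$. Here $\ker(S_x)$ is the right kernel, $\{v\in\mathbb{R}^n: S_xv=0\}$, as in Definition~\ref{def:wc-grassmannian-map}. Hence each row satisfies $S\,(y^{(r)})^{T}=0$, and collecting the rows gives the matrix identity
\[
S\,Y^{T}=0\qquad\text{($n\times(n-t)$ zero matrix)}.
\]
Equivalently, transposing and using the symmetry of $S$ (Section~\ref{sec:wc-matrix}), we get $Y S=0$.

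Next I unpack $YS=0$ column by column. The $i$-th column of $YS$ is $\sum_{q=1}^n y_q\, s_{qi}$, where $y_q$ is the $q$-th column of $Y$. By symmetry $s_{qi}=s_{iq}$, so this column equals $\sum_q s_{iq}\,y_q$. Its vanishing for every $i$ is exactly the claimed identity in $\mathbb{R}^{n-t}$.

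It helps to record the parallel with Lemma~\ref{lem:eqg}, since that is the role this lemma plays later. In the generic stratum $t=n-k-1$, one may take $Y=X\mu$, because the columns of $\mu X^{T}$ span $\ker(S)$. Then $y_q=m_q g_q$, and the identity reduces to \eqref{eqg}. So the lemma is the verbatim replacement of the mass-weighted configuration columns $m_qg_q$ by the kernel columns $y_q$.

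There is essentially no obstacle. The only point needing care is bookkeeping: whether ``kernel'' means left or right kernel, and whether the index is summed against rows or columns of $S$. Both issues are neutralized by the symmetry of $S$, so I would state the symmetry explicitly at the step where $s_{qi}$ is replaced by $s_{iq}$.
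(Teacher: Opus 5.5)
Your proof is correct and is essentially the paper's: both derive $SY^{T}=0$ from the fact that the rows of $Y$ span $\ker(S_x)$, and then read off the identity entrywise. The only difference is cosmetic: the paper reads the $i$-th row of $SY^{T}=0$ directly, and that row is already $\sum_q s_{iq}\,y_q^{T}$, so your detour through $YS=0$ and the symmetry of $S$ is harmless but unnecessary.
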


\begin{proof}
Each row of $Y$, regarded as a vector in $\mathbb{R}^n$, lies in $\ker(S)$ by construction, so $SY^T=0$. The $i$-th row of this matrix identity reads $\sum_{q} s_{iq}y_q=0$.
\end{proof}

\begin{remark}
In the maximal-rank case $t=n-k-1$, taking $Y=X\mu$ (whose rows are the columns of $\mu X^T$, shown above to lie in $\ker(S)$) gives $y_q=m_qg_q$, and Lemma~\ref{lem:3.1-wc} reduces to Lemma~\ref{lem:eqg} (after relabeling $i\leftrightarrow q$ and using $s_{iq}=s_{qi}$).
\end{remark}

\begin{proposition}[Analogue of Proposition~\ref{prop:geometric_S}]\label{prop:3.2-wc}
For every $H\in I_{n-t-1}$ and every $i\in\{1,\dots,n\}$,
\[
\sum_{l\notin H}\epsilon_{l,H}\, s_{il}\,\Delta^{\mathrm{bwc}}_{H\cup\{l\}} = 0.
\]
\end{proposition}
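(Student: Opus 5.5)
The plan is to copy the proof of Proposition~\ref{prop:geometric_S} verbatim, with Lemma~\ref{lem:3.1-wc} playing the role of Lemma~\ref{lem:eqg}. The columns $y_q$ of $Y$ now live in $\mathbb{R}^{n-t}$ instead of $\mathbb{R}^{k+1}$, and the top exterior power $\bigwedge^{n-t}\mathbb{R}^{n-t}$ is one-dimensional. Identifying it with $\mathbb{R}$ through $e_1\wedge\cdots\wedge e_{n-t}\mapsto1$, every wedge of $n-t$ columns of $Y$ becomes a determinant, namely a Pl\"ucker--BWC coordinate up to sign.

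Fix $H=\{h_1<\cdots<h_{n-t-1}\}\in I_{n-t-1}$ and set $w_H:=y_{h_1}\wedge\cdots\wedge y_{h_{n-t-1}}\in\bigwedge^{n-t-1}\mathbb{R}^{n-t}$. Take the identity $\sum_{q}s_{iq}y_q=0$ of Lemma~\ref{lem:3.1-wc} and wedge it on the right by $w_H$. This gives
\[
\sum_{l=1}^{n}s_{il}\,y_l\wedge w_H=0\in\bigwedge^{n-t}\mathbb{R}^{n-t}.
\]
The terms with $l\in H$ vanish, because they contain a repeated vector.

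For $l\notin H$, the wedge $y_l\wedge y_{h_1}\wedge\cdots\wedge y_{h_{n-t-1}}$ is the determinant of the columns of $Y$ listed in the order $(l,h_1,\dots,h_{n-t-1})$. Reordering these columns increasingly into $H\cup\{l\}$ costs exactly the signature of the permutation that sorts the concatenation $(\{l\},H)$. By Definition~\ref{def:shuffle_sign} this signature is $\epsilon_{l,H}$. Hence $y_l\wedge w_H=\epsilon_{l,H}\det(Y_{\cdot,H\cup\{l\}})=\epsilon_{l,H}\Delta^{\mathrm{bwc}}_{H\cup\{l\}}$, and substituting this into the display gives the claimed identity.

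The only step needing care is this sign bookkeeping. I would check it directly from the formula~\eqref{eq:shuffle_sign}: moving $l$ past the $p$ elements of $H$ smaller than $l$ contributes $(-1)^{p}$. The formula also gives $(-1)^{\operatorname{pos}(l)-1}=(-1)^{p}$, so the two agree.

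It is also worth noting that the identity is invariant under a change of the basis matrix $Y$. Such a change multiplies every $\Delta^{\mathrm{bwc}}$ by the same nonzero factor, so the statement is well defined for the class $[x]$, consistent with Remark~\ref{rem:wc-grassmannian-remarks}(iii).

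Finally, in the generic case one can take $Y=X\mu$. Then, by \eqref{eq:wc-generic-factorization} and after cancelling $\prod_{h\in H}m_h$, the identity reduces to \eqref{eqSP}, which serves as a consistency check.
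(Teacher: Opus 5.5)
Your proposal is correct and is essentially the paper's own proof: you wedge the identity $\sum_q s_{iq}y_q=0$ of Lemma~\ref{lem:3.1-wc} with $w_H$, discard the terms with $l\in H$, and identify $y_l\wedge w_H=\epsilon_{l,H}\,\Delta^{\mathrm{bwc}}_{H\cup\{l\}}\,e$. Your explicit check that $\epsilon_{l,H}=(-1)^{p}$ matches the sign of moving $y_l$ past the $p$ smaller columns is a useful detail that the paper leaves implicit by referring back to Proposition~\ref{prop:geometric_S}.
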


\begin{proof}
Fix $i$ and take the wedge product of both sides of Lemma~\ref{lem:3.1-wc} with $w_H := y_{h_1}\wedge\cdots\wedge y_{h_{n-t-1}}$, where $H=\{h_1<\cdots<h_{n-t-1}\}$:
\[
\sum_{q=1}^n s_{iq}\,(y_q\wedge w_H) = 0 \in \bigwedge^{n-t}\mathbb{R}^{n-t}.
\]
If $q \in H$, then $y_q \wedge w_H = 0$. Otherwise, exactly as in the proof of Proposition~\ref{prop:geometric_S}, $y_q\wedge w_H = \epsilon_{q,H}\,\Delta^{\mathrm{bwc}}_{H\cup\{q\}}\, e$, where $e = e_1\wedge\cdots\wedge e_{n-t}$. Substituting yields the result.
\end{proof}

\begin{remark}
Unlike the classical statement, no mass weight appears here: it is already absorbed into $\Delta^{\mathrm{bwc}}$. Indeed, in the maximal-rank case,
\[
\Delta^{\mathrm{bwc}}_{H\cup\{l\}} = \Big(\prod_{q\in H\cup\{l\}} m_q\Big)\,\Delta_{H\cup\{l\}}
\]
by~\eqref{eq:wc-generic-factorization}; cancelling the nonzero factor $\prod_{q\in H}m_q$ recovers Proposition~\ref{prop:geometric_S} with $I=H$.
\end{remark}

\begin{proposition}[Determinantal Formulation of the Kernel]\label{prop:wc-kernel-formulation}
Let $[x] \in \mathfrak{X}_{n,k,t}$ be an equivalence class of central configurations. Let $S$ be its normalized shifted Brehm--Wintner--Conley matrix. Let $I,J \in I_{t+1}$. Let $L=\{1,\dots,n\}\setminus J$, which has cardinality $n-t-1$.

For each $p \in J$, let $L_p = L \cup \{p\}$. Then the vector $v_J \in \mathbb{R}^{t+1}$ defined componentwise by
\begin{equation}\label{eq:9.2}
(v_J)_p = \epsilon_{L,p}\,\Delta^{\mathrm{bwc}}_{L_p}
\end{equation}
belongs to the right kernel of the square submatrix $S^J_I$. That is,
\[
S^J_I\, v_J = 0,
\]
where $\Delta^{\mathrm{bwc}}_{L_p}$ are the Pl\"ucker--BWC coordinates of the class $[x]$.
\end{proposition}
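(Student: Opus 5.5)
The plan is to apply Proposition~\ref{prop:3.2-wc} with $H:=L$. This is legitimate because $L=\{1,\dots,n\}\setminus J$ has cardinality $n-t-1$, so $L\in I_{n-t-1}$. Fix a row index $i\in I$. Proposition~\ref{prop:3.2-wc} then gives
\[
\sum_{l\notin L}\epsilon_{l,L}\,s_{il}\,\Delta^{\mathrm{bwc}}_{L\cup\{l\}}=0 .
\]
The indices $l\notin L$ are exactly the elements $p\in J$, and $L\cup\{p\}=L_p$. The identity therefore reads
\[
\sum_{p\in J} s_{ip}\,\epsilon_{p,L}\,\Delta^{\mathrm{bwc}}_{L_p}=0 .
\]
This is the $i$-th row of a matrix product: the row of $S$ indexed by $i$, restricted to the columns $J$, applied to the vector with $p$-th coordinate $\epsilon_{p,L}\Delta^{\mathrm{bwc}}_{L_p}$. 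Letting $i$ run over $I$ gives all rows of $S_I^J$ at once.

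The only remaining point is that the statement writes the sign as $\epsilon_{L,p}$ instead of $\epsilon_{p,L}$. Reordering $(L,p)$ versus $(p,L)$ into the same increasing sequence $L_p$ differs by moving $p$ across the $n-t-1$ elements of $L$. Hence
\[
\epsilon_{L,p}=(-1)^{n-t-1}\epsilon_{p,L}.
\]
One can also check this directly from \eqref{eq:shuffle_sign}: the positions of the elements of $L$ in $L_p$ together with that of $p$ sum to $\binom{n-t+1}{2}$. The factor $(-1)^{n-t-1}$ does not depend on $p$. So the vector $v_J$ of \eqref{eq:9.2} is a global sign multiple of the vector obtained above, and it lies in $\ker(S_I^J)$ as well.

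I expect no real obstacle here. The only care needed is this sign bookkeeping: verifying that the conversion factor is uniform in $p\in J$. Once that is done, the proof is a one-line specialization of Proposition~\ref{prop:3.2-wc}, mirroring how Proposition~\ref{S} followed from Proposition~\ref{prop:geometric_S}. If desired, one can add, as in Proposition~\ref{S}, that $v_J\neq0$ exactly when the columns $y_l$, $l\in L$, of $Y$ are linearly independent, in which case $\det(S_I^J)=0$. This parallels the last part of Proposition~\ref{S} but is not required by the statement.
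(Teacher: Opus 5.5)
Your proposal is correct and is essentially the paper's argument: the paper's proof inlines the wedge computation behind Proposition~\ref{prop:3.2-wc} (wedging $\sum_q s_{iq}y_q=0$ with $w_L$), and Remark~\ref{rem:9.3-corollary} records exactly your derivation, specializing Proposition~\ref{prop:3.2-wc} to $H=L$ and converting signs via $\epsilon_{L,p}=(-1)^{|L|}\epsilon_{p,L}$. Your explicit treatment of that uniform sign is slightly cleaner than the inlined proof. That proof writes $y_p\wedge w_L=\epsilon_{L,p}\Delta^{\mathrm{bwc}}_{L_p}e$, whereas the ordering $(p,L)$ actually gives $\epsilon_{p,L}$; this is harmless, since the discrepancy is precisely the global factor $(-1)^{n-t-1}$ you identified.
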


\begin{proof}
Let $V=\ker(S)$ be the $(n-t)$-dimensional subspace associated to the class $[x]$. Consider an $(n-t)\times n$ matrix $Y$ whose rows form a basis for $V$. By definition, the Pl\"ucker--BWC coordinates $\Delta^{\mathrm{bwc}}_K$ are exactly the maximal minors of $Y$ of order $n-t$.

Let $y_q \in \mathbb{R}^{n-t}$ denote the $q$-th column vector of $Y$. Since the rows of $Y$ span the kernel of $S$, we have the matrix identity $SY^T=0$. Therefore, for any fixed row index $i$ of $S$, expanding the matrix product yields the following vector equation in $\mathbb{R}^{n-t}$:
\begin{equation}\label{eq:9.3}
\sum_{q=1}^n s_{iq}\, y_q = 0.
\end{equation}

Let the set $L=\{l_1,\dots,l_{n-t-1}\}$ with $l_1<\cdots<l_{n-t-1}$. We define the wedge vector of the columns indexed by $L$:
\[
w_L = y_{l_1}\wedge\cdots\wedge y_{l_{n-t-1}} \in \bigwedge^{n-t-1}\mathbb{R}^{n-t}.
\]

Taking the exterior product of equation~\eqref{eq:9.3} with $w_L$, we obtain
\[
\sum_{q=1}^n s_{iq}\, y_q\wedge w_L = 0 \in \bigwedge^{n-t}\mathbb{R}^{n-t}.
\]

If the index $q \in L$, the vector $y_q$ is already present in the wedge product $w_L$, which strictly implies $y_q\wedge w_L=0$. Since $J$ is exactly the complement of $L$, all terms where $q\in L$ vanish, and the sum collapses purely to the indices in $J$:
\begin{equation}\label{eq:9.4}
\sum_{p\in J} s_{ip}\,(y_p\wedge w_L) = 0.
\end{equation}

The term $y_p\wedge w_L$ is a wedge product of $n-t$ vectors in $\mathbb{R}^{n-t}$. To reorder these vectors into the naturally sorted set $L_p=L\cup\{p\}$, we must perform adjacent permutations. The signature of this permutation is exactly the shuffle sign $\epsilon_{L,p}$.

By the fundamental properties of the exterior algebra, the completely ordered wedge product of $n-t$ vectors in $\mathbb{R}^{n-t}$ is proportional to the standard volume form $e$, with the scalar coefficient being the determinant of the matrix formed by those column vectors. Thus,
\[
y_p\wedge w_L = \epsilon_{L,p}\det(Y_{L_p})e = \epsilon_{L,p}\,\Delta^{\mathrm{bwc}}_{L_p}\, e.
\]

Substituting this result back into equation~\eqref{eq:9.4} and dropping the common volume form $e$, we conclude that the scalar coefficients must satisfy
\[
\sum_{p\in J} s_{ip}\,\epsilon_{L,p}\,\Delta^{\mathrm{bwc}}_{L_p} = 0.
\]

Since this scalar equation holds for any row index $i \in \{1,\dots,n\}$ of $S$, it necessarily holds for all rows $i \in I$. Given that $I \in I_{t+1}$, we obtain exactly the kernel relation for the square submatrix: $S^J_I v_J = 0$.
\end{proof}

\begin{remark}\label{rem:9.3-corollary}
Proposition~\ref{prop:wc-kernel-formulation} is now an immediate corollary of Proposition~\ref{prop:3.2-wc}. Taking $H=L$ there gives, for every $i=1,\dots,n$,
\[
\sum_{p\in J}\epsilon_{p,L}\,s_{ip}\,\Delta^{\mathrm{bwc}}_{L\cup\{p\}} = 0, \qquad J=\{1,\dots,n\}\setminus L.
\]
Since $\epsilon_{L,p} = (-1)^{|L|}\epsilon_{p,L}$ (graded commutativity of the shuffle sign, immediate from Definition~\ref{def:shuffle_sign}) and this holds in particular for every $i \in I$, multiplying through by the nonzero scalar $(-1)^{|L|}$ recovers $S^J_I v_J = 0$ with $(v_J)_p = \epsilon_{L,p}\Delta^{\mathrm{bwc}}_{L_p}$, which is precisely Proposition~\ref{prop:wc-kernel-formulation}. Thus the classical architecture Lemma~\ref{lem:eqg} $\to$ Proposition~\ref{prop:geometric_S} $\to$ Proposition~\ref{S} is reproduced intact at the level of $\Delta^{\mathrm{bwc}}$, with Proposition~\ref{prop:wc-kernel-formulation} playing the role of Proposition~\ref{S}.
\end{remark}

In specific cases where the dimension is large relative to the number of bodies (such as Dziobek configurations, e.g., $n=4$, $k=2$ or $n=5$, $k=3$), the bodies do not possess enough degrees of freedom to generate new syzygies. Consequently, this projective approach yields the classical identities but does not furnish additional independent equations.

\begin{definition}[Analogue of Definitions~\ref{def:D_dual} and~\ref{def:z_dual}]\label{def:dual-wc}
For $I \subset \{1,\dots,n\}$ with $|I|=t$ and $J=\{1,\dots,n\}\setminus I$, define the \emph{dual Pl\"ucker--BWC coordinate}
\[
D^{\mathrm{bwc}}_I := \epsilon_{I,J}\,\Delta^{\mathrm{bwc}}_J.
\]
\end{definition}

\begin{remark}
In the maximal-rank case, $D^{\mathrm{bwc}}_I = \big(\prod_{q\in J} m_q\big)D_I$, by~\eqref{eq:wc-generic-factorization}.
\end{remark}

\begin{proposition}[Analogue of Proposition~\ref{Wilgen-prop}]\label{prop:6.2-wc}
Let $I,L \subset \{1,\dots,n\}$ with $|I|=t$, $|L|=t+1$. For any distinct $u,v \in L$,
\[
\det\!\big(S^{L\setminus\{v\}}_I\big)\, D^{\mathrm{bwc}}_{L\setminus\{u\}} \;=\; \det\!\big(S^{L\setminus\{u\}}_I\big)\, D^{\mathrm{bwc}}_{L\setminus\{v\}}.
\]
\end{proposition}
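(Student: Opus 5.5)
The plan is to mirror the proof of Proposition~\ref{Wilgen-prop} word for word, with the geometric coordinates replaced by the Pl\"ucker--BWC ones and the masses absent, since they are already absorbed into $\Delta^{\mathrm{bwc}}$. Write $L=\{l_1<\dots<l_{t+1}\}$. The argument has three steps: a kernel relation for the BWC coordinates, a comparison with the cofactor vector of a degenerate square matrix, and a rank-one conclusion.

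\textbf{Step 1 (kernel relation).} Apply Proposition~\ref{prop:3.2-wc} with $H=L^c\in I_{n-t-1}$. For every row index $j$ this gives
\[
\sum_{h=1}^{t+1}\epsilon_{l_h,L^c}\,s_{jl_h}\,\Delta^{\mathrm{bwc}}_{L^c\cup\{l_h\}}=0 .
\]
Since $L^c\cup\{l_h\}$ is the complement of $L\setminus\{l_h\}$, Definition~\ref{def:dual-wc} gives $\Delta^{\mathrm{bwc}}_{L^c\cup\{l_h\}}=\epsilon_{L\setminus\{l_h\},L^c\cup\{l_h\}}\,D^{\mathrm{bwc}}_{L\setminus\{l_h\}}$, because shuffle signs are $\pm1$. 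The product of the two shuffle signs is exactly the one already computed in the proof of Proposition~\ref{Wilgen-prop}, namely $(-1)^{\sum_h l_h-h-t(t+1)/2}$, which alternates in $h$. After absorbing a global sign this yields
\[
\sum_{h=1}^{t+1}(-1)^h s_{jl_h}\,D^{\mathrm{bwc}}_{L\setminus\{l_h\}}=0,\qquad j=1,\dots,n .
\]

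\textbf{Step 2 (two kernel vectors).} Form the $(t+1)\times(t+1)$ matrix $M_I^L$: its first $t$ rows are $S_I^L$, and its last row is zero. Its cofactors along the zero row give the vector $v_I^L$ with entries $(-1)^{h+1}\det(S_I^{L\setminus\{l_h\}})$, and $v_I^L$ lies in $\ker M_I^L$. By Step~1 with $j\in I$, the vector $\tilde v$ with entries $(-1)^h D^{\mathrm{bwc}}_{L\setminus\{l_h\}}$ also lies in $\ker M_I^L$. There are two cases.
\begin{itemize}
\item[(a)] If $v_I^L=0$, then both sides of the claimed identity vanish.
\item[(b)] Otherwise $S_I^L$ has rank $t$, so the kernel is one-dimensional and the two vectors are proportional.
\end{itemize}
In case (b), multiplying the $h$-th coordinate of each vector by $(-1)^h$ preserves proportionality. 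Hence the two-row matrix with rows $\big(\det S_I^{L\setminus\{l_h\}}\big)_h$ and $\big(D^{\mathrm{bwc}}_{L\setminus\{l_h\}}\big)_h$ has rank at most one.

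\textbf{Step 3 (conclusion).} Take $u=l_a$ and $v=l_b$. The vanishing of the $2\times2$ minor in columns $a,b$ is
\[
\det(S_I^{L\setminus\{u\}})\,D^{\mathrm{bwc}}_{L\setminus\{v\}}=\det(S_I^{L\setminus\{v\}})\,D^{\mathrm{bwc}}_{L\setminus\{u\}},
\]
which is the claimed identity. No mass cancellation is needed, in contrast with Proposition~\ref{Wilgen-prop}. Unlike the classical case, the identity is never vacuous on the $D^{\mathrm{bwc}}$ side: since $Y$ has rank $n-t$, some maximal minor, and hence some $D^{\mathrm{bwc}}$, is nonzero. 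The argument does not need this fact, however.

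The main obstacle is the sign bookkeeping in Step~1. One must verify that $\epsilon_{l_h,L^c}\,\epsilon_{L\setminus\{l_h\},L^c\cup\{l_h\}}$ alternates exactly as in Proposition~\ref{Wilgen-prop}. That computation used only the index sets and not the size $k$, so it should transfer verbatim with $|L^c|=n-t-1$. I would recheck it once using $\operatorname{pos}_U(x)=x$ for $U=\{1,\dots,n\}$ together with $\epsilon_{l_h,L^c}=(-1)^{l_h-h}$.
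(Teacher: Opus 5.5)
Your proposal is correct and is essentially the paper's proof. Both apply Proposition~\ref{prop:3.2-wc} with $H=L^c$ and reuse the alternating sign $(-1)^{\sum_i l_i-h-t(t+1)/2}$ from Proposition~\ref{Wilgen-prop}. Both then conclude by proportionality of the zero-row cofactor vector and the vector $\big((-1)^hD^{\mathrm{bwc}}_{L\setminus\{l_h\}}\big)_h$ in the kernel of $S_I^L$, which is one-dimensional unless all the relevant $t\times t$ minors vanish.

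Your closing aside is slightly off as a contrast, since in the classical case the vector $z$ was also never zero; what is genuinely new is that, with $t=\operatorname{rank}(S)$, some $t\times t$ minor of $S$ is nonzero. As you note, the proof does not use this in any case.
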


\begin{proof}
Write $L=\{l_1<\cdots<l_{t+1}\}$. Applying Proposition~\ref{prop:3.2-wc} with $M=L^c$, we get, for every $j$,
\[
\sum_{l \in L}\epsilon_{l,L^c}\, s_{jl}\, \Delta^{\mathrm{bwc}}_{L^c\cup\{l\}} = 0.
\]
The complement of $L\setminus\{l\}$ (of size $t$) is $L^c\cup\{l\}$, so, by the same sign bookkeeping as in the proof of Proposition~\ref{Wilgen-prop} (which depends only on the position of $l$ within $L$, not on any masses), this reduces to
\[
\sum_{h=1}^{t+1}(-1)^h\, s_{jl_h}\, D^{\mathrm{bwc}}_{L\setminus\{l_h\}} = 0, \qquad j=1,\dots,n. \tag{$\star$}
\]
Let $A := S^L_I$, a $t\times(t+1)$ matrix. The cofactor vector $v = \big((-1)^h\det(S^{L\setminus\{l_h\}}_I)\big)_{h=1}^{t+1}$ always lies in $\ker(A)$: append a zero row to $A$ to form a square matrix and expand its (identically zero) determinant along that row. By $(\star)$ restricted to $j \in I$, the vector $w = \big((-1)^h D^{\mathrm{bwc}}_{L\setminus\{l_h\}}\big)_{h=1}^{t+1}$ also lies in $\ker(A)$. If $v=0$, the claimed identity holds trivially. Otherwise $\dim\ker(A)=1$, so $v$ and $w$ are proportional: $v_h w_{h'} = v_{h'}w_h$ for all $h,h'$. Cancelling the common sign $(-1)^{h+h'}$ gives the stated identity.
\end{proof}

\begin{remark}
Again no mass factor appears: in the maximal-rank case, $D^{\mathrm{bwc}}_{L\setminus\{u\}} = m_u\big(\prod_{q\notin L}m_q\big)D_{L\setminus\{u\}}$, and the common factor $\prod_{q\notin L}m_q$ cancels between the two sides, leaving exactly the $m_u,m_v$ of Proposition~\ref{Wilgen-prop}.
\end{remark}

\begin{proposition}[Analogue of Proposition~\ref{dziowill}: Generalized Dziobek--Williams Equations]\label{prop:7.2-wc}
For any $I,J \subset \{1,\dots,n\}$ with $|I|=|J|=t$,
\[
\det(S^J_I) = \kappa^{\mathrm{bwc}}\, D^{\mathrm{bwc}}_I\, D^{\mathrm{bwc}}_J
\]
for a constant $\kappa^{\mathrm{bwc}}$ independent of $I,J$. In particular, the matrix $S^{(t)} := \big(\det S^J_I\big)_{I,J \in I_t}$ has rank at most $1$.
\end{proposition}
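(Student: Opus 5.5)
The plan is to repeat the proof of Proposition~\ref{dziowill} word for word, with Proposition~\ref{prop:6.2-wc} taking the place of Proposition~\ref{Wilgen-prop} and $D^{\mathrm{bwc}}$ taking the place of $z$. The masses can be ignored, since they have already been absorbed into $\Delta^{\mathrm{bwc}}$.

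First, $D^{\mathrm{bwc}}\neq0$. The matrix $Y$ has rank $n-t$, so some maximal minor $\Delta^{\mathrm{bwc}}_J$ is nonzero.

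Next comes the support lemma: if $D^{\mathrm{bwc}}_K=0$, then $\det(S_I^K)=0$ for every $I\in I_t$. Indeed, $\Delta^{\mathrm{bwc}}_{K^c}=0$ means the columns $y_q$, $q\in K^c$, are dependent. So there is a nonzero $c\in\mathbb{R}^{n-t}$ orthogonal to all of them. Set $\psi:=Y^Tc$. This vector is nonzero because $Y^T$ is injective, it lies in $\ker S$, and it is supported in $K$. The rows of $S\psi=0$ indexed by $I$ then give $S_I^K\psi_K=0$ with $\psi_K\neq0$, so $\det(S_I^K)=0$. By the symmetry of $S$, the same holds with the roles of rows and columns exchanged.

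The core step fixes $I_0\in I_t$ and proves
\[
\det(S_{I_0}^J)\,D^{\mathrm{bwc}}_K=\det(S_{I_0}^K)\,D^{\mathrm{bwc}}_J
\qquad\text{for all } J,K\in I_t .
\]
If either $J$ or $K$ lies outside the support, both sides vanish by the support lemma. Otherwise the column sets $\{y_q\}_{q\in J^c}$ and $\{y_q\}_{q\in K^c}$ are two bases of $\mathbb{R}^{n-t}$. Lemma~\ref{lem:steinitz} then gives a chain $J=J_0,\dots,J_r=K$ in which each $J_i$ is in the support and consecutive terms differ by one index. For consecutive terms, put $L=J_i\cup J_{i+1}$, $\{u\}=J_i\setminus J_{i+1}$ and $\{v\}=J_{i+1}\setminus J_i$. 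Proposition~\ref{prop:6.2-wc} then gives the adjacent relation directly, with no mass cancellation needed. An induction along the chain, dividing by the nonzero $D^{\mathrm{bwc}}_{J_i}$ at each step, yields the claim.

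It follows that $\det(S_{I_0}^J)=c_{I_0}D^{\mathrm{bwc}}_J$ for every $J$. Symmetry of $S^{(t)}$ gives $c_{I_0}D^{\mathrm{bwc}}_J=c_JD^{\mathrm{bwc}}_{I_0}$, hence $c=\kappa^{\mathrm{bwc}}D^{\mathrm{bwc}}$ for a single constant, which is the stated factorization. Rank at most one is then immediate.

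A shorter alternative is Remark~\ref{rem:hodge-alt}. Since $\operatorname{rank}S=t$, the compound $S^{(t)}$ has rank $\binom{t}{t}=1$ and is symmetric, so $S^{(t)}=c\,ww^T$ with $w$ spanning $\bigwedge^t(\operatorname{Im}S)=\bigwedge^t(\ker S^\perp)$. Lemma~\ref{lem:hodge-dual}, applied to $W=\ker S$ with basis matrix $Y$, identifies $w$ with $D^{\mathrm{bwc}}$ up to scale.

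The main obstacle is the support lemma: we must make sure that kernel vectors of the correct support exist in the non-generic setting. The construction $\psi=Y^Tc$ is where this is handled. The sign bookkeeping has already been done in Proposition~\ref{prop:6.2-wc}.
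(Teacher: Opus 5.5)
Your proposal is correct and follows the paper's own proof essentially step for step. The paper likewise transports the proof of Proposition~\ref{dziowill} with the kernel basis $Y$ in place of $\mu X^{T}$, proves the support lemma via $\psi=Y^{T}c$, builds the single-exchange chain inside the support with Lemma~\ref{lem:steinitz}, takes the adjacent relations from Proposition~\ref{prop:6.2-wc} with no mass factors to cancel, and closes with the same two rank-one arguments; your Hodge-dual shortcut is the alternative the paper records in Remark~\ref{rem:hodge-alt}.
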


\begin{proof}
The proof of Proposition~\ref{dziowill} carries over, with the kernel basis $Y$ playing the role of $\mu X^{T}$; we indicate the changes. Since $Y$ has rank $n-t$, the vector $D^{\mathrm{bwc}} = (D^{\mathrm{bwc}}_K)_{K \in I_t}$ is nonzero. If $D^{\mathrm{bwc}}_K = 0$, then $\det(Y_{\cdot,K^{c}}) = 0$, some nonzero $c \in \mathbb{R}^{n-t}$ is orthogonal to the columns of $Y$ indexed by $K^{c}$, and $\psi := Y^{T}c$ is a nonzero vector of $\ker(S)$ supported in $K$ (it is a linear combination of the rows of $Y$, and $Y^{T}$ is injective); as before, $S\psi = 0$ forces $\det(S^{K}_{I}) = 0$ for every $I \in I_t$. If $J$ and $K$ lie in the support of $D^{\mathrm{bwc}}$, their complements index bases of $\mathbb{R}^{n-t}$ extracted from the columns of $Y$, Lemma~\ref{lem:steinitz} yields the same chain of single-element exchanges inside the support, and along it Proposition~\ref{prop:6.2-wc} furnishes the adjacent relations directly, with no mass factors to cancel. The two rank-one arguments at the end are unchanged and produce, first, the constants $c_{I_0}$, and then the constant $\kappa^{\mathrm{bwc}}$, proving the formula.
\end{proof}

\begin{proposition}[Analogue of Proposition~\ref{prop:kappa-formula}]\label{prop:7.3-wc}
With $\kappa^{\mathrm{bwc}}$ as above and $\nu_{1},\dots,\nu_{n}$ the eigenvalues of $S$,
\[
\kappa^{\mathrm{bwc}} = \frac{e_t(\nu_{1},\dots,\nu_{n})}{\|D^{\mathrm{bwc}}\|^2}, \qquad e_t(\nu_{1},\dots,\nu_{n}) = \sum_{I \in I_t}\det(S^I_I) = \mathrm{Tr}\big(S^{(t)}\big).
\]
\end{proposition}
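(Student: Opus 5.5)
The argument copies the proof of Proposition~\ref{prop:kappa-formula}, with Proposition~\ref{prop:7.2-wc} in place of Proposition~\ref{dziowill}. I will compute the trace of the compound matrix $S^{(t)}$ in two ways.

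First, the characteristic-polynomial expansions \eqref{eqext1} and \eqref{eqext2} hold for any square matrix. Applying them to $B=S$ and comparing the coefficients of $\theta^{n-t}$ gives $\operatorname{Tr}(S^{(t)})=e_t(\nu_1,\dots,\nu_n)$. This is exactly \eqref{traceformula}, which needs no hypothesis on the rank. It also gives the second equality in the statement, since the diagonal entries of $S^{(t)}$ are the principal minors $\det(S_I^I)$.

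Second, Proposition~\ref{prop:7.2-wc} gives $S^{(t)}=\kappa^{\mathrm{bwc}}D^{\mathrm{bwc}}(D^{\mathrm{bwc}})^T$. Taking the trace,
\[
\operatorname{Tr}(S^{(t)})=\sum_{I\in I_t}\kappa^{\mathrm{bwc}}(D^{\mathrm{bwc}}_I)^2=\kappa^{\mathrm{bwc}}\|D^{\mathrm{bwc}}\|^2 .
\]

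To divide by $\|D^{\mathrm{bwc}}\|^2$, I must check that $D^{\mathrm{bwc}}\neq0$. This holds because $Y$ has rank $n-t$, so some maximal minor $\Delta^{\mathrm{bwc}}_J$ is nonzero, and $D^{\mathrm{bwc}}_{J^c}=\pm\Delta^{\mathrm{bwc}}_J$. The same observation appears at the start of the proof of Proposition~\ref{prop:7.2-wc}. Equating the two expressions for the trace then gives the formula for $\kappa^{\mathrm{bwc}}$.

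The only point needing care is the one just discussed: the nonvanishing of $\|D^{\mathrm{bwc}}\|$, together with the fact that the formula is stated over the reals, so that $\|D^{\mathrm{bwc}}\|^2=\sum_I (D^{\mathrm{bwc}}_I)^2>0$. I would also note the following, as preparation for Corollary~\ref{cor:kappa-wc-nonzero}. Since $t=\operatorname{rank}(S)$ and $S$ is real symmetric, hence diagonalizable, exactly $t$ of its eigenvalues are nonzero. Therefore $e_t(\nu_1,\dots,\nu_n)$ reduces to the product of those $t$ eigenvalues, which is nonzero. The formula is not needed for that conclusion, but it makes the nonvanishing of $\kappa^{\mathrm{bwc}}$ immediate.
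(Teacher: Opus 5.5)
Your proposal is correct and follows the paper's proof. Like the paper, you compute $\operatorname{Tr}(S^{(t)})$ once from the rank-one factorization of Proposition~\ref{prop:7.2-wc} and once as $e_t(\nu_1,\dots,\nu_n)$ via the characteristic-polynomial identity, which needs no rank hypothesis, and then equate the two. Your explicit check that $D^{\mathrm{bwc}}\neq 0$, so that $\|D^{\mathrm{bwc}}\|^2>0$, is a detail the paper leaves implicit here and invokes only later, in Corollary~\ref{cor:kappa-wc-nonzero}.
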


\begin{proof}
By Proposition~\ref{prop:7.2-wc}, $S^{(t)} = \kappa^{\mathrm{bwc}}\, D^{\mathrm{bwc}}(D^{\mathrm{bwc}})^T$, so $\mathrm{Tr}(S^{(t)}) = \kappa^{\mathrm{bwc}}\|D^{\mathrm{bwc}}\|^2$. On the other hand, $\mathrm{Tr}(S^{(t)}) = e_t(\nu_{1},\dots,\nu_{n})$, by the same classical identity used in equations \eqref{eqext1}--\eqref{traceformula} above (the sum of the principal $t\times t$ minors of a matrix equals $e_t$ of its eigenvalues; this holds regardless of whether $S$ has maximal rank). Equating the two expressions for the trace gives the formula.
\end{proof}

\begin{corollary}\label{cor:kappa-wc-nonzero}
$\kappa^{\mathrm{bwc}} \ne 0$ for every central configuration, with no genericity or convexity hypothesis.
\end{corollary}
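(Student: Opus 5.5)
The plan is to derive the corollary from Proposition~\ref{prop:7.3-wc} together with the rank formula for compound matrices. Since $t=\mathrm{bwc}(x)=\operatorname{rank}(S)$ is, from Section~\ref{sec:pwc} on, the true rank of $S$, the compound matrix $S^{(t)}$ has rank $\binom{\operatorname{rank}(S)}{t}=\binom{t}{t}=1$ \cite[\S0.8.1]{hornjohnson2013}. In particular $S^{(t)}\neq0$. Proposition~\ref{prop:7.2-wc} gives $S^{(t)}=\kappa^{\mathrm{bwc}}D^{\mathrm{bwc}}(D^{\mathrm{bwc}})^{T}$, so $\kappa^{\mathrm{bwc}}=0$ would force $S^{(t)}=0$, which is a contradiction. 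This argument is already complete. It uses no hypothesis on the configuration beyond the definition of $t$ as the rank. Positivity of the masses is not needed either, only their non-vanishing, which entered in defining $S$.

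For robustness, and to connect with the spectral reading, I would also give the trace route. $S$ is real symmetric, so it is diagonalizable with real eigenvalues, and exactly $t$ of them are nonzero, say $\nu_1,\dots,\nu_t$. Then $e_t(\nu_1,\dots,\nu_n)=\nu_1\cdots\nu_t\neq0$. Every other term of $e_t$ contains a zero eigenvalue, and symmetry is what guarantees that algebraic and geometric multiplicity of $0$ agree, so that the rank counts the nonzero eigenvalues. The denominator $\|D^{\mathrm{bwc}}\|^2$ is positive, because $Y$ has full rank $n-t$ and hence some maximal minor $\Delta^{\mathrm{bwc}}_J$ is nonzero. Proposition~\ref{prop:7.3-wc} then yields $\kappa^{\mathrm{bwc}}=\nu_1\cdots\nu_t/\|D^{\mathrm{bwc}}\|^2\neq0$.

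The only delicate point is the edge case $t=0$, the regular simplex. There the statement is a matter of convention, and I would dispose of it by citing Remark~\ref{rem:t-zero}: $S^{(0)}=(1)$ and $\kappa^{\mathrm{bwc}}=\det(Y)^{-2}\neq0$. A second, minor point is that the diagonal entries $s_{jj}$ must be well defined, which requires $m_j\neq0$. I would note this in one sentence rather than treat it as a genuine obstacle.
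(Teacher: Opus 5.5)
Your proposal is correct. Your second paragraph is exactly the paper's proof: it reads $\kappa^{\mathrm{bwc}}=e_t(\nu_1,\dots,\nu_n)/\|D^{\mathrm{bwc}}\|^2$ off Proposition~\ref{prop:7.3-wc}, and observes that, since $S$ is real symmetric, $e_t$ collapses to the product of the $t$ nonzero eigenvalues.

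Your first argument takes a genuinely different and more elementary route. It bypasses Proposition~\ref{prop:7.3-wc} and the trace identity altogether. It uses only the fact that a matrix of rank $t$ has a nonzero $t\times t$ minor; you do not even need the compound rank formula $\binom{\operatorname{rank}S}{t}$ that you cite. Hence $S^{(t)}\neq0$, and the factorization of Proposition~\ref{prop:7.2-wc} cannot hold with $\kappa^{\mathrm{bwc}}=0$.

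This makes visible that the corollary is essentially a restatement of the definition $t=\operatorname{rank}(S)$. It also shows that Lemma~\ref{lem:psi-well-defined}(i) needs no detour through $\kappa^{\mathrm{bwc}}$ at all.

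Your route is also more robust. The trace route genuinely needs diagonalizability to get $e_t\neq0$: for a nilpotent matrix of rank $t$, one has $S^{(t)}\neq0$ but $e_t=0$. Your route uses symmetry only through Proposition~\ref{prop:7.2-wc} itself.

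What the paper's route buys in exchange is quantitative information. It gives the explicit value $\kappa^{\mathrm{bwc}}=\prod_{\nu_i\neq0}\nu_i/\|D^{\mathrm{bwc}}\|^2$, and hence the sign of $\kappa^{\mathrm{bwc}}$ from the inertia of $S$. For example, the sign is $(-1)^t$ in the MacMillan--Bartky regime of Corollary~\ref{cor:mb-psd}, which the rank argument does not provide.
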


\begin{proof}
$S$ is real symmetric, hence diagonalizable, so it has exactly $t=\operatorname{rank}(S)$ nonzero eigenvalues, counted with multiplicity. In $e_t(\nu_{1},\dots,\nu_{n}) = \sum_{|K|=t}\prod_{i\in K}\nu_i$, every $K$ other than the set $K_0$ of indices of the nonzero eigenvalues contains at least one zero eigenvalue and so contributes $0$; the single surviving term is $\prod_{i\in K_0}\nu_i \ne 0$. Hence $e_t(\nu_{1},\dots,\nu_{n})\ne 0$ always, and since $\|D^{\mathrm{bwc}}\|^2 > 0$ always (Remark~\ref{rem:wc-grassmannian-remarks}(iii)), Proposition~\ref{prop:7.3-wc} gives $\kappa^{\mathrm{bwc}}\ne 0$.
\end{proof}

\begin{remark}[Why this removes the vertical-degeneracy triviality]\label{rem:why-it-matters}
In Proposition~\ref{dziowill}, $I,J$ have size fixed by the dimension $k$ alone, namely $n-k-1$, regardless of the actual rank of $S$. If $\mathrm{bwc}(x) < n-k-1$, a \emph{vertical degeneracy} in the terminology of~\cite{albouy2024limit}, then $S^J_I$ is a minor of size strictly larger than $\mathrm{rank}(S)$, hence automatically singular: $\det(S^J_I)\equiv 0$ for every such $I,J$, and the classical identity collapses to $0=0$. This is exactly why it is ``not always known when the constant $\kappa$ is nonzero'' in the classical statement. Working instead with index sets of the \emph{true} size $t=\mathrm{rank}(S)$ removes this obstruction identically: Corollary~\ref{cor:kappa-wc-nonzero} shows the resulting equations of Proposition~\ref{prop:7.2-wc} are never vacuous, at any Brehm--Wintner--Conley dimension.

We record, finally, that Lemma~\ref{lem:3.1-wc} through Proposition~\ref{prop:7.3-wc}, once phrased through $\ker(S)$, are statements of pure linear algebra about an arbitrary symmetric matrix $S$ and a basis of its kernel: they use nothing about central configurations beyond the symmetry of $S$, exactly as the existing proof of Proposition~\ref{prop:wc-kernel-formulation} above already does.
\end{remark}

The framework introduced in this section is put to use in two directions. Within this paper, Sections~\ref{sec:veronese} and~\ref{sec:universal} build on Propositions~\ref{prop:7.2-wc} and~\ref{prop:7.3-wc} to map each stratum $\mathfrak{X}_{n,k,t}$ into a Veronese variety and to expand the resulting equations explicitly. The dynamical consequences of a non-generic Brehm--Wintner--Conley dimension (virtual dimensions, vertical degeneracies, and the continuation of families of higher-dimensional central configurations bifurcating from a degenerate one) will be developed elsewhere. What remains open is the classification of degenerate central configurations by their Brehm--Wintner--Conley dimension, and the complete stratification of the Grassmannian map $\Phi$ of Definition~\ref{def:wc-grassmannian-map} (that is, which strata $\mathfrak{X}_{n,k,t}$ are non-empty and how they fit together), which we leave to future work.

\section{The Veronese Variety of Central Configurations}\label{sec:veronese}

As established in Proposition~\ref{dziowill} and, unconditionally, in Proposition~\ref{prop:7.2-wc}, the Dziobek--Williams matrix $S^{(t)}$, whose entries are the $t\times t$ minors of the shifted Brehm--Wintner--Conley matrix $S$, is symmetric of rank at most $1$, and its entries factorize as $\det(S_I^J) = \kappa^{\mathrm{bwc}}\, D^{\mathrm{bwc}}_I D^{\mathrm{bwc}}_J$. This quadratic parametrization reveals a geometric structure: it maps each stratum of central configurations into a classical algebraic object, the Veronese variety. In this section we make this statement precise. We define the resulting map on each stratum $\mathfrak{X}_{n,k,t}$, prove that it is everywhere defined (with no degenerate locus of any kind, which is exactly the content of Corollary~\ref{cor:kappa-wc-nonzero}) and show that it factors through the BWC Grassmannian map $\Phi$ of Definition~\ref{def:wc-grassmannian-map}. As consequences, we bound the dimension of the image and recover, on the stratum $t=1$, the Dziobek--Veronese variety of~\cite{DiasVeronese}.

\subsection{Classical input: the Veronese variety and the Grassmannian}
\label{subsec:veronese-background}

The construction of this section rests on three classical facts, which we
state in the form in which they will be used. Throughout, $V$ denotes an
$n$-dimensional vector space over a field of characteristic zero, and all
varieties are projective over $\mathbb{C}$.

\begin{fact}[Second Veronese embedding]\label{fact:veronese}
The morphism $v_2:\mathbb{P}^{N-1}\to\mathbb{P}^{\binom{N+1}{2}-1}$,
$[w_I]\mapsto[w_Iw_J]$, is a closed embedding, i.e.\ an isomorphism onto its
image $\mathcal{V}_{N-1,2}$; the image is a smooth irreducible variety of
dimension $N-1$, and its homogeneous ideal is generated by the $2\times2$
minors of the generic symmetric matrix $(y_{IJ})$ of coordinates of the
target, that is, by the quadrics $y_{IJ}y_{KL}-y_{IL}y_{KJ}$. Equivalently,
a point $[y_{IJ}]$ of the target lies on $\mathcal{V}_{N-1,2}$ if and only if
the symmetric matrix $(y_{IJ})$ has rank~$1$.
\end{fact}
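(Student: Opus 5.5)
The statement to be proved is Fact~\ref{fact:veronese}, the classical structure of the second Veronese embedding, and I plan to prove it directly in coordinates. The first step is to show that $v_2$ is a well-defined morphism: for $w\neq0$, some $w_I\neq0$, so the diagonal coordinate $w_I^2$ of the target is nonzero and the image point exists. The target can be identified with the projectivization of the space of symmetric $N\times N$ matrices, and $v_2([w])=[ww^T]$. This immediately gives one inclusion of the rank-one characterization: every matrix $ww^T$ has rank exactly one.

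For the converse and the embedding property, I will argue chart by chart. Let $y=(y_{IJ})$ be symmetric of rank one. Its column space is a line spanned by some $u$, so $y=uc^T$ for some vector $c$. Symmetry gives $uc^T=cu^T$, which forces $c=\alpha u$ with $\alpha\neq0$. Over $\mathbb{C}$ we can take a square root of $\alpha$ and write $y=ww^T$; this proves that the image is exactly the rank-one locus. It also proves injectivity, since $ww^T$ determines $w$ up to sign, and a sign change does not move the projective point.

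To upgrade this to a closed embedding, I will use the standard affine charts. On the chart $\{y_{II}\neq0\}$ of the target, the map $[y]\mapsto[y_{IJ}]_{J}$ (the $I$-th row) is an inverse to $v_2$ on the chart $\{w_I\neq0\}$, because $y_{IJ}=w_Iw_J$. Hence $v_2$ restricts to isomorphisms of these charts onto closed subsets. These charts cover the image, since a rank-one symmetric matrix has a nonzero diagonal entry by the argument above. Together these show that $v_2$ is an isomorphism onto its image. Smoothness, irreducibility and the dimension $N-1$ of $\mathcal{V}_{N-1,2}$ then transfer from $\mathbb{P}^{N-1}$.

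The main obstacle is the ideal-theoretic statement: the $2\times2$ minors $y_{IJ}y_{KL}-y_{IL}y_{KJ}$ must generate the whole homogeneous ideal, not merely define the variety set-theoretically. I would handle this through the kernel of the ring map $\mathbb{C}[y_{IJ}]\to\mathbb{C}[w]$, $y_{IJ}\mapsto w_Iw_J$. Modulo the minors, every monomial in the $y$'s can be rewritten into a standard form: a sorted (Gale-ordered) product $y_{I_1I_2}y_{I_3I_4}\cdots$ with $I_1\le I_2\le I_3\le\cdots$. Distinct standard monomials map to distinct monomials in $w$, so they are linearly independent. This shows that the minors generate the kernel, which is prime and is therefore the full ideal. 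Alternatively, I could cite the classical result that the ideal of $2\times2$ minors of a generic symmetric matrix is prime (Kutz; Józefiak). I would cite it, as the paper does for its other determinantal facts, and sketch the straightening argument only for completeness.
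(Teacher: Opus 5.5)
Your proposal is correct, and it takes a genuinely different route from the paper. The paper does not prove Fact~\ref{fact:veronese}; it assembles it from references:
\begin{itemize}
\item the embedding and its quadrics from Harris and Shafarevich;
\item primality of the ideal of the image from Hartshorne's Exercise~I.2.12;
\item generation by the $2\times2$ minors as the case $t=1$ of the primality of the symmetric determinantal ideal in Fact~\ref{fact:symdet}(i) (Kutz). Together with the rank-one description of the image and the Nullstellensatz, this gives $I(\mathcal{V}_{N-1,2})=I_2$.
\end{itemize}
You instead prove everything by hand. Writing a symmetric rank-one matrix as $\alpha uu^{T}$ gives the set-theoretic description and injectivity. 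The row projections on the charts $\{y_{II}\neq0\}$ give the inverse morphism. The straightening argument identifies the ideal of minors with the kernel of $y_{IJ}\mapsto w_Iw_J$. That last step is exactly the $t=1$ case of Kutz's theorem, obtained elementarily as a toric ideal generated by quadratic binomials. Your route buys a self-contained proof of precisely the ingredient the paper borrows from determinantal-ideal theory. The paper's route buys brevity and uniformity with Fact~\ref{fact:symdet}, which it needs for general $t$ anyway.

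Two small points to tighten.
\begin{itemize}
\item The kernel is the full homogeneous ideal of $\mathcal{V}_{N-1,2}$ not \emph{because} it is prime. The reason is that a form $F$ vanishes on the image if and only if $F(w_Iw_J)$ vanishes identically, i.e.\ if and only if $F$ lies in the kernel. Alternatively, primality gives radicality, and then the Nullstellensatz together with your rank-one description does the job.
\item ``Gale-ordered'' is the wrong term. The standard monomials are simply those whose $2d$ indices appear in sorted order. The straightening works because the quadrics $y_{IJ}y_{KL}-y_{IL}y_{KJ}$, read with $y_{IJ}=y_{JI}$, let you pass between all three pairings of the four indices of any two factors. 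Hence every monomial is congruent to the sorted one with the same index multiset, which is exactly what its image in $\mathbb{C}[w]$ records.
\end{itemize}
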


See \cite[Lecture~2, Examples~2.4 and~2.6]{harris1992algebraic} and
\cite[Ch.~I, \S4.4, Example~1.28]{shafarevich1994basic} for the embedding, with the
quadrics exhibited there as the equations of the image; primality of the
homogeneous ideal of the image is
\cite[Ch.~I, Exercise~2.12]{hartshorne1977algebraic}, and its generation by
the $2\times2$ minors is the case $t=1$ of Fact~\ref{fact:symdet}(i) below
\cite{kutz1974cohen}. The last sentence of
Fact~\ref{fact:veronese} is the reason the rank-one statement of
Proposition~\ref{prop:7.2-wc} is exactly a statement of membership in a
Veronese variety, and it is what Theorem~\ref{thm:veronese-factorization}(i)
records.

\begin{fact}[Pl\"ucker embedding]\label{fact:plucker}
The map sending a $d$-dimensional subspace $W\subset V$ to the point
$[\,\det(Y_{\cdot,K})\,]_{K\in I_d}\in\mathbb{P}\big(\bigwedge^dV\big)$,
where $Y$ is any basis matrix of $W$, is a closed embedding
$\mathrm{pl}:\mathrm{Gr}(d,n)\hookrightarrow\mathbb{P}^{\binom{n}{d}-1}$. Its
image is a smooth irreducible variety of dimension $d(n-d)$, cut out
scheme-theoretically by the Pl\"ucker relations of
Proposition~\ref{prop:plucker-relations}.
\end{fact}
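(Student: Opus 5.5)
We plan to prove Fact~\ref{fact:plucker} by the classical route: first show the map is well defined and injective, then study it in the standard affine charts. For well-definedness, note that replacing the basis matrix $Y$ by $gY$ with $g\in\mathrm{GL}_d$ multiplies every maximal minor by $\det g\neq0$. The row span of $Y$ has rank $d$, so some minor is nonzero, and the point $[\det(Y_{\cdot,K})]_K$ of $\mathbb{P}(\bigwedge^dV)$ depends only on $W$. For injectivity, write $\omega=y_1\wedge\cdots\wedge y_d$ and recover $W$ as
\[
W=\{v\in V:\ v\wedge\omega=0\},
\]
which holds because the $y_i$ are linearly independent.

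For the local structure, fix $K\in I_d$ and work on the open set $U_K=\{\Delta_K\neq0\}$ of the target. A subspace $W$ lands in $U_K$ exactly when it admits a unique basis matrix $Y$ with $Y_{\cdot,K}=\mathrm{Id}_d$. The remaining entries $A=Y_{\cdot,K^c}$ then give a bijection $\mathrm{pl}^{-1}(U_K)\cong\mathbb{A}^{d(n-d)}$. Cramer's rule identifies these entries as regular functions on $U_K$: up to the shuffle sign of Definition~\ref{def:shuffle_sign}, each is a ratio
\[
\pm\,\frac{\Delta_{(K\setminus\{k\})\cup\{j\}}}{\Delta_K},\qquad k\in K,\ j\notin K.
\]
Conversely, every $\Delta_J/\Delta_K$ is a polynomial in $A$, namely a minor of $[\mathrm{Id}\,|\,A]$ after reordering columns. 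So on each chart the map is an isomorphism of $\mathbb{A}^{d(n-d)}$ onto its image, with regular inverse. The charts $\mathrm{pl}^{-1}(U_K)$ cover $\mathrm{Gr}(d,n)$, and from this we read off smoothness and dimension $d(n-d)$. Irreducibility follows because the image is the closure of a single chart: the set $\{\det Y_{\cdot,K}\neq0\}$ is dense in the irreducible space of full-rank $d\times n$ matrices.

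For closedness and the scheme-theoretic statement we compare with the subscheme $Z$ cut out by the Pl\"ucker relations of Proposition~\ref{prop:plucker-relations}. Working on $Z\cap U_K$ with $\Delta_K=1$, we show that the relations force
\[
\Delta_J=\det\big([\mathrm{Id}\,|\,A]_{\cdot,J}\big)
\]
for every $J$, where $A$ is defined by the $d(n-d)$ coordinates $\Delta_{(K\setminus\{k\})\cup\{j\}}$. The argument is by induction on $|J\setminus K|$. Take $H=J\setminus\{j\}$ with $j\in J\setminus K$, and take the $(d+1)$-set $K\cup\{j\}$. The corresponding Pl\"ucker relation expresses $\Delta_K\Delta_J$ as a signed sum of products $\Delta_{H\cup\{k\}}\Delta_{(K\cup\{j\})\setminus\{k\}}$ with $k\in K\setminus H$. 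Each factor $\Delta_{H\cup\{k\}}$ is closer to $K$ than $J$ is, and each factor $\Delta_{(K\cup\{j\})\setminus\{k\}}$ is one of the chart coordinates. By induction the right-hand side is the Laplace-type expansion of the corresponding minor of $[\mathrm{Id}\,|\,A]$.

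Consequently the coordinate ring of $Z\cap U_K$ is a quotient of the polynomial ring in the entries of $A$. It also surjects onto the coordinate ring of $\mathrm{pl}(\mathrm{Gr})\cap U_K\cong\mathbb{A}^{d(n-d)}$, so the two coincide. Gluing over $K$ gives $Z=\mathrm{pl}(\mathrm{Gr}(d,n))$ as schemes, and in particular the image is closed.

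We expect the main obstacle to be this inductive elimination: checking that the shuffle signs $\epsilon_{i,H}\epsilon_{i,J\setminus\{i\}}$ of Proposition~\ref{prop:plucker-relations} match exactly the cofactor signs in the Laplace expansion of the minor. We would handle it by first verifying the case $K=\{1,\dots,d\}$, where the signs are transparent. The general case then follows by permuting coordinates, which changes all coordinates compatibly by the signs of Definition~\ref{def:shuffle_sign}.
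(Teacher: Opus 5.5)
Your proof is correct and complete. The paper gives no argument of its own for Fact~\ref{fact:plucker}; it only cites \cite{harris1992algebraic,shafarevich1994basic}. What you have written is a self-contained version of the classical chart argument. The charts $[\mathrm{Id}\,|\,A]$ give smoothness and dimension $d(n-d)$. Your inductive elimination on $Z\cap U_K$ is exactly what upgrades the set-theoretic Proposition~\ref{prop:plucker-relations} to the scheme-theoretic clause: it exhibits $Z\cap U_K$ as the graph of a polynomial map, hence reduced and isomorphic to $\mathbb{A}^{d(n-d)}$. Incidentally, it also reproves the ``if'' half of that Proposition.

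The step you flag as the main obstacle is not one. You already use the ``only if'' half of Proposition~\ref{prop:plucker-relations} to get $\mathrm{pl}(\mathrm{Gr}(d,n))\subseteq Z$. That half says the relation indexed by $(H,K\cup\{j\})$ holds identically for the maximal minors of $[\mathrm{Id}\,|\,A]$ with indeterminate $A$. Subtract this identity from the same relation in $\mathcal{O}(Z\cap U_K)$. The terms with $i\in K\setminus H$ cancel, by the induction hypothesis and the definition of $A$. What remains is $\epsilon_{j,H}\,\epsilon_{j,K}\big(\Delta_J-\det([\mathrm{Id}\,|\,A]_{\cdot,J})\big)=0$, recalling $\Delta_K=1$. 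The coefficient is $\pm1$, so you never need to compare shuffle signs with cofactor signs. The base case is just the choice of sign in the definition of each $a_{kj}$.

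Two phrasings deserve tightening. First, ``so the two coincide'' holds because the composite $\Bbbk[A]\twoheadrightarrow\mathcal{O}(Z\cap U_K)\twoheadrightarrow\mathcal{O}(\mathrm{pl}(\mathrm{Gr})\cap U_K)=\Bbbk[A]$ is the identity, which forces the first map to be injective. Second, irreducibility should come directly from the irreducibility of the variety of rank-$d$ matrices: ``closure of a single chart'' presupposes the closedness you only prove later.
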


See \cite[Lecture~6, Example~6.6]{harris1992algebraic} and
\cite[Ch.~I, \S4.1, Example~1.24]{shafarevich1994basic}. Two consequences are used below
without further comment: a composition of closed embeddings is a closed
embedding, so $\rho=\iota\circ\mathrm{pl}$ is one as well ($\iota$ being a
signed permutation of coordinates, hence a linear isomorphism of the ambient
projective space); and the image of a projective variety under a morphism is
Zariski closed and irreducible, which is what makes the containment of
Proposition~\ref{prop:image-dimension} a containment of the \emph{closure} of
$\Psi_t(\mathfrak{X}_{n,k,t})$.

\begin{fact}[Compound matrices]\label{fact:compound}
Let $S$ be an $n\times n$ matrix over a field and $\bigwedge^tS$ its $t$-th
compound (the matrix $S^{(t)}=(\det S_I^J)_{I,J\in I_t}$, i.e.\ the
matrix of the induced endomorphism of $\bigwedge^t\mathbb{C}^n$). Then
$\bigwedge^t(AB)=\bigwedge^tA\cdot\bigwedge^tB$ (Cauchy--Binet),
\[
\operatorname{rank}\Big(\bigwedge\nolimits^tS\Big)
=\binom{\operatorname{rank}(S)}{t},
\qquad
\operatorname{Tr}\Big(\bigwedge\nolimits^tS\Big)=e_t(\nu_{1},\dots,\nu_{n}),
\]
where $\nu_{1},\dots,\nu_{n}$ are the eigenvalues of $S$ and $e_t$ the
$t$-th elementary symmetric polynomial; and if $S$ is symmetric then so is
$\bigwedge^tS$.
\end{fact}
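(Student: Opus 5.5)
The statement to prove is Fact~\ref{fact:compound}. It consists of four classical assertions about $\bigwedge^tS$: multiplicativity, the rank formula, the trace formula, and preservation of symmetry. The plan is to derive all four from functoriality of $\bigwedge^t$, interpreting $S^{(t)}$ as the matrix of the induced map on $\bigwedge^t\mathbb{C}^n$ in the basis $\{e_I\}_{I\in I_t}$.

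\emph{Multiplicativity.} Expand
\[
\bigwedge\nolimits^tS(e_J)=Se_{j_1}\wedge\cdots\wedge Se_{j_t}
\]
multilinearly. The coefficient of $e_I$ is $\det(S_I^J)$, so this identifies the abstract map with the matrix of minors. Functoriality, $\bigwedge^t(AB)=\bigwedge^tA\circ\bigwedge^tB$, is immediate on decomposable vectors, and reading it in coordinates is exactly Cauchy--Binet.

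\emph{Symmetry.} Entrywise we have $\det\big((S^T)_I^J\big)=\det\big((S_J^I)^T\big)=\det(S_J^I)$. Hence $\bigwedge^t(S^T)=(\bigwedge^tS)^T$, and symmetry of $S$ transfers to $\bigwedge^tS$.

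\emph{Rank.} Let $r=\operatorname{rank}(S)$. Use a rank factorization $S=PDQ$ with $P,Q$ invertible and $D=\operatorname{diag}(\mathrm{Id}_r,0)$. Multiplicativity gives $\bigwedge^tS=\bigwedge^tP\,\bigwedge^tD\,\bigwedge^tQ$, and $\bigwedge^tP$, $\bigwedge^tQ$ are invertible with inverses $\bigwedge^t(P^{-1})$ and $\bigwedge^t(Q^{-1})$. Moreover $\bigwedge^tD$ is diagonal, with entry $1$ at $I$ exactly when $I\subseteq\{1,\dots,r\}$ and $0$ otherwise. Its rank is therefore $\binom{r}{t}$.

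\emph{Trace.} Work over $\mathbb{C}$ and triangularize, $S=PTP^{-1}$ with $T$ upper triangular with diagonal $\nu_1,\dots,\nu_n$. Then $\bigwedge^tS$ is similar to $\bigwedge^tT$, so the two have the same trace. Since $Te_j\in\operatorname{span}(e_1,\dots,e_j)$, the map $\bigwedge^tT$ is triangular in the basis $e_I$ ordered by the dominance partial order on increasing tuples. Its diagonal entry at $I$ is $\prod_{i\in I}\nu_i$, and summing gives $e_t(\nu)$. Alternatively, the trace is the sum of principal minors $\sum_I\det(S_I^I)$, which is the coefficient of $(-\theta)^{n-t}$ in $\det(S-\theta\,\mathrm{Id})$, as in \eqref{eqext1}--\eqref{traceformula}.

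\emph{Expected difficulty.} The only step needing care is the triangularity claim for $\bigwedge^tT$. When $Te_{i_1}\wedge\cdots\wedge Te_{i_t}$ is expanded, each surviving term $e_K$ has sorted indices satisfying $k_s\le i_s$ componentwise. Equality in every component occurs only for the diagonal product $\prod_s T_{i_si_s}$. Extending dominance to a linear order then gives triangularity, and the rest of the proof is bookkeeping.
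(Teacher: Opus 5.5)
Your proposal is correct and follows essentially the same route as the paper, which cites Horn--Johnson for multiplicativity, symmetry and the compound of a triangular matrix, then obtains the rank identity from the rank normal form $S=P\Sigma Q$ via multiplicativity and the trace identity from a (Schur) triangularization, exactly as you do. Your direct check that $\bigwedge^tT$ is triangular ($k_{(s)}\le i_s$ componentwise, with equality forcing the diagonal term) fills in what the paper takes from the reference. The only small caveat is that over a general field the trace identity is to be read after passing to an algebraic closure, which you do implicitly by working over $\mathbb{C}$.
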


See \cite[\S0.8.1]{hornjohnson2013} for the multiplicativity, the symmetry
statement, the compound of a triangular matrix, and the case
$t=\operatorname{rank}$ of the rank identity; the identity for general $t$
follows by applying multiplicativity to the rank normal form $S=P\Sigma Q$,
and the trace identity by applying it to a Schur triangularization together
with the triangular case just cited. See \cite{prells2003use} for the
characteristic-polynomial expansion used in
equations~\eqref{eqext1}--\eqref{traceformula}. The trace identity is what
turns Proposition~\ref{prop:7.3-wc} into a computable formula for
$\kappa^{\mathrm{bwc}}$; the rank identity is what makes
``$\operatorname{rank}(S^{(t)})\le1$'' and ``$\operatorname{rank}(S)\le t$''
equivalent, and is used again in Proposition~\ref{prop:economical}. The last
sentence (symmetry is inherited by the compound) is the hypothesis under
which the linear syzygies of Lemma~\ref{lem:three-term} appear; it is exactly
what fails for a generic non-symmetric matrix.

\subsection{The Veronese Model of a Stratum}

Throughout this section, fix $n$, $k$, and a value $t$ of the Brehm--Wintner--Conley dimension, and write
\[
N_t := \binom{n}{t} = \#I_t.
\]
Coordinates of the projective space $\mathbb{P}^{\binom{N_t+1}{2}-1}$ are indexed by unordered pairs $\{I,J\}$ with $I,J\in I_t$. The \emph{second Veronese embedding} is the morphism
\[
v_2 : \mathbb{P}^{N_t-1} \longrightarrow \mathbb{P}^{\binom{N_t+1}{2}-1}, \qquad [w_I]_{I\in I_t} \longmapsto [w_I w_J]_{I,J\in I_t}.
\]
It is an isomorphism onto its image, the \emph{Veronese variety} $\mathcal{V}_{N_t-1,2}$, a smooth irreducible variety of dimension $N_t-1$, whose homogeneous ideal is generated by the $2\times2$ minors of the generic symmetric matrix whose entries are the coordinates of the target.

\begin{definition}[Veronese model of a stratum]\label{def:veronese-model}
The \emph{Veronese--BWC map} of the stratum $\mathfrak{X}_{n,k,t}$ is
\[
\Psi_t : \mathfrak{X}_{n,k,t} \longrightarrow \mathbb{P}^{\binom{N_t+1}{2}-1}, \qquad [x] \longmapsto \big[\det(S_I^J)\big]_{I,J\in I_t}.
\]
\end{definition}

\begin{lemma}\label{lem:psi-well-defined}
$\Psi_t$ is well defined. That is: (i) the tuple $\big(\det(S_I^J)\big)_{I,J}$ is not identically zero, so it determines a point of projective space; and (ii) this point depends only on the class $[x]$, not on the representative.
\end{lemma}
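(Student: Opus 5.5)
For (i) I will use the rank identity of Fact~\ref{fact:compound}. Since $[x]\in\mathfrak{X}_{n,k,t}$, we have $\operatorname{rank}(S_x)=t$. Hence
\[
\operatorname{rank}\big(S^{(t)}\big)=\binom{t}{t}=1,
\]
so some $t\times t$ minor $\det(S_I^J)$ is nonzero. This can also be read off Proposition~\ref{prop:7.2-wc} together with Corollary~\ref{cor:kappa-wc-nonzero}. The factorization $S^{(t)}=\kappa^{\mathrm{bwc}}D^{\mathrm{bwc}}(D^{\mathrm{bwc}})^T$ has $\kappa^{\mathrm{bwc}}\neq0$, and $D^{\mathrm{bwc}}\neq0$ because $Y$ has full rank $n-t$, so some maximal minor of $Y$ is nonzero. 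Then the diagonal entry $\kappa^{\mathrm{bwc}}(D^{\mathrm{bwc}}_I)^2$ is nonzero for any $I$ in the support. The case $t=0$ is covered by the conventions of Remark~\ref{rem:t-zero}: $S^{(0)}=(1)$ and $\Psi_0$ is constant.

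For (ii) I will follow the scaling argument of Remarks~\ref{rem:scaling-shape} and~\ref{rem:wc-invariance}, treating classes as classes of pairs $(x,m)$ with the masses fixed.

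\emph{Isometries.} If $\tilde x$ differs from $x$ by an isometry of $\mathbb{R}^d$, the mutual distances are unchanged. The multiplier $\lambda$ is also unchanged, because equation~\eqref{eq:cc-def} is equivariant under isometries once the center of mass is moved accordingly. So $r_0$ is the same, and $S_{\tilde x}=S_x$ entrywise.

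\emph{Homotheties.} If $\tilde x=\alpha x$ with $\alpha>0$, then $r_{ij}$ scales by $\alpha$. The multiplier scales by $\alpha^{2a}$, as one sees by substituting into~\eqref{eq:cc-def}, so $r_0$ scales by $\alpha$ and $S_{\tilde x}=\alpha^{2a}S_x$. Each $t\times t$ minor is then multiplied by the common factor $\alpha^{2at}\neq0$, and the projective point $[\det(S_I^J)]$ is unchanged.

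\emph{Reflections.} Reflections are isometries, so they already leave $S$ unchanged, with no orientation sign to track. In particular, unlike the Pl\"ucker coordinates $\Delta_I$, no sign ambiguity arises here.

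The only point needing care is the claim that $\lambda$ transforms correctly, so that $r_0$ is a class invariant. I will verify it directly from Definition~\ref{defcc}: under $x\mapsto\alpha Rx+b$, with $R$ orthogonal, the left side of~\eqref{eq:cc-def} holds with multiplier $\alpha^{2a}\lambda$. Everything else is immediate from the multilinearity of determinants.
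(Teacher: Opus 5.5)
Your proof is correct and is essentially the paper's: part (ii) is the same scaling argument (isometries fix $S$, homotheties multiply every $t\times t$ minor by $\alpha^{2at}$), and your second argument for (i), via a nonzero $D^{\mathrm{bwc}}_I$ and $\kappa^{\mathrm{bwc}}\neq0$, is exactly the paper's proof. Your first argument for (i), that $\operatorname{rank}(S)=t$ directly forces a nonzero $t\times t$ minor, is a more elementary shortcut, and the paper's route ultimately rests on the same fact through Corollary~\ref{cor:kappa-wc-nonzero}; one small wording slip is that $r_0$ is not itself a class invariant but scales by $\alpha$, as your homothety paragraph correctly states.
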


\begin{proof}
(i) By Remark~\ref{rem:wc-grassmannian-remarks}(iii), the kernel basis matrix $Y$ has rank $n-t$, so some maximal minor of $Y$ is nonzero and the tuple $(D^{\mathrm{bwc}}_K)_{K\in I_t}$ is not identically zero. Choosing $I$ with $D^{\mathrm{bwc}}_I \ne 0$, Proposition~\ref{prop:7.2-wc} and Corollary~\ref{cor:kappa-wc-nonzero} give $\det(S_I^I) = \kappa^{\mathrm{bwc}}(D^{\mathrm{bwc}}_I)^2 \ne 0$.

(ii) By Remark~\ref{rem:wc-invariance}, $S$ is unchanged under isometries and rescales as $S \mapsto \alpha^{2a}S$ under the homothety $x\mapsto\alpha x$. Every $t\times t$ minor then rescales by the same nonzero factor $\alpha^{2at}$, which leaves the projective point unchanged. Finally, the minors of $S$ do not involve any choice of basis for $\ker(S)$, so no further ambiguity arises.
\end{proof}

The next theorem is the structural result of this section: the Veronese model is nothing but the Grassmannian invariant $\Phi$ of Definition~\ref{def:wc-grassmannian-map}, re-embedded quadratically. Recall that the Pl\"ucker embedding
\[
\mathrm{pl}: \mathrm{Gr}(n-t,n) \hookrightarrow \mathbb{P}\Big(\textstyle\bigwedge^{n-t}\mathbb{R}^n\Big) = \mathbb{P}^{N_t-1}
\]
sends a subspace $V$ to the point whose coordinates are the maximal minors $\Delta^{\mathrm{bwc}}_K$, $K\in I_{n-t}$, of any basis matrix of $V$; and that the signed complementation $\iota\big([\Delta^{\mathrm{bwc}}_K]_{K\in I_{n-t}}\big) = [D^{\mathrm{bwc}}_I]_{I\in I_t}$, with $D^{\mathrm{bwc}}_I = \epsilon_{I,I^c}\Delta^{\mathrm{bwc}}_{I^c}$ as in Definition~\ref{def:dual-wc}, is a coordinate permutation with signs, hence a linear isomorphism $\mathbb{P}^{N_t-1}\to\mathbb{P}^{N_t-1}$ (here we use $\binom{n}{n-t}=\binom{n}{t}=N_t$). In particular $\rho := \iota\circ\mathrm{pl}$ is a closed embedding of $\mathrm{Gr}(n-t,n)$ into $\mathbb{P}^{N_t-1}$.

\begin{theorem}[Factorization through the Grassmannian]\label{thm:veronese-factorization}
The Veronese--BWC map factors as
\[
\Psi_t = v_2 \circ \rho \circ \Phi,
\]
where $\Phi:\mathfrak{X}_{n,k,t}\to\mathrm{Gr}(n-t,n)$ is the BWC Grassmannian map $[x]\mapsto\ker(S_x)$. Consequently:
\begin{enumerate}
\item[(i)] the image of $\Psi_t$ is contained in the Veronese variety $\mathcal{V}_{N_t-1,2}$, and, more precisely, in the subvariety $v_2\big(\rho(\mathrm{Gr}(n-t,n))\big)$;
\item[(ii)] $\Psi_t$ has no base points: it is defined at every class of the stratum, with no genericity, convexity, or non-degeneracy hypothesis;
\item[(iii)] for $[x],[x']\in\mathfrak{X}_{n,k,t}$,
\[
\Psi_t([x]) = \Psi_t([x']) \iff \ker(S_x)=\ker(S_{x'}) \iff \Phi([x])=\Phi([x']).
\]
In other words, $\Psi_t$ is exactly as injective as $\Phi$: the quadratic re-embedding loses no information.
\end{enumerate}
\end{theorem}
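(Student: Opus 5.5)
The factorization itself will come directly from Proposition~\ref{prop:7.2-wc} and Corollary~\ref{cor:kappa-wc-nonzero}. Fix $[x]\in\mathfrak{X}_{n,k,t}$ and a kernel basis matrix $Y$ of $\ker(S_x)$. By Definition~\ref{def:wc-grassmannian-map} and Fact~\ref{fact:plucker}, $\mathrm{pl}(\Phi([x]))=[\Delta^{\mathrm{bwc}}_K]_{K\in I_{n-t}}$. By the definition of $\iota$ and Definition~\ref{def:dual-wc}, $\rho(\Phi([x]))=[D^{\mathrm{bwc}}_I]_{I\in I_t}$, and therefore $v_2\rho\Phi([x])=[D^{\mathrm{bwc}}_ID^{\mathrm{bwc}}_J]_{I,J}$. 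Proposition~\ref{prop:7.2-wc} gives $\det(S_I^J)=\kappa^{\mathrm{bwc}}D^{\mathrm{bwc}}_ID^{\mathrm{bwc}}_J$ for all $I,J$. Since $\kappa^{\mathrm{bwc}}\neq0$, the two tuples are proportional by a nonzero scalar, so they define the same projective point. Changing $Y$ multiplies $D^{\mathrm{bwc}}$ by a nonzero factor $c$ and the products by $c^2$; this is harmless projectively, and it also changes $\kappa^{\mathrm{bwc}}$ by $c^{-2}$, consistently. Hence $\Psi_t=v_2\circ\rho\circ\Phi$ as maps of sets on the stratum.

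Item (i) is then immediate: $v_2$ lands in $\mathcal{V}_{N_t-1,2}$ by Fact~\ref{fact:veronese}, and the image of $\Psi_t$ lies in $v_2(\rho(\mathrm{Gr}(n-t,n)))$. Alternatively, rank one of $S^{(t)}$ is exactly Veronese membership by Fact~\ref{fact:veronese}. For (ii) I will cite Lemma~\ref{lem:psi-well-defined}(i): the tuple of minors never vanishes identically, because some $D^{\mathrm{bwc}}_I\neq0$ (as $Y$ has rank $n-t$) and $\kappa^{\mathrm{bwc}}\ne0$. The factorization gives the same conclusion, since $v_2$, $\rho$ and $\Phi$ are everywhere defined. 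I will remark that no hypothesis beyond membership in the stratum was used. The case $t=0$ follows by the conventions of Remark~\ref{rem:t-zero}.

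For (iii), the implication $\Leftarrow$ follows from the factorization. For $\Rightarrow$, suppose $\Psi_t([x])=\Psi_t([x'])$. Then $v_2(\rho\Phi[x])=v_2(\rho\Phi[x'])$. Injectivity of $v_2$ (Fact~\ref{fact:veronese}) gives $\rho\Phi[x]=\rho\Phi[x']$, and injectivity of $\rho$, a closed embedding composed with a linear isomorphism, gives $\Phi[x]=\Phi[x']$. The only subtle point is working over $\mathbb{R}$ while Facts~\ref{fact:veronese} and \ref{fact:plucker} are stated over $\mathbb{C}$. For $v_2$ I would argue directly and avoid this issue. If $[w_Iw_J]=[w'_Iw'_J]$, choose $I$ with $w_I\neq0$. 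Then $w_I^2=\lambda w_I'^2$ forces $w'_I\ne0$, and $w_J=(w_Iw_J/w_I^2)\,w_I$ shows that $w'$ is proportional to $w$. Injectivity of the real Plücker map is the classical fact that a subspace is recovered from its wedge, namely $V=\{v: v\wedge\omega=0\}$. These real injectivity checks are the only step I expect needs care; everything else is bookkeeping of earlier results.
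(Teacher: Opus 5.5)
Your proposal is correct and follows the paper's proof essentially step for step. The factorization comes from Proposition~\ref{prop:7.2-wc} together with $\kappa^{\mathrm{bwc}}\neq0$ (Corollary~\ref{cor:kappa-wc-nonzero}); (i) is read off the factorization, (ii) is Lemma~\ref{lem:psi-well-defined}(i), and (iii) follows from the injectivity of $v_2$ and $\rho$. Your explicit real-coefficient injectivity checks for $v_2$ and the Pl\"ucker map are a small, welcome addition, since the paper leaves them implicit by citing the embedding facts over $\mathbb{C}$.
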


\begin{proof}
Fix $[x]$ and a kernel basis matrix $Y$. The point $v_2\big(\rho(\Phi([x]))\big)$ has homogeneous coordinates $\big(D^{\mathrm{bwc}}_I D^{\mathrm{bwc}}_J\big)_{I,J}$, while $\Psi_t([x])$ has coordinates $\big(\det(S_I^J)\big)_{I,J} = \big(\kappa^{\mathrm{bwc}} D^{\mathrm{bwc}}_I D^{\mathrm{bwc}}_J\big)_{I,J}$ by Proposition~\ref{prop:7.2-wc}. Since $\kappa^{\mathrm{bwc}}\ne0$ (Corollary~\ref{cor:kappa-wc-nonzero}), the two tuples differ by a nonzero global factor and define the same projective point. This proves the factorization; note that the choice of $Y$ rescales all $D^{\mathrm{bwc}}_I$ by one nonzero scalar and is therefore immaterial.

(i) is immediate from the factorization, since $\rho$ lands in $\mathbb{P}^{N_t-1}$ and $v_2(\mathbb{P}^{N_t-1})=\mathcal{V}_{N_t-1,2}$. (ii) is Lemma~\ref{lem:psi-well-defined}(i), whose only inputs are Proposition~\ref{prop:7.2-wc} and Corollary~\ref{cor:kappa-wc-nonzero}, both unconditional. For (iii): $v_2$ is an isomorphism onto its image and $\rho$ is a closed embedding, so $\Psi_t([x])=\Psi_t([x'])$ holds if and only if $\Phi([x])=\Phi([x'])$, which by definition of $\Phi$ means $\ker(S_x)=\ker(S_{x'})$.
\end{proof}

\begin{remark}[Non-collapse: why the stratification matters]\label{rem:no-collapse}
Statement (ii) is the point at which the stratification by the true Brehm--Wintner--Conley dimension earns its keep. If one instead attempts the same construction with minors of the size $n-k-1$ fixed by the dimension alone (the classical setting of Section~\ref{sec:dziobek-williams}), the map is undefined precisely on the vertically degenerate locus $\mathrm{bwc}(x)<n-k-1$: there \emph{every} coordinate $\det(S_I^J)$ vanishes and the would-be projective point collapses (Remark~\ref{rem:why-it-matters}). Working stratum by stratum, each class is sent to the Veronese variety matched to its own rank, and the assignment
\[
x \longmapsto \big(\mathrm{bwc}(x),\, \Psi_{\mathrm{bwc}(x)}([x])\big)
\]
is defined for \emph{every} central configuration without exception. Moreover, by Theorem~\ref{thm:veronese-factorization}(iii) the map on each stratum is non-trivial in the strongest sense available at this level of generality: it separates classes exactly as well as the kernel invariant $\ker(S_x)$ itself does.
\end{remark}

\begin{example}[Two non-empty generic strata for six bodies]\label{ex:hexagon-octahedron}
Both generic strata relevant to the six-body systems of Section~\ref{sec:universal} are non-empty, and explicit elements can be certified by hand, in exact arithmetic, for the Newtonian potential. First, the regular hexagon with six equal masses, inscribed in the unit circle, is a planar central configuration with mutual distances $1$, $\sqrt{3}$, $2$; the central configuration equation fixes $r_0^{2a}=\tfrac{5}{24}+\tfrac{\sqrt{3}}{18}$, and the circulant structure of $S$ gives the spectrum $\{0^{(3)},\,(-\tfrac{7}{4})^{(2)},\,-\tfrac{9-\sqrt{3}}{3}\}$. Hence $\operatorname{bwc}=3=n-k-1$ and the hexagon lies in $\mathfrak{X}_{6,2,3}$; this is also an instance of Corollary~\ref{cor:mb-psd}, since the regular hexagon satisfies the MacMillan--Bartky condition (Remark~\ref{rem:mb-polygon}). Second, the regular octahedron with six equal masses at $\pm e_1,\pm e_2,\pm e_3$ is a spatial central configuration: the equation at each vertex reduces to $2s_{\mathrm{a}}+s_{\mathrm{o}}=0$, where $s_{\mathrm{a}}$ and $s_{\mathrm{o}}$ denote the values of $s_{ij}$ on adjacent and antipodal pairs, giving $s_{\mathrm{a}}=\tfrac{2\sqrt{2}-1}{24}>0$. The kernel of $S$ contains the four trivial vectors, and on the invariant complement spanned by $(1,1,-1,-1,0,0)$ and $(1,1,0,0,-1,-1)$, for the antipodal pairing $(12)(34)(56)$, the matrix $S$ acts as $-6s_{\mathrm{a}}$ times the identity. Hence the spectrum is $\{0^{(4)},\,(-\tfrac{2\sqrt{2}-1}{4})^{(2)}\}$, $\operatorname{bwc}=2=n-k-1$, and the octahedron lies in $\mathfrak{X}_{6,3,2}$. In both cases $\operatorname{vdef}=0$, so by Proposition~\ref{prop:wc-defect} neither configuration is a limit of central configurations of higher dimension; and in both cases $S$ is negative semidefinite, the hexagon by Corollary~\ref{cor:mb-psd} and the octahedron by the computation above, outside the planar scope of the MacMillan--Bartky condition.

These certificates concern only the two generic strata. Whether the degenerate planar stratum $\mathfrak{X}_{6,2,2}$ is non-empty, that is, whether there exists a vertically degenerate planar six-body central configuration, is, to our knowledge, open: by \cite[Theorem~1.2]{albouy2024limit} the corresponding stratum $\mathfrak{X}_{5,2,1}$ at $n=5$ is empty, and $n=6$ is the first open case. Corollary~\ref{cor:mb-psd} contributes a constraint: such a configuration cannot satisfy the MacMillan--Bartky condition, so it must be concave, or convex with at least one side or diagonal violating the sign pattern. The degenerate spatial stratum $\mathfrak{X}_{6,3,1}$, at dimension $k=n-3$, is likewise open: nontrivial vertical degeneracy at dimension $n-3$ is the subject of work announced in \cite{albouy2024limit}, and on that stratum the rank-one structure of $S$ is the case $t=1$ of Proposition~\ref{prop:7.2-wc}.
\end{example}

\subsection{Dimension of the Image}

\begin{proposition}\label{prop:image-dimension}
The Zariski closure of $\Psi_t(\mathfrak{X}_{n,k,t})$ in $\mathbb{P}^{\binom{N_t+1}{2}-1}$ is contained in the irreducible subvariety $v_2\big(\rho(\mathrm{Gr}(n-t,n))\big)$ of the Veronese variety, of dimension
\[
\dim \mathrm{Gr}(n-t,n) = t(n-t).
\]
In particular, $\dim \overline{\Psi_t(\mathfrak{X}_{n,k,t})} \le t(n-t)$, and on the generic stratum $t=n-k-1$ this bound reads $(k+1)(n-k-1)$.
\end{proposition}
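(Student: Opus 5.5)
The plan is to read everything off the factorization $\Psi_t=v_2\circ\rho\circ\Phi$ of Theorem~\ref{thm:veronese-factorization}. First, by that theorem, every point of $\Psi_t(\mathfrak{X}_{n,k,t})$ lies in $Z:=v_2\big(\rho(\mathrm{Gr}(n-t,n))\big)$. So the proof reduces to showing that $Z$ is Zariski closed, irreducible and of dimension $t(n-t)$. The closure containment $\overline{\Psi_t(\mathfrak{X}_{n,k,t})}\subseteq Z$ then follows because the closure of a subset of a closed set stays inside it, and the dimension bound follows from monotonicity of dimension for closed subvarieties.

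Second, I would establish the three properties of $Z$ from the classical facts. By Fact~\ref{fact:plucker}, $\mathrm{pl}$ is a closed embedding of $\mathrm{Gr}(n-t,n)$, a smooth irreducible projective variety of dimension $(n-t)\big(n-(n-t)\big)=t(n-t)$. Since $\iota$ is a signed coordinate permutation, $\rho=\iota\circ\mathrm{pl}$ is again a closed embedding. By Fact~\ref{fact:veronese}, $v_2$ is a closed embedding, an isomorphism onto $\mathcal{V}_{N_t-1,2}$. Hence $v_2\circ\rho$ is a closed embedding, and $Z$ is isomorphic to $\mathrm{Gr}(n-t,n)$. In particular $Z$ is closed in $\mathbb{P}^{\binom{N_t+1}{2}-1}$, irreducible, of dimension $t(n-t)$, and contained in the Veronese variety.

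Third, the specialization to the generic stratum is just substitution: for $t=n-k-1$ we have $n-t=k+1$, so $t(n-t)=(n-k-1)(k+1)$.

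The only delicate point is a base-field issue. $\mathfrak{X}_{n,k,t}$ is a set of real classes, whereas the Facts are stated over $\mathbb{C}$. I would take the Zariski closure inside the complex projective space of the real image, with the real points of $Z$ lying in the closed complex variety $Z_{\mathbb{C}}$. Closure of a subset inside a closed set is then harmless, and $\dim$ refers to the complex closure. I expect no genuine obstacle beyond stating this convention carefully. I would also not claim equality of dimensions, since the image of $\Phi$ may be much smaller than the Grassmannian.
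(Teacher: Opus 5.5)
Your proposal is correct and matches the paper's proof: the image lies in $v_2(\rho(\mathrm{Gr}(n-t,n)))$ by Theorem~\ref{thm:veronese-factorization}(i), that set is Zariski closed and irreducible, and its dimension is $t(n-t)$ because $v_2\circ\rho$ is an embedding. Your convention of taking the closure of the real image inside the complex variety is something the paper leaves implicit, and it is harmless.
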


\begin{proof}
By Theorem~\ref{thm:veronese-factorization}(i), the image lies in $v_2(\rho(\mathrm{Gr}(n-t,n)))$, which is Zariski closed (as the image of a projective variety under a morphism) and irreducible, so it contains the closure of the image. Since $v_2$ and $\rho$ are embeddings, $\dim v_2(\rho(\mathrm{Gr}(n-t,n))) = \dim\mathrm{Gr}(n-t,n) = t(n-t)$.
\end{proof}

\begin{remark}\label{rem:image-dimension-comments}
Two comments on the bound. First, it is a genuine restriction: the ambient Veronese variety $\mathcal{V}_{N_t-1,2}$ has dimension $N_t-1=\binom{n}{t}-1$, which grows much faster than $t(n-t)$. For instance, for $n=5$, $t=2$ the bound is $6$ against an ambient dimension of $9$; for $n=6$, $t=3$ it is $9$ against $19$. The Grassmannian factorization is what cuts the ambient down. Second, the bound is not claimed to be attained: the image of $\Phi$ is further constrained, since $\ker(S_x)$ must contain the $(k+1)$-dimensional column space of $\mu X^T$, and on the generic stratum $t=n-k-1$ it is \emph{equal} to it, so that by~\eqref{eq:wc-generic-factorization} the point $\rho(\Phi([x]))=[D^{\mathrm{bwc}}_I]$ coincides with the mass-weighted geometric dual Pl\"ucker point $\big[\big(\prod_{q\in I^c} m_q\big)D_I\big]$ of Definition~\ref{def:D_dual}. Computing the exact dimension of the image, stratum by stratum, is part of the stratification problem already raised at the end of Section~\ref{sec:pwc}, and we do not address it here.
\end{remark}

\subsection{The Dziobek Stratum}

We now verify that the construction specializes, on the stratum $t=1$, to the Dziobek--Veronese geometry developed in~\cite{DiasVeronese}, which treats configurations of dimension $k=n-2$ for the homogeneous potentials $U_a$. This anchors Definition~\ref{def:veronese-model} in the one case where the resulting parameter space has already been put to work.

\begin{proposition}[Recovery of the Dziobek--Veronese picture]\label{prop:dziobek-recovery}
Let $k=n-2$ and $t=1$ (the generic value for this $k$). Then:
\begin{enumerate}
\item[(i)] $I_1=\big\{\{1\},\dots,\{n\}\big\}$, $N_1=n$, and $S^{(1)}=S$; the map $\Psi_1$ sends $[x]$ to the point $[s_{ij}]_{i\le j} \in \mathbb{P}^{\binom{n+1}{2}-1}$ whose coordinates are the entries of $S$ themselves;
\item[(ii)] $\Psi_1([x]) = v_2\big([z_1:\cdots:z_n]\big)$, where $z_i = D_i/m_i$ are the mass-weighted dual Pl\"ucker coordinates of Definition~\ref{def:z_dual}; in the classical Dziobek notation, $[z_1:\cdots:z_n]=[\delta_1/m_1:\cdots:\delta_n/m_n]$ for $\Delta=(\delta_1,\dots,\delta_n)$ a generator of the kernel of the configuration matrix;
\item[(iii)] the image of $\Psi_1$ lies on the Veronese variety $\mathcal{V}_{n-1,2}$, and the dimension bound of Proposition~\ref{prop:image-dimension} is $t(n-t)=n-1$.
\end{enumerate}
Statement (i)--(ii) recovers, up to the projectively immaterial overall normalization by $r_0^{2a}$, the parametrization of Lemma~5.2 of~\cite{DiasVeronese}; statement (iii) recovers the containment of Corollary~3.4 of~\cite{DiasVeronese}, and the bound $n-1$ agrees with the dimension of the Dziobek--Veronese variety computed in Lemma~5.7 of that work.
\end{proposition}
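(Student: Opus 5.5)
The plan is to specialize the general machinery at $t=1$, item by item, using Proposition~\ref{dziowill} for the generic stratum rather than the BWC version. For (i), everything follows from the definitions. $I_1$ consists of the singletons, so $N_1=\binom{n}{1}=n$. A $1\times1$ minor $\det(S_{\{i\}}^{\{j\}})$ is simply the entry $s_{ij}$, so $S^{(1)}=S$. Since $S$ is symmetric, the coordinates of $\Psi_1$, indexed by unordered pairs $\{I,J\}=\{\{i\},\{j\}\}$, are exactly $[s_{ij}]_{i\le j}$. Here the diagonal entries $s_{ii}$ are the ones fixed by the mass convention of Definition~\ref{def:shape-variables}.

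For (ii), I will invoke Proposition~\ref{dziowill} with $t=n-k-1=1$. This requires $\operatorname{bwc}(x)=1$, which holds because we are on the stratum $\mathfrak{X}_{n,n-2,1}$. The proposition gives $s_{ij}=\kappa z_iz_j$, and $\kappa\ne0$ follows from either of two arguments.
\begin{itemize}
\item Theorem~\ref{thm:veronese-factorization} and equation~\eqref{eq:wc-generic-factorization}: on the generic stratum $D^{\mathrm{bwc}}_i=\big(\prod_{q\ne i}m_q\big)D_i=\big(\prod_q m_q\big)z_i$, so $D^{\mathrm{bwc}}$ is proportional to $z$, and then $\kappa$ is a nonzero multiple of $\kappa^{\mathrm{bwc}}$, which is nonzero by Corollary~\ref{cor:kappa-wc-nonzero}.
\item More directly, $\operatorname{rank}S=1$ means $S\ne0$, so $\kappa\ne0$.
\end{itemize}
Projectively, therefore, $\Psi_1([x])=[\kappa z_iz_j]=[z_iz_j]=v_2([z_1:\cdots:z_n])$.

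To translate into Dziobek's notation, I will observe that $D_i=\epsilon_{\{i\},\{i\}^c}\Delta_{\{i\}^c}$ is, up to the global sign convention, the signed $i$-th maximal minor of the $(n-1)\times n$ matrix $X$. By Cramer's rule (the cofactor expansion of $X$ with a repeated row), the vector of signed maximal minors spans $\ker X$, which is one-dimensional because $\operatorname{rank}X=n-1$. Hence $(D_i)=\pm\delta$ for a generator $\delta$ of $\ker X$. This is exactly the identification already used in the proof of Corollary~\ref{cor:mb-four}. The check I expect to be fussiest is the sign bookkeeping here, namely that $\epsilon_{\{i\},\{i\}^c}=(-1)^{i-1}$ reproduces the alternating cofactor signs. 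Since only the projective class matters, a global sign is harmless.

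For (iii), containment in $\mathcal{V}_{n-1,2}$ is Theorem~\ref{thm:veronese-factorization}(i), or it follows directly from (ii) and Fact~\ref{fact:veronese}. The bound is Proposition~\ref{prop:image-dimension} with $t=1$, giving $t(n-t)=n-1$. Here $\mathrm{Gr}(n-1,n)\cong\mathbb{P}^{n-1}$, so the subvariety $v_2(\rho(\mathrm{Gr}))$ is the whole Veronese variety and the bound equals $\dim\mathcal{V}_{n-1,2}$.

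The comparison with Lemma~5.2, Corollary~3.4 and Lemma~5.7 of \cite{DiasVeronese} is a matter of matching notation. The variables there are presumably $r_{ij}^{2a}-r_0^{2a}$ up to the overall factor $r_0^{2a}$, and a global factor does not change a projective point. The main risk is a normalization mismatch in that comparison, not any mathematical gap.
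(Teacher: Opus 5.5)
Your proposal is correct and is essentially the paper's argument. Parts (i) and (iii) match. For (ii), the paper uses the factorization $\Psi_1=v_2\circ\rho\circ\Phi$ together with $D^{\mathrm{bwc}}_i=\big(\prod_q m_q\big)z_i$, which is your first bullet. Your shortcut through Proposition~\ref{dziowill}, with $\kappa\neq0$ because $S=\kappa zz^{T}\neq0$, is equally valid.

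One slip: Proposition~\ref{dziowill} does not require $\operatorname{bwc}(x)=1$, since it holds at every rank. The stratum hypothesis is needed only to get $\kappa\neq0$.
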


\begin{proof}
(i) For $t=1$ the multi-indices are singletons, minors of size $1$ are entries, and $\det(S_{\{i\}}^{\{j\}})=s_{ij}$.

(ii) By Theorem~\ref{thm:veronese-factorization}, $\Psi_1([x]) = v_2\big([D^{\mathrm{bwc}}_1:\cdots:D^{\mathrm{bwc}}_n]\big)$. On the generic stratum, $\ker(S)$ is spanned by the columns of $\mu X^T$, and~\eqref{eq:wc-generic-factorization} gives, for $I=\{i\}$ with complement $J=I^c$,
\[
D^{\mathrm{bwc}}_i = \Big(\prod_{q\ne i} m_q\Big) D_i = \Big(\prod_{q=1}^n m_q\Big)\frac{D_i}{m_i} = \Big(\prod_{q=1}^n m_q\Big) z_i.
\]
The common factor $\prod_q m_q \ne 0$ is projectively immaterial, so $[D^{\mathrm{bwc}}_i]_i=[z_i]_i$. The identification of $z_i$ with $\delta_i/m_i$ is Definition~\ref{def:z_dual} read at $t=1$: $D_i$ is, up to the fixed shuffle sign, the maximal minor $\Delta_{I^c}$ of the configuration matrix obtained by deleting the $i$-th column, which is the classical cofactor description of the kernel generator $\Delta$.

(iii) The containment is Theorem~\ref{thm:veronese-factorization}(i) with $N_1=n$; the bound is $1\cdot(n-1)$. Here $\rho$ identifies $\mathrm{Gr}(n-1,n)$ with $\mathbb{P}^{n-1}$, consistently with the fact that at $t=1$ the whole Grassmannian is a projective space.

For the comparison with~\cite{DiasVeronese}, note that the shape variables used there are $s^{\mathrm{DV}}_{ij}=(r_{ij}/r_0)^{2a}-1=(r_{ij}^{2a}-r_0^{2a})/r_0^{2a}$, proportional to our $s_{ij}=r_{ij}^{2a}-r_0^{2a}$ by the configuration-wide nonzero constant $r_0^{-2a}$; the two conventions therefore define the same projective point $[s_{ij}]$, and the parametrization $[s_{ij}] = v_2\big(\big[\sqrt{\kappa}\,\delta_i/m_i\big]\big)$ of Lemma~5.2 of~\cite{DiasVeronese} is the equality (ii) above.
\end{proof}

\begin{remark}\label{rem:dziobek-n4}
For $n=4$ bodies, Proposition~\ref{prop:dziobek-recovery} places the planar central configurations on the Veronese variety $\mathcal{V}_{3,2}\subset\mathbb{P}^9$, cut out by the relations $s_{ij}s_{kl}-s_{ik}s_{jl}=0$. Among these one finds $s_{12}s_{34}-s_{13}s_{24}=0$, the condition presented by Dziobek in 1900; the Veronese geometry of this case is the starting point of~\cite{DiasVeronese}. Definition~\ref{def:veronese-model} is thus the extension of that picture from the single classical stratum $(k,t)=(n-2,1)$ to every pair $(k,t)$, with Corollary~\ref{cor:kappa-wc-nonzero} guaranteeing that no stratum is mapped trivially.
\end{remark}

\section{Determinantal Relations and Universal Equations for Central Configurations}\label{sec:universal}

The coordinates of the Veronese model $\Psi_t$ are the $t\times t$ minors of the symmetric matrix $S$. This has a consequence that goes beyond the rank-one structure exploited in the previous section: \emph{every} polynomial relation that holds identically among the $t\times t$ minors of a generic symmetric matrix specializes to a valid equation on central configurations. The relations among the minors of a generic matrix are a classical subject of commutative algebra, studied in depth by Bruns, Conca, and Varbaro in~\cite{bruns2013relations} and by Huang, Perlman, Polini, Raicu, and Sammartano in~\cite{huang2021relations}. In this section we set up the specialization mechanism, exhibit the resulting quadratic family explicitly, and record what the structural results of~\cite{bruns2013relations, huang2021relations} yield for central configurations.

\subsection{Commutative algebra input}\label{subsec:ca-input}

The results of this section, and the expansions of
Appendix~\ref{app:expansions}, draw on four bodies of classical and recent
work in commutative algebra. Since they are not standard references in
celestial mechanics, we state precisely what is borrowed, and in which
form.

\begin{remark}[The word \emph{syzygy}]\label{rem:syzygy-meaning}
We use \emph{syzygy} in a restricted sense, which we fix here once and for all
because it differs from the standard one. Present the subalgebra generated by
the $t\times t$ minors of $S$ as
\[
\Bbbk[y_{IJ}]_{I,J\in I_t}\twoheadrightarrow\Bbbk\big[\det S_I^J\big],
\qquad y_{IJ}\longmapsto\det S_I^J ,
\]
and let $\mathcal{R}$ be the kernel. By a \emph{linear syzygy} we mean an
element of the degree-one graded piece $\mathcal{R}_1$: a relation
\[
\sum_{I,J\in I_t}c_{IJ}\,\det S_I^J=0,\qquad c_{IJ}\in\Bbbk\ \text{constant},
\]
valid identically in the entries $s_{ij}$. Three points deserve emphasis.
First, \emph{linear} qualifies the coefficients, not the minors: allowing
polynomial coefficients would give all of $\mathcal{R}$, whose degree-two piece
already contains the exchange relations~\eqref{eq:exchange}. Second, the
identities hold for \emph{every} symmetric matrix, not only for those arising
from central configurations; they therefore locate the image of $\Psi_t$ inside
a proper linear subspace of its target and exclude no configuration. Third,
when we say that there are \emph{exactly} $r$ independent syzygies we mean that
$\dim_{\Bbbk}\mathcal{R}_1=r$, so that any $r$ independent ones form a basis and
every other is a combination of them. This is stronger than exhibiting $r$
relations, and it is what allows Remark~\ref{rem:syzygies-irreducible} to
conclude that Lemma~\ref{lem:three-term} exhausts $\mathcal{R}_1$.

The usual meaning of \emph{syzygy} in commutative algebra (a relation among
a set of generators with coefficients in the ambient ring, i.e.\ an element of
the first syzygy module) is not what is meant here, and does not appear in
this paper.
\end{remark}

\subsubsection*{(I) The symmetric determinantal ideal}

Let $A=(a_{ij})$ be a \emph{generic symmetric} $n\times n$ matrix of
indeterminates over a field $\Bbbk$ of characteristic zero, and let
$I_{t+1}(A)\subset\Bbbk[a_{ij}]$ be the ideal generated by its
$(t+1)\times(t+1)$ minors, so that $V(I_{t+1}(A))$ is the locus
$\{\operatorname{rank}\le t\}$ of symmetric matrices of rank at most $t$.

\begin{fact}[Structure of $I_{t+1}(A)$]\label{fact:symdet}
For $0\le t\le n-1$:
\begin{enumerate}
\item[(i)] $I_{t+1}(A)$ is a prime ideal, and $\Bbbk[a_{ij}]/I_{t+1}(A)$ is a
normal Cohen--Macaulay domain \cite{kutz1974cohen,jozefiak1978ideals,conca1994divisor};
in particular $I_{t+1}(A)$ is radical, so
$I\big(V(I_{t+1}(A))\big)=I_{t+1}(A)$.
\item[(ii)] $\operatorname{ht} I_{t+1}(A)=\binom{n-t+1}{2}$, so the affine
cone $V(I_{t+1}(A))$ has dimension
$\binom{n+1}{2}-\binom{n-t+1}{2}=tn-\binom{t}{2}$ \cite{kutz1974cohen}.
\item[(iii)] Its degree is
$\displaystyle\prod_{a=0}^{n-t-1}\binom{n+a}{\,n-t-a\,}\Big/\binom{2a+1}{a}$
\cite[Proposition~12(b)]{harris1984symmetric}; a minimal free resolution is constructed in
\cite{jozefiakpragaczweyman1981}.
\item[(iv)] The $(t+1)$-minors of $A$ form a Gr\"obner basis of $I_{t+1}(A)$
with respect to a diagonal term order \cite[Theorem~2.9]{conca1994groebner}; a systematic
treatment of determinantal ideals and their Gr\"obner bases is \cite{brunsvetter1988,brunsconcaraicuvarbaro2022}.
\end{enumerate}
\end{fact}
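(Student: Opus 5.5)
Since this Fact collects classical results, the plan is to reduce each item to the cited source, and to give a self-contained argument only where that is cheap: the dimension count in (ii) and the deduction ``prime $\Rightarrow$ radical'' in (i).

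\textbf{Dimension count in (ii).} I would work over $\Bbbk$ algebraically closed of characteristic zero; the statements descend to any field of characteristic zero by flat base change of the polynomial ring. The locus $\{\operatorname{rank}\le t\}$ is the closure of the rank-exactly-$t$ locus $\mathcal{O}_t$. I would describe $\mathcal{O}_t$ by the surjective morphism
\[
\Bbbk^{n\times t}_{\mathrm{full\ rank}}\times \mathrm{Sym}^2_{\mathrm{nondeg}}(\Bbbk^t)\longrightarrow \mathcal{O}_t,\qquad (B,Q)\longmapsto BQB^{T}.
\]
Its fibers are orbits of $\mathrm{GL}_t$ acting by $(B,Q)\mapsto(Bg^{-1},gQg^{T})$, and these orbits are free. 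Hence
\[
\dim\mathcal{O}_t=nt+\binom{t+1}{2}-t^2=tn-\binom{t}{2}.
\]
Equivalently, one chooses the column space in $\mathrm{Gr}(t,n)$, which gives $t(n-t)$ parameters, and then a nondegenerate form on it, which gives $\binom{t+1}{2}$. Subtracting $tn-\binom{t}{2}$ from $\binom{n+1}{2}$ gives $\binom{n-t+1}{2}$. This yields the height, once we know that $I_{t+1}(A)$ is the full vanishing ideal of this irreducible set, which is what (i) supplies.

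\textbf{Primality, normality and Cohen--Macaulayness in (i).} I would invoke Kutz's theorem \cite{kutz1974cohen} (and J\'ozefiak \cite{jozefiak1978ideals}). That proof runs through standard monomial theory: one shows that the doset-standard products of minors form a $\Bbbk$-basis of $\Bbbk[a_{ij}]/I_{t+1}(A)$, and that the resulting algebra with straightening law is Hodge and has a Cohen--Macaulay (shellable) poset. Normality is in \cite{conca1994divisor}. An alternative route to the domain property is invariant-theoretic. By the first fundamental theorem for $\mathrm{O}_t$, the subring of $\Bbbk[B]$ generated by the entries of $BB^{T}$ is the invariant ring $\Bbbk[B]^{\mathrm{O}_t}$, hence a domain. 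By the second fundamental theorem, its presentation ideal is exactly $I_{t+1}(A)$. Cohen--Macaulayness would then follow from Hochster--Roberts, since $\mathrm{O}_t$ is reductive. Once primality is established, the ideal is radical, and the Nullstellensatz gives $I(V(I_{t+1}(A)))=I_{t+1}(A)$.

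\textbf{Degree in (iii) and Gr\"obner basis in (iv).} For (iii) I would cite Harris--Tu \cite[Proposition~12(b)]{harris1984symmetric}. They compute the class of the symmetric degeneracy locus by a Porteous-type formula, with Chern classes of $\mathrm{Sym}^2$ of the tautological bundle, and evaluate it to the stated product. The resolution is cited from \cite{jozefiakpragaczweyman1981}. For (iv) I would cite Conca \cite[Theorem~2.9]{conca1994groebner}. The argument there is a comparison of Hilbert functions: the initial ideal of the minors under a diagonal order is generated by squarefree monomials, and its Stanley--Reisner complex has the same Hilbert series as the standard-monomial basis.

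\textbf{Main obstacle.} The genuine content lies in (i) and (iv). Both require the straightening law for symmetric minors, and this is not reproducible in a short argument. I would not re-derive it. Instead I would check carefully that the hypotheses match the cited statements: characteristic zero, a generic symmetric matrix, and the index shift between $I_{t+1}$ and ``rank $\le t$''. With those checked, each item is a direct citation.
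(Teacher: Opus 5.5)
Your proposal is correct and takes the same approach as the paper, which also treats this Fact purely by citation. Your orbit-map dimension count $(B,Q)\mapsto BQB^{T}$ for (ii) and the FFT/SFT-plus-Hochster--Roberts route to (i) are valid self-contained supplements, and the second one actually shows that primality does not need the straightening law, contrary to your closing remark (likewise, height depends only on the radical, so (ii) does not need (i)). One small correction: Kutz's 1974 proof predates the doset and Hodge-algebra machinery (De Concini--Procesi 1976, De Concini--Eisenbud--Procesi 1982) and, to my recollection, adapts Hochster--Eagon's principal radical systems, so your account of how that cited proof runs is misattributed, though this does not affect an argument that only invokes the result.
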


Only (i) is logically required for the results below: it is what licenses
the passage from an equality of \emph{loci} to an ideal membership in
Proposition~\ref{prop:economical}. Items (ii)--(iv) are recorded because they
are what one measures when computing with the system in practice, and because
(ii) is easily confused with two different dimensions occurring in this
paper; see Remark~\ref{rem:three-dimensions}.

\subsubsection*{(II) Relations among the minors of a generic matrix}

For a generic (\emph{non}-symmetric) $m\times n$ matrix $X$, let
$\mathcal{A}_t(X)\subset\Bbbk[x_{ij}]$ be the subalgebra generated by the
$t\times t$ minors, and let $\mathcal{R}_t$ be the ideal of relations among
them, i.e.\ the kernel of the presentation
$\Bbbk[y_{I,J}]\twoheadrightarrow\mathcal{A}_t(X)$, $y_{I,J}\mapsto\det X_I^J$.

\begin{fact}[Bruns--Conca--Varbaro; Huang--Perlman--Polini--Raicu--Sammartano]
\label{fact:bcv}
For $t=\min(m,n)$ the ideal $\mathcal{R}_t$ is generated by the Pl\"ucker
relations. For $t<\min(m,n)$ this is no longer so:
\cite{bruns2013relations} exhibits minimal relations in degrees $2$ (not of
Pl\"ucker type in general) and $3$, described in terms of the representation
theory of $\mathrm{GL}$, and conjectures
\cite[Conjecture~2.12]{bruns2013relations} that these generate
$\mathcal{R}_t$. The conjecture is proved for $t=2$ in
\cite{huang2021relations}.
\end{fact}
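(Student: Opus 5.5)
The statement is a compilation of results from the literature, so the plan is to prove the first sentence directly and, for the rest, only to identify precisely what is being imported. The first sentence concerns the case $t=\min(m,n)$; by transposition we may assume $t=m\le n$.

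\textbf{The case $t=m\le n$.} Then $\mathcal{A}_m(X)$ is generated by the maximal minors $\det X^{K}$, $K\in I_m$.
\begin{enumerate}
\item Show that the image of $\Bbbk[y_K]\to\mathcal{A}_m(X)$ is the homogeneous coordinate ring of the affine cone over $\mathrm{Gr}(m,n)$. The map $X\mapsto(\det X^K)_K$ sends the space of $m\times n$ matrices into the cone over the Pl\"ucker embedding (Fact~\ref{fact:plucker}). It is onto that cone: every decomposable vector $y_1\wedge\cdots\wedge y_m$, including $0$, is the vector of maximal minors of the matrix with rows $y_1,\dots,y_m$.
\item Deduce that the kernel $\mathcal{R}_m$ is exactly the vanishing ideal of the cone. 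A polynomial in the $y_K$ vanishes after substituting the minors if and only if it vanishes on the image, which by step 1 is the whole cone.
\item Conclude with the classical theorem that this vanishing ideal is generated by the quadratic Pl\"ucker relations of Proposition~\ref{prop:plucker-relations}. I would cite Hodge's standard monomial theory, or the straightening law in \cite{brunsvetter1988}. Its content is that standard monomials in the $y_K$ span modulo the Pl\"ucker quadrics, while their images in $\Bbbk[x_{ij}]$ are linearly independent, so no further relations exist.
\end{enumerate}
This is the step most likely to need care, because Fact~\ref{fact:plucker} only asserts \emph{scheme-theoretic} cutting-out, not that the quadrics generate the saturated ideal. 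So the argument should invoke the ideal-theoretic statement (the Pl\"ucker ideal is prime and generated in degree two) explicitly, rather than Fact~\ref{fact:plucker}.

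\textbf{The case $t<\min(m,n)$.} Here nothing is proved in this paper; the statement reports \cite{bruns2013relations} and \cite{huang2021relations}. To make the citation meaningful, one can check, without redoing their work, why degree-two relations of non-Pl\"ucker type must exist:
\begin{enumerate}
\item Decompose both sides under $\mathrm{GL}_m\times\mathrm{GL}_n$.
\item The degree-two piece of $\Bbbk[y_{I,J}]$ is $\operatorname{Sym}^2\big(\bigwedge^tV\otimes\bigwedge^tW\big)$. By the Cauchy formula this contains $\mathbb{S}_\lambda V\otimes\mathbb{S}_\mu W$ with $\lambda\neq\mu$ possible.
\item The degree-two piece of $\mathcal{A}_t(X)$ sits inside $\Bbbk[x_{ij}]$, whose isotypic components are only of the form $\mathbb{S}_\lambda V\otimes\mathbb{S}_\lambda W$. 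Any component of the first kind therefore lies in the kernel.
\item Some of these components are not Pl\"ucker-type relations, since Pl\"ucker relations involve only minors with a common row (or column) set.
\end{enumerate}

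\textbf{What remains cited.} The degree-three relations, the conjecture \cite[Conjecture~2.12]{bruns2013relations}, and its proof for $t=2$ \cite{huang2021relations} are taken as given. The generation statement is the genuine obstacle: it is open in general, and no argument here attempts it.
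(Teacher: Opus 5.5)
The paper gives no argument for this Fact; it is imported from \cite{bruns2013relations,huang2021relations}. Your treatment of $t=\min(m,n)$ is correct and goes beyond what the paper provides. The standard-monomial step alone already identifies $\mathcal{R}_m$ with the Pl\"ucker ideal, and you are right that the scheme-theoretic Fact~\ref{fact:plucker} would not suffice. Deferring the remaining assertions to the literature is exactly what the paper does.

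The gap is in step (4) of your representation-theoretic check, which conflates \emph{off-diagonal} with \emph{non-Pl\"ucker}. Steps (2)--(3) correctly place every component $\mathbb{S}_\lambda V\otimes\mathbb{S}_\mu W$ with $\lambda\neq\mu$ in $\mathcal{R}_t$. But the Pl\"ucker relations themselves fill such components.

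To see this, apply $\mathrm{GL}_m$ to the rows. Take the $2\times n$ matrix whose rows are the first row of $X$ and the second row plus $s$ times the third. At order $s$, its Pl\"ucker relations mix minors with row sets $\{1,2\}$ and $\{1,3\}$. So the remark that each coordinate Pl\"ucker relation has a common row set says nothing about the $\mathrm{GL}$-stable span. The squares $\omega^2$ of decomposable $\omega\in\bigwedge^tV$ span exactly the Cartan component, so the row and column Pl\"ucker relations span
\[
\mathbb{S}_{(2^t)}V\otimes N(W)\;\oplus\;N(V)\otimes\mathbb{S}_{(2^t)}W,\qquad N(U)=\bigoplus_{j\ge1}\mathbb{S}_{(2^{t-2j},1^{4j})}U ,
\]
where $N(U)$ is the non-Cartan part of $\operatorname{Sym}^2(\bigwedge^tU)$. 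A quadric therefore escapes this span only if $\lambda\neq\mu$ and \emph{neither} equals $(2^t)$.

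For $t=2$ this never happens. The only off-diagonal pieces of $\operatorname{Sym}^2(\bigwedge^2V\otimes\bigwedge^2W)$ are $\mathbb{S}_{(2,2)}V\otimes\bigwedge^4W$ and its transpose, and these are precisely the Pl\"ucker relations. So step (4) fails in exactly the case treated by \cite{huang2021relations}. If you read ``Pl\"ucker'' narrowly, with coordinate row sets only, your conclusion becomes true but empty: mere polarizations of Pl\"ucker relations would then count as non-Pl\"ucker.

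Genuinely non-Pl\"ucker quadrics first appear in pieces such as $\mathbb{S}_{(2^{t-1},1^2)}V\otimes\mathbb{S}_{(2^{t-3},1^6)}W\subset\bigwedge^2(\bigwedge^tV)\otimes\bigwedge^2(\bigwedge^tW)$. These require $t\ge3$, $m\ge t+1$ and $n\ge t+3$ (or $m,n$ exchanged), which is the sense of ``in general''. Step (2) likewise needs a range: for $t=1$, or for $m=n=t+1$, the $t$-minors are algebraically independent and $\mathcal{R}_t=0$. You should restate the check using this component-level description of the Pl\"ucker span, with an explicit range of $(m,n,t)$.
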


Corollary~\ref{cor:universal-equations} is the specialization of
Fact~\ref{fact:bcv} to $X=S$ through Lemma~\ref{lem:specialization}. Two
caveats are worth stating explicitly, and are developed in
Remark~\ref{rem:specialization-scope} and after
Lemma~\ref{lem:three-term}. First, the generating statement of
Fact~\ref{fact:bcv} concerns $\mathcal{R}_t$, not the ideal of the image of
$\Psi_t$, and is conjectural except for $t=2$. Second, and more
substantially, $S$ is symmetric, and the specialization
$\Bbbk[x_{ij}]\to\Bbbk[s_{ij}]$, $x_{ij}\mapsto s_{ij}$, is not injective:
the symmetric case carries relations \emph{not} present in
$\mathcal{R}_t$, already in degree one in the minors, by
Lemma~\ref{lem:three-term}. Fact~\ref{fact:bcv} therefore bounds the
relations available here from below, never from above.

\subsubsection*{(III) Plethysm and the compound matrix}

The linear syzygies of Lemma~\ref{lem:three-term} are a
representation-theoretic phenomenon, and we isolate the two classical decompositions that
produce them. We write $\mathbb{S}_\pi V$ for the Schur functor of the
partition $\pi$, and call a partition \emph{even} if all its parts are.

\begin{fact}[Littlewood]\label{fact:littlewood}
For $\dim V=n$ and $t\ge0$,
\[
\operatorname{Sym}^t\big(\operatorname{Sym}^2V\big)
=\bigoplus_{\substack{\pi\vdash2t\\ \pi\ \mathrm{even}}}\mathbb{S}_\pi V,
\qquad
\operatorname{Sym}^2\Big(\bigwedge\nolimits^{t}V\Big)
=\bigoplus_{j\ge0}\mathbb{S}_{(2^{\,t-2j},\,1^{\,4j})}V .
\]
\end{fact}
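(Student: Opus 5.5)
We prove the two decompositions separately, working with characters (equivalently, highest weight vectors) for $\mathrm{GL}(V)$. In characteristic zero every finite-dimensional polynomial representation is determined by its character. It therefore suffices to identify the multiplicities of the $\mathbb{S}_\pi V$, and by stability in $n$ we may take $n$ as large as we like and then restrict. Throughout, $x_1,\dots,x_n$ are the torus weights of a basis $e_1,\dots,e_n$ of $V$.

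\emph{First identity.} Summing over $t$, the graded character of $\operatorname{Sym}^\bullet(\operatorname{Sym}^2V)$ is $\prod_{i\le j}(1-x_ix_j)^{-1}$. We therefore reduce to the classical Schur-function identity
\[
\prod_{i\le j}\frac{1}{1-x_ix_j}=\sum_{\pi\ \mathrm{even}}s_\pi(x).
\]
This identity can be quoted from Macdonald, Ch.~I, \S5, Ex.~5. To keep the argument self-contained, we would instead prove it through the coordinate ring $\Bbbk[a_{ij}]$ of a symmetric matrix (cf.\ Fact~\ref{fact:symdet}). The key step is to show that the products of leading principal minors $\prod_r \det(A_{[r]}^{[r]})^{c_r}$ are highest weight vectors of weight $\sum_r c_r(2^r)$. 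Since $\det(A_{[r]}^{[r]})$ has weight $(2^r)$, these weights are exactly the even partitions. One then checks that each such weight occurs with multiplicity one by a dimension count against the product formula. Matching degrees, $\pi\vdash 2t$ corresponds to $\operatorname{Sym}^t$.

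\emph{Second identity.} We first decompose the full tensor square by Pieri's rule (dual form, adding a vertical strip):
\[
\bigwedge\nolimits^tV\otimes\bigwedge\nolimits^tV=\bigoplus_{i=0}^{t}\mathbb{S}_{(2^{t-i},1^{2i})}V,
\]
each summand occurring once. Since this decomposition is multiplicity-free, each summand lies entirely in $\operatorname{Sym}^2$ or entirely in $\bigwedge^2$. It therefore remains to compute the sign by which the swap $\tau(u\otimes w)=w\otimes u$ acts on the $i$-th summand, and we do this on an explicit highest weight vector. Put $P=e_1\wedge\cdots\wedge e_{t-i}$ and $Q=\{t-i+1,\dots,t+i\}$. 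Then
\[
h_i=\sum_{A\subset Q,\ |A|=i}\pm\,(P\wedge e_A)\otimes(P\wedge e_{Q\setminus A}),
\]
with the signs chosen as the Laplace-expansion signs of $e_Q$ into $e_A\wedge e_{Q\setminus A}$. This vector has weight $(2^{t-i},1^{2i})$ and is killed by the raising operators; the latter is the usual check that $E_{p,p+1}$ cancels terms in pairs. Applying $\tau$ replaces $A$ by $Q\setminus A$. The Laplace sign changes by $(-1)^{i\cdot i}=(-1)^i$ under this exchange, so $\tau h_i=(-1)^ih_i$. Hence the summand lies in $\operatorname{Sym}^2$ exactly when $i$ is even; writing $i=2j$ gives $\bigoplus_{j}\mathbb{S}_{(2^{t-2j},1^{4j})}V$.

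As a cross-check, we would verify the dimensions for $t=1$ and $t=2$, where $\operatorname{Sym}^2\bigwedge^2V=\mathbb{S}_{(2,2)}V\oplus\bigwedge^4V$.

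The main obstacle is the sign bookkeeping in the second identity. The sign of $\tau$ on $h_i$ must be read off correctly, so that both the overall orientation of each term and the Laplace sign are accounted for. We would fix the sign convention via the shuffle sign $\epsilon$ of Definition~\ref{def:shuffle_sign}, using $\epsilon_{A,Q\setminus A}\epsilon_{Q\setminus A,A}=(-1)^{|A|\,|Q\setminus A|}$, and confirm the outcome against the case $t=2$. The highest-weight check for the first identity is routine by comparison.
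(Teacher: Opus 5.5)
Your argument is correct. For the second decomposition it takes a genuinely different route from the paper, which proves neither identity. The paper cites the symmetric-function identities in Macdonald (including the plethysm $h_2\circ e_t$) and their module forms in Fulton--Harris and Weyman, and passes between the two via characters.

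For the first identity you cite the same source, so nothing differs there. For the second you replace the plethysm citation by an actual proof:
\begin{itemize}
\item Pieri gives the multiplicity-free decomposition $\bigwedge^tV\otimes\bigwedge^tV=\bigoplus_{i=0}^{t}\mathbb{S}_{(2^{t-i},1^{2i})}V$.
\item Schur's lemma then makes the swap $\tau$ act by $\pm1$ on each summand.
\item Your $h_i$ is a nonzero highest weight vector of weight $(2^{t-i},1^{2i})$. The raising operators with $p+1\le t-i+1$ die on $P$. For $p,p+1\in Q$ the pairwise cancellation is just equivariance of the comultiplication $\bigwedge^{2i}\to\bigwedge^{i}\otimes\bigwedge^{i}$ applied to $E_{p,p+1}e_Q=0$.
\item Finally, $\epsilon_{Q\setminus A,A}=(-1)^{i^2}\epsilon_{A,Q\setminus A}$ gives $\tau h_i=(-1)^ih_i$.
\end{itemize}

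The paper's route is shorter. Yours is self-contained and produces explicit highest weight vectors of the non-Cartan summands, which the proof of Lemma~\ref{lem:three-term} invokes without writing down. Take summand index $2$, with $P=e_C$ and $Q=\{i,j,k,l\}$: your vector has exactly three pairs $\{A,Q\setminus A\}$, with signs $+,-,+$. Distinct isotypic components are orthogonal for the standard inner product, and $\bigwedge^tS$ lies in the Cartan component (Proposition~\ref{prop:cartan}). Hence pairing $h_2$ with $\bigwedge^tS$ returns a nonzero multiple of the left-hand side of \eqref{eq:three-term}, so your proof makes the mechanism of that lemma concrete.

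One caution about the self-contained variant you sketch for the first identity. The products of leading principal minors show only that each even $\pi$ occurs \emph{at least} once. The step ``multiplicity one by a dimension count against the product formula'' would require
\[
\sum_{\pi\vdash2t,\ \pi\ \mathrm{even}}\dim\mathbb{S}_\pi\mathbb{C}^n=\binom{\binom{n+1}{2}+t-1}{t}\quad\text{for all } n,t .
\]
This is not a routine check; it is the dimension-level content of the identity itself. It is usually obtained from Burge's symmetric RSK correspondence, or from the theorem that the unipotent invariants of $\Bbbk[a_{ij}]$ are generated by the leading principal minors. So in that half it is the exhaustion, not the highest-weight check, that carries the content. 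Keep the citation, as the paper does, unless you supply one of these inputs.
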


Both are classical. The symmetric-function identities are
\cite[I.8, Ex.~9 and I.5, Ex.~5(a)]{macdonald1995}; the module form is
\cite[Ex.~6.16 and \S6.1]{fultonharris1991} or
\cite[Prop.~2.3.8(a)]{weyman2003}, where the Schur functors are indexed in
the transpose convention. In characteristic zero the two formulations
are equivalent: polynomial representations of $\mathrm{GL}(V)$ are
semisimple and determined by their characters, the character of
$\mathbb{S}_\pi V$ is the Schur polynomial $s_\pi(x_1,\dots,x_n)$, and a
plethysm such as $h_2\circ e_t$ is by definition the character of the
composition $\operatorname{Sym}^2(\bigwedge^tV)$; partitions with more than
$n$ rows contribute zero, which is the truncation used in
Proposition~\ref{prop:ambient-reduction}. The second decomposition is also
the starting point of the analysis in \cite[\S1]{bruns2013relations}. The summand
$\mathbb{S}_{(2^t)}V$ occurring at $j=0$ is the \emph{Cartan component} of
$\operatorname{Sym}^2(\bigwedge^tV)$, i.e.\ the irreducible component
generated by the square of a highest weight vector.

The following consequence is what the expansions of
Appendix~\ref{app:expansions} make visible, and we isolate it here because it
is the precise mechanism behind Lemma~\ref{lem:three-term}.

\begin{proposition}[The compound of a symmetric matrix is a Cartan tensor]
\label{prop:cartan}
Let $S$ be a symmetric $n\times n$ matrix, regarded as an element of
$\operatorname{Sym}^2V$ with $V=\mathbb{C}^n$. Then the $t$-th compound
$\bigwedge^tS$, regarded via Fact~\ref{fact:compound} as an element of
$\operatorname{Sym}^2(\bigwedge^tV)$, lies in the Cartan component
$\mathbb{S}_{(2^t)}V$.
\end{proposition}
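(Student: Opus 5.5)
The plan is to use equivariance plus a density argument. The map $c:\operatorname{Sym}^2V\to\operatorname{Sym}^2(\bigwedge^tV)$, $S\mapsto\bigwedge^tS$, is a homogeneous polynomial map of degree $t$. It is $\mathrm{GL}(V)$-equivariant: under $S\mapsto gSg^{T}$ the compound transforms as $\bigwedge^t(gSg^T)=\bigwedge^tg\,\bigwedge^tS\,(\bigwedge^tg)^T$ by Cauchy--Binet (Fact~\ref{fact:compound}), and this is exactly the induced action of $g$ on $\operatorname{Sym}^2(\bigwedge^tV)$. Let $P$ be the $\mathrm{GL}(V)$-equivariant projection of $\operatorname{Sym}^2(\bigwedge^tV)$ onto the complement $\bigoplus_{j\ge1}\mathbb{S}_{(2^{t-2j},1^{4j})}V$ of the Cartan component, given by Fact~\ref{fact:littlewood}. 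The task is to show that $P\circ c\equiv0$. Because $P\circ c$ is polynomial, it suffices to check this on a Zariski-dense subset of $\operatorname{Sym}^2V$.

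For that subset I will take the rank-one symmetric matrices together with their sums. The cleaner route is to decompose a symmetric matrix over $\mathbb{C}$ as $S=\sum_{i=1}^n \nu_i\,u_iu_i^T$; generic $S$ is congruent, and in fact orthogonally similar over $\mathbb{R}$ or for generic complex $S$, to such a sum. Expanding $\bigwedge^t$ of this sum multilinearly via Cauchy--Binet, applied to $S=UDU^T$, gives
\[
\bigwedge\nolimits^tS=\sum_{K\in I_t}\Big(\prod_{i\in K}\nu_i\Big)\,\omega_K\omega_K^{T},\qquad \omega_K:=u_{k_1}\wedge\cdots\wedge u_{k_t}\in\bigwedge\nolimits^tV .
\]
Each $\omega_K$ is decomposable, so each summand $\omega_K\cdot\omega_K$ is the square of a decomposable $t$-vector. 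The orbit of a decomposable vector under $\mathrm{GL}(V)$ is the orbit of the highest weight vector $e_1\wedge\cdots\wedge e_t$ of $\bigwedge^tV$. Its square therefore lies in the $\mathrm{GL}(V)$-orbit of $(e_1\wedge\cdots\wedge e_t)^2$, which is the highest weight vector of weight $(2^t)$, and that vector generates $\mathbb{S}_{(2^t)}V$ by the definition of the Cartan component recalled after Fact~\ref{fact:littlewood}. Hence every summand lies in $\mathbb{S}_{(2^t)}V$. Since $\mathbb{S}_{(2^t)}V$ is a linear subspace, the sum does too.

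The remaining work is to justify the density step and the identification of the action. The set of symmetric matrices of the form $UDU^T$ with $U$ invertible is all nondegenerate symmetric matrices over $\mathbb{C}$, with $D$ allowed to be the identity after rescaling, and this set is Zariski dense. The displayed formula holds there because $\bigwedge^t(UDU^T)=\bigwedge^tU\,\bigwedge^tD\,(\bigwedge^tU)^T$, and $\bigwedge^tD$ is diagonal with entries $\prod_{i\in K}\nu_i$. Closedness of $\mathbb{S}_{(2^t)}V$ then extends the conclusion to all $S$, degenerate ones included.

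I expect the main obstacle to be bookkeeping rather than ideas. One has to confirm that the identification of $\bigwedge^tS$ with an element of $\operatorname{Sym}^2(\bigwedge^tV)$ in Fact~\ref{fact:compound} matches the convention under which $\omega\omega^T\leftrightarrow\omega\cdot\omega$. One also has to confirm that the transpose convention for Schur functors in the cited sources places the span of squares of decomposables at $(2^t)$ and not at its transpose. The first check I would make is the case $t=1$: there the statement says that $S$ lies in $\operatorname{Sym}^2V=\mathbb{S}_{(2)}V$, which is trivially true.
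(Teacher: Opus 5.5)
Your argument is correct, and it takes a different route from the paper's. The paper linearizes the compound map to an equivariant map $\operatorname{Sym}^t(\operatorname{Sym}^2V)\to\operatorname{Sym}^2(\bigwedge^tV)$. It then uses both of Littlewood's decompositions (Fact~\ref{fact:littlewood}) and Schur's lemma: the source contains only even partitions, and among the target summands $(2^{t-2j},1^{4j})$ only $j=0$ is even.

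You instead write $\bigwedge^tS=\sum_K\big(\prod_{i\in K}\nu_i\big)\,\omega_K\cdot\omega_K$ by Cauchy--Binet. Each square of a nonzero decomposable $t$-vector is a translate $g\cdot(e_1\wedge\cdots\wedge e_t)^2$, so it lies in the submodule generated by the square of the highest weight vector, which is the Cartan component by definition.

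What your route buys:
\begin{enumerate}
\item[(i)] It needs neither plethysm identity. The projection $P$ you set up is never actually used, since you prove membership directly.
\item[(ii)] It gives the reverse inclusion of Proposition~\ref{prop:ambient-reduction} at once. Taking $S=UU^{T}$ with $U$ of size $n\times t$ yields $\bigwedge^tS=\omega\cdot\omega$ for every decomposable $\omega$, and the span of these is the whole irreducible module.
\item[(iii)] It matches the paper's geometry, exhibiting $\bigwedge^tS$ as a combination of points of $v_2\big(\mathrm{pl}(\mathrm{Gr}(t,n))\big)$; the rank-one case is the paper's factorization.
\end{enumerate}
The paper's route, in exchange, phrases the result as the vanishing of the non-Cartan isotypic components. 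That is the form in which the linear syzygies of Lemma~\ref{lem:three-term} are interpreted.

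Two simplifications are available. The density step is unnecessary: over $\mathbb{C}$ every symmetric matrix, singular or not, equals $UDU^{T}$ with $U$ invertible and $D$ diagonal, so your formula holds everywhere. The aside about orthogonal similarity can also be dropped, since congruence is all you use.
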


\begin{proof}
The assignment $S\mapsto\bigwedge^tS$ is a polynomial map
$\operatorname{Sym}^2V\to\operatorname{Sym}^2(\bigwedge^tV)$, homogeneous of
degree $t$ in the entries of $S$ and equivariant for the action of
$\mathrm{GL}(V)$ by congruence, since
$\bigwedge^t(gSg^{T})=(\bigwedge^tg)(\bigwedge^tS)(\bigwedge^tg)^{T}$ by
Cauchy--Binet. It therefore factors through a $\mathrm{GL}(V)$-equivariant
linear map
\[
\operatorname{Sym}^t\big(\operatorname{Sym}^2V\big)\longrightarrow
\operatorname{Sym}^2\Big(\bigwedge\nolimits^tV\Big),
\]
whose image is a sum of irreducible summands common to both sides, by Schur's
lemma. By Fact~\ref{fact:littlewood} the summands on the left are the
$\mathbb{S}_\pi V$ with $\pi\vdash2t$ even, and those on the right are
the $\mathbb{S}_{(2^{t-2j},1^{4j})}V$. A partition of the form
$(2^{t-2j},1^{4j})$ is even if and only if $j=0$. Hence the image is contained
in $\mathbb{S}_{(2^t)}V$, which is the assertion.
\end{proof}

\begin{remark}\label{rem:symmetry-essential}
The proof uses the symmetry of $S$ twice and essentially: to place $S$ in
$\operatorname{Sym}^2V$ rather than in $V\otimes V$, and to place
$\bigwedge^tS$ in $\operatorname{Sym}^2(\bigwedge^tV)$ rather than in
$\bigwedge^tV\otimes\bigwedge^tV$. For a generic matrix the ambient space is
$\bigwedge^tV\otimes\bigwedge^tW$ and the analogous argument yields only the
Cauchy decomposition, with no vanishing statement; correspondingly, the
left-hand side of~\eqref{eq:three-term} is a nonzero polynomial there. This is
the precise sense in which the relations of
\cite{bruns2013relations,huang2021relations} do not exhaust the relations
available in our setting.
\end{remark}

\subsubsection*{(IV) Sylvester's identity}

\begin{fact}[Desnanot--Jacobi--Sylvester]\label{fact:sylvester}
Let $M$ be a square matrix and let $I,K$ (resp.\ $J,L$) be index sets of
equal size with $|I\cap K|=|I|-1$ (resp.\ $|J\cap L|=|J|-1$). Then, up to a
sign depending only on the index positions,
\[
\det(M_I^J)\det(M_K^L)-\det(M_I^L)\det(M_K^J)
=\det\big(M_{I\cap K}^{\,J\cap L}\big)\cdot\det\big(M_{I\cup K}^{\,J\cup L}\big).
\]
\end{fact}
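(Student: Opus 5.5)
The plan is to reduce the identity to the classical Desnanot--Jacobi identity for a single square matrix of size $p+1$, where $p:=|I|=|J|$. Write $C:=I\cap K$ and $D:=J\cap L$, so that $I=C\cup\{a\}$, $K=C\cup\{b\}$, $J=D\cup\{c\}$ and $L=D\cup\{d\}$, with $a\neq b$ and $c\neq d$. Let $N:=M_{I\cup K}^{\,J\cup L}$; it is $(p+1)\times(p+1)$. Each of the five minors in the statement is then a minor of $N$:
\[
M_I^J = N \text{ with row } b \text{ and column } d \text{ deleted}, \qquad M_K^L = N \text{ with row } a \text{ and column } c \text{ deleted},
\]
and similarly for $M_I^L$ (delete row $b$, column $c$) and $M_K^J$ (delete row $a$, column $d$). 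Finally, $M_C^D$ is $N$ with rows $a,b$ and columns $c,d$ deleted, and $M_{I\cup K}^{J\cup L}=N$.

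Next, permute the rows of $N$ so that $C$ comes first in increasing order, followed by $a$ and then $b$, and the columns so that $D$ comes first, followed by $c$ and then $d$. Call the result $N'$. This permutation changes $\det N$ and each of the four $p\times p$ minors only by signs. These signs are determined by the positions of $a,b$ in $I\cup K$ and of $c,d$ in $J\cup L$ (shuffle signs in the sense of Definition~\ref{def:shuffle_sign}). The statement allows exactly such a sign, so I would record these parities but not optimize them. The minor $M_C^D$ keeps its order, hence its sign.

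For $N'$, whose last two rows and columns are the distinguished ones, I would prove the Desnanot--Jacobi identity
\[
\det N'\cdot\det N'^{\,\widehat{p,p+1}}_{\,\widehat{p,p+1}}=\det N'^{\,\widehat{p}}_{\,\widehat{p}}\det N'^{\,\widehat{p+1}}_{\,\widehat{p+1}}-\det N'^{\,\widehat{p}}_{\,\widehat{p+1}}\det N'^{\,\widehat{p+1}}_{\,\widehat{p}}
\]
via Jacobi's complementary minor theorem. The steps are:
\begin{enumerate}
\item Assume first that $N'$ is invertible.
\item The relation $N'\operatorname{adj}(N')=\det(N')\,\mathrm{Id}$ gives, by Cauchy--Binet applied to $2\times2$ compounds (Fact~\ref{fact:compound}), that each $2\times2$ minor of $\operatorname{adj}(N')$ equals $\det(N')$ times the complementary $(p-1)\times(p-1)$ minor of $N'$, up to the usual sign.
\item Apply this to the $2\times2$ block of $\operatorname{adj}(N')$ indexed by the last two positions. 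Its entries are exactly the four cofactors above, which yields the identity.
\item Both sides are polynomial in the entries, and invertible matrices are Zariski dense, so the identity holds for every $N'$.
\end{enumerate}
Translating back through the reordering gives the statement with a sign depending only on index positions.

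I expect the main obstacle to be purely the sign bookkeeping, namely checking that the cofactor signs $(-1)^{i+j}$ of the adjugate combine with the reordering parities into a single global sign rather than a relative sign between the two products on the left. This works because the four cofactors sit at the positions $(p,p)$, $(p+1,p+1)$, $(p,p+1)$ and $(p+1,p)$. The products $(p,p)(p+1,p+1)$ and $(p,p+1)(p+1,p)$ both carry total cofactor sign $+1$, so the relative minus sign is preserved and only an overall sign remains.
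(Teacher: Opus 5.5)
Your proposal is correct and follows the paper's route: restrict to the $(p+1)\times(p+1)$ submatrix on rows $I\cup K$ and columns $J\cup L$, move the common indices to the front (the source of the global sign), and apply the classical Desnanot--Jacobi--Sylvester identity. The paper takes that identity from Gantmacher's identity with $q=2$, using his continuity argument, whereas you derive it from Jacobi's complementary-minor theorem and the density of invertible matrices.

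One small repair is needed in step~(2). Cauchy--Binet on second compounds only yields $\bigwedge^2\operatorname{adj}(N')=(\det N')^2\bigl(\bigwedge^2N'\bigr)^{-1}$. To identify this with $\det N'$ times signed complementary minors you also need the Laplace expansion along two rows, or the standard trick of multiplying $N'$ by the identity matrix whose last two columns are replaced by those of $\operatorname{adj}(N')$.
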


Our statement is the case $q=2$ of identity (30) in
\cite[Ch.~II, \S3]{gantmacher1959}, applied to the $(t+1)\times(t+1)$
submatrix on the rows $I\cup K$ and columns $J\cup L$, with the common rows
and columns permuted into leading position; the permutation is what
produces the sign. The continuity argument given there establishes the
identity for arbitrary matrices, with no nonvanishing hypothesis. See also
\cite[\S3.5, Theorem~3.12]{bressoud1999}, where the classical
Desnanot--Jacobi case is proved; the identity is the determinantal shadow
of the Pl\"ucker relation of length two.
Proposition~\ref{prop:sylvester-type} is exactly Fact~\ref{fact:sylvester}
applied to $M=S$, and it is what makes the redundancy counts of
Table~\ref{tab:counts} possible: a relation of this type is a multiple of a
single $(t+1)$-minor, hence carries no information beyond
$\operatorname{rank}(S)\le t$.

\begin{remark}[Three different dimensions]\label{rem:three-dimensions}
Fact~\ref{fact:symdet}(ii) invites a confusion worth dispelling once. Three
numbers occur in this paper, and they are distinct:
\begin{enumerate}
\item[(A)] $\dim V(I_{t+1}(S))-1=tn-\binom{t}{2}-1$, the dimension of the
determinantal variety in the coordinates $s_{ij}$, what a Gr\"obner basis
computation in $\Bbbk[s_{ij}]$ measures;
\item[(B)] $\dim\overline{\operatorname{im}\Psi_t}\le t(n-t)$, the bound of
Proposition~\ref{prop:image-dimension}, equal to $\dim\mathrm{Gr}(n-t,n)$ by
Fact~\ref{fact:plucker};
\item[(C)] $\binom{t+1}{2}-1$, the dimension of the fiber: the data of the
nondegenerate quadratic form induced by $S$ on a complement of $\ker(S)$,
which $\Psi_t$ discards, since by
Theorem~\ref{thm:veronese-factorization}(iii) it remembers only $\ker(S_x)$.
\end{enumerate}
One checks $(A)=(B)+(C)$ identically in $n$ and $t$. In particular $\Psi_t$
is injective on the locus $\{\operatorname{rank}(S)=t\}$ only for $t=1$,
which is precisely the Dziobek stratum of
Proposition~\ref{prop:dziobek-recovery}, the one case in which the kernel,
together with the masses, reconstructs $S$ up to scale.
\end{remark}

\begin{lemma}[Specialization principle]\label{lem:specialization}
Let $A=(a_{ij})$ be a symmetric $n\times n$ matrix of indeterminates ($a_{ij}=a_{ji}$), and let
\[
F \in \mathbb{C}\big[\,y_{I,J} : I,J\in I_t\,\big]
\]
be a polynomial such that $F\big(\det(A_I^J)\big)_{I,J} = 0$ identically in $\mathbb{C}[a_{ij}]$. Then, for every central configuration $x$,
\[
F\big(\det(S_I^J)\big)_{I,J} = 0,
\]
where $S$ is the shifted Brehm--Wintner--Conley matrix of $x$.
\end{lemma}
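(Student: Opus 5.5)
The argument is a direct evaluation (specialization) homomorphism; no property of central configurations is used beyond the fact that $S$ is a real symmetric matrix. Fix a central configuration $x$ and its matrix $S=(s_{ij})$, which is symmetric by construction in Section~\ref{sec:wc-matrix} (its off-diagonal entries are $s_{ij}=s_{ji}$, and its diagonal entries are defined by the mass convention~\eqref{sdef}). Consider the evaluation map
\[
\mathrm{ev}_x:\mathbb{C}[a_{ij}]\longrightarrow\mathbb{C},\qquad a_{ij}\longmapsto s_{ij}\quad(i\le j),
\]
where the polynomial ring is in the $\binom{n+1}{2}$ independent indeterminates $a_{ij}$, $i\le j$, with $a_{ji}:=a_{ij}$. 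Because $S$ is symmetric, this assignment is compatible with the identification $a_{ji}=a_{ij}$, so $\mathrm{ev}_x$ is a well-defined ring homomorphism and sends the matrix $A$ entrywise to $S$.

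The first step is to check that $\mathrm{ev}_x$ commutes with taking minors: $\mathrm{ev}_x\big(\det(A_I^J)\big)=\det(S_I^J)$ for all $I,J\in I_t$. This holds because each minor is given by the Leibniz formula, a fixed integer polynomial in the entries, and a ring homomorphism commutes with integer polynomial expressions. The second step is to apply $\mathrm{ev}_x$ to the polynomial $G:=F\big(\det(A_I^J)\big)_{I,J}\in\mathbb{C}[a_{ij}]$. Since $F$ has complex coefficients and $\mathrm{ev}_x$ is $\mathbb{C}$-linear and multiplicative, we get
\[
\mathrm{ev}_x(G)=F\big(\mathrm{ev}_x\det(A_I^J)\big)_{I,J}=F\big(\det(S_I^J)\big)_{I,J}.
\]
By hypothesis $G=0$ in $\mathbb{C}[a_{ij}]$, so $\mathrm{ev}_x(G)=0$, which gives the claim.

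The only point that needs care is that the hypothesis is an identity for \emph{symmetric} indeterminates. That is what allows the diagonal and off-diagonal entries of $S$, which are not algebraically independent functions of the configuration (for instance $\sum_i m_is_{ij}=0$), to be substituted freely. The evaluation does not depend on this independence, because any additional relations satisfied by the $s_{ij}$ can only create more vanishing, never less. I would close by noting the consequence the paper relies on: the conclusion carries no information specific to central configurations (compare Remark~\ref{rem:specialization-scope}), and the symmetric hypothesis is exactly what makes the linear syzygies of Lemma~\ref{lem:three-term} admissible as choices of $F$.
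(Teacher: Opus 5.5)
Your proof is correct and follows the paper's own argument: the evaluation $a_{ij}\mapsto s_{ij}$ is a ring homomorphism sending $A$ entrywise to $S$ precisely because $S$ is symmetric. Since it commutes with taking minors, it carries the identity $F\big(\det(A_I^J)\big)_{I,J}=0$ to the stated equation; taking $\mathbb{C}$ as the codomain, as you do (the paper writes $\mathbb{R}$), is the cleaner choice because $F$ has complex coefficients.
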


\begin{proof}
The hypothesis is a polynomial identity in the ring $\mathbb{C}[a_{ij}]$. The evaluation $a_{ij}\mapsto s_{ij}$ is a ring homomorphism $\mathbb{C}[a_{ij}]\to\mathbb{R}$, well defined because $S$ is symmetric, and it carries the identity to the stated equation.
\end{proof}

Note that Lemma~\ref{lem:specialization} requires nothing of $x$ beyond the symmetry of $S$: the resulting equations hold at every Brehm--Wintner--Conley dimension simultaneously. The first family produced this way is quadratic.

\begin{proposition}[Exchange relations]\label{prop:exchange}
For all $I,J,K,L \in I_t$,
\begin{equation}\label{eq:exchange}
\det(S_I^J)\det(S_K^L) \;=\; \det(S_I^L)\det(S_K^J).
\end{equation}
\end{proposition}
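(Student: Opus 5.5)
The exchange relations are an immediate consequence of the rank-one factorization of Proposition~\ref{prop:7.2-wc}, so I will derive them from it directly. Here $t$ is the true Brehm--Wintner--Conley dimension of the class $[x]$. Fix $I,J,K,L\in I_t$. By Proposition~\ref{prop:7.2-wc}, $\det(S_I^J)=\kappa^{\mathrm{bwc}}D^{\mathrm{bwc}}_ID^{\mathrm{bwc}}_J$, and similarly for the other three minors. Then both sides of \eqref{eq:exchange} equal
\[
(\kappa^{\mathrm{bwc}})^2\,D^{\mathrm{bwc}}_ID^{\mathrm{bwc}}_JD^{\mathrm{bwc}}_KD^{\mathrm{bwc}}_L .
\]
Equivalently, \eqref{eq:exchange} is the vanishing of a $2\times2$ minor of the rank-at-most-one matrix $S^{(t)}$, taken on rows $I,K$ and columns $J,L$. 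Neither the nonvanishing of $\kappa^{\mathrm{bwc}}$ nor the choice of kernel basis $Y$ matters: the identity is a polynomial identity in the minors, and the scalar ambiguity in $D^{\mathrm{bwc}}$ enters both sides to the same even degree (Remark~\ref{rem:wc-grassmannian-remarks}(iii)).

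The only point needing care is which $t$ is meant, since the proposition is placed in a section built on Lemma~\ref{lem:specialization}, whose identities hold for all symmetric matrices. The relation is not a universal identity for generic symmetric matrices: for a symmetric $A$ of rank $>t$, the compound $A^{(t)}$ has rank $\binom{\operatorname{rank}A}{t}>1$ by Fact~\ref{fact:compound}. So I will state that the proof is at $t=\operatorname{bwc}(x)$, or at any $t\ge\operatorname{bwc}(x)$. For larger $t$ every $t\times t$ minor vanishes and \eqref{eq:exchange} reads $0=0$.

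As a cross-check, I will also give the route the section's framing suggests. The relations \eqref{eq:exchange} are the defining quadrics of the Veronese variety in Fact~\ref{fact:veronese}. Theorem~\ref{thm:veronese-factorization}(i) places $\Psi_t([x])$ on $\mathcal{V}_{N_t-1,2}$, so the coordinates $\det(S_I^J)$ satisfy all these quadrics. This is the same argument in geometric language. No step is a real obstacle; the main risk is misreading the statement as a universal identity, which I will guard against explicitly.
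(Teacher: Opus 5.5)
Your proposal is correct and follows the paper's proof essentially verbatim: the paper also substitutes the rank-one factorization of Proposition~\ref{prop:7.2-wc} to show that both sides equal $(\kappa^{\mathrm{bwc}})^2D^{\mathrm{bwc}}_ID^{\mathrm{bwc}}_JD^{\mathrm{bwc}}_KD^{\mathrm{bwc}}_L$, and then gives the same geometric reading through the Veronese quadrics and Theorem~\ref{thm:veronese-factorization}(i). Your caveat is also correct and consistent with Remark~\ref{rem:specialization-scope}: $t$ must be the true rank $\operatorname{bwc}(x)$ (or larger, where the identity reads $0=0$), since the relation is not an identity valid for all symmetric matrices.
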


\begin{proof}
We give two proofs, each instructive. First, directly from Proposition~\ref{prop:7.2-wc}: both sides equal $(\kappa^{\mathrm{bwc}})^2\, D^{\mathrm{bwc}}_I D^{\mathrm{bwc}}_J D^{\mathrm{bwc}}_K D^{\mathrm{bwc}}_L$. Second, geometrically: the relations~\eqref{eq:exchange} are precisely the $2\times2$ minors of the generic symmetric matrix of coordinates of $\mathbb{P}^{\binom{N_t+1}{2}-1}$, which generate the homogeneous ideal of the Veronese variety $\mathcal{V}_{N_t-1,2}$; by Theorem~\ref{thm:veronese-factorization}(i) the point $\Psi_t([x])$ lies on this variety, hence satisfies its equations.
\end{proof}

\begin{remark}\label{rem:exchange-instances}
At $t=1$, equations~\eqref{eq:exchange} read $s_{ij}s_{kl}=s_{il}s_{kj}$: the classical Dziobek relations (Corollary~3.3 of~\cite{DiasVeronese}). At $t=2$ and $n=5$ (the planar five-body problem), the coordinates $\det(S_I^J)$ are the $2\times2$ minors of $S$ appearing throughout Section~\ref{sec:williams5}, and Proposition~\ref{dziowill} together with~\eqref{eq:exchange} organizes the generalized Williams' formulae of Proposition~\ref{Wilnew} and Corollary~\ref{Wilnew2} into the single geometric statement that the point of $2\times2$ minors lies on a Veronese variety.
\end{remark}

Beyond degree two, the relations among the $t\times t$ minors of a generic matrix are the subject of~\cite{bruns2013relations}: over a field of characteristic zero, Bruns, Conca and Varbaro exhibit minimal relations in degrees $2$ and $3$ and conjecture that these generate the full ideal of relations \cite[Conjecture~2.12]{bruns2013relations}; the conjecture is proved for $t=2$ in~\cite{huang2021relations}, which also gives a complete structural description of the relations in that case. Combining the known degree-$2$ and degree-$3$ relations with Lemma~\ref{lem:specialization} yields, for each $t$, a finite family of universal equations for central configurations:

\begin{corollary}\label{cor:universal-equations}
Every relation of degree $2$ or $3$ among the $t\times t$ minors of a generic matrix (in particular each of the minimal relations exhibited in~\cite{bruns2013relations, huang2021relations}) specializes under Lemma~\ref{lem:specialization} to a polynomial equation satisfied by the minors $\det(S_I^J)$ of every central configuration: a polynomial identity of degree $2t$ or $3t$, respectively, in the entries $s_{ij}$, and hence an explicit algebraic equation in the mutual distances $r_{ij}$ and the shift $r_0^{2a}$.
\end{corollary}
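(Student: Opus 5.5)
The statement follows by combining the specialization principle with the fact that relations among minors of a generic matrix survive substitution of any matrix. Take $F\in\mathbb{C}[y_{I,J}]$, a relation of degree $d\in\{2,3\}$ among the $t\times t$ minors of a generic $n\times n$ matrix $X=(x_{ij})$, so that $F(\det X_I^J)=0$ in $\mathbb{C}[x_{ij}]$. Here $F$ may be one of the minimal relations of \cite{bruns2013relations,huang2021relations}. The first step is to pass from the generic matrix to the generic symmetric matrix $A$. The substitution $x_{ij}\mapsto a_{\min(i,j)\max(i,j)}$ is a ring homomorphism $\mathbb{C}[x_{ij}]\to\mathbb{C}[a_{ij}]$. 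Because a determinant is a polynomial in the entries, this homomorphism sends $\det X_I^J$ to $\det A_I^J$. It therefore sends $F(\det X_I^J)$ to $F(\det A_I^J)$, and the latter vanishes identically. This is exactly the hypothesis of Lemma~\ref{lem:specialization}.

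The second step applies Lemma~\ref{lem:specialization} directly. For every central configuration $x$ it gives $F(\det S_I^J)=0$, with no restriction on the stratum, since only the symmetry of $S$ is used. The same reasoning also shows that the specialization only adds relations. This matches the remark after Fact~\ref{fact:bcv} that these relations bound the available ones from below.

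The third step is degree bookkeeping. Each $\det S_I^J$ is a homogeneous form of degree $t$ in the $s_{ij}$. The minimal relations are homogeneous in the $y_{I,J}$, being graded pieces of $\mathcal{R}_t$. A homogeneous relation of degree $d$ therefore becomes a homogeneous polynomial of degree $dt$ in the $s_{ij}$, namely $2t$ or $3t$. If a relation is not homogeneous, I would replace it by its homogeneous components; each component is again a relation, because the presentation map is graded once the minors are assigned degree $t$. Finally, substituting $s_{ij}=r_{ij}^{2a}-r_0^{2a}$ for $i\neq j$, and $s_{jj}=-m_j^{-1}\sum_{i\neq j}m_is_{ij}$ for the diagonal, gives an explicit equation in the $r_{ij}$, the shift $r_0^{2a}$, and the masses.

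The main obstacle is interpretive rather than technical. The resulting identity holds for every symmetric matrix, so after specialization it may be the zero polynomial in the $s_{ij}$. The statement only claims that it is satisfied, not that it is nontrivial, and I would say so, citing Remark~\ref{rem:specialization-scope}. The only genuine care point is the homogeneity and grading in step three.
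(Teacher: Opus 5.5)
Your argument is correct and follows the paper's own route: the corollary is Lemma~\ref{lem:specialization} applied to the relations of Fact~\ref{fact:bcv}, and you make explicit two steps the paper leaves implicit, namely the bridging homomorphism $x_{ij}\mapsto a_{\min(i,j)\max(i,j)}$ from the generic to the generic symmetric matrix (mentioned only in passing after Fact~\ref{fact:bcv}) and the degree count $dt$. One sharpening: by your own first step the specialized polynomial is not merely possibly but always identically zero in the $s_{ij}$, which is exactly the point of Remark~\ref{rem:specialization-scope}.
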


We emphasize the honest scope of this statement: Corollary~\ref{cor:universal-equations} asserts that the specialized equations \emph{hold}; whether they generate the full ideal of the Zariski closure of the image of $\Psi_t$ is a different and finer question, which we do not address. Indeed, by Theorem~\ref{thm:veronese-factorization}(i) the image satisfies, in addition, the pullback under $v_2$ of the Pl\"ucker relations of $\mathrm{Gr}(n-t,n)$, and the minimal generation problem for the resulting ideal is open, in the same spirit as the generation problem left open at the end of Section~\ref{sec:williamsk} for the Williams' systems.

\begin{example}[The planar five- and six-body systems]\label{ex:planar-56}
For $n=5$, $k=2$, on the generic stratum $t=2$: $I_2$ has $N_2=10$ elements, $S^{(2)}$ is the symmetric $10\times10$ matrix of $2\times2$ minors of $S$, and $\Psi_2$ maps into $\mathbb{P}^{54}$. The quadratic system consists of the vanishing of all $2\times2$ minors of $S^{(2)}$ (equivalently, the exchange relations~\eqref{eq:exchange}), each of degree $4$ in the entries $s_{ij}$; through Proposition~\ref{dziowill} these encode the generalized Williams' formulae of Section~\ref{sec:williams5}. For $n=6$, $k=2$, on the generic stratum $t=3$: $I_3$ has $N_3=20$ elements, $S^{(3)}$ is the symmetric $20\times20$ matrix of $3\times3$ minors, and $\Psi_3$ maps into $\mathbb{P}^{209}$. The quadratic system is the vanishing of all $2\times2$ minors of $S^{(3)}$, of degree $6$ in the $s_{ij}$, supplemented by the cubic relations of~\cite{bruns2013relations, huang2021relations}, of degree $9$; the systems are generated mechanically from these two displayed families, and we do not reproduce the expansions here.
\end{example}

\subsection{Structural consequences of the expansion}
\label{subsec:structural}

Carrying the two systems of Example~\ref{ex:planar-56} out explicitly (which
we do in Appendix~\ref{app:expansions}) brings to light three facts that are
not apparent from the abstract statement, and which we establish here. The
coordinates of $\Psi_t$ satisfy \emph{linear} syzygies, so the target
projective space may be replaced by a proper linear subspace
(Lemma~\ref{lem:three-term} and Proposition~\ref{prop:ambient-reduction}); a large
proportion of the exchange relations are instances of Sylvester's identity,
hence redundant (Proposition~\ref{prop:sylvester-type}); and the whole system
is equivalent, in the variables $s_{ij}$, to a system of degree $t+1$ rather than $2t$
(Proposition~\ref{prop:economical}).

\subsubsection*{Linear syzygies}

\begin{lemma}[Three-term identity]\label{lem:three-term}
Let $S$ be a symmetric $n\times n$ matrix, let $2\le t\le n$, let $C\subset\{1,\dots,n\}$ with
$|C|=t-2$, and let $i<j<k<l$ lie in the complement of $C$. Then
\begin{equation}\label{eq:three-term}
\det\big(S_{C\cup ij}^{\,C\cup kl}\big) \;-\; \det\big(S_{C\cup ik}^{\,C\cup jl}\big)
\;+\; \det\big(S_{C\cup il}^{\,C\cup jk}\big) \;=\; 0
\end{equation}
identically in the entries of $S$. If moreover $n=2t$, then for every $A$ with $|A|=t-1$
\begin{equation}\label{eq:complementary}
\sum_{x\notin A}\epsilon_{I,I^c}\,\det\big(S_{I}^{\,I^c}\big)=0,\qquad I=A\cup\{x\},
\end{equation}
with $\epsilon$ the shuffle sign of Definition~\ref{def:shuffle_sign}.
\end{lemma}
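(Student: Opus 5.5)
The plan is to deduce both identities from Proposition~\ref{prop:cartan}, rather than by expanding the determinants. Each left-hand side in \eqref{eq:three-term} and \eqref{eq:complementary} is the value at $\bigwedge^tS$ of a fixed linear functional $\phi$ on $\operatorname{Sym}^2(\bigwedge^tV)$, with $V=\mathbb{C}^n$ (we may complexify). By Proposition~\ref{prop:cartan}, $\bigwedge^tS$ lies in the Cartan component $\mathbb{S}_{(2^t)}V$. So it suffices to show that $\phi$ vanishes on $\mathbb{S}_{(2^t)}V$. The identities then hold for every symmetric $S$, and therefore identically in the entries, because they are polynomial identities on all of $\operatorname{Sym}^2V$.

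\emph{Step 1: a spanning set for the Cartan component.} The component $\mathbb{S}_{(2^t)}V$ is irreducible and contains the square $\omega\cdot\omega$ of the highest weight vector $\omega=e_1\wedge\cdots\wedge e_t$. The $\mathrm{GL}(V)$-orbit of $\omega\cdot\omega$ consists of all squares of decomposable $t$-vectors, and its linear span is a nonzero invariant subspace. By irreducibility it is the whole component. Concretely, take an $n\times t$ matrix $P$ and set $S=PP^T$. Cauchy--Binet (Fact~\ref{fact:compound}) gives $\bigwedge^tS=(\bigwedge^tP)(\bigwedge^tP)^T$, that is,
\[
\det(S_I^J)=p_Ip_J ,
\]
where $p_I=\det P_{I,\cdot}$ are the Pl\"ucker coordinates of the column space of $P$. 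Hence it is enough to check $\phi(pp^T)=0$ for every decomposable Pl\"ucker vector $p$.

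\emph{Step 2: reduction to Pl\"ucker relations.} With $\det(S_I^J)=p_Ip_J$, the left side of \eqref{eq:three-term} becomes
\[
p_{C\cup ij}\,p_{C\cup kl}-p_{C\cup ik}\,p_{C\cup jl}+p_{C\cup il}\,p_{C\cup jk}.
\]
This is the three-term Pl\"ucker relation of Proposition~\ref{prop:plucker-relations}, taken with $H=C\cup\{i\}$ and $J=C\cup\{j,k,l\}$. For \eqref{eq:complementary}, with $n=2t$, let $\omega$ be decomposable and contract it by $e_A^{*}$. The result $\iota_{e_A^*}\omega$ is a vector in the span of $\omega$, so $(\iota_{e_A^*}\omega)\wedge\omega=0$ in $\bigwedge^{t+1}V$. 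Pairing this with the appropriate basis $(t+1)$-vectors built from $A$ should give $\sum_{x\notin A}\epsilon_{I,I^c}\,p_Ip_{I^c}=0$ with $I=A\cup\{x\}$. This is again a quadratic Pl\"ucker relation, namely the case $H=A$, $J=A^{c}$ of Proposition~\ref{prop:plucker-relations} after sorting.

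\emph{Expected obstacle: sign bookkeeping.} Determinants indexed by unordered unions such as $C\cup ij$ are taken with sorted rows and columns. The Pl\"ucker relation, however, carries shuffle signs $\epsilon_{i,H}\epsilon_{i,J\setminus\{i\}}$. I expect to verify, in the spirit of the computations in the proofs of Propositions~\ref{Wilnew} and~\ref{Wilgen-prop}, that these signs collapse to the displayed pattern $+,-,+$. Two facts should make this work: the same sorting sign appears on the row and column side of each term, and the positions of $C$ relative to $i<j<k<l$ contribute a common factor to all three terms. As a sanity check I would first do $t=2$, $C=\emptyset$ by hand. There the three minors are $s_{ik}s_{jl}-s_{il}s_{jk}$, $s_{ij}s_{kl}-s_{il}s_{jk}$ and $s_{ij}s_{kl}-s_{ik}s_{jl}$, using $s_{lk}=s_{kl}$ and similar symmetric swaps. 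The alternating sum then cancels term by term, which confirms both the sign pattern and the essential role of symmetry. The complementary identity needs the same care with $\epsilon_{I,I^c}$.
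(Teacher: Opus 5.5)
Your argument is correct. It starts where the paper's proof does, from Proposition~\ref{prop:cartan}, but you handle the decisive step differently.

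The paper asserts that the left-hand sides of \eqref{eq:three-term} and \eqref{eq:complementary} are, up to scalars, the equivariant projections of $\bigwedge^tS$ onto the non-Cartan summands $\mathbb{S}_{(2^{t-2j},1^{4j})}V$, $j\ge1$. It supports this only with the hand check at $t=2$, $C=\varnothing$, and a remark about Laplace expansion along $C$.

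You instead \emph{verify} that the two functionals annihilate the Cartan component. That component is spanned by the squares $pp^{T}=\bigwedge^t(PP^{T})$ of decomposable $t$-vectors. On these, the functionals become the quadratic Pl\"ucker relations of Proposition~\ref{prop:plucker-relations} with $(H,J)=(C\cup\{i\},\,C\cup\{j,k,l\})$ and $(H,J)=(A,A^{c})$.

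The sign bookkeeping you postponed does close:
\begin{itemize}
\item In the first case, $\epsilon_{x,H}\,\epsilon_{x,J\setminus\{x\}}=(-1)^{1+\#\{y\in\{j,k,l\}:\,y<x\}}$, because the contributions of $C$ occur twice and cancel. This gives the pattern $-,+,-$.
\item In the second case, $\epsilon_{x,A}\,\epsilon_{x,A^{c}\setminus\{x\}}=(-1)^{x-1}$, while $\epsilon_{A\cup\{x\},(A\cup\{x\})^{c}}=(-1)^{x+\sum_{a\in A}a-t(t+1)/2}$. Their ratio does not depend on $x$.
\end{itemize}

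Your route buys a good deal. It reduces the lemma to facts already available in the paper.

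It also identifies the space $\mathcal{R}_1$ of linear syzygies with the degree-two part of the ideal of $\mathrm{Gr}(t,n)$. A linear form $\sum c_{IJ}y_{IJ}$ kills every compound of a symmetric matrix exactly when the quadratic form $\sum c_{IJ}p_Ip_J$ vanishes on decomposable vectors. This explains the counts $5$ and $35$ of Corollary~\ref{cor:56-ambient}: they are the dimensions of the spaces of quadrics containing $\mathrm{Gr}(2,5)\subset\mathbb{P}^{9}$ and $\mathrm{Gr}(3,6)\subset\mathbb{P}^{19}$.

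It yields further syzygies too. At $(n,t)=(7,3)$, the four-term Pl\"ucker relations with $H\cap J=\varnothing$ have torus weights different from those of every relation in the lemma, so they lie outside its span.

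Your route can even be made independent of Proposition~\ref{prop:cartan} and of Littlewood's plethysm. Over $\mathbb{C}$ every symmetric $S$ equals $PP^{T}$ with $P$ square, and Cauchy--Binet gives $\det(S_I^J)=\sum_{K\in I_t}\det(P_{I,K})\det(P_{J,K})$, a sum of products $p_Ip_J$ with $p$ decomposable.

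The paper's formulation, for its part, ties the syzygies directly to the isotypic decomposition that produces the dimension count of Lemma~\ref{lem:cartan-dimension}.
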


\begin{proof}
By Proposition~\ref{prop:cartan}, the tuple $\big(\det S_I^J\big)_{I,J}$ (that is, the
$t$-th compound $\bigwedge^tS$) lies in the Cartan component
$\mathbb{S}_{(2^t)}V$ of $\mathrm{Sym}^2(\bigwedge^tV)$. Identities~\eqref{eq:three-term}
and~\eqref{eq:complementary} are precisely the vanishing of its components in the remaining
summands $\mathbb{S}_{(2^{t-2j},1^{4j})}V$, $j\ge1$, of Fact~\ref{fact:littlewood}: the
left-hand sides are, up to scalars, the images of $\bigwedge^tS$ under the equivariant
projections onto those summands, applied to the highest weight vectors indexed by
$(C;i,j,k,l)$ and, when $n=2t$, by $A$. Since $\bigwedge^tS$ has no component there, they
vanish identically in the entries of $S$.

For the reader who prefers a direct verification, the case $t=2$, $C=\varnothing$ reads
$$(s_{ik}s_{jl}-s_{il}s_{jk})-(s_{ij}s_{kl}-s_{il}s_{kj})+(s_{ij}s_{lk}-s_{ik}s_{lj})=0,$$
which cancels term by term after replacing $s_{kj}=s_{jk}$, $s_{lk}=s_{kl}$ and
$s_{lj}=s_{jl}$; the general case follows by Laplace expansion along the rows indexed by $C$.
\end{proof}

We stress that symmetry of $S$ is essential, and that this is not an artifact of the proof:
for a generic \emph{non}-symmetric matrix the left-hand side of~\eqref{eq:three-term} is a
nonzero polynomial (already for $t=2$ it has six monomials, and for $t=3$, eighteen)
exactly as Remark~\ref{rem:symmetry-essential} predicts. Consequently the relations of
\cite{bruns2013relations,huang2021relations}, which concern minors of a generic matrix, do not
exhaust the relations available here: in the symmetric setting relations already occur in
degree one in the minors.

\begin{proposition}[Reduction of the ambient space]\label{prop:ambient-reduction}
The linear span of the minors $\det S_I^J$, $I,J\in I_t$, is exactly the
Cartan component $\mathbb{S}_{(2^t)}\mathbb{C}^n$; consequently $\Psi_t$ takes
values in the linear subspace
\[
\mathbb{P}\big(\mathbb{S}_{(2^t)}\mathbb{C}^n\big)\subset\mathbb{P}^{\binom{N_t+1}{2}-1},
\]
whose codimension is $\binom{N_t+1}{2}-\dim\mathbb{S}_{(2^t)}\mathbb{C}^n$, the
number of independent linear syzygies. The inclusion is strict if and only if
$t\ge2$ and $n\ge t+2$.
\end{proposition}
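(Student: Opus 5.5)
The plan is to identify the linear span in question with the linear span of the set of compounds $\{\bigwedge^tS : S\in\operatorname{Sym}^2V\}$ inside $\operatorname{Sym}^2(\bigwedge^tV)$, where $V=\mathbb{C}^n$. Dually, $\mathcal{R}_1$ of Remark~\ref{rem:syzygy-meaning} is the annihilator of this span. Call the span $E$. By Proposition~\ref{prop:cartan} we already have $E\subseteq\mathbb{S}_{(2^t)}V$, so only the reverse inclusion needs an argument.

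\textbf{Reverse inclusion.} I will use equivariance and irreducibility. By Cauchy--Binet (Fact~\ref{fact:compound}), $\bigwedge^t(gSg^T)=(\bigwedge^tg)(\bigwedge^tS)(\bigwedge^tg)^T$. Hence $E$ is stable under the congruence action of $\mathrm{GL}(V)$ and is a subrepresentation of $\mathbb{S}_{(2^t)}V$. It is nonzero for $t\le n$, since $\bigwedge^t\mathrm{Id}=\mathrm{Id}\neq0$. The Schur module $\mathbb{S}_{(2^t)}V$ is irreducible, and it is nonzero because $(2^t)$ has $t\le n$ rows. Therefore $E=\mathbb{S}_{(2^t)}V$.

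Two technical points remain. First, the span over $\mathbb{C}$ of the values at complex symmetric matrices equals the span of the values at real symmetric matrices. This holds because real symmetric matrices are Zariski dense in $\operatorname{Sym}^2\mathbb{C}^n$, so a linear form vanishing on the compounds of all real symmetric matrices vanishes identically. Second, it follows that for every $x$ the point $\Psi_t([x])$ lies in $\mathbb{P}(\mathbb{S}_{(2^t)}\mathbb{C}^n)$. The codimension statement is then rank--nullity: $\dim\mathcal{R}_1=\binom{N_t+1}{2}-\dim E$.

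\textbf{Strictness.} The inclusion is strict exactly when $\operatorname{Sym}^2(\bigwedge^tV)$ has a nonzero summand other than the Cartan one. By Fact~\ref{fact:littlewood}, these summands are $\mathbb{S}_{(2^{t-2j},1^{4j})}V$ with $j\ge1$ and $t-2j\ge0$. Such a summand is nonzero precisely when its number of rows, $(t-2j)+4j=t+2j$, is at most $n$; partitions with more than $n$ rows contribute zero, as recalled after Fact~\ref{fact:littlewood}. Since the row count $t+2j$ increases with $j$, some admissible $j\ge1$ gives a nonzero summand if and only if $j=1$ does. The case $j=1$ is admissible if and only if $t\ge2$, and then it is nonzero if and only if $t+2\le n$. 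This gives the stated criterion.

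I expect the main obstacle to be justifying the vanishing and nonvanishing of the truncated Schur functors cleanly, namely that $\mathbb{S}_\pi\mathbb{C}^n\neq0$ if and only if $\ell(\pi)\le n$. This should be checked against the transpose indexing convention flagged after Fact~\ref{fact:littlewood}. If the convention causes trouble, I would fall back on an explicit witness: the three-term identity \eqref{eq:three-term} of Lemma~\ref{lem:three-term} is a nontrivial linear form in $\mathcal{R}_1$ as soon as $t\ge2$ and $n\ge t+2$, since indices $C$ and $i<j<k<l$ then exist. Conversely, for $t=1$ or $n\le t+1$, I would check directly that the minors are linearly independent.
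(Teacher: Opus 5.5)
Your proposal is correct and follows the paper's proof essentially step for step. You get containment from Proposition~\ref{prop:cartan}, equality from the irreducibility of $\mathbb{S}_{(2^t)}V$ together with the witness $\bigwedge^t\mathrm{Id}_n=\mathrm{Id}_{N_t}\neq0$, and strictness from the $j=1$ summand $\mathbb{S}_{(2^{t-2},1^4)}V$ with its $t+2$ rows; the only differences are that you get $\mathrm{GL}(V)$-stability of the span directly from Cauchy--Binet rather than through the paper's polarization step, and your Zariski-density argument settles the real-versus-complex point more cleanly than Remark~\ref{rem:real-span} does.
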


\begin{proof}
Write $V=\mathbb{C}^n$ and
$\Theta_t:\operatorname{Sym}^2V\to\operatorname{Sym}^2(\bigwedge^tV)$,
$\Theta_t(S)=\bigwedge^tS$, for the compound map, which is well defined by
Fact~\ref{fact:compound}, homogeneous of degree $t$, and
$\mathrm{GL}(V)$-equivariant, as recalled in the proof of
Proposition~\ref{prop:cartan}. The assertion to be proved is
\begin{equation}\label{eq:span-cartan}
\operatorname{span}_{\mathbb{C}}\Theta_t\big(\operatorname{Sym}^2V\big)
=\mathbb{S}_{(2^t)}V .
\end{equation}

\emph{Step 1: the span is the image of a linear map.} In characteristic zero a
homogeneous polynomial map $f:W\to U$ of degree $t$ factors uniquely as
$f(w)=\tilde f(w^{\cdot t})$ for a linear $\tilde f:\operatorname{Sym}^tW\to U$,
and $\tilde f$ inherits the equivariance of $f$ by uniqueness of the
factorization. Moreover
\[
\operatorname{span}f(W)=\operatorname{im}\tilde f .
\]
Indeed $\subseteq$ is immediate, and $\supseteq$ follows because
$\operatorname{Sym}^tW$ is spanned by the $t$-th powers $w^{\cdot t}$: by the
polarization identity
\[
t!\,w_1\cdots w_t=\sum_{\varnothing\ne T\subseteq\{1,\dots,t\}}(-1)^{t-|T|}
\Big(\sum_{i\in T}w_i\Big)^{\cdot t},
\]
legitimate since $t!$ is invertible. Applying this to $\Theta_t$, the left-hand
side of~\eqref{eq:span-cartan} equals the image of a
$\mathrm{GL}(V)$-equivariant linear map
\[
\tilde\Theta_t:\operatorname{Sym}^t\big(\operatorname{Sym}^2V\big)
\longrightarrow\operatorname{Sym}^2\Big(\bigwedge\nolimits^tV\Big).
\]

\emph{Step 2: the image lands in the Cartan component.} Both decompositions of
Fact~\ref{fact:littlewood} are multiplicity free, so Schur's lemma applies
summand by summand: an equivariant map annihilates every irreducible
constituent of its source that does not occur in its target. A partition of
the form $(2^{t-2j},1^{4j})$ has $4j$ parts equal to $1$, hence is even if and
only if $j=0$; the unique common constituent is therefore
$\mathbb{S}_{(2^t)}V$, and $\operatorname{im}\tilde\Theta_t\subseteq
\mathbb{S}_{(2^t)}V$. This is Proposition~\ref{prop:cartan}.

\emph{Step 3: the inclusion is an equality.} Being the image of an equivariant
map, $\operatorname{im}\tilde\Theta_t$ is a $\mathrm{GL}(V)$-submodule of
$\mathbb{S}_{(2^t)}V$. The latter is irreducible, so the image is either $0$ or
all of it. It is not $0$: taking $S=\mathrm{Id}_n$, which is symmetric, gives
$\Theta_t(\mathrm{Id}_n)=\mathrm{Id}_{N_t}\ne0$ because $1\le t\le n$. This
proves~\eqref{eq:span-cartan}.

\emph{Step 4: strictness.} By Fact~\ref{fact:littlewood} the complement of the
Cartan component is $\bigoplus_{j\ge1}\mathbb{S}_{(2^{t-2j},1^{4j})}V$, which is
nonzero if and only if its first summand is, namely
$\mathbb{S}_{(2^{t-2},1^{4})}V\ne0$. A Schur functor $\mathbb{S}_\pi V$ vanishes
exactly when $\pi$ has more than $n=\dim V$ rows, and $(2^{t-2},1^4)$ has
$(t-2)+4=t+2$ rows; the condition is thus $t\ge2$ and $n\ge t+2$.

Finally, $\Psi_t([x])=\big[\bigwedge^tS_x\big]$ for every class of the stratum,
by Definition~\ref{def:veronese-model}, and $\bigwedge^tS_x$ lies in the span;
the codimension statement is the definition of codimension.
\end{proof}

\begin{remark}[Real versus complex coefficients]\label{rem:real-span}
The matrix $S$ of a central configuration is real, whereas
Proposition~\ref{prop:ambient-reduction} is a statement about
$\mathrm{GL}(V)$-modules over $\mathbb{C}$. No generality is lost: the
syzygies produced below have integer coefficients, and the real span of
$\{\bigwedge^tS:S\ \text{real symmetric}\}$ already has the full dimension
$\dim_{\mathbb{C}}\mathbb{S}_{(2^t)}\mathbb{C}^n$, as one checks by exhibiting
that many real symmetric matrices whose compounds are linearly independent over
$\mathbb{Q}$. The complexification of the real span therefore coincides with
the complex span, and the two codimensions agree.
\end{remark}

The codimension is computable in closed form.

\begin{lemma}[Dimension of the ambient reduction]\label{lem:cartan-dimension}
Let $1\le t\le n$. Then
\[
\dim\mathbb{S}_{(2^t)}\mathbb{C}^n
=\prod_{i=1}^{t}\prod_{j=1}^{2}\frac{n+j-i}{h(i,j)},
\qquad
h(i,1)=t-i+2,\quad h(i,2)=t-i+1,
\]
so that
\begin{equation}\label{eq:cartan-dim}
\dim\mathbb{S}_{(2^t)}\mathbb{C}^n
=\frac{1}{t+1}\binom{n+1}{t}\binom{n}{t},
\end{equation}
and the number of independent linear syzygies among the $t\times t$ minors of a
symmetric $n\times n$ matrix equals
\[
\binom{N_t+1}{2}-\frac{1}{t+1}\binom{n+1}{t}\binom{n}{t},
\qquad N_t=\binom{n}{t}.
\]
\end{lemma}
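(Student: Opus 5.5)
The plan is to apply the hook content formula to the rectangular partition $(2^t)$, which has $t$ rows and two columns. By Weyl's dimension formula in its hook-content form,
\[
\dim\mathbb{S}_\pi\mathbb{C}^n=\prod_{(i,j)\in\pi}\frac{n+c(i,j)}{h(i,j)},
\qquad c(i,j)=j-i,
\]
where $h(i,j)$ is the hook length of the box $(i,j)$. For the box $(i,j)$ of $(2^t)$, the arm is $2-j$ and the leg is $t-i$. Hence $h(i,1)=(t-i)+1+1=t-i+2$ and $h(i,2)=t-i+1$, which is exactly the first displayed product.

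Next I will evaluate the products column by column. The first column contributes
\[
\prod_{i=1}^t(n+1-i)\Big/\prod_{i=1}^t(t-i+2)=\frac{n!/(n-t)!}{(t+1)!}.
\]
The second column contributes
\[
\prod_{i=1}^t(n+2-i)\Big/\prod_{i=1}^t(t-i+1)=\frac{(n+1)!/(n+1-t)!}{t!}.
\]
Rewriting these in binomial form gives
\[
\frac{1}{t+1}\binom{n}{t}\quad\text{and}\quad\binom{n+1}{t}
\]
respectively. Their product is \eqref{eq:cartan-dim}. The only care needed here is the index bookkeeping, in particular the shift by one in the first column's hooks, which produces the factor $1/(t+1)$.

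Finally, the syzygy count follows from Proposition~\ref{prop:ambient-reduction}. The span of the minors is exactly $\mathbb{S}_{(2^t)}\mathbb{C}^n$ inside $\operatorname{Sym}^2(\bigwedge^t\mathbb{C}^n)$, which has dimension $\binom{N_t+1}{2}$. So $\dim\mathcal{R}_1$, the space of linear relations, is the kernel dimension of the surjection from the coordinate space onto this span. That kernel dimension is the difference stated in the lemma.

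There is no real obstacle in this argument. As a sanity check, I will verify the formula in two known cases. At $t=1$ it gives $\binom{n+1}{2}$, consistent with the absence of syzygies. At $n=5$, $t=2$ it gives $\tfrac13\cdot10\cdot15=50$, matching the reduction $\mathbb{P}^{54}\to\mathbb{P}^{49}$.
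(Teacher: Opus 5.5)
Your proof is correct and follows the paper's own argument: the hook content formula applied to $(2^t)$ with hook lengths $(2-j)+(t-i)+1$, a column-by-column evaluation of contents and hooks, and the syzygy count read off from Proposition~\ref{prop:ambient-reduction} as the codimension of the span of the minors inside a space of dimension $\binom{N_t+1}{2}$. Two small differences, neither of which matters. You pair each column's contents with its own hooks, giving $\frac{1}{t+1}\binom{n}{t}$ and $\binom{n+1}{t}$, whereas the paper pairs them crosswise; this is only a regrouping of the same factorials. Your identification of $\dim\mathcal{R}_1$ with that codimension is strictly the dual statement that a linear form in the $y_{IJ}$ vanishes on every compound exactly when it annihilates their span, which the paper also leaves implicit.
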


\begin{proof}
The first display is the hook content formula
$\dim\mathbb{S}_\pi\mathbb{C}^n=\prod_{(i,j)\in\pi}(n+j-i)/h(i,j)$ applied to
the rectangular partition $\pi=(2^t)$, whose Young diagram has $t$ rows and $2$
columns. Its conjugate is $(t,t)$, so the hook length at the cell $(i,j)$ is
$(2-j)+(t-i)+1$, giving $h(i,1)=t-i+2$ and $h(i,2)=t-i+1$ as stated.

For~\eqref{eq:cartan-dim}, evaluate the two columns separately. The contents
of the first column are $n,n-1,\dots,n-t+1$ and those of the second are
$n+1,n,\dots,n-t+2$, so
\[
\prod_{(i,j)\in\pi}(n+j-i)=\frac{n!}{(n-t)!}\cdot\frac{(n+1)!}{(n+1-t)!}.
\]
The hooks of the first column are $t+1,t,\dots,2$ and those of the second are
$t,t-1,\dots,1$, so
\[
\prod_{(i,j)\in\pi}h(i,j)=(t+1)!\cdot t! .
\]
Dividing and regrouping the factorials,
\[
\dim\mathbb{S}_{(2^t)}\mathbb{C}^n
=\underbrace{\frac{n!}{t!\,(n-t)!}}_{\textstyle\binom{n}{t}}
\cdot\frac{(n+1)!}{(t+1)!\,(n+1-t)!}
=\binom{n}{t}\cdot\frac{1}{t+1}\binom{n+1}{t},
\]
since $(n+1)!/\big((t+1)!\,(n+1-t)!\big)=\binom{n+1}{t}/(t+1)$.

As a check, $t=1$ gives $\tfrac12(n+1)n=\binom{n+1}{2}=\dim\operatorname{Sym}^2\mathbb{C}^n$,
so there are no syzygies on the Dziobek stratum, in agreement with the last
assertion of Proposition~\ref{prop:ambient-reduction}.
\end{proof}

\begin{corollary}[The planar five- and six-body systems]\label{cor:56-ambient}
For $(n,t)=(5,2)$ one has $\dim\mathbb{S}_{(2,2)}\mathbb{C}^5=50$, so the $55$
minors $\det S_I^J$ satisfy exactly $5$ independent linear syzygies and
$\Psi_2$ maps into $\mathbb{P}^{49}$ rather than $\mathbb{P}^{54}$. For
$(n,t)=(6,3)$ one has $\dim\mathbb{S}_{(2,2,2)}\mathbb{C}^6=175$, so the $210$
minors satisfy exactly $35$ independent linear syzygies and $\Psi_3$ maps into
$\mathbb{P}^{174}$ rather than $\mathbb{P}^{209}$.
\end{corollary}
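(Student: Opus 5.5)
This corollary is a direct numerical specialization of Lemma~\ref{lem:cartan-dimension} together with Proposition~\ref{prop:ambient-reduction}, so the plan is simply to evaluate the closed forms at the two parameter pairs. I first compute $N_t=\binom{n}{t}$ and the number of coordinates $\binom{N_t+1}{2}$ of the target of $\Psi_t$. For $(n,t)=(5,2)$ this gives $N_2=10$ and $\binom{11}{2}=55$. For $(n,t)=(6,3)$ it gives $N_3=20$ and $\binom{21}{2}=210$. These agree with the nominal ambient spaces $\mathbb{P}^{54}$ and $\mathbb{P}^{209}$ of Example~\ref{ex:planar-56}.

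Next I would insert the parameters into \eqref{eq:cartan-dim}, $\dim\mathbb{S}_{(2^t)}\mathbb{C}^n=\frac{1}{t+1}\binom{n+1}{t}\binom{n}{t}$. For $(5,2)$ this is $\frac13\cdot\binom62\cdot\binom52=\frac13\cdot15\cdot10=50$. For $(6,3)$ it is $\frac14\cdot\binom73\cdot\binom63=\frac14\cdot35\cdot20=175$. As a sanity check I would also run the hook content product directly. For $(2,2)$ with $n=5$, the contents are $5,6,4,5$ and the hooks are $3,2,2,1$, giving $600/12=50$, which agrees.

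Finally, Proposition~\ref{prop:ambient-reduction} identifies the linear span of the minors with $\mathbb{S}_{(2^t)}\mathbb{C}^n$. Hence $\dim_{\Bbbk}\mathcal{R}_1$ equals $55-50=5$ and $210-175=35$ respectively, which is the meaning of ``exactly'' fixed in Remark~\ref{rem:syzygy-meaning}. It also follows that $\Psi_t$ lands in $\mathbb{P}(\mathbb{S}_{(2^t)}\mathbb{C}^n)$, that is, in $\mathbb{P}^{49}$ and $\mathbb{P}^{174}$. The strictness criterion $t\ge2$, $n\ge t+2$ holds in both cases, so both reductions are genuine. Remark~\ref{rem:real-span} lets the count be read for the real matrices $S$ arising from configurations.

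There is no real obstacle here. The only point needing care is that ``exactly $r$'' requires the span to equal the Cartan component rather than merely be contained in it. That is Step~3 of Proposition~\ref{prop:ambient-reduction}, the irreducibility argument, which I cite rather than reprove.
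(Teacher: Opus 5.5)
Your proposal is correct and follows the paper's proof: substitute $(n,t)=(5,2)$ and $(6,3)$ into \eqref{eq:cartan-dim}, subtract the result from $\binom{N_t+1}{2}$, and invoke the strictness criterion of Proposition~\ref{prop:ambient-reduction}. Your hook-content check and your explicit appeal to Step~3, that the span equals the Cartan component rather than merely lying in it, are sound additions that the paper leaves implicit.
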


\begin{proof}
Substitute in~\eqref{eq:cartan-dim}. For $(n,t)=(5,2)$,
$\tfrac13\binom{6}{2}\binom{5}{2}=\tfrac13\cdot15\cdot10=50$, and
$\binom{N_2+1}{2}=\binom{11}{2}=55$, whence $55-50=5$. For $(n,t)=(6,3)$,
$\tfrac14\binom{7}{3}\binom{6}{3}=\tfrac14\cdot35\cdot20=175$, and
$\binom{N_3+1}{2}=\binom{21}{2}=210$, whence $210-175=35$. In both cases
$t\ge2$ and $n\ge t+2$, so the inclusion is strict by the last assertion of
Proposition~\ref{prop:ambient-reduction}.
\end{proof}

\begin{table}[ht]
\centering
\small
\begin{tabular}{clrrr}
\hline
$(n,t)$ & decomposition of $\operatorname{Sym}^2(\bigwedge^t\mathbb{C}^n)$
& Cartan & syz. & ambient \\
\hline
$(5,2)$ & $\mathbb{S}_{(2,2)}\oplus\mathbb{S}_{(1^4)}$ & $50$ & $5$ & $\mathbb{P}^{54}\to\mathbb{P}^{49}$ \\
$(6,3)$ & $\mathbb{S}_{(2^3)}\oplus\mathbb{S}_{(2,1^4)}$ & $175$ & $35$ & $\mathbb{P}^{209}\to\mathbb{P}^{174}$ \\
$(7,3)$ & $\mathbb{S}_{(2^3)}\oplus\mathbb{S}_{(2,1^4)}$ & $490$ & $140$ & $\mathbb{P}^{629}\to\mathbb{P}^{489}$ \\
$(8,4)$ & $\mathbb{S}_{(2^4)}\oplus\mathbb{S}_{(2^2,1^4)}\oplus\mathbb{S}_{(1^8)}$ & $1764$ & $721$ & $\mathbb{P}^{2484}\to\mathbb{P}^{1763}$ \\
\hline
\end{tabular}
\caption{The reduction of Proposition~\ref{prop:ambient-reduction} in the first
cases. The Cartan column is~\eqref{eq:cartan-dim}; the syzygy column is the
dimension of the complementary summands.}\label{tab:cartan-dims}
\end{table}

\begin{remark}[The syzygies form a single irreducible module]\label{rem:syzygies-irreducible}
Table~\ref{tab:cartan-dims} makes visible a point that the two families of
Lemma~\ref{lem:three-term} might obscure. For $t\le3$ only the summand $j=1$
survives, so the whole space of linear syzygies is the \emph{single
irreducible} module $\mathbb{S}_{(2^{t-2},1^4)}V$. At $(n,t)=(5,2)$ it is
$\bigwedge^4\mathbb{C}^5$, of dimension $\binom{5}{4}=5$, which is why the
identities~\eqref{eq:three-term} are there indexed by the $4$-subsets of
$\{1,\dots,5\}$ and are $5$ in number. At $(n,t)=(6,3)$ it is
$\mathbb{S}_{(2,1,1,1,1)}\mathbb{C}^6$, of dimension $35$. Accordingly, the
$30$ identities of type~\eqref{eq:three-term} and the $15$ of
type~\eqref{eq:complementary} are \emph{not} two isotypic blocks: they are two
spanning families of one and the same irreducible module, neither of which
spans it alone. One verifies directly that the first has rank $30$, the second
rank $5$, and the two together rank $35$, matching the codimension given by
Lemma~\ref{lem:cartan-dimension}, which proves that
Lemma~\ref{lem:three-term} exhausts the linear syzygies in these cases. From
$t=4$ on, a second summand appears and the analogous statement requires a
further family of identities, which we do not pursue here.
\end{remark}

\begin{remark}[Scope of Lemma~\ref{lem:specialization}]\label{rem:specialization-scope}
It is worth being precise about what the specialization principle does and does not give. If
$F$ vanishes identically on the minors of a symmetric matrix of indeterminates, then
$F\big(\det S_I^J\big)$ vanishes identically in $\mathbb{C}[s_{ij}]$ as well; the resulting
``equation'' in the mutual distances is $0=0$ and imposes no condition on the configuration.
The content of Lemma~\ref{lem:specialization} and Corollary~\ref{cor:universal-equations} is
therefore \emph{ambient}: they locate the image of $\Psi_t$ inside a fixed subvariety of
$\mathbb{P}^{\binom{N_t+1}{2}-1}$ determined by $n$ and $t$ alone, exactly as
Proposition~\ref{prop:ambient-reduction} does in degree one. The equations that do cut the
configurations are the exchange relations~\eqref{eq:exchange}, which come from
Proposition~\ref{prop:7.2-wc}, that is, from $\operatorname{rank}(S)=t$, and not from any
identity valid for all symmetric matrices: none of them vanishes identically (see the counts
in Table~\ref{tab:counts}).
\end{remark}

\subsubsection*{Sylvester-type relations}

\begin{proposition}\label{prop:sylvester-type}
Let $I,J,K,L\in I_t$ with $|I\cap K|=|J\cap L|=t-1$. Then
\begin{equation}\label{eq:sylvester}
\det(S_I^J)\det(S_K^L)-\det(S_I^L)\det(S_K^J)
=\det\big(S_{I\cap K}^{\,J\cap L}\big)\cdot\det\big(S_{I\cup K}^{\,J\cup L}\big),
\end{equation}
up to sign. In particular such an exchange relation is a multiple of a single
$(t+1)\times(t+1)$ minor of $S$, and holds automatically once $\operatorname{rank}(S)\le t$.
\end{proposition}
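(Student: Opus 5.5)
The plan is to reduce the statement to Fact~\ref{fact:sylvester} applied to $M=S$, after putting the index sets in the configuration that fact requires. Since $|I\cap K|=t-1$ and $|I|=|K|=t$, we may write $I=C\cup\{p\}$ and $K=C\cup\{q\}$ with $C=I\cap K$ of size $t-1$ and $p\neq q$. Similarly $J=C'\cup\{r\}$ and $L=C'\cup\{s\}$ with $C'=J\cap L$ and $r\ne s$. Then $I\cup K=C\cup\{p,q\}$ and $J\cup L=C'\cup\{r,s\}$ both have size $t+1$, so the submatrix $N:=S_{I\cup K}^{J\cup L}$ is square of order $t+1$. Every minor in \eqref{eq:sylvester} is a minor of $N$: the four $t\times t$ minors delete one row from $\{p,q\}$ and one column from $\{r,s\}$, the $(t-1)$-minor deletes both, and the $(t+1)$-minor is $\det N$ itself.

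First, permute the rows of $N$ so that those indexed by $C$ come first, followed by $p,q$. Do the same for the columns, putting $C'$ first, followed by $r,s$. This changes every minor involved only by a sign determined by index positions, which the statement allows.

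Second, apply the Desnanot--Jacobi--Sylvester identity to the permuted matrix, with leading block on $C\times C'$ and the two bordering rows and columns. This is identity (30) of \cite[Ch.~II, \S3]{gantmacher1959} with $q=2$, as recorded in Fact~\ref{fact:sylvester}. It gives
\[
\det(S_I^J)\det(S_K^L)-\det(S_I^L)\det(S_K^J)=\pm\det(S_C^{C'})\det(N),
\]
valid for arbitrary matrices, with no nonvanishing hypothesis. Symmetry of $S$ is not needed.

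Finally, the left side of \eqref{eq:sylvester} is $\pm\det(S_{I\cap K}^{J\cap L})$ times the single $(t+1)$-minor $\det(S_{I\cup K}^{J\cup L})$. If $\operatorname{rank}(S)\le t$, every $(t+1)$-minor vanishes, so the exchange relation holds automatically.

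The only delicate step is the sign bookkeeping. Each of the five minors must pick up a consistent sign under the reorderings, so that the relation really has the displayed two-term shape rather than, for instance, a sum. I would handle this by noting that the reordering acts by conjugating $N$ with permutation matrices, then checking that the signs of the four $t$-minors pair up so that the products $\det(S_I^J)\det(S_K^L)$ and $\det(S_I^L)\det(S_K^J)$ each acquire a common factor. Since the statement is only claimed up to sign, it suffices to verify this pairing.
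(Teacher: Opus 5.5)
Your proposal is correct and is essentially the paper's argument: the paper proves this by citing Sylvester's identity, Fact~\ref{fact:sylvester}, which is Gantmacher's identity (30) with $q=2$ applied to the $(t+1)\times(t+1)$ submatrix on rows $I\cup K$ and columns $J\cup L$, with the common rows and columns moved into leading position. Your sign pairing does go through, since both products $\det(S_I^J)\det(S_K^L)$ and $\det(S_I^L)\det(S_K^J)$ acquire the same factor $\epsilon_{C,p}\epsilon_{C,q}\epsilon_{C',r}\epsilon_{C',s}$.
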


This is Sylvester's determinant identity. For $n=5$, $t=2$, $435$ of the $1005$ distinct
exchange relations ($9$ of the $19$ orbits) are of this type; for $n=6$, $t=3$, $3915$ of the
$17965$ ($16$ of the $57$ orbits) are.

\subsubsection*{The economical system}

\begin{proposition}\label{prop:economical}
Let $I_{t+1}(S)\subset\mathbb{C}[s_{ij}]$ be the ideal generated by the
$(t+1)\times(t+1)$ minors of $S$. Then every exchange relation~\eqref{eq:exchange} belongs to
$I_{t+1}(S)$, and the two systems define the same locus
$\{\operatorname{rank}(S)\le t\}$. By the Nullstellensatz, the two ideals
therefore have the same radical, namely $I_{t+1}(S)$ itself. Equality of
the ideals is not asserted: for $t\ge2$ the ideal generated by the exchange
relations contains no element of degree $t+1$, so the inclusion is strict.
Here $S$ denotes the generic symmetric matrix in the variables $s_{ij}$;
the passage to central configurations is by evaluation
(Lemma~\ref{lem:specialization}) and is set-theoretic.
\end{proposition}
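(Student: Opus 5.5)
The statement has three parts, and I would prove them in this order: membership of the exchange relations in $I_{t+1}(S)$, equality of the two zero loci, and then the consequences for radicals and the strictness claim.

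\emph{Membership.} Each exchange relation $\det(S_I^J)\det(S_K^L)-\det(S_I^L)\det(S_K^J)$ is, by Fact~\ref{fact:compound}, a $2\times2$ minor of the compound $\bigwedge^tS$. Write the generic symmetric matrix over the field of fractions in a rank-type normal form, or use the Laplace--Cauchy--Binet expansion directly. The cleanest tool is the generalized Sylvester / Plücker-type expansion: for $t\times t$ minors,
\[
\det(S_I^J)\det(S_K^L)-\det(S_I^L)\det(S_K^J)
\]
can be rewritten, by iterating Fact~\ref{fact:sylvester} along a Steinitz chain (Lemma~\ref{lem:steinitz}) from $J$ to $L$ and from $I$ to $K$, as a combination of products in which one factor is a $(t+1)$-minor. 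Consecutive terms in such a chain differ by a single index, which is exactly the Sylvester situation $|I\cap K|=t-1$ of Proposition~\ref{prop:sylvester-type}. Telescoping through the chain should express the general relation as a polynomial combination of Sylvester-type relations, and each of those is a multiple of a $(t+1)$-minor. This telescoping is the step I expect to be the main obstacle, because the chain argument needs care with nonvanishing denominators.

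As a fallback that avoids the chain, I would argue by ideal theory. The exchange relations vanish on the locus $\{\operatorname{rank}\le t\}$, because $\operatorname{rank}\bigwedge^tS=\binom{\operatorname{rank}S}{t}\le1$ there (Fact~\ref{fact:compound}). Since $I_{t+1}(S)$ is prime, hence radical, by Fact~\ref{fact:symdet}(i), every polynomial vanishing on $V(I_{t+1}(S))$ lies in $I_{t+1}(S)$. This gives membership at once, and it is the argument I would actually write.

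\emph{Equality of loci.} Conversely, if all exchange relations vanish, then $\bigwedge^tS$ has rank at most $1$. By the rank identity $\binom{r}{t}\le1$, this forces $r\le t$ when $t\ge1$: if $r\ge t+1$ then $\binom{r}{t}\ge t+1\ge2$. So the two systems cut out the same locus. The Nullstellensatz then gives equal radicals, and Fact~\ref{fact:symdet}(i) identifies the common radical as $I_{t+1}(S)$.

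\emph{Strictness.} This is a degree count. The exchange relations are homogeneous of degree $2t$ in the $s_{ij}$, so every element of the ideal they generate has all homogeneous components of degree $\ge2t$. For $t\ge2$ we have $2t>t+1$, so no nonzero element has degree $t+1$. On the other hand, the $(t+1)$-minors are nonzero of degree $t+1$, since $t+1\le n$ in the relevant range. Hence the inclusion is strict.

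Finally, specialization to central configurations is by evaluation, as in Lemma~\ref{lem:specialization}, so the conclusion for configurations is set-theoretic.
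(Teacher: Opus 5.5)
Your fallback argument is exactly the paper's proof: the exchange relations vanish on $\{\operatorname{rank}\le t\}$ by the compound-rank identity, primality of $I_{t+1}(S)$ (Fact~\ref{fact:symdet}(i)) turns this into ideal membership, and the same identity ($\binom{r}{t}\le1$ forces $r\le t$) gives the reverse containment of loci. Your homogeneous degree count ($2t>t+1$ for $t\ge2$, with the $(t+1)$-minors nonzero since $t+1\le n$) correctly proves the strictness claim, which the paper states but does not argue in its proof, and the Sylvester-telescoping route you sketched first is not needed.
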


\begin{proof}
If $\operatorname{rank}(S)\le t$ then $\bigwedge^tS$ has rank $\binom{\operatorname{rank}(S)}{t}\le1$,
so all its $2\times2$ minors vanish; hence the exchange relations vanish on
$V(I_{t+1}(S))$. By Fact~\ref{fact:symdet}(i) the ideal $I_{t+1}(A)$ of a generic symmetric matrix is prime,
hence radical, in characteristic zero \cite{kutz1974cohen,jozefiak1978ideals}, so
$I(V(I_{t+1}(S)))=I_{t+1}(S)$ and vanishing on the locus is the same as membership in the
ideal. The reverse containment of loci is the rank identity of Fact~\ref{fact:compound}:
$\operatorname{rank}\big(\bigwedge^tS\big)=\binom{\operatorname{rank}(S)}{t}\le1$ forces
$\operatorname{rank}(S)\le t$.
\end{proof}

For $n=5$, $t=2$ the membership was also verified by explicit linear algebra: each of the $19$
orbit representatives below is a combination $\sum_i \ell_i\,\det(S_{I_i}^{J_i})$ with $\ell_i$
linear and $\det(S_{I_i}^{J_i})$ the $3\times3$ minors, and the $1005$ quartics span a space of
dimension $575$, which is exactly the degree-$4$ graded piece of $I_3(S)$. Thus the exchange
system, although natural in the Veronese coordinates, is not economical in the variables
$s_{ij}$: for $n=5$ it may be replaced by the $55$ distinct $3\times3$ minors of $S$, and for
$n=6$ by the $120$ distinct $4\times4$ minors: degree $t+1$ instead of degree $2t$, and no
cubic relations of degree $3t$ are needed.

\begin{table}[ht]
\centering
\begin{tabular}{lrr}
\hline
 & $n=5,\ t=2$ & $n=6,\ t=3$ \\
\hline
$N_t=\binom{n}{t}$ & 10 & 20 \\
distinct minors $\det(S_I^J)$ & 55 & 210 \\
nominal ambient space & $\mathbb{P}^{54}$ & $\mathbb{P}^{209}$ \\
linear syzygies (Lemma~\ref{lem:three-term}) & 5 & 35 \\
effective ambient space & $\mathbb{P}^{49}$ & $\mathbb{P}^{174}$ \\
exchange generators & 1035 & 18145 \\
identically zero & 0 & 0 \\
distinct up to sign & 1005 & 17965 \\
$\mathfrak{S}_n$-orbits & 19 & 57 \\
of Sylvester type & 435 & 3915 \\
degree in the $s_{ij}$ & 4 & 6 \\
economical system: minors of size $t+1$ & 55 (degree 3) & 120 (degree 4) \\
\hline
\end{tabular}
\caption{The two systems of Example~\ref{ex:planar-56}, expanded.}\label{tab:counts}
\end{table}

The maps $\Psi_t$ thus realize each stratum $\mathfrak{X}_{n,k,t}$ inside a projective variety cut out by explicit quadrics and cubics, with the Grassmannian factorization of Theorem~\ref{thm:veronese-factorization} controlling its dimension. In the Dziobek case, the corresponding parameter space (the Dziobek--Veronese variety) supports the proof of generic finiteness and a Bezout-type bound on the number of configurations~\cite{DiasVeronese}. The analysis of the fibers of the induced projection to the mass space at an arbitrary Brehm--Wintner--Conley dimension will be carried out in a separate work.

\appendix

\section{Explicit Expansions of the Planar Five- and Six-Body Systems}
\label{app:expansions}

We carry out here the two systems of Example~\ref{ex:planar-56}, whose
structural features were established in
Subsection~\ref{subsec:structural}. Each orbit under the natural action of
$\mathfrak{S}_n$ on the bodies is listed through one representative. We write $|S_I^J|$ for $\det(S_I^J)$ and abbreviate
index sets, so that $|S_{12}^{34}|=s_{13}s_{24}-s_{14}s_{23}$. In the summary tables, the \emph{type} column records whether the relation is an instance of Sylvester's identity, in the sense of Proposition~\ref{prop:sylvester-type} (\emph{Sylvester}), or not (\emph{non-Sylvester}). All expansions, orbit counts, spans, and rank computations reported here and in Section~\ref{sec:universal} were carried out in exact integer arithmetic (custom \textsc{Python} routines); no floating-point computation enters any count or rank statement.

\subsubsection*{The planar five-body system, expanded}

The $1035$ exchange relations obtained from the $2\times2$ minors of $S^{(2)}$ (one for each
pair of rows and pair of columns, modulo the symmetry of $S^{(2)}$) reduce to $1005$ distinct
polynomials, none of them identically zero, falling into $19$ orbits under the natural action
of $\mathfrak{S}_5$ on the bodies.

\begin{equation}\label{eq:n5-orbit-1}
\begin{split}
&|S_{12}^{14}||S_{13}^{24}|-|S_{12}^{24}||S_{13}^{14}|\;(\text{Sylvester})\\
&\qquad = s_{11} s_{14} s_{22} s_{34} - s_{11} s_{14} s_{23} s_{24} - s_{12}^{2} s_{14} s_{34} \\
&\qquad\quad + s_{12} s_{13} s_{14} s_{24} + s_{12} s_{14}^{2} s_{23} - s_{13} s_{14}^{2} s_{22} = 0.
\end{split}
\end{equation}
% orbit 1: size 120, 6 terms, Sylvester type

\begin{equation}\label{eq:n5-orbit-2}
\begin{split}
&|S_{12}^{12}||S_{13}^{34}|-|S_{12}^{34}||S_{13}^{12}|\\
&\qquad = s_{11} s_{13} s_{22} s_{34} - s_{11} s_{13} s_{23} s_{24} - s_{11} s_{14} s_{22} s_{33} \\
&\qquad\quad + s_{11} s_{14} s_{23}^{2} - s_{12}^{2} s_{13} s_{34} + s_{12}^{2} s_{14} s_{33} \\
&\qquad\quad + s_{12} s_{13}^{2} s_{24} - s_{12} s_{13} s_{14} s_{23} = 0.
\end{split}
\end{equation}
% orbit 2: size 120, 8 terms

\begin{equation}\label{eq:n5-orbit-3}
\begin{split}
&|S_{12}^{14}||S_{13}^{25}|-|S_{12}^{25}||S_{13}^{14}|\\
&\qquad = s_{11} s_{12} s_{24} s_{35} - s_{11} s_{12} s_{25} s_{34} + s_{11} s_{15} s_{22} s_{34} \\
&\qquad\quad - s_{11} s_{15} s_{23} s_{24} - s_{12}^{2} s_{14} s_{35} + s_{12} s_{13} s_{14} s_{25} \\
&\qquad\quad + s_{12} s_{14} s_{15} s_{23} - s_{13} s_{14} s_{15} s_{22} = 0.
\end{split}
\end{equation}
% orbit 3: size 120, 8 terms

\begin{equation}\label{eq:n5-orbit-4}
\begin{split}
&|S_{12}^{12}||S_{13}^{14}|-|S_{12}^{14}||S_{13}^{12}|\;(\text{Sylvester})\\
&\qquad = s_{11}^{2} s_{22} s_{34} - s_{11}^{2} s_{23} s_{24} - s_{11} s_{12}^{2} s_{34} \\
&\qquad\quad + s_{11} s_{12} s_{13} s_{24} + s_{11} s_{12} s_{14} s_{23} \\
&\qquad\quad - s_{11} s_{13} s_{14} s_{22} = 0.
\end{split}
\end{equation}
% orbit 4: size 60, 6 terms, Sylvester type

\begin{equation}\label{eq:n5-orbit-5}
\begin{split}
&|S_{12}^{14}||S_{13}^{45}|-|S_{12}^{45}||S_{13}^{14}|\;(\text{Sylvester})\\
&\qquad = s_{11} s_{14} s_{24} s_{35} - s_{11} s_{14} s_{25} s_{34} - s_{12} s_{14}^{2} s_{35} \\
&\qquad\quad + s_{12} s_{14} s_{15} s_{34} + s_{13} s_{14}^{2} s_{25} - s_{13} s_{14} s_{15} s_{24} = 0.
\end{split}
\end{equation}
% orbit 5: size 60, 6 terms, Sylvester type

\begin{equation}\label{eq:n5-orbit-6}
\begin{split}
&|S_{12}^{24}||S_{13}^{45}|-|S_{12}^{45}||S_{13}^{24}|\;(\text{Sylvester})\\
&\qquad = s_{12} s_{14} s_{24} s_{35} - s_{12} s_{14} s_{25} s_{34} - s_{14}^{2} s_{22} s_{35} \\
&\qquad\quad + s_{14}^{2} s_{23} s_{25} + s_{14} s_{15} s_{22} s_{34} - s_{14} s_{15} s_{23} s_{24} = 0.
\end{split}
\end{equation}
% orbit 6: size 60, 6 terms, Sylvester type

\begin{equation}\label{eq:n5-orbit-7}
\begin{split}
&|S_{12}^{14}||S_{13}^{23}|-|S_{12}^{23}||S_{13}^{14}|\\
&\qquad = - s_{11} s_{12} s_{23} s_{34} + s_{11} s_{12} s_{24} s_{33} \\
&\qquad\quad + s_{11} s_{13} s_{22} s_{34} - s_{11} s_{13} s_{23} s_{24} - s_{12}^{2} s_{14} s_{33} \\
&\qquad\quad + 2 s_{12} s_{13} s_{14} s_{23} - s_{13}^{2} s_{14} s_{22} = 0.
\end{split}
\end{equation}
% orbit 7: size 60, 7 terms

\begin{equation}\label{eq:n5-orbit-8}
\begin{split}
&|S_{12}^{12}||S_{13}^{45}|-|S_{12}^{45}||S_{13}^{12}|\\
&\qquad = s_{11} s_{14} s_{22} s_{35} - s_{11} s_{14} s_{23} s_{25} - s_{11} s_{15} s_{22} s_{34} \\
&\qquad\quad + s_{11} s_{15} s_{23} s_{24} - s_{12}^{2} s_{14} s_{35} + s_{12}^{2} s_{15} s_{34} \\
&\qquad\quad + s_{12} s_{13} s_{14} s_{25} - s_{12} s_{13} s_{15} s_{24} = 0.
\end{split}
\end{equation}
% orbit 8: size 60, 8 terms

\begin{equation}\label{eq:n5-orbit-9}
\begin{split}
&|S_{12}^{24}||S_{13}^{35}|-|S_{12}^{35}||S_{13}^{24}|\\
&\qquad = s_{12} s_{13} s_{24} s_{35} - s_{12} s_{13} s_{25} s_{34} + s_{12} s_{15} s_{23} s_{34} \\
&\qquad\quad - s_{12} s_{15} s_{24} s_{33} - s_{13} s_{14} s_{22} s_{35} \\
&\qquad\quad + s_{13} s_{14} s_{23} s_{25} + s_{14} s_{15} s_{22} s_{33} - s_{14} s_{15} s_{23}^{2} = 0.
\end{split}
\end{equation}
% orbit 9: size 60, 8 terms

\begin{equation}\label{eq:n5-orbit-10}
\begin{split}
&|S_{12}^{13}||S_{34}^{25}|-|S_{12}^{25}||S_{34}^{13}|\\
&\qquad = s_{11} s_{23}^{2} s_{45} - s_{11} s_{23} s_{24} s_{35} - s_{12} s_{13} s_{23} s_{45} \\
&\qquad\quad + s_{12} s_{13} s_{24} s_{35} - s_{12} s_{13} s_{25} s_{34} \\
&\qquad\quad + s_{12} s_{14} s_{25} s_{33} + s_{13} s_{15} s_{22} s_{34} \\
&\qquad\quad - s_{14} s_{15} s_{22} s_{33} = 0.
\end{split}
\end{equation}
% orbit 10: size 60, 8 terms

\begin{equation}\label{eq:n5-orbit-11}
\begin{split}
&|S_{12}^{12}||S_{13}^{13}|-|S_{12}^{13}||S_{13}^{12}|\;(\text{Sylvester})\\
&\qquad = s_{11}^{2} s_{22} s_{33} - s_{11}^{2} s_{23}^{2} - s_{11} s_{12}^{2} s_{33} \\
&\qquad\quad + 2 s_{11} s_{12} s_{13} s_{23} - s_{11} s_{13}^{2} s_{22} = 0.
\end{split}
\end{equation}
% orbit 11: size 30, 5 terms, Sylvester type

\begin{equation}\label{eq:n5-orbit-12}
\begin{split}
&|S_{12}^{12}||S_{13}^{23}|-|S_{12}^{23}||S_{13}^{12}|\;(\text{Sylvester})\\
&\qquad = s_{11} s_{12} s_{22} s_{33} - s_{11} s_{12} s_{23}^{2} - s_{12}^{3} s_{33} \\
&\qquad\quad + 2 s_{12}^{2} s_{13} s_{23} - s_{12} s_{13}^{2} s_{22} = 0.
\end{split}
\end{equation}
% orbit 12: size 30, 5 terms, Sylvester type

\begin{equation}\label{eq:n5-orbit-13}
\begin{split}
&|S_{12}^{12}||S_{13}^{24}|-|S_{12}^{24}||S_{13}^{12}|\;(\text{Sylvester})\\
&\qquad = s_{11} s_{12} s_{22} s_{34} - s_{11} s_{12} s_{23} s_{24} - s_{12}^{3} s_{34} \\
&\qquad\quad + s_{12}^{2} s_{13} s_{24} + s_{12}^{2} s_{14} s_{23} - s_{12} s_{13} s_{14} s_{22} = 0.
\end{split}
\end{equation}
% orbit 13: size 30, 6 terms, Sylvester type

\begin{equation}\label{eq:n5-orbit-14}
\begin{split}
&|S_{12}^{24}||S_{13}^{34}|-|S_{12}^{34}||S_{13}^{24}|\;(\text{Sylvester})\\
&\qquad = s_{12} s_{14} s_{23} s_{34} - s_{12} s_{14} s_{24} s_{33} - s_{13} s_{14} s_{22} s_{34} \\
&\qquad\quad + s_{13} s_{14} s_{23} s_{24} + s_{14}^{2} s_{22} s_{33} - s_{14}^{2} s_{23}^{2} = 0.
\end{split}
\end{equation}
% orbit 14: size 30, 6 terms, Sylvester type

\begin{equation}\label{eq:n5-orbit-15}
\begin{split}
&|S_{12}^{23}||S_{13}^{45}|-|S_{12}^{45}||S_{13}^{23}|\\
&\qquad = s_{12} s_{14} s_{23} s_{35} - s_{12} s_{14} s_{25} s_{33} - s_{12} s_{15} s_{23} s_{34} \\
&\qquad\quad + s_{12} s_{15} s_{24} s_{33} - s_{13} s_{14} s_{22} s_{35} \\
&\qquad\quad + s_{13} s_{14} s_{23} s_{25} + s_{13} s_{15} s_{22} s_{34} \\
&\qquad\quad - s_{13} s_{15} s_{23} s_{24} = 0.
\end{split}
\end{equation}
% orbit 15: size 30, 8 terms

\begin{equation}\label{eq:n5-orbit-16}
\begin{split}
&|S_{12}^{12}||S_{34}^{35}|-|S_{12}^{35}||S_{34}^{12}|\\
&\qquad = s_{11} s_{22} s_{33} s_{45} - s_{11} s_{22} s_{34} s_{35} - s_{12}^{2} s_{33} s_{45} \\
&\qquad\quad + s_{12}^{2} s_{34} s_{35} - s_{13}^{2} s_{24} s_{25} + s_{13} s_{14} s_{23} s_{25} \\
&\qquad\quad + s_{13} s_{15} s_{23} s_{24} - s_{14} s_{15} s_{23}^{2} = 0.
\end{split}
\end{equation}
% orbit 16: size 30, 8 terms

\begin{equation}\label{eq:n5-orbit-17}
\begin{split}
&|S_{12}^{14}||S_{13}^{15}|-|S_{12}^{15}||S_{13}^{14}|\;(\text{Sylvester})\\
&\qquad = s_{11}^{2} s_{24} s_{35} - s_{11}^{2} s_{25} s_{34} - s_{11} s_{12} s_{14} s_{35} \\
&\qquad\quad + s_{11} s_{12} s_{15} s_{34} + s_{11} s_{13} s_{14} s_{25} \\
&\qquad\quad - s_{11} s_{13} s_{15} s_{24} = 0.
\end{split}
\end{equation}
% orbit 17: size 15, 6 terms, Sylvester type

\begin{equation}\label{eq:n5-orbit-18}
\begin{split}
&|S_{12}^{13}||S_{34}^{24}|-|S_{12}^{24}||S_{34}^{13}|\\
&\qquad = s_{11} s_{23}^{2} s_{44} - s_{11} s_{23} s_{24} s_{34} - s_{12} s_{13} s_{23} s_{44} \\
&\qquad\quad + s_{12} s_{14} s_{24} s_{33} + s_{13} s_{14} s_{22} s_{34} - s_{14}^{2} s_{22} s_{33} = 0.
\end{split}
\end{equation}
% orbit 18: size 15, 6 terms

\begin{equation}\label{eq:n5-orbit-19}
\begin{split}
&|S_{12}^{12}||S_{34}^{34}|-|S_{12}^{34}||S_{34}^{12}|\\
&\qquad = s_{11} s_{22} s_{33} s_{44} - s_{11} s_{22} s_{34}^{2} - s_{12}^{2} s_{33} s_{44} \\
&\qquad\quad + s_{12}^{2} s_{34}^{2} - s_{13}^{2} s_{24}^{2} + 2 s_{13} s_{14} s_{23} s_{24} \\
&\qquad\quad - s_{14}^{2} s_{23}^{2} = 0.
\end{split}
\end{equation}
% orbit 19: size 15, 7 terms

\begin{table}[ht]
\centering
\begin{tabular}{rlrrl}
\hline
\# & representative $(I,J,K,L)$ & orbit size & terms & type \\
\hline
1 & $(12,14,13,24)$ & 120 & 6 & Sylvester \\
2 & $(12,12,13,34)$ & 120 & 8 & non-Sylvester \\
3 & $(12,14,13,25)$ & 120 & 8 & non-Sylvester \\
4 & $(12,12,13,14)$ & 60 & 6 & Sylvester \\
5 & $(12,14,13,45)$ & 60 & 6 & Sylvester \\
6 & $(12,24,13,45)$ & 60 & 6 & Sylvester \\
7 & $(12,14,13,23)$ & 60 & 7 & non-Sylvester \\
8 & $(12,12,13,45)$ & 60 & 8 & non-Sylvester \\
9 & $(12,24,13,35)$ & 60 & 8 & non-Sylvester \\
10 & $(12,13,34,25)$ & 60 & 8 & non-Sylvester \\
11 & $(12,12,13,13)$ & 30 & 5 & Sylvester \\
12 & $(12,12,13,23)$ & 30 & 5 & Sylvester \\
13 & $(12,12,13,24)$ & 30 & 6 & Sylvester \\
14 & $(12,24,13,34)$ & 30 & 6 & Sylvester \\
15 & $(12,23,13,45)$ & 30 & 8 & non-Sylvester \\
16 & $(12,12,34,35)$ & 30 & 8 & non-Sylvester \\
17 & $(12,14,13,15)$ & 15 & 6 & Sylvester \\
18 & $(12,13,34,24)$ & 15 & 6 & non-Sylvester \\
19 & $(12,12,34,34)$ & 15 & 7 & non-Sylvester \\
\hline
\multicolumn{2}{l}{total} & 1005 & & \\
\hline
\end{tabular}
\caption{The 19 $\mathfrak{S}_5$-orbits of exchange relations for $n=5$, $t=2$.}\label{tab:n5-orbits}
\end{table}

\subsubsection*{The planar six-body system, expanded}

For $n=6$ and $t=3$ the same construction produces 18145 generators, none identically zero,
17965 of them distinct up to sign, in 57 orbits under $\mathfrak{S}_6$. Each relation has
degree $6$ in the $s_{ij}$ and between 30 and 71 monomials, so we display
only the two shortest orbit representatives.

\begingroup\allowdisplaybreaks
\begin{align}
&|S_{123}^{134}||S_{124}^{234}|-|S_{123}^{234}||S_{124}^{134}|\;(\text{Sylvester})\nonumber\\
&\qquad = s_{11} s_{13} s_{22} s_{24} s_{33} s_{44} - s_{11} s_{13} s_{22} s_{24} s_{34}^{2} \nonumber\\
&\qquad\quad - s_{11} s_{13} s_{23}^{2} s_{24} s_{44} + 2 s_{11} s_{13} s_{23} s_{24}^{2} s_{34} \nonumber\\
&\qquad\quad - s_{11} s_{13} s_{24}^{3} s_{33} - s_{11} s_{14} s_{22} s_{23} s_{33} s_{44} \nonumber\\
&\qquad\quad + s_{11} s_{14} s_{22} s_{23} s_{34}^{2} + s_{11} s_{14} s_{23}^{3} s_{44} \nonumber\\
&\qquad\quad - 2 s_{11} s_{14} s_{23}^{2} s_{24} s_{34} + s_{11} s_{14} s_{23} s_{24}^{2} s_{33} \nonumber\\
&\qquad\quad - s_{12}^{2} s_{13} s_{24} s_{33} s_{44} + s_{12}^{2} s_{13} s_{24} s_{34}^{2} \nonumber\\
&\qquad\quad + s_{12}^{2} s_{14} s_{23} s_{33} s_{44} - s_{12}^{2} s_{14} s_{23} s_{34}^{2} \nonumber\\
&\qquad\quad + 2 s_{12} s_{13}^{2} s_{23} s_{24} s_{44} - 2 s_{12} s_{13}^{2} s_{24}^{2} s_{34} \nonumber\\
&\qquad\quad - 2 s_{12} s_{13} s_{14} s_{23}^{2} s_{44} + 2 s_{12} s_{13} s_{14} s_{24}^{2} s_{33} \nonumber\\
&\qquad\quad + 2 s_{12} s_{14}^{2} s_{23}^{2} s_{34} - 2 s_{12} s_{14}^{2} s_{23} s_{24} s_{33} \nonumber\\
&\qquad\quad - s_{13}^{3} s_{22} s_{24} s_{44} + s_{13}^{3} s_{24}^{3} \nonumber\\
&\qquad\quad + s_{13}^{2} s_{14} s_{22} s_{23} s_{44} + 2 s_{13}^{2} s_{14} s_{22} s_{24} s_{34} \nonumber\\
&\qquad\quad - 3 s_{13}^{2} s_{14} s_{23} s_{24}^{2} - 2 s_{13} s_{14}^{2} s_{22} s_{23} s_{34} \nonumber\\
&\qquad\quad - s_{13} s_{14}^{2} s_{22} s_{24} s_{33} + 3 s_{13} s_{14}^{2} s_{23}^{2} s_{24} \nonumber\\
&\qquad\quad + s_{14}^{3} s_{22} s_{23} s_{33} - s_{14}^{3} s_{23}^{3} = 0.
\end{align}
\endgroup
% orbit size 45, 30 terms

\begingroup\allowdisplaybreaks
\begin{align}
&|S_{123}^{123}||S_{124}^{134}|-|S_{123}^{134}||S_{124}^{123}|\;(\text{Sylvester})\nonumber\\
&\qquad = s_{11}^{2} s_{22} s_{23} s_{33} s_{44} - s_{11}^{2} s_{22} s_{23} s_{34}^{2} \nonumber\\
&\qquad\quad - s_{11}^{2} s_{23}^{3} s_{44} + 2 s_{11}^{2} s_{23}^{2} s_{24} s_{34} \nonumber\\
&\qquad\quad - s_{11}^{2} s_{23} s_{24}^{2} s_{33} - s_{11} s_{12}^{2} s_{23} s_{33} s_{44} \nonumber\\
&\qquad\quad + s_{11} s_{12}^{2} s_{23} s_{34}^{2} - s_{11} s_{12} s_{13} s_{22} s_{33} s_{44} \nonumber\\
&\qquad\quad + s_{11} s_{12} s_{13} s_{22} s_{34}^{2} + 3 s_{11} s_{12} s_{13} s_{23}^{2} s_{44} \nonumber\\
&\qquad\quad - 4 s_{11} s_{12} s_{13} s_{23} s_{24} s_{34} + s_{11} s_{12} s_{13} s_{24}^{2} s_{33} \nonumber\\
&\qquad\quad - 2 s_{11} s_{12} s_{14} s_{23}^{2} s_{34} + 2 s_{11} s_{12} s_{14} s_{23} s_{24} s_{33} \nonumber\\
&\qquad\quad - s_{11} s_{13}^{2} s_{22} s_{23} s_{44} + s_{11} s_{13}^{2} s_{23} s_{24}^{2} \nonumber\\
&\qquad\quad + 2 s_{11} s_{13} s_{14} s_{22} s_{23} s_{34} - 2 s_{11} s_{13} s_{14} s_{23}^{2} s_{24} \nonumber\\
&\qquad\quad - s_{11} s_{14}^{2} s_{22} s_{23} s_{33} + s_{11} s_{14}^{2} s_{23}^{3} \nonumber\\
&\qquad\quad + s_{12}^{3} s_{13} s_{33} s_{44} - s_{12}^{3} s_{13} s_{34}^{2} \nonumber\\
&\qquad\quad - 2 s_{12}^{2} s_{13}^{2} s_{23} s_{44} + 2 s_{12}^{2} s_{13}^{2} s_{24} s_{34} \nonumber\\
&\qquad\quad + 2 s_{12}^{2} s_{13} s_{14} s_{23} s_{34} - 2 s_{12}^{2} s_{13} s_{14} s_{24} s_{33} \nonumber\\
&\qquad\quad + s_{12} s_{13}^{3} s_{22} s_{44} - s_{12} s_{13}^{3} s_{24}^{2} \nonumber\\
&\qquad\quad - 2 s_{12} s_{13}^{2} s_{14} s_{22} s_{34} + 2 s_{12} s_{13}^{2} s_{14} s_{23} s_{24} \nonumber\\
&\qquad\quad + s_{12} s_{13} s_{14}^{2} s_{22} s_{33} - s_{12} s_{13} s_{14}^{2} s_{23}^{2} = 0.
\end{align}
\endgroup
% orbit size 180, 32 terms

\begin{table}[!b]
\centering\small
\begin{tabular}{rlrrl}
\hline
\# & representative $(I,J,K,L)$ & orbit size & terms & type \\
\hline
1 & $(123,134,124,234)$ & 45 & 30 & Sylvester \\
2 & $(123,123,124,134)$ & 180 & 32 & Sylvester \\
3 & $(123,123,124,124)$ & 90 & 32 & Sylvester \\
4 & $(123,134,124,345)$ & 360 & 42 & Sylvester \\
5 & $(123,123,124,135)$ & 180 & 43 & Sylvester \\
6 & $(123,125,124,135)$ & 720 & 44 & Sylvester \\
7 & $(123,135,124,345)$ & 360 & 44 & Sylvester \\
8 & $(123,123,124,125)$ & 180 & 44 & Sylvester \\
9 & $(123,135,124,145)$ & 180 & 44 & Sylvester \\
10 & $(123,125,124,156)$ & 360 & 46 & Sylvester \\
11 & $(123,135,124,156)$ & 360 & 46 & Sylvester \\
12 & $(123,135,124,356)$ & 360 & 46 & Sylvester \\
13 & $(123,156,124,356)$ & 360 & 46 & Sylvester \\
14 & $(123,156,124,256)$ & 90 & 46 & Sylvester \\
15 & $(123,123,145,145)$ & 90 & 46 & non-Sylvester \\
16 & $(123,125,124,126)$ & 45 & 46 & Sylvester \\
\hline
\end{tabular}
\caption{The $57$ $\mathfrak{S}_6$-orbits of exchange relations for $n=6$,
$t=3$: orbits $1$--$16$.}\label{tab:n6-orbits}
\end{table}

\begin{table}[p]
\centering\small
\begin{tabular}{rlrrl}
\hline
\# & representative $(I,J,K,L)$ & orbit size & terms & type \\
\hline
17 & $(123,356,124,456)$ & 45 & 46 & Sylvester \\
18 & $(123,123,456,456)$ & 10 & 46 & non-Sylvester \\
19 & $(123,123,124,345)$ & 360 & 52 & non-Sylvester \\
20 & $(123,125,124,345)$ & 180 & 52 & non-Sylvester \\
21 & $(123,125,124,134)$ & 360 & 54 & non-Sylvester \\
22 & $(123,134,124,235)$ & 720 & 56 & non-Sylvester \\
23 & $(123,124,145,135)$ & 90 & 56 & non-Sylvester \\
24 & $(123,123,124,145)$ & 720 & 57 & non-Sylvester \\
25 & $(123,123,124,356)$ & 180 & 58 & non-Sylvester \\
26 & $(123,135,124,245)$ & 360 & 59 & non-Sylvester \\
27 & $(123,123,124,156)$ & 360 & 60 & non-Sylvester \\
28 & $(123,124,145,235)$ & 360 & 61 & non-Sylvester \\
29 & $(123,135,124,346)$ & 720 & 62 & non-Sylvester \\
30 & $(123,125,124,356)$ & 360 & 62 & non-Sylvester \\
31 & $(123,135,124,236)$ & 360 & 62 & non-Sylvester \\
32 & $(123,156,124,345)$ & 360 & 62 & non-Sylvester \\
33 & $(123,134,124,156)$ & 180 & 62 & non-Sylvester \\
34 & $(123,125,124,136)$ & 720 & 63 & non-Sylvester \\
35 & $(123,135,124,256)$ & 720 & 63 & non-Sylvester \\
36 & $(123,135,124,146)$ & 360 & 63 & non-Sylvester \\
37 & $(123,123,145,146)$ & 180 & 63 & non-Sylvester \\
38 & $(123,123,145,245)$ & 180 & 63 & non-Sylvester \\
39 & $(123,123,145,456)$ & 180 & 63 & non-Sylvester \\
40 & $(123,135,124,456)$ & 720 & 64 & non-Sylvester \\
41 & $(123,134,124,356)$ & 360 & 64 & non-Sylvester \\
42 & $(123,123,145,246)$ & 360 & 66 & non-Sylvester \\
43 & $(123,123,124,456)$ & 180 & 66 & non-Sylvester \\
44 & $(123,124,145,356)$ & 360 & 68 & non-Sylvester \\
45 & $(123,125,124,346)$ & 180 & 68 & non-Sylvester \\
46 & $(123,134,124,256)$ & 180 & 68 & non-Sylvester \\
47 & $(123,126,145,346)$ & 720 & 69 & non-Sylvester \\
48 & $(123,124,145,256)$ & 720 & 70 & non-Sylvester \\
49 & $(123,246,145,356)$ & 90 & 70 & non-Sylvester \\
50 & $(123,135,124,246)$ & 360 & 71 & non-Sylvester \\
51 & $(123,124,145,136)$ & 360 & 71 & non-Sylvester \\
52 & $(123,124,145,236)$ & 360 & 71 & non-Sylvester \\
53 & $(123,126,145,345)$ & 360 & 71 & non-Sylvester \\
54 & $(123,126,145,456)$ & 360 & 71 & non-Sylvester \\
55 & $(123,126,145,245)$ & 180 & 71 & non-Sylvester \\
56 & $(123,236,145,456)$ & 45 & 71 & non-Sylvester \\
57 & $(123,124,456,356)$ & 45 & 71 & non-Sylvester \\
\hline
\multicolumn{2}{l}{total} & 17965 & & \\
\hline
\end{tabular}
\caption{Orbits $17$--$57$ of the $n=6$, $t=3$ system (continuation of
Table~\ref{tab:n6-orbits}), with the total count.}
\end{table}

\clearpage

\begin{remark}
The pattern is uniform in $t$: the exchange relations are the equations of the Veronese variety
read in the coordinates $\det(S_I^J)$, and their content in the variables $s_{ij}$ is exactly
$\operatorname{rank}(S)\le t$. In particular, since
$\operatorname{bwc}(x)\le n-\delta(x)-1$, the two systems expanded here, both
with $t=n-3$, are satisfied by \emph{every} central configuration of
dimension $\delta(x)\ge2$: Dziobek, spatial, and planar configurations,
vertically degenerate or not, as well as the regular simplex. Only collinear
configurations of maximal Brehm--Wintner--Conley dimension $n-2$ escape, while
vertically degenerate collinear ones with $\operatorname{bwc}\le n-3$ are
again included. The expansion is therefore useful less as a system to be solved
than as a dictionary between the ambient Veronese picture, where the geometry is transparent,
and the determinantal picture, where the equations are of minimal degree.
\end{remark}

%    Bibliographies can be prepared with BibTeX using amsplain,
%    amsalpha, or (for "historical" overviews) natbib style.
\clearpage

\section*{Acknowledgments}

The author thanks Alain Albouy, for inspiring conversations in Itajubá and Recife, and Ernesto Pérez-Chavela, for equally inspiring conversations in Rio de Janeiro and Mexico, on this subject and on much else; Eduardo Leandro, for the invitation to give a series of talks on this subject at the Mathematical Physics Seminar of the Department of Mathematics at UFPE; and Rafael Holanda and Alan Muniz, organizers of the Sagui Seminar on Commutative Algebra and Algebraic Geometry, for the invitation to present the ideas that eventually led to this work. Finally, this paper is written in memory of Bob, the author's dog and faithful companion throughout much of its writing.

\bibliographystyle{amsplain}
%    Insert the bibliography data here.

\bibliography{refs}

\end{document}